\documentclass{article}

\usepackage{geometry}
\usepackage[T1]{fontenc}
\usepackage[utf8]{inputenc}
\usepackage{lmodern}

\usepackage{amsmath,amssymb,amsthm,mathtools,bm,bbm,mathrsfs,needspace,mleftright,accents,url}
\usepackage[inline]{enumitem}
\usepackage[breaklinks]{hyperref}

\ifdefined\mleft
\else
 \let\mleft\left
 \let\mright\right
\fi

\ifdefined\mathbbm
\else
 \newcommand{\mathbbm}[1]{\mathbf{#1}}
\fi

\newtheoremstyle{mystyle}
{}{}{}{}{\bfseries}{}{ }
{\thmname{#1}\thmnumber{ #2}\thmnote{ (#3)}}

\theoremstyle{mystyle}
\newtheorem{theorem}{Theorem}[section]
\newtheorem{definition}[theorem]{Definition}
\newtheorem{lemma}[theorem]{Lemma}
\newtheorem{claim}[theorem]{Claim}
\newtheorem{remark}[theorem]{Remark}
\newtheorem{corollary}[theorem]{Corollary}

\newtheorem{notation}[theorem]{Notation}
\newtheorem{proposition}[theorem]{Proposition}

\allowdisplaybreaks

\newcommand{\pr}{\mathbb{P}}
\newcommand{\ev}{\mathbb{E}}

\newcommand{\indic}[1]{\mathbbm{1}{\mleft[\,{#1}\,\mright]}}

\newcommand{\quand}{\quad\text{and}\quad}
\newcommand{\ffrac}[2]{{\textstyle\frac{{#1}}{{#2}}}}

\newcommand{\Alower}{\underline{A}^*}
\newcommand{\Aupper}{\overline{A}^*}

\title{The Window Process of Slightly Subcritical Frozen Percolation}
\author{Bal\'azs R\'ath \thanks{Department of Stochastics, Institute of Mathematics, Budapest University of Technology and Economics,
M\H{u}egyetem rkp.\ 3., H-1111 Budapest, Hungary; HUN-REN--BME Stochastics Research Group, Budapest University of Technology and Economics, M\H{u}egyetem rkp.\ 3., H-1111 Budapest, Hungary;
HUN-REN Alfr\'ed R\'enyi Institute of Mathematics, Re\'altanoda utca 13-15, H-1053 Budapest,
Hungary. Email:~\href{mailto:rathb@math.bme.hu}{\tt rathb@math.bme.hu}, \href{mailto:szokemp@math.bme.hu}{\tt szokemp@math.bme.hu}.}
\and
M\'arton Sz\H{o}ke \footnotemark[1]
\and
Dominic Yeo \thanks{Department of Mathematics, King's College London, Strand, London WC2R 2LS, United Kingdom. E-mail: \href{mailto:dominic.yeo@kcl.ac.uk}{\tt dominic.yeo@kcl.ac.uk}.}
}

\begin{document}

\maketitle

\begin{abstract}
We study the frozen percolation model on the complete graph $K_n$, where each edge appears at rate~$1/n$ and connected components are destroyed by lightning strikes hitting vertices at rate $\lambda_n$ (thus the rate at which a connected component is destroyed is linear in its size). We focus on the slightly subcritical sub-regime of the self-organized critical (SOC) regime, assuming $(\ln n)^{4/3}n^{-1/3} \ll \lambda_n \ll 1$. Our main object of study is the so-called window process, which allows us to study the fine details of the mechanism that drives the SOC behavior of this dynamic random graph. We prove two scaling limit theorems. First, we show that the fluctuations of the window process around its deterministic fluid limit converge to a stationary Ornstein--Uhlenbeck process, whose mean-reverting property captures the SOC nature of the model. Second, by zooming in on a finer space-time scale, we prove that the scaled increments of the window process converge to a tempered stable L\'evy process.

\vspace{0.2cm}
\noindent
\textbf{Keywords:} frozen percolation, self-organized criticality, scaling limit, window process

\noindent
\textbf{AMS MSC 2020:} 60K35, 05C80, 60F17, 60G51, 60J27
\end{abstract}

\tableofcontents

\section{Introduction}

\textbf{Frozen percolation.}
The concept of frozen percolation was introduced by Aldous~\cite{A00}, motivated by \cite{S43}. In this model, i.i.d.\ \textit{activation times} that are uniformly distributed on~$[0,1]$ are assigned to the edges of an infinite, unoriented graph. Initially, all edges are \textit{closed}. At its activation time, an edge \textit{opens}, provided it is not \textit{frozen}.
Here, by definition, an edge freezes as soon as one of its end-vertices becomes part of an infinite open cluster. Aldous defined and proved the existence of this process on the \textit{infinite 3-regular tree}.

For general graphs, the existence of a process satisfying the above naive description is not obvious. Indeed, Benjamini and Schramm (see \cite[Section~5.1]{A00}) observed that on the square lattice~$\mathbb{Z}^2$, frozen percolation does not exist (see also an observation of J\'arai in~\cite[Section~3]{BT01} for an account of the argument) and it is currently open whether frozen percolation can be defined on $\mathbb{Z}^d$, $d \ge 3$.

\medskip
\textbf{Frozen percolation and forest fire model on $K_n$.}
In this paper, we study the \textit{finite volume mean-field version} of the frozen percolation model where the underlying graph is the complete graph $K_n$ on~$n$ vertices. This version of frozen percolation and the closely related forest fire model were defined by R\'ath and T\'oth~\cite{R09, RT09}. This is a modification of the dynamical Erd\H{o}s--R\'enyi graph on~$n$ vertices: edges appear between any pair of vertices with rate~$1/n$, and vertices are `struck by lightning' with rate~$\lambda_n$, where ${1/n \ll \lambda_n \ll 1}$. A lightning strike destroys the whole connected component of the vertex: in the forest fire model, we only remove edges, while in the frozen percolation model we remove vertices as well as edges of the destroyed component. These models exhibit \textit{self-organized criticality} (SOC): from the moment when a giant component tries to appear, the incipient giants are constantly destroyed and the graph permanently stays in a state similar to that of the critical Erd\H{o}s–-R\'enyi graph. The mean-field forest fire model has been further studied in~\cite{ CFT15, CRY21}.

\medskip
\textbf{Further variants.}
Besides these, numerous other variants of frozen percolation have been studied in the literature. Merle and Normand~\cite{MN14} investigated a variant in which components freeze when their size reaches a threshold that depends on~$n$. Another version of frozen percolation on the complete graph (the $p$-frozen Erd\H{o}s--R\'enyi model) was introduced by Contat and Curien~\cite{CC23}, where the growth of components containing a cycle is slowed down and multi-cyclic components are forbidden. This model was also investigated in~\cite{HV26, K24, V25}. For more on the literature of frozen percolation, see Section~\ref{subsec:literature}.

\medskip
\textbf{Window process and scaling limits.}
We focus on the SOC regime of the mean-field frozen percolation. Specifically, we consider the slightly subcritical sub-regime by assuming ${(\ln n)^{4/3}n^{-1/3} \ll \lambda_n \ll 1}$. Our main object of study is the \textit{window process}~$W_n(t):=\frac{t A_n(t)}{n}$, where $A_n(t)$ denotes set of vertices that are still alive at time $t$
(see Definition~\ref{def:window} below for further details). The window process keeps track of how close to criticality is the state of the dynamic random graph as it evolves. It was shown in~\cite{R09} that the window process~$W_n(t)$ converges in probability to a deterministic hockey-stick-shaped curve $t \mapsto t \wedge 1$, which is a universal sign of SOC behavior (see Remark~\ref{rem:hockey_stick} below). In this paper, we describe the microscopic behavior of the window process~$W_n(\cdot)$ by establishing two \textit{scaling limit} theorems, see Theorems~\ref{thm:OU_scaling} and~\ref{thm:tempered_stable_scaling} below. The study of scaling limits for dynamic random graph models also has a rich literature. We mention here the paper of Aldous~\cite{A97} in which the multiplicative coalescent process, i.e., the scaling limit of the dynamics of component structure of the near-critical Erd\H{o}s--R\'enyi graph is defined and studied, and the paper of Martin and Yeo~\cite{MY18} in which scaling limits for the largest components in cycle-free modifications are proved. Recently, the scaling limits of the $p$-frozen Erd\H{o}s--R\'enyi model have also been investigated in~\cite{V25}. For more on the literature of scaling limits for dynamic random graphs, see Section~\ref{sss:scaling_limits_literature}.

\medskip
\textbf{Fluctuations around the window curve.}
First, we study the fluctuations of the window process~$W_n(\cdot)$ around a finer deterministic curve~$w_n(t)$ that we call the \textit{window curve}, which is defined by the solution of an explicit ordinary differential equation (see Definition~\ref{def:qn_window_curve}).
We show that the fluctuations converge to a stationary \textit{Ornstein--Uhlenbeck process} (O.-U.~process), see Theorem~\ref{thm:OU_scaling} below. The mean-reverting property of the O.-U.~process captures the fine details of the mechanism behind the self-organized criticality of the model (see Remark \ref{remark_OU_SOC_explain} for further details).

\medskip
\textbf{Jump structure of the window process.}
Second, note that the window process~$W_n(\cdot)$ increases continuously and decreases with jumps (due to lightning strikes destroying entire connected components). In order to see these jumps in the scaling limit, we need to zoom in further and study the window process~$W_n(\cdot)$ on a finer space-time scale. On this finer scale, we prove that the scaled increments of the process~$W_n(\cdot)$ converge to a \textit{tempered stable L\'evy process} (see Theorem~\ref{thm:tempered_stable_scaling} below).

\section{Statements of main results}

\begin{notation}
We introduce the following notation for sequences $f_n,\, g_n\colon \mathbb{N} \to \mathbb{R}$:
\begin{alignat*}{2}
f_n &\ll g_n
&&\text{ if } \quad \frac{f_n}{g_n} \to 0 \text{ as } n \to \infty,\\
f_n &= \mathcal{O}(g_n) \quad
&&\text{ if } \quad \exists \, C \in \mathbb{R}_+ \, : \, \forall \, n \in \mathbb{N} \, : \, 0 \le |f_n| \le C \cdot g_n,\\
f_n &\asymp g_n
&&\text{ if } \quad \exists \, c,\, C \in \mathbb{R}_+ \, : \forall \, n \in \mathbb{N} \, : \, c g_n \le f_n \le C g_n, \\
f_n &\approx g_n
&&\text{ if } \quad \frac{f_n}{g_n} \to 1 \text{ as } n \to \infty.
\end{alignat*}

We also use the notation $f_n = o(g_n)$ equivalently to $f_n \ll g_n$, and $f_n = \Theta(g_n)$ equivalently to $f_n \asymp g_n$.
\end{notation}

We begin by recalling the definition of the frozen percolation model with linear deletion on~$K_n$ from~\cite{R09}. 

\begin{definition}[Frozen percolation on $K_n$ with lightning rate $\lambda_n$]\label{def:frozen_perc}
Given some $n \in \mathbb{N}$ and $\lambda_n \in \mathbb{R}_+$, we denote by $G_n(t)$ the state of the frozen percolation process at time $t$. $G_n(t)$ is a subgraph of the complete graph $K_n$ with $n$ vertices. The vertex set of $G_n(t)$ is $\mathcal{A}_n(t)$, the set of alive vertices at time $t$. We start with $\mathcal{A}_n(0)= \{\,1,\,2,\,\ldots,\,n\,\}$ and $G_n(0)$ is the empty graph on $n$ vertices. The dynamics of the model is as follows. Edges appear at rate $\frac 1n$ between any $v,w \in \mathcal{A}_n(t)$. 
On the other hand, lightnings hit vertices of~$\mathcal{A}_n(t)$ at rate $\lambda_n$ per vertex. If a lightning hits $v$ at time $t$, then it destroys $\mathcal{C}_n(v, t)$ the vertex set of the connected component of vertex $v$ in $G_n(t)$, i.e., $\mathcal{A}_n(t) = \mathcal{A}_n(t-) \setminus \mathcal{C}_n(v, t)$ and $G_n(t)$ is the restriction of the graph~$G_n(t-)$ to~$\mathcal{A}_n(t)$. 
\end{definition}

Note that $\mathcal{A}_n(t)$ is monotone decreasing, i.e., for any $s \le t$, we have $\mathcal{A}_n(t) \subseteq \mathcal{A}_n(s)$.

\begin{definition}[Alive vertices]\label{def:alive_vert}
Let us denote by $A_n(t)$ the \textbf{number of alive vertices} at time $t$, i.e., $A_n(t) = |\mathcal{A}_n(t)|$. Note that $A_n(0)=n$. Denote by $\mathcal{F}_n(t)$ the $\sigma$-algebra generated by $\mathcal{A}_n(s),\, 0 \leq s \leq t$.

Let us also introduce the notation~$Q_n(t)$ for the fraction of alive vertices at time~$t$:
\begin{equation}\label{eq:Qn_def}
Q_n(t) \coloneqq \frac{A_n(t)}{n}.
\end{equation}
\end{definition}

\begin{definition}[Window process]\label{def:window}
We define the \textbf{window process} as
\begin{equation}\label{eq:Wn_def}
W_n(t)\coloneqq tQ_n(t) = \frac{t A_n(t)}{n}.
\end{equation}
\end{definition}

Before we explain why we call the window process as we do in Remark~\ref{rem:wp_name}, let us introduce some notation.

\begin{notation}\label{not:ER_graph}
Let us denote by $\mathcal{G}(n,p)$ the binomial Erd\H{o}s--R\'enyi random graph with $n$ vertices and edge probability~$p$. We also use the notation $\mathcal{G}(V,p)$, where $V \subseteq [n]$, i.e., $\mathcal{G}(V,p)$ denotes a random graph on the vertex set $V$ with the same distribution as $\mathcal{G}(|V|,\,p)$.
\end{notation}

\begin{lemma}[Connection of frozen percolation with Erd\H{o}s--R\'enyi graph]\label{lem:ER}
Given $\mathcal{F}_n(t)$, we have $G_n(t) \sim \mathcal{G}(\mathcal{A}_n(t),\,1-\exp(-t/n))$, i.e., the conditional distribution of $G_n(t)$ given $\mathcal{F}_n(t)$ is Erd\H{o}s--R\'enyi on the vertex set $\mathcal{A}_n(t)$ with edge probability
$p=1-\exp(-t/n)$.
\end{lemma}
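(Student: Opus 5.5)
We will realise the process through a graphical construction that separates the edge randomness from the vertex randomness. For every pair $\{v,w\}$ of $[n]$ let $\tau_{vw}$ be the first arrival time of an independent rate-$1/n$ Poisson clock. For every vertex let there be an independent rate-$\lambda_n$ Poisson process of lightnings. As long as $v,w$ are both alive, the edge $vw$ is present at time $t$ exactly when $\tau_{vw}\le t$. Later arrivals of the clock are irrelevant, since an edge, once present, stays until one of its endpoints dies. Then $G_n(t)$ is the graph on $\mathcal{A}_n(t)$ with edge set $\{vw : v,w\in\mathcal{A}_n(t),\ \tau_{vw}\le t\}$. For a fixed pair, $\pr(\tau_{vw}\le t)=1-\exp(-t/n)$. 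The claim is therefore that, conditionally on the history of the alive set, the indicators $\indic{\tau_{vw}\le t}$ for pairs inside $\mathcal{A}_n(t)$ are still i.i.d.\ Bernoulli$(1-e^{-t/n})$.

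The natural route is induction over lightning events, in the form of a Markov/filtration argument. We prove the stronger statement for every $s\le t$: given $\mathcal{F}_n(s)$, the restriction of $G_n(s)$ to $\mathcal{A}_n(s)$ is $\mathcal{G}(\mathcal{A}_n(s),1-e^{-s/n})$. It is convenient to work with the joint process: a Markov chain on (alive set, graph), with the alive-set history observed. The proof then consists of two steps.

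\emph{Between lightnings.} Fix an interval on which the alive set equals $V$ and no lightning occurs. Suppose that at its start the graph is ER$(V,p_0)$ given the past. Edges among $V$ absent so far appear independently at rate $1/n$, so each pair is present at time $s$ with probability $1-(1-p_0)e^{-(s-s_0)/n}$. Conditioning on ``no lightning occurred in $[s_0,s]$'' reweights the law of the graph by $\exp\bigl(-\lambda_n|V|(s-s_0)\bigr)$. This weight is deterministic given $V$, since the total lightning rate $\lambda_n|V|$ does not depend on the graph. The key point is that the lightning rate is linear in the number of alive vertices, not in component structure, so observing ``no lightning'' gives no information about edges.

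\emph{At a lightning.} Suppose a lightning hits a uniformly chosen vertex $v\in V$ and destroys $C=\mathcal{C}_n(v,s)$. Observing the new alive set $V\setminus C$ amounts to observing that $C$ is the component of $v$. The event $\{\mathcal{C}(v)=C\}$ is determined by the edges inside $C$ together with the absence of all edges between $C$ and $V\setminus C$. The edges inside $V\setminus C$ are independent of these and unconstrained. Hence, after conditioning on $\{\mathcal{C}(v)=C\}$ (and on $v$, which we then marginalise), the graph on $V\setminus C$ is still ER with the same edge probability. Iterating and combining the two steps gives the claim with $p=1-e^{-t/n}$.

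The main obstacle is making the conditioning on the continuous-time filtration rigorous, since lightning times are random and the conditioning events have probability zero. We handle this by computing the joint density of the alive-set path on $[0,t]$: the jump times, the struck vertices summed over, and the destroyed sets. We show that this density factorises as a function of the path times the ER law on the final alive set, so the conditional law follows directly.
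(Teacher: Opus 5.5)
Your proposal is correct and follows essentially the same route as the paper: induction over the lightning events, with edges among alive vertices continuing to appear independently between strikes, and the observation that the identity of a destroyed component only constrains edges touching that component, so the graph on the survivors stays Erd\H{o}s--R\'enyi. The one difference is that the paper proves the stronger statement in which one also conditions on the whole lightning point process $\Lambda$. This fixes the strike times and makes the conditioning events discrete, so your separate step that ``no lightning'' carries no information (via the graph-independent rate $\lambda_n|V|$) and your joint-density argument for the continuous-time conditioning are not needed there.
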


We will prove Lemma~\ref{lem:ER} in Section~\ref{sec:ER}. 

\begin{remark}[Window process nomenclature]
\label{rem:wp_name}
Lemma~\ref{lem:ER} implies that the conditional expectation of the number of neighbors of a fixed vertex in $G_n(t)$ given $\mathcal{F}_n(t)$ is $(A_n(t)-1)(1-\exp(-t/n))$, and this quantity is approximately equal to $W_n(t)$, cf.~\eqref{eq:Wn_def}. Thus, loosely speaking, if $W_n(t) >1$, then $G_n(t)$ is supercritical, if $W_n(t) <1$, then $G_n(t)$ is subcritical and if $W_n(t) \approx 1$, then~$G_n(t)$ is near-critical. This explains why we call $W_n(t)$ the window process: it tells us where we are in the critical window. However, let us note that in the literature of Erd\H{o}s--R\'enyi graphs (e.g.\ in~\cite{ABG12}), the critical window refers to the case when $A_n(t)=n$ and $|W_n(t)-1| \asymp n^{-1/3}$, while in this paper, for any fixed $t>1$ the value of $1-W_n(t)$ will turn out to be close to~${\lambda_n t}$ (cf.~Remark~\ref{rem:window_curve} below) and our assumptions on~$\lambda_n$ (cf.~\eqref{eq:subcrit_lambda_assumption} below) imply $n^{-1/3} \ll 1-W_n(t) \ll 1$, i.e., in this paper the graph at time $t>1$ will be near-critical, but leaning towards the subcritical. We conjecture that if $\lambda_n \asymp n^{-1/3}$ then $|W_n(t)-1| \asymp n^{-1/3}$ holds (i.e., the graph is in the critical window) if $t>1$ is fixed and $n \gg 1$.
\end{remark}

\begin{definition}[Time horizon]
We fix a time-horizon $T \in (1,+\infty)$. We will run the frozen percolation processes from time $0$ until time~$T$.
\end{definition}

\begin{definition}[SOC parameter range of $\lambda_n$, {\cite[Definition 1.2]{R09}}]
\label{def:soc_lambda_regime}
A sequence of frozen percolation models on~$K_n$ as $n \to \infty$ is called \emph{self-organized critical} if $1/n \ll \lambda_n \ll 1$.
\end{definition}
Loosely speaking, the assumption $\frac{1}{n} \ll \lambda(n)$ guarantees that giant connected components are instantly destroyed, while the condition $ \lambda(n) \ll 1$ guarantees that the lightnings do not harm small components.

\begin{theorem}[Fluid limit of the window process, {\cite[Theorem 1.2, Claim 1]{R09}}]\label{thm:window_critical}
If $1/n \ll \lambda_n \ll 1$ and $t \ge 0$, then $W_n(t)$ defined in \eqref{eq:Wn_def} converges in probability to $t \wedge 1$ as $n \to \infty$.
\end{theorem}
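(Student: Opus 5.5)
We prove the two bounds $W_n(t)\le t\wedge 1+\varepsilon$ and $W_n(t)\ge t\wedge 1-\varepsilon$ with high probability, for each fixed $t$ and $\varepsilon>0$. The main tool is Lemma~\ref{lem:ER}: conditionally on $\mathcal{F}_n(t)$, the graph $G_n(t)$ is Erd\H{o}s--R\'enyi on $A_n(t)$ vertices with edge probability $\approx t/n$. Hence its mean degree is $\approx W_n(t)$, and classical Erd\H{o}s--R\'enyi estimates are available at every fixed time.

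\textbf{Lower bound.} Since $W_n(t)=tQ_n(t)$, it suffices to show that few vertices are burnt before time $t$, except for those needed to keep the window from exceeding $1$. We write
\[
n-A_n(t)=\text{(vertices burnt by time $t$)}
\]
and note that the burning rate at time $s$ is $\lambda_n\sum_{C}|C|^2$, summed over the components $C$ of $G_n(s)$. We use the following facts for an Erd\H{o}s--R\'enyi graph on $A$ vertices with mean degree $w$:
\begin{itemize}
\item if $w\le 1-\delta$, then $\ev\bigl[\sum_C|C|^2\bigr]\le A/\delta$, i.e.\ $n\cdot$(susceptibility);
\item if $w\ge 1+\delta$, a giant component of size $\asymp n$ exists.
\end{itemize}

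For $t<1$, fix $\delta$ with $t<1-\delta$. Because $W_n(s)\le s\cdot 1<1-\delta$ for $s\le t$, the compensator of the burnt mass is at most $\lambda_n t n/\delta=o(n)$. Markov's inequality then gives $Q_n(t)\to 1$, hence $W_n(t)\to t$.

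For $t\ge 1$ we need $W_n(t)\ge 1-\varepsilon$. Suppose $W_n(s)<1-\varepsilon$ on some interval $[s_0,t]$ with $s_0$ chosen suitably. While the graph is subcritical, the burning rate is at most $\lambda_n n/\varepsilon=o(n)$ per unit time. Meanwhile $W_n(s)=sQ_n(s)$ grows at rate $Q_n(s)\ge c$ whenever $Q_n$ barely decreases. So $W$ increases by order one over a time interval of order $\varepsilon$, contradicting staying below $1-\varepsilon$. Making this rigorous requires a union bound over a grid of times together with the monotonicity of $A_n$ between grid points. Monotonicity also handles the fact that $W$ can only decrease by jumps.

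\textbf{Upper bound.} For $t\le 1$, the bound $W_n(t)\le t$ is trivial. For $t>1$, suppose $W_n(t)\ge 1+\varepsilon$. Then $W_n(s)\ge 1+\varepsilon/2$ on $[t-\eta,t]$ for small $\eta$, since $A_n$ is nonincreasing and hence $W_n(s)\ge (s/t)W_n(t)$. On each grid time of this interval, conditionally on $\mathcal{F}_n(s)$, Lemma~\ref{lem:ER} yields a giant component of size $\ge c n$ with probability $1-o(1)$. That component is hit by lightning at rate $\ge \lambda_n c n\to\infty$, so it is destroyed within time $o(1)$. Each such destruction removes $\ge cn$ vertices. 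Repeating this $\asymp 1/c$ times within $[t-\eta,t]$ would exhaust all vertices, which is absurd. More precisely, the destruction drives $W$ below $1+\varepsilon/2$ before time $t$, and afterwards $W$ can climb at most at rate $\le 1$, so it cannot regain $1+\varepsilon$ by time $t$ if $\eta$ is small.

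\textbf{Main obstacle.} The hardest part is the conditioning argument: Lemma~\ref{lem:ER} describes the graph at a fixed time given the past, but a lightning strike occurs at a random time. I would handle this by discretizing time into steps of length $h_n$ with $1/(\lambda_n n)\ll h_n\ll 1$. At each grid point, apply Lemma~\ref{lem:ER} with the Erd\H{o}s--R\'enyi giant and susceptibility bounds, and use the fact that edges only add to components between grid points, so the giant persists until struck. A union bound over the $O(1/h_n)$ grid points then requires failure probabilities of order $o(h_n)$, which exponential bounds on the giant provide. Without that strength, we would instead argue only on the $O(1)$ many grid times actually needed.
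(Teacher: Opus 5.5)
There is no in-paper proof to compare with: the paper quotes this result from [R09] without proof. Judged on its own, your lower bound for $t<1$ (compensator at most $\lambda_n n t/\delta$, then Markov) and your upper bound for $t>1$ are sound. For the upper bound you don't need the $h_n$-grid or exponential tails. The $\lceil 1/c\rceil+1$ fixed grid times from your own fallback remark suffice, because lightnings after a deterministic time $u$ are independent of $G_n(u)$, so a component with $\ge cn$ vertices present at $u$ is destroyed within a fixed time with probability tending to $1$.

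\textbf{The gap is the lower bound for $t\ge 1$.} Your argument shows only that $W_n$ cannot stay below $1-\varepsilon$ on an interval of macroscopic length. The event $\{W_n(t)<1-\varepsilon\}$ does not force that. $W_n$ can fall from about $1$ to below $1-\varepsilon$ in a single jump shortly before $t$, if a strike destroys a component with $\gtrsim \varepsilon n$ vertices, and then there is no time for subcritical growth to repair it. The phrase ``$s_0$ chosen suitably'' is exactly where this must be excluded. Monotonicity of $A_n$ does not exclude it: it gives $W_n(u)\le (u/s)W_n(s)$ for $u\ge s$, which limits how fast $W_n$ can rise, not how far it can fall. A quick diagnostic: your lower-bound argument never uses $\lambda_n\gg 1/n$. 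Yet for $\lambda_n\asymp 1/n$ a linear-size giant survives for a time of order one, $W_n$ overshoots $1$ macroscopically, and a single strike then throws it macroscopically below $1$, so the conclusion is false there.

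\textbf{How to close it.} You need an upper bound on the size of burnt components. This combines your upper bound on $W_n$ with an Erd\H{o}s--R\'enyi fact missing from your list. It is not enough that a giant exists when the mean degree exceeds $1$; you need that it is small, of size $(\rho_\mu+o(1))A$, where $\rho_\mu$ is the survival probability of a Poisson$(\mu)$ Galton--Watson tree and $\rho_\mu\to 0$ as $\mu\downarrow 1$.

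\begin{enumerate}[label=(\roman*)]
\item Take a fine deterministic grid $1-\delta=s_0<\dots<s_m$ covering $[1-\delta,t]$. Your fixed-time bound gives $W_n(s_k)\le 1+\delta$ for all $k$ w.h.p.
\item Every component of $G_n(u)$ with $u\in[s_k,s_{k+1}]$ lies inside a component of the graph on $\mathcal{A}_n(s_k)$ formed by all edges whose clocks ring before $s_{k+1}$. Given $\mathcal{F}_n(s_k)$, this graph is $\mathcal{G}(\mathcal{A}_n(s_k),1-\mathrm{e}^{-s_{k+1}/n})$ with mean degree at most $(s_{k+1}/s_k)W_n(s_k)\le 1+2\delta$. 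Hence w.h.p.\ no strike in $[1-\delta,t]$ removes more than $(\rho_{1+2\delta}+o(1))n$ vertices.
\item The number of vertices burnt at strike times $u\le T$ with $W_n(u-)\le 1-\varepsilon/2$ has compensator at most $2\lambda_n nT/\varepsilon$, so it is $o(n)$ in probability.
\item Take $\delta<\varepsilon/2$ with $T\rho_{1+2\delta}<\varepsilon/4$. Let $\sigma$ be the supremum of $u\in[1-\delta,t]$ with $W_n(u)\ge 1-\varepsilon/2$; this set is nonempty w.h.p.\ since $W_n(1-\delta)\to 1-\delta$. Then $W_n(\sigma-)\ge 1-\varepsilon/2$, and the jump at $\sigma$ is below $\varepsilon/4$ by (ii). All strikes in $(\sigma,t]$ occur while $W_n(u-)\le 1-\varepsilon/2$, so by (iii) only $o(n)$ vertices burn there.
\end{enumerate}

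Together these give $W_n(t)\ge \frac{t}{\sigma}W_n(\sigma)-o(1)\ge 1-\frac{3\varepsilon}{4}-o(1)$ in probability. The assumption $\lambda_n\gg 1/n$ enters exactly through the grid bound $W_n(s_k)\le 1+\delta$ in (i), as it must.
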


\begin{remark}[Hockey sticks and activated random walks]\label{rem:hockey_stick}
The hockey-stick-shaped curve $t \mapsto t \wedge 1$ is a universal sign of SOC behavior. Analogously to Theorem~\ref{thm:window_critical}, the driven-dissipative activated random walk on a long interval drives itself directly to and then sustains a critical density, producing a hockey-stick-shaped curve, see~\cite{HJJ26}.
\end{remark}

Our goal is to study the fluctuations of the window process around the flat part of the curve $t \mapsto t \wedge 1$. For that purpose, we assume that the lightning rate~$\lambda_n$ is in the barely subcritical sub-regime of the SOC regime:
\begin{equation}\label{eq:subcrit_lambda_assumption}
(\ln n)^{4/3} n^{-1/3} \ll \lambda_n \ll 1. 
\end{equation}

\begin{remark}\label{rem:log_correction} We note that the factor $(\ln n)^{4/3}$ is a technical assumption. We believe that our results also hold for $n^{-1/3} \ll \lambda_n \ll 1$. 
\end{remark}

\begin{remark}[Drift of $Q_n(t)$]
The ODE \eqref{eq:alive_ODE} below serves as a deterministic proxy for the time evolution of the fraction $Q_n(t)$ of alive vertices at time $t$ (cf.~\eqref{eq:Qn_def}). The rate at which a lightning strikes the set $\mathcal{A}_n(t)$ of alive vertices is $\lambda_n \cdot A_n(t)$, moreover, by Lemma~\ref{lem:ER} and the susceptibility formula~\eqref{eq:first_moment_of_component_size} the expected number of vertices that a lightning strike kills is close to $\frac{1}{1-W_n(t)}$. Recalling that $W_n(t)= tQ_n(t)$ from~\eqref{eq:Wn_def}, the drift of $A_n(t)$ is close to $-\lambda_n A_n(t) \cdot \frac{1}{1- t \cdot Q_n(t) }$ and thus the drift of $Q_n(t)$ is close to 
$-\lambda_n \cdot \frac{ Q_n(t)}{1- t \cdot Q_n(t) }$.
\end{remark}

\begin{definition}[ODE approximation of $Q_n(t)$]\label{def:qn_window_curve}
Denote by $q_n(t)$ the solution of the ODE
\begin{equation}\label{eq:alive_ODE}
\frac{\mathrm{d}}{\mathrm{d}t} q_n(t)= -\lambda_n \cdot \frac{q_n(t)}{1- t \cdot q_n(t) }, \qquad q_n(0)=1,
\end{equation}
and let us define the \emph{window curve}
\begin{equation}\label{eq:wn_def}
w_n(t)\coloneqq q_n(t) \cdot t, \qquad t\geq 0.
\end{equation}
\end{definition}

\needspace{3\baselineskip}
\begin{remark}[About the window curve]\label{rem:window_curve}
\mbox{}
\begin{enumerate}[label=(\roman*)]
\item 
We will see that the window process $W_n(t)$ and the window curve $w_n(t)$ are quite close to each other.
In fact, we will show that $w_n(t)$ attracts $W_n(t)$ in Theorem~\ref{thm:OU_scaling} if $t>1$. We will see that our assumption $\lambda_n \ll 1$ implies $w_n(t) \approx t \wedge 1$ for all $t \geq 0$, so our results elaborate on Theorem~\ref{thm:window_critical}. 

\item
We will state some properties of the window curve $w_n(t)$ in Section~\ref{sec:ODE}, but let us already mention that for any $t>1$ we have $w_n(t)=1-\lambda_n t +o(\lambda_n)$ (cf.\ Lemma~\ref{lem:gn_properties}). Note that by looking at \eqref{eq:alive_ODE} one sees that $\frac{\mathrm{d}}{\mathrm{d}t} q_n(t) \approx 0$ for any $t<1$ and that $q_n(t)$ only starts to move when $|1-w_n(t)| \asymp \lambda_n$. 
In order to quantify when this happens, we introduce~$t_n^c$ below.
\end{enumerate}
\end{remark}
We will refer to the regime when $|1-w_n(t)| \asymp \lambda_n $ holds as $\lambda_n$-\emph{criticality} of the window curve. We will see that this also implies $|1-W_n(t)| \asymp \lambda_n $, too. We call this regime $\lambda_n$-criticality because this is when the SOC of the model starts to manifest itself.

\begin{definition}[Arrival at $\lambda_n$-criticality]
We define the time $t_n^c$ of the arrival of the deterministic curve~$w_n(t)$ to $\lambda_n$-criticality as follows:
\begin{equation}\label{eq:tnc_def}
t_n^c\coloneqq \min\{\,t\ge 0\,:\, w_n(t)= 1-4\lambda_n\,\}
\end{equation}
\end{definition}

\begin{claim}[Asymptotic behavior of $t_n^c$]\label{cl:tc_asymp}
The time $t_n^c$ defined in \eqref{eq:tnc_def} satisfies
\begin{equation*}
 t_n^c = 1 + \lambda_n \cdot \ln \mleft( \frac{1}{\lambda_n} \mright) + \mathcal{O}(\lambda_n).
\end{equation*}
\end{claim}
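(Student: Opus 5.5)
We will work directly with the ODE~\eqref{eq:alive_ODE}, rewritten in terms of the quantity $u_n(t) := 1 - w_n(t) = 1 - t q_n(t)$, which measures the distance from criticality. Differentiating $w_n(t) = t q_n(t)$ and using~\eqref{eq:alive_ODE} gives
\begin{equation*}
\frac{\mathrm{d}}{\mathrm{d}t} u_n(t) = -q_n(t) + \lambda_n \frac{t\, q_n(t)}{u_n(t)} .
\end{equation*}
The plan has three steps. First, control $q_n$ on $[0,t_n^c]$. Second, rewrite the equation for $u_n$ in a form that is essentially the separable equation $\dot u = -1 + \lambda_n/u$. Third, integrate this equation exactly from $u=1$ down to $u = 4\lambda_n$.

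\textbf{Step 1: $q_n$ stays close to $1$.} On $[0,t_n^c]$ we have $u_n \ge 4\lambda_n$ by the definition~\eqref{eq:tnc_def} and continuity. Hence $|\dot q_n| \le \lambda_n q_n / u_n$, so $\ln(1/q_n(t)) \le \lambda_n \int_0^t \mathrm{d}s/u_n(s)$. A crude a priori bound comes first: $t_n^c \le 2$. Indeed, as long as $u_n \ge 4\lambda_n$, we have $|\dot q_n| \le q_n/4$, so $q_n$ cannot drop far below~$1$ in bounded time. Then $w_n(t) = t q_n(t)$ exceeds $1 - 4\lambda_n$ before time~$2$ for large $n$, since $\lambda_n \ll 1$. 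With this, $q_n(t) = 1 - \mathcal{O}(\lambda_n \ln(1/\lambda_n))$ on $[0,t_n^c]$. More precisely, we need $\int_0^{t_n^c} \mathrm{d}s/u_n(s) = \mathcal{O}(\ln(1/\lambda_n))$, which follows from the comparison in Step 2 (bootstrapping).

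\textbf{Step 2: comparison with the model equation.} Write the equation as
\begin{equation*}
\dot u_n = -q_n\Bigl(1 - \frac{\lambda_n t}{u_n}\Bigr).
\end{equation*}
Since $u_n \ge 4\lambda_n$ and $t \le 2$, the bracket lies in $[1/2, 1]$, so $u_n$ is strictly decreasing. This justifies changing variables to $u$ and writing
\begin{equation*}
t_n^c = \int_{4\lambda_n}^{1} \frac{\mathrm{d}u}{q_n\,\bigl(1 - \lambda_n t / u\bigr)} ,
\end{equation*}
where $q_n$ and $t$ are viewed as functions of $u$. Expanding, $1/(1 - \lambda_n t/u) = 1 + \lambda_n t/u + \mathcal{O}(\lambda_n^2/u^2)$, where the error term is uniformly controlled since $\lambda_n/u \le 1/4$. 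Together with $1/q_n = 1 + \mathcal{O}\bigl(\lambda_n \int \mathrm{d}s/u_n\bigr)$ from Step 1, this gives
\begin{equation*}
t_n^c = (1 - 4\lambda_n) + \int_{4\lambda_n}^1 \frac{\lambda_n t(u)}{u}\, \mathrm{d}u + \mathcal{O}\Bigl(\int_{4\lambda_n}^1 \frac{\lambda_n^2}{u^2}\, \mathrm{d}u\Bigr) + \text{(error from } q_n\text{)} .
\end{equation*}

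\textbf{Step 3: evaluating the terms.} The third term is $\mathcal{O}(\lambda_n)$. For the second term, note that $t(u) = 1 - u + \mathcal{O}(\lambda_n \ln(1/\lambda_n))$ by a first-order version of the same identity. Hence
\begin{equation*}
\int_{4\lambda_n}^1 \frac{\lambda_n t(u)}{u}\, \mathrm{d}u = \lambda_n \ln\frac{1}{4\lambda_n} - \lambda_n\bigl(1 - 4\lambda_n\bigr) + \mathcal{O}\bigl(\lambda_n^2 \ln^2(1/\lambda_n)\bigr) = \lambda_n \ln\frac{1}{\lambda_n} + \mathcal{O}(\lambda_n).
\end{equation*}

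\textbf{Main obstacle.} The delicate point is the error coming from $q_n \ne 1$. Naively it is $\mathcal{O}(\lambda_n \ln(1/\lambda_n))$ times $t_n^c$, which is too big. The fix is that $1 - q_n(t)$ is tiny for most of the time. Since $|\dot q_n| \le \lambda_n q_n/u_n$ and $\mathrm{d}t \approx -\mathrm{d}u$, we get $1 - q_n(t(u)) \lesssim \lambda_n \ln(1/u)$. The resulting error is then
\begin{equation*}
\int_{4\lambda_n}^1 \lambda_n \ln(1/u)\, \mathrm{d}u = \mathcal{O}(\lambda_n),
\end{equation*}
because $\ln(1/u)$ is integrable on $(0,1)$. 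This also covers the $\mathcal{O}(\lambda_n \ln(1/\lambda_n))$ correction in $t(u)$, which enters only multiplied by $\lambda_n$. Carrying out this refined bound carefully, via a bootstrap between Steps 1 and 2, gives the claim.
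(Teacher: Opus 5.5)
Your argument is correct, but it takes a different route from the paper.

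The paper never integrates the equation for $w_n$ directly. Instead it:
\begin{enumerate}
\item introduces the autonomous comparison ODE $\dot v_n = 1-\lambda_n v_n/(1-v_n)$, whose inverse $\mathrm{t}_n(v)$ is explicit (\eqref{eq:explicit_t_v}), so the $\lambda_n\ln(1/\lambda_n)$ term comes straight from the logarithm in $t_n^*=\mathrm{t}_n(1/(1+2\lambda_n))$;
\item sandwiches $w_n$ between $v_n-2\lambda_n$ and $v_n$ on $[0,t_n^*]$, the upper bound by a comparison principle and the lower bound via the explicit solution $z$ of a linear ODE;
\item deduces $t_n^*-6\lambda_n\le t_n^c\le t_n^*$ using $\dot v_n\ge 1/2$ on $[0,t_n^*]$.
\end{enumerate}

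You instead change variables to $u=1-w_n$, write $t_n^c$ as the exact integral $\int_{4\lambda_n}^1 \mathrm{d}u/\bigl(q_n(1-\lambda_n t/u)\bigr)$, and extract the logarithm from $\lambda_n\int t(u)/u\,\mathrm{d}u$ with $t(u)\approx 1-u$. You correctly identify the one delicate point: $1-q_n$ must be controlled as $\lambda_n\ln(1/u)$, not uniformly as $\lambda_n\ln(1/\lambda_n)$, to get an $\mathcal{O}(\lambda_n)$ error. You also handle it correctly.

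The bootstrap you mention is not actually needed. The bound $|\dot q_n|\le q_n/4$ already gives $q_n\ge \mathrm{e}^{-1/2}$ on $[0,t_n^c\wedge 2]$, hence $t_n^c<2$ because $2\mathrm{e}^{-1/2}>1$. This holds without using $\lambda_n\ll 1$. It also gives $|\dot u_n|\ge \mathrm{e}^{-1/2}/2$, which is exactly the bound $|\mathrm{d}t/\mathrm{d}u|\le 2\mathrm{e}^{1/2}$ your change of variables and the estimate $\ln(1/q_n)\lesssim\lambda_n\ln(1/u)$ require.

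What each approach buys:
\begin{itemize}
\item Yours is self-contained, avoids comparison principles and the auxiliary subsolution, and could be pushed further to identify the $\mathcal{O}(\lambda_n)$ constant.
\item The paper's gives explicit non-asymptotic two-sided bounds for $\lambda_n\le 1/10$. More importantly, it produces the objects $v_n$, $t_n^*$, \eqref{eq:v_dot_lower_bound} and \eqref{eq:substition-integral}, which are reused in the Gr\"onwall time-change argument of Section~\ref{sec:arrive-crit}.
\end{itemize}
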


We prove Claim~\ref{cl:tc_asymp} in Appendix~\ref{ap:window_curve_proofs}.

\begin{remark}[Slightly delayed arrival]
Note that $t_n^c \approx 1$, but let us also note without proof that $1-w_n(1) \asymp \lambda_n \cdot \ln \mleft( \frac{1}{\lambda_n} \mright)$, thus the window curve is still quite far from being $\lambda_n$-critical at $t=1$ (noting that~${t=1}$ is the critical time of the dynamic Erd\H{o}s--R\'enyi graph process without lightnings).
\end{remark}

\begin{definition}[Window fluctuation process]
We define the shifted and scaled \textbf{window fluctuation process} as
\begin{equation}\label{eq:Z_n_t_def}
Z_n(t)=(n \lambda_n)^{1/2} \cdot \left( W_n(t)-w_n(t) \right),
\end{equation}
and its time-scaled version
\begin{equation}\label{eq:tilde_Z_def}
\widetilde{Z}_n^t(s)\coloneqq Z_n(t +\lambda_n s)
\stackrel{\eqref{eq:Z_n_t_def} }{=}
(n \lambda_n)^{1/2} \cdot \left[ W_n(t +\lambda_n s)-w_n(t +\lambda_n s) \right], \quad s\in \mathbb{R}.
\end{equation}
\end{definition}

\begin{lemma}[Window process fluctuation bound]\label{lem:Z_bound}
For any $\varepsilon>0$, there exists $K<+\infty$ such that
\begin{equation}\label{eq:window_fluctuation_bound}
\sup_{n \in \mathbb{N}} \sup_{ 0 \leq t \leq T} \pr( \,|Z_n(t)| \geq K ) \leq \varepsilon.
\end{equation}
\end{lemma}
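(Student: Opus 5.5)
We will prove Lemma~\ref{lem:Z_bound} by a semimartingale decomposition of $Q_n$ together with a Gronwall-type (linearization) argument around the ODE~\eqref{eq:alive_ODE}. It suffices to bound the second moment $\ev[(Q_n(t)-q_n(t))^2]$ by $C/(n\lambda_n)$ uniformly in $t\le T$. Indeed, $W_n-w_n = t(Q_n-q_n)$ and $t\le T$, so Chebyshev's inequality then gives~\eqref{eq:window_fluctuation_bound}. Large $n$ is what matters, since for each fixed $n$ the variable $Z_n(t)$ is bounded uniformly in $t\le T$ by $(n\lambda_n)^{1/2}2T$. Because the drift is singular near $W=1$, we will instead work up to a stopping time $\tau_n$. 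This is the first time $W_n(t)$ exits the region $W_n(t)\le 1-\delta\lambda_n$ (for small $\delta$), or the time $T$. We will show separately that $\pr(\tau_n<T)\to 0$.

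\textbf{Step 1 (drift and martingale).} Write $Q_n(t)=1+\int_0^t b_n(s)\,ds + M_n(t)$. Conditionally on $\mathcal{F}_n(s)$, Lemma~\ref{lem:ER} shows that lightning kills at rate $\lambda_n A_n(s)$, and the killed cluster is the component of a uniform vertex of $\mathcal{G}(A_n(s),1-e^{-s/n})$. Hence $b_n(s) = -\lambda_n Q_n(s)\,\chi_n(s)$, where $\chi_n$ is the conditional susceptibility. We will use the subcritical susceptibility formula~\eqref{eq:first_moment_of_component_size} to get $\chi_n(s)= \frac{1}{1-W_n(s)} + \text{error}$. The error should be $\mathcal{O}(1/(n(1-W_n)^3))$ plus a term of order $s/n$. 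In the region $1-W_n\ge\delta\lambda_n$, the assumption~\eqref{eq:subcrit_lambda_assumption} makes this error negligible compared with $(n\lambda_n)^{-1/2}/\lambda_n$ after integration. The quadratic variation of $M_n$ has rate $\lambda_n A_n n^{-2}\,\ev[|\mathcal{C}|^2]$. The second moment of the cluster size is $\asymp(1-W_n)^{-3}$, so the rate is $\asymp \lambda_n n^{-1}(1-W_n)^{-3}$. On the $\lambda_n$-critical plateau this gives $n^{-1}\lambda_n^{-2}$ per unit time. That is too big over a unit time interval, but mean reversion saves us (Step 2).

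\textbf{Step 2 (linearization and mean reversion).} Set $D_n=Q_n-q_n$, and let $F(t,q)=-\lambda_n q/(1-tq)$. Then $\partial_q F(t,q) = -\lambda_n/(1-tq)^2$, which on the plateau $1-tq\asymp\lambda_n$ equals $-\Theta(1/\lambda_n)$. So $D_n$ satisfies $dD_n = (\partial_qF)\,D_n\,dt + (\text{higher order}) + dM_n$, a strongly mean-reverting equation with relaxation time $\asymp\lambda_n$. Itô's formula for $D_n^2$ then yields the following: the stationary variance is (noise rate)$\times$(relaxation time) $\asymp n^{-1}\lambda_n^{-2}\cdot\lambda_n = (n\lambda_n)^{-1}$, as desired. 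For $t$ before the plateau ($1-w_n(t)\gg\lambda_n$), both the drift derivative and the noise are smaller. There we can bound $\ev D_n^2$ by the integrated noise $\int_0^t \lambda_n n^{-1}(1-w_n)^{-3}ds$. Using $1-w_n(s)\approx 1-s$ for $s<1$, cut off at $\lambda_n$, this integral is $\mathcal{O}(\lambda_n n^{-1}\lambda_n^{-2})=\mathcal{O}((n\lambda_n)^{-1})$. The near-plateau approach region around $t_n^c$ (Claim~\ref{cl:tc_asymp}) interpolates between these two estimates via a Gronwall inequality using $\partial_qF\le 0$. The nonlinear remainder is $\mathcal{O}(\lambda_n D_n^2/(1-tQ)^3)$. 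It is controlled on $\{t<\tau_n\}$ because $D_n\ll\lambda_n$ there. Specifically, $(n\lambda_n)^{-1/2}\ll\lambda_n$ is equivalent to $\lambda_n\gg n^{-1/3}$, which holds by~\eqref{eq:subcrit_lambda_assumption}.

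\textbf{Step 3 (the stopping time; main obstacle).} The hardest part is showing $\pr(\tau_n<T)\to0$, i.e.\ that $W_n$ never gets within $\delta\lambda_n$ of $1$, uniformly over a time window containing $\asymp 1/\lambda_n$ relaxation periods. A second-moment bound at fixed times is not enough here. Instead we need an exponential martingale (Freedman-type) bound on $M_n$ over each interval of length $\lambda_n$, combined with a union bound over the $\mathcal{O}(1/\lambda_n)$ intervals. The jumps of $M_n$ are cluster sizes divided by $n$, and these clusters have exponential tails at scale $(1-W)^{-2}\asymp\lambda_n^{-2}$. The largest jump is then $\mathcal{O}(\lambda_n^{-2}\ln n/n)$, which must be $\ll\lambda_n$. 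This condition is exactly $\lambda_n^3\gg \ln n/n$, and the $(\ln n)^{4/3}$ in~\eqref{eq:subcrit_lambda_assumption} is what accounts for it. Outside $\{\tau_n<T\}$, the moment bound of Step 2 applies and finishes the proof.
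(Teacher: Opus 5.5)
Your route is genuinely different from the paper's. The paper treats $[0,t_n^c]$ by Gr\"onwall, using a stopping time placed at the fluctuation scale itself together with a Chebyshev bound on $M_n$ (Proposition~\ref{prop:arrival_at_OU}). It treats $[t_n^c,T]$ with the Lyapunov function $\cosh(Z_n)$ (Lemmas~\ref{lem:chZ_drift_upper_bound}--\ref{lem:lyap_control}). A second-moment bound combined with an exponential martingale bound for the exit time could also work. As written, though, there is a concrete gap: your stopping time is the wrong one.

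\textbf{The stopping time is one-sided, but Step 2 needs two-sided control.} $\tau_n$ only enforces $W_n\le 1-\delta\lambda_n$. That does not give ``$D_n\ll\lambda_n$ on $\{t<\tau_n\}$''. On the plateau it still allows $tD_n$ up to $(g_n(t)-\delta)\lambda_n\asymp\lambda_n$, and it gives no lower bound on $D_n$ at all. Both halves of Step 2 depend on this missing control.
\begin{itemize}
\item The drift term is in fact exact, with no remainder:
\[
2D_n\bigl(F(t,Q_n)-F(t,q_n)\bigr)=-\frac{2\lambda_n D_n^2}{(1-tQ_n)(1-tq_n)}.
\]
So it is always restoring. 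But it reverts at rate $\gtrsim 1/\lambda_n$ only if $1-tQ_n\lesssim\lambda_n$, which fails when $Q_n$ is well below $q_n$.
\item Before the plateau, the noise rate is $\approx \lambda_n n^{-1}(1-W_n)^{-3}$, with $W_n$ and not $w_n$. Knowing only $1-W_n\ge\delta\lambda_n$, the integral over $[0,t_n^c]$ is bounded only by $\asymp 1/(n\lambda_n^2)$. That is a factor $1/\lambda_n$ too large.
\end{itemize}
The fix is a two-sided barrier, for example $\tau=\inf\{t:|W_n(t)-w_n(t)|\ge\epsilon\lambda_n\}$ with $\epsilon$ small; this is the analogue of the paper's $\kappa_n$. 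Since $1-w_n\ge\lambda_n/(1+\lambda_n)$ (Lemma~\ref{lem:w_bounds}), this single barrier gives $1-W_n\asymp 1-w_n$ for all $t\le T$. Then run your argument for the \emph{killed} quantity $\ev[D_n^2(t)\indic{t<\tau}]$. Its drift is at most $\indic{t<\tau}\mathcal{L}D_n^2$, so the reversion inequality closes, as the paper does with $\xi_n$.

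\textbf{Step 3 omits the key mechanism.} Once the barrier is two-sided, Step 3 must show it is not hit, and your sketch does not say how.
\begin{itemize}
\item Freedman bounds on each interval plus a union bound do not by themselves prevent accumulation across $\mathcal{O}(1/\lambda_n)$ intervals. You need a pathwise comparison: if $|D_n|\le a$ at the start of an interval of length $\lambda_n$ and the increment of $M_n$ on it is at most $ca$, the restoring drift brings $|D_n|$ back below $a$ by the end. For $t\ge t_n^c$ this drift has rate $\gtrsim 1/\lambda_n$ by \eqref{eq:gnt_bounds}.
\item Before $t_n^c$ the relaxation time $(1-w_n)^2/\lambda_n$ is much longer than $\lambda_n$, so the interval scheme does not apply there. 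Instead use one exponential bound on $\sup_{t\le t_n^c}|M_n|$, whose total quadratic variation is $\lesssim 1/(n\lambda_n)$ below the barrier. Combine it with Gr\"onwall as in Section~\ref{sec:arrive-crit}.
\item Handle the unbounded jumps through exponential moments, via \eqref{eq:magic_branching_proc_estimate_2}.
\end{itemize}

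\textbf{What each route buys.} Repaired this way, your route needs only about $n\lambda_n^3\gg\ln(1/\lambda_n)$. Your own jump heuristic corresponds to $(\ln n)^{1/3}n^{-1/3}\ll\lambda_n$, not to the $(\ln n)^{4/3}$ in \eqref{eq:subcrit_lambda_assumption}. In the paper, that exponent comes from combining $\kappa_n\gg\ln(1/\lambda_n)$ (so that $\lambda_n\cosh\kappa_n\to\infty$) with $\kappa_n^{4/3}n^{-1/3}\ll\lambda_n$ (from the generator estimates). In exchange, the paper's route gets both exit-time control and fixed-time tightness from one generator estimate (Proposition~\ref{prop:exp_drift_precise_statement}), which it then reuses for the O.-U.\ limit.
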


We prove Lemma~\ref{lem:Z_bound} in Section~\ref{sec:arrive-crit} for $0 \le t \le t_n^c$ using a Gr\"onwall-type argument and in Section~\ref{sec:SOC} for $t_n^c \le t \le T$ using a Lyapunov-type argument. Actually we will show in Theorem~\ref{thm:OU_scaling} that if $t >1$, then $ |Z_n(t)| \asymp 1$, i.e., we have $|W_n(t)-w_n(t)| \asymp \frac{1}{\sqrt{n \lambda_n}}$.

\needspace{3\baselineskip}
\begin{remark}[On the order of fluctuations]
\mbox{}
\begin{enumerate}[label=(\roman*)]
\item Note that if $t \geq t_n^c$, then $1-w_n(t) \asymp \lambda_n$ (cf.\ Remark~\ref{rem:window_curve}), while $|W_n(t)-w_n(t)| = \mathcal{O}\mleft( \frac{1}{\sqrt{n \lambda_n}} \mright)$ by~\eqref{eq:window_fluctuation_bound}. 
Thus by~\eqref{eq:subcrit_lambda_assumption} we have $ |W_n(t)-w_n(t)| \ll 1-w_n(t)$, i.e., the fluctuation is much smaller than the distance of the window curve from $1$, which implies that $W_n(t)$ is itself $\lambda_n$-critical.

\item The differential equation~\eqref{eq:alive_ODE} for $q_n(t)$ does not have an explicit solution, so there is no explicit formula for $w_n(t)$ either. One is thus tempted to approximate $w_n(t)$ with something simpler like $1-\lambda_n t$ when $t >1$, cf.\ Remark~\ref{rem:window_curve}. However, one can show that $|w_n(t)-(1-\lambda_n t)| \asymp \lambda_n^2$, and if $n^{-1/5} \ll \lambda_n$, then $|W_n(t)-w_n(t)| \asymp \frac{1}{\sqrt{n \lambda_n}} \ll \lambda_n^2$, much smaller than $|w_n(t)-(1-\lambda_n t)|$. One might try to remedy this by finding an expansion of form $w_n(t)=1-\lambda_n t +\sum_{\ell=2}^{L-1} \lambda^{\ell}_n a_\ell(t) +\Theta(\lambda^{L}_n) $, but the form of $a_\ell(t)$ becomes more and more complex as $\ell$ increases, while $|W_n(t)-w_n(t)| \asymp \frac{1}{\sqrt{n \lambda_n}} \ll \lambda^L_n$ as soon as $n^{-\frac{1}{2L+1}} \ll \lambda_n$. 
In other words, we cannot generally replace~$w_n(t)$ with something more explicit in the definition~\eqref{eq:Z_n_t_def} of~$Z_n(t)$ if we want~\eqref{eq:window_fluctuation_bound} to hold.
\end{enumerate}
\end{remark}

We denote by $D[a,b]$ the space of right-continuous functions $z\colon [a,b] \to \mathbb{R}$ with left limits.
The Skorokhod topology on this space was introduced in~\cite[Definition~2.2.2]{S56}, where it is called the topology $J_1$. A more detailed description of the space $D[a,b]$ and the Skorokhod topology can be found in~\cite[Section~14]{B68} or \mbox{\cite[Section~8.6]{D96}}.

Our first main result identifies the scaling limit of the fluctuations of $W_n(t)$ around $w_n(t)$.

\begin{theorem}[Ornstein--Uhlenbeck scaling limit]\label{thm:OU_scaling}
Recall~\eqref{eq:tnc_def}. Let $t \ge 1$ and $t_n$ be such that ${t_n \to t}$ and ${(t_n - t_n^c)/\lambda_n \to \infty}$ as $n \to \infty$. Consider the process $\widetilde{Z}_n^{t_n}$ defined in \eqref{eq:tilde_Z_def}. Then $\widetilde{Z}_n^{t_n}(\cdot) \Rightarrow \widetilde{Z}^t(\cdot)$ in $D[-S,S]$ for all $S<\infty$, where $\widetilde{Z}^t$ is the stationary Ornstein--Uhlenbeck process that satisfies the SDE
\begin{equation}\label{eq:OU_SDE}
\mathrm{d} \widetilde{Z}^t(s)= -\frac{1}{t^2} \widetilde{Z}^t(s)\, \mathrm{d}s + \frac{1}{t} \, \mathrm{d} B(s).
\end{equation}
\end{theorem}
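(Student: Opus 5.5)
We will prove Theorem~\ref{thm:OU_scaling} by a martingale-problem/semimartingale convergence argument for the time-changed process $\widetilde{Z}_n^{t_n}$, using Lemma~\ref{lem:Z_bound} for tightness of the initial values and a separate argument for stationarity of the limit. We first write the Doob--Meyer decomposition of $A_n$. Conditionally on $\mathcal{F}_n(u)$, Lemma~\ref{lem:ER} says $G_n(u)$ is Erd\H{o}s--R\'enyi on $\mathcal{A}_n(u)$. The jump rate of $A_n$ is $\lambda_n A_n(u)$, and each jump has size distributed as the size-biased component size. Its first moment is the susceptibility $\chi(W_n(u)) \approx \frac{1}{1-W_n(u)}$, and its second moment is $\approx \frac{1}{(1-W_n(u))^3}$, up to errors controlled by the assumption $(\ln n)^{4/3} n^{-1/3}\ll\lambda_n$.

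Writing $W_n = uQ_n$, we get
\[
\mathrm{d}W_n(u) = \Big(Q_n(u) - \lambda_n \frac{W_n(u)}{1-W_n(u)}\Big)\mathrm{d}u + \mathrm{d}M_n(u) + \text{error},
\]
and $w_n$ satisfies the same equation without the martingale and error terms. Subtract the two, multiply by $(n\lambda_n)^{1/2}$, and linearize around $w_n$. On $[t_n-\lambda_n S,\,t_n+\lambda_n S]$ we have $1-w_n \approx \lambda_n t$ by Remark~\ref{rem:window_curve} and Lemma~\ref{lem:gn_properties}, and $|W_n-w_n|\ll\lambda_n$. The derivative of $x\mapsto \lambda_n\frac{x}{1-x}$ at $w_n$ is $\approx \lambda_n/(1-w_n)^2\approx 1/(\lambda_n t^2)$. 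The difference of the $Q_n$ terms contributes only $O(1)$ drift in $u$, which becomes $O(\lambda_n)$ after the time change $u=t_n+\lambda_n s$. This yields
\[
\mathrm{d}\widetilde{Z}_n(s) = -\frac{1}{t^2}\widetilde{Z}_n(s)\,\mathrm{d}s + \mathrm{d}\widetilde{M}_n(s) + \mathrm{d}\widetilde{R}_n(s).
\]

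The quadratic variation of $\widetilde M_n$ per unit $s$ is
\[
n\lambda_n \cdot \lambda_n \cdot \lambda_n A_n \cdot \frac{t^2}{n^2}\cdot \frac{1}{(\lambda_n t)^3}\approx \frac{1}{t^2}.
\]
Here the factor $t^2/n^2$ comes from $\mathrm{d}W=t\,\mathrm{d}A/n$, and $A_n\approx n/t$. This gives the diffusion coefficient $1/t$.

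We next verify the hypotheses of a standard martingale functional CLT, for instance Ethier--Kurtz Theorem~7.1.4. The predictable quadratic variation must converge in probability to $s/t^2$, and the maximal jump must vanish. The jumps of $\widetilde Z_n$ are of order $(n\lambda_n)^{1/2}\cdot(\text{component size})/n$. Largest components in a graph at distance $\lambda_n$ below criticality have size $O(\lambda_n^{-2}\ln(n\lambda_n^3))$, so the jumps are of order $(n\lambda_n)^{-1/2}\lambda_n^{-2}\ln n$. This tends to zero exactly because $\lambda_n\gg(\ln n)^{4/3}n^{-1/3}$, which is where that hypothesis is used. The remainder $\widetilde R_n$ collects three pieces: the second-order Taylor term, of size $(n\lambda_n)^{1/2}\lambda_n^{-2}|W_n-w_n|^2\lambda_n \cdot \lambda_n^{-1} \ll 1$ by Lemma~\ref{lem:Z_bound}; the errors in the susceptibility formula; and the $O(\lambda_n)$ drift. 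We show it tends to zero uniformly on $[-S,S]$ in probability. The initial values $\widetilde Z_n(-S)$ are tight by Lemma~\ref{lem:Z_bound}. A continuous-mapping argument then applies: the OU solution map is continuous from the driving noise and the initial value. Hence every subsequential limit solves \eqref{eq:OU_SDE} on $[-S,S]$ with some initial law.

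The main obstacle is identifying the initial law as the stationary law $N(0,t/2)$; the variance is $(1/t^2)/(2/t^2)=t/2$. For this we use the hypothesis $(t_n-t_n^c)/\lambda_n\to\infty$. We run the same linearized analysis on a long window $[t_n-\lambda_n L,\,t_n-\lambda_n S]$ with $L\to\infty$ slowly. Over that window the coefficients drift only slightly, since $t_n\to t$. The mean reversion at rate $\approx1/t^2$ then forgets the initial value, which is $O(1)$ by Lemma~\ref{lem:Z_bound} uniformly over the whole window after $t_n^c$. Variation of constants makes $\widetilde Z_n(-S)$ equal to $e^{-(L-S)/t^2}\widetilde Z_n(-L)$ plus a stochastic integral whose variance converges to $t/2$, together with asymptotic Gaussianity from the same martingale CLT. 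The delicate point is keeping the remainder estimates uniform over a window of length $L\to\infty$ in $s$, since the Taylor errors accumulate linearly in $L$. We handle this by choosing $L=L_n$ growing slowly enough, using the uniform bound of Lemma~\ref{lem:Z_bound} in Section~\ref{sec:SOC}.
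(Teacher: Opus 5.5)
Your route is viable and genuinely different from the paper's. Several orders of magnitude you write down are wrong, and the a priori bound you invoke is too weak as stated, but everything can be repaired with results already in the paper.

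\textbf{Comparison.} The paper never splits $\widetilde{Z}_n^{t_n}$ into drift plus martingale. Instead it compares the generator of $Z_n$ with that of the $t$-dependent O.-U.~process $\widehat{Z}_n^t$ on the test-function class $\mathcal{D}(z,\beta)$ (Proposition~\ref{prop:exp_drift_precise_ou_lyap_momgen}). One estimate thus controls the drift, the diffusion coefficient and the jump contribution (the third-order Taylor remainder), and it is reused with $\cosh$ for the Lyapunov bounds. The rest of the paper's argument runs as follows.
\begin{itemize}
\item Tightness comes from Aldous' criterion and Lemma~\ref{lem:tight_sup_Z}.
\item Finite-dimensional convergence comes from inserting $\widetilde{Z}_n^{t_n}$ into the explicit Doob martingale of the limiting characteristic functional, conditionally on $\widetilde{Z}_n^{t_n}(-S^*)=z$, uniformly in $|z|\le K$.
\item Stationarity is obtained with $S^*$ \emph{fixed}, via O.-U.~mixing \eqref{eq:joint_char_close_to_conditional} and tightness of $\widetilde{Z}_n^{t_n}(-S^*)$, letting $n\to\infty$ before $S^*\to\infty$.
\end{itemize}
Your semimartingale decomposition, martingale FCLT and continuity of the linear solution map give convergence in $D[-S,S]$ at once, with no separate tightness or characteristic-function step. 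They also make the diffusion coefficient transparent through the predictable quadratic variation. The price is three ingredients that the paper's formulation handles implicitly.
\begin{itemize}
\item A maximal-jump bound. This is the analogue of \eqref{eq:control_of_f_expect_taylor_remainder} and follows from \eqref{eq:magic_branching_proc_estimate_2} plus a union bound over the $O(n\lambda_n^2 L)$ strikes.
\item Joint convergence of $(\widetilde{Z}_n(-S),\widetilde{M}_n)$ with \emph{independent} limits. This follows from the deterministic limit of the predictable quadratic variation, but you must state it: otherwise your limit only solves the SDE, not necessarily as the O.-U.~process from that initial law. The paper gets independence by conditioning on $\widetilde{Z}_n^{t_n}(-S^*)=z$.
\item Uniformity over windows of length $L_n\to\infty$. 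You can avoid this exactly as the paper does, by fixing $L$ and sending $L\to\infty$ after $n\to\infty$.
\end{itemize}

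\textbf{Corrections.}
\begin{enumerate}
\item[(i)] A strike destroying $\mathscr{C}$ vertices moves $\widetilde{Z}_n$ by $t\sqrt{\lambda_n/n}\,\mathscr{C}$. With $\mathscr{C}=O(\lambda_n^{-2}\ln n)$ the jumps are therefore $O(\ln n/\sqrt{n\lambda_n^3})$. Your $(n\lambda_n)^{-1/2}\lambda_n^{-2}\ln n$ is too large by a factor $\lambda_n^{-1}$ and need not vanish (take $\lambda_n=n^{-3/10}$). The correct bound needs only $\lambda_n\gg(\ln n)^{2/3}n^{-1/3}$. The $(\ln n)^{4/3}$ is used instead to build $\kappa_n$ in \eqref{eq:kappa_threshold_assumption} for Lemma~\ref{lem:opt_stop}.
\item[(ii)] Since $\frac{W}{1-W}-\frac{w}{1-w}=\frac{W-w}{(1-W)(1-w)}$, the nonlinear correction to the drift in the $s$-variable is of order $\widetilde{Z}_n^2/\sqrt{n\lambda_n^3}$. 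Your expression simplifies to $\widetilde{Z}_n^2/\sqrt{n\lambda_n^5}$, which again need not vanish.
\item[(iii)] $(1/t^2)/(2/t^2)=1/2$, so the stationary law is $N(0,1/2)$ for every $t$, not $N(0,t/2)$. This agrees with \eqref{eq:mu_sigma_def}.
\item[(iv)] Lemma~\ref{lem:Z_bound} is a fixed-time bound. It controls neither the remainder uniformly in $s$ nor the localization of the jump bound, which needs $1-W_n\gtrsim\lambda_n$ (cf.~\eqref{eq:w_geq_1}) at every strike time. Use Lemma~\ref{lem:opt_stop} instead: w.h.p.\ $\sup_{[t_n^c,T]}|Z_n|<\kappa_n$. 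Then $\kappa_n^2\ll\sqrt{n\lambda_n^3}$ and $\kappa_n\lambda_n\ll1$ kill the nonlinear term, the $O(\lambda_n\widetilde{Z}_n)$ term coming from $Q_n-q_n$, and the susceptibility error.
\end{enumerate}
For the linear term you have two options. You can keep the coefficient $g_n(t_n+\lambda_n u)^{-2}$ exact inside the solution map, which depends continuously on it by Gr\"onwall. Or you can use a tight bound on $\sup|\widetilde{Z}_n|$ over the window, as in Lemma~\ref{lem:tight_sup_Z}. The crude bound $\kappa_n$ is not enough here, because $|g_n(t_n+\lambda_n u)^{-2}-t^{-2}|\to0$ at no controlled rate.
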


We prove Theorem~\ref{thm:OU_scaling} in Section~\ref{sec:SOC}.

\needspace{3\baselineskip}
\begin{remark}[On the Ornstein--Uhlenbeck scaling limit]\label{remark_OU_SOC_explain}
\mbox{}
\begin{enumerate}[label=(\roman*)]
\item Note that in~\eqref{eq:OU_SDE} $s$ is the time variable, $t$ is a parameter. Observe that for any fixed~$t$, the process~$\widetilde{Z}^t(s)$ is a stationary Ornstein--Uhlenbeck process, but for different values of $t$, we obtain different stationary processes (i.e., if we `zoom in' on different parts of~$Z_n(\cdot)$, we see different scaling limits).
 
\item One may interpret the mean-reverting property of the O.-U.~process as the driving force behind the SOC of the frozen percolation model: if $W_n(t)$ is either more subcritical (i.e., lower) or more supercritical (i.e., higher) than $w_n(t)$, then the drift pushes it closer to the curve~$w_n(t)$, but~$w_n(t)$ stays in the $\lambda_n$-critical zone, thus $W_n(t)$ also stays $\lambda_n$-critical.
However, the mental image that the process $W_n(\cdot)$ fluctuates around the \emph{relatively flat} curve~$w_n(\cdot)$ can be misleading:
if $n^{-1/5} \ll \lambda_n$, then for any $0 \neq s \in \mathbb{R} $ we have $(n \lambda_n)^{1/2} \cdot | w_n(t+ \lambda_n s) -w_n(t)| \gg 1 $, i.e., using the same spatial and temporal scaling as in~\eqref{eq:tilde_Z_def}, the rescaled window curve is not at all flat, i.e., it looks `almost vertical' and not `almost horizontal', as one might naively expect. 
\end{enumerate}
\end{remark}

The window process $W_n(\cdot)$ increases continuously and decreases with jumps (cf.~\eqref{eq:Wn_def}). In order to see jumps in the scaling limit, we need to zoom in further than in Theorem~\ref{thm:OU_scaling}.

\begin{definition}[Scaled window increment process]
Let $t \in \mathbb{R}_+$ and let
\begin{equation}\label{eq:defnU_n(s)}
U_n^t(s)\coloneqq n\lambda_n^2 \cdot \left( W_n\mleft(t+\frac{s}{n\lambda_n^2}\mright) - W_n(t)\right), \qquad s \in \mathbb{R}_+.
\end{equation}
\end{definition}

Note that $n \lambda_n^2 \gg (n \lambda_n)^{1/2}$ and $ \frac{1}{n \lambda_n^2} \ll \lambda_n $ both follow from~\eqref{eq:subcrit_lambda_assumption}, thus we indeed stretch both space and time more in \eqref{eq:defnU_n(s)} than in \eqref{eq:tilde_Z_def} (i.e., we indeed zoom in further in \eqref{eq:defnU_n(s)} than in~\eqref{eq:tilde_Z_def}).

\begin{theorem}[Tempered stable L\'evy process scaling limit]\label{thm:tempered_stable_scaling}
Let $t$ and $t_n$ as in Theorem~\ref{thm:OU_scaling}, and consider the process $U_n^{t_n}$ defined in \eqref{eq:defnU_n(s)}. Then $U_n^{t_n}(\cdot) \Rightarrow U^t(\cdot) $ in $D[0,S]$ for each $S<\infty$, where $U^t(\cdot)$ is the L\'evy process defined by
\begin{equation}\label{eq:defnU(s)}
U^t(s)\coloneqq \frac{1}{t} s - \frac{1}{t} \widetilde{D}(s), \qquad 
\widetilde{D}(s)\coloneqq
\int \limits_0^s \int \limits_0^\infty y N(\mathrm{d} y,\mathrm{d} u),
\end{equation}
where $N(\cdot,\cdot)$ is a Poisson point process (PPP) on $\mathbb{R}^2_+$ with intensity measure $\frac{1}{\sqrt{2\pi}}y^{-3/2}\mathrm{e}^{-y/2}\, \mathrm{d} y \, \mathrm{d} u$.
\end{theorem}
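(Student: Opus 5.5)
We decompose the increment of $W_n$ over the short interval $[t_n, t_n + s/(n\lambda_n^2)]$ into a continuous increasing part and a pure-jump decreasing part. Writing $\Delta = s/(n\lambda_n^2)$, we have
\[
W_n(t_n+\Delta)-W_n(t_n) = \Delta\, Q_n(t_n+\Delta) + t_n\bigl(Q_n(t_n+\Delta)-Q_n(t_n)\bigr).
\]
Since $Q_n$ changes by at most $\mathcal{O}(\Delta\lambda_n/\lambda_n)$ on this interval with high probability (drift of $Q_n$ is $\asymp 1$ when $1-W_n\asymp\lambda_n$, but over time $\Delta\ll\lambda_n$ this is negligible relative to $1$), the first term times $n\lambda_n^2$ equals $s\,Q_n(t_n)+o(1)=s/t+o(1)$. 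Here we use $W_n(t_n)\to 1$ by Theorem~\ref{thm:window_critical}, so $Q_n(t_n)\to 1/t$.

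The second term is $-\frac{t_n}{n}$ times the total number of vertices killed in lightning strikes during $[t_n,t_n+\Delta]$. So it suffices to show that the rescaled killed-mass process
\[
s\mapsto \lambda_n^2\cdot\#\{\text{vertices burnt in }[t_n,t_n+s/(n\lambda_n^2)]\}
\]
converges in $D[0,S]$ to $\widetilde D(s)/t\cdot$(appropriate factor). Matching with \eqref{eq:defnU(s)}, we need it to converge to $\widetilde D/t^{2}\cdot t=\widetilde D/t$ after multiplying by $t_n$. We will therefore compute the compensator/Laplace functional directly.

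Conditionally on $\mathcal{F}_n(u)$, by Lemma~\ref{lem:ER} the graph is $\mathcal{G}(A_n(u),1-e^{-u/n})$ with mean degree $\approx W_n(u)=1-\varepsilon_n(u)$. Here $\varepsilon_n(u)\asymp\lambda_n$, and by Lemma~\ref{lem:Z_bound} and Remark~\ref{rem:window_curve}, $\varepsilon_n(u)=\lambda_n u+o(\lambda_n)$ uniformly on the window. Lightnings hit at total rate $\lambda_n A_n(u)\approx \lambda_n n/t$. A strike at a uniform vertex kills a component whose size is size-biased, and in the barely subcritical regime this is approximately the total progeny of a Poisson$(1-\varepsilon)$ Galton--Watson tree (Borel distribution). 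Its local limit gives
\[
\pr(|\mathcal C|=k)\approx \frac{1}{\sqrt{2\pi}}k^{-3/2}e^{-k\varepsilon^2/2}
\]
for $1\ll k$. With $k=y/\lambda_n^2$ and $\varepsilon=\lambda_n t$, the rate of strikes killing $\approx y/\lambda_n^2$ vertices, per unit of rescaled time $s$, is
\[
\frac{\lambda_n n}{t}\cdot\frac{1}{n\lambda_n^2}\cdot\frac{1}{\sqrt{2\pi}}\lambda_n^{3}y^{-3/2}e^{-yt^2/2}\lambda_n^{-2}\,\mathrm dy=\frac1t\frac{1}{\sqrt{2\pi}}y^{-3/2}e^{-yt^2/2}\,\mathrm dy .
\]
A change of variables $y\mapsto y/t^2$ then produces the stated intensity together with the $1/t$ prefactor. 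This is the first concrete check; we will fix constants by this computation. Small components with $k\ll\lambda_n^{-2}$ contribute total mass $\lambda_n^2\cdot\frac{n\lambda_n}{t}\Delta\cdot\ev[|\mathcal C|;|\mathcal C|\le\delta\lambda_n^{-2}]=\mathcal{O}(\sqrt\delta)$, which is negligible as $\delta\to0$. This is consistent with $\widetilde D$ having no drift.

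The rigorous step is to prove convergence of the jump point process of rescaled burnt sizes to the PPP. We will use a standard criterion for point processes with predictable compensators: show that the compensator of the number of jumps with size in $[a,b]\subset(0,\infty)$, times $\lambda_n^{-2}$, during rescaled time $[0,s]$ converges in probability to $s\int_a^b\nu(\mathrm dy)$. By Lemma~\ref{lem:ER} this compensator is an integral of $\lambda_n A_n(u)\,\pr(\text{size-biased component size}\in[a,b]\lambda_n^{-2}\mid\mathcal F_n(u))$. This is a deterministic function of $(A_n(u),u)$, so we need the following.

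(a) A uniform Erd\H{o}s--R\'enyi estimate: for $\mathcal G(m,p)$ with $mp=1-\varepsilon$ and $m^{-1/3}\ll\varepsilon\ll1$, the component of a uniform vertex satisfies $\pr(|\mathcal C|=k)=\frac{1+o(1)}{\sqrt{2\pi}}k^{-3/2}e^{-k\varepsilon^2/2}$ uniformly for $k\in[a,b]\varepsilon^{-2}$. This holds by exact enumeration of tree components (Cayley) plus a bound on non-tree contributions; the hypothesis $(\ln n)^{4/3}n^{-1/3}\ll\lambda_n$ ensures $k\ll m^{2/3}$ at the relevant scale.

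(b) Uniform control of $W_n(u)$ on the window: $\sup_{u\in[t_n,t_n+\Delta]}|1-W_n(u)-\lambda_n t|=o(\lambda_n)$. This follows from Lemma~\ref{lem:Z_bound} plus the fact that within time $\Delta$ the burnt mass is $\mathcal O(\lambda_n^{-2})\ll n\lambda_n$ vertices with high probability.

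Tightness in $D[0,S]$ follows from convergence of the compensators and of the (vanishing) small-jump contribution via Aldous' criterion; joint convergence with the deterministic drift part is immediate. We expect step (a), the sharp uniform local estimate for component sizes near the scale $\varepsilon^{-2}$ together with truncation of large jumps, to be the main obstacle.
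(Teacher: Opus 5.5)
Your proposal is correct, and its first step coincides with the paper's. The identity \eqref{eq:U_Y_connection}, $U_n^{t_n}(s)=-t_nY_n^{t_n}(s)+s\,A_n(t_n+s/(n\lambda_n^2))/n$, reduces everything to showing that the rescaled burnt mass $Y_n^{t_n}$ converges to $Y^t=\widetilde D/t^2$, which is exactly your target.

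After that the routes diverge. The paper never examines individual jump sizes:
\begin{enumerate}
\item Tightness of $Y_n^{t_n}$ comes from Aldous' criterion and the first-moment decrement bound of Lemma~\ref{lem:bnd_decrement}.
\item For finite-dimensional distributions it sandwiches $A_n$ on $I_n^S$ between two conditionally compound Poisson processes with frozen parameters ($\Alower$ at time $t_n$, $\Aupper$ at time $t_n+S/(n\lambda_n^2)$), using stochastic monotonicity of the component size. Independent increments then reduce the problem to a single Laplace transform.
\item The limit $\lambda_n^{-1}\ev_{A,n,t}[\mathrm e^{-\gamma\lambda_n^2\mathscr C}-1]\to t-\sqrt{t^2+2\gamma}$ is obtained essentially algebraically from the exact identity \eqref{eq:magic_A_n_t}, with $j_n=\lceil n\lambda_n\alpha_n\rceil$ and $t_n^2(\alpha_n-\alpha_n^2/2)=-\gamma$. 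The errors are controlled by the crude moment bounds \eqref{eq:general_moment_of_component_size_crude}.
\end{enumerate}

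Your compensator approach instead reads off the L\'evy measure pointwise from Borel/Cayley asymptotics. It absorbs the time-inhomogeneity directly: no coupling is needed, since the compensator just integrates the time-varying rates. It also yields a stronger conclusion, namely that the whole point process of rescaled (time, burnt size) pairs converges, and it explains where $y^{-3/2}\mathrm e^{-y/2}$ comes from.

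The price is precisely what the paper's identity sidesteps. First, you need the uniform local estimate (a). This requires the tree count plus a bound on components with positive excess, which cost a factor $\asymp k^{3/2}/m\asymp (m\varepsilon^3)^{-1}$ at $k\asymp\varepsilon^{-2}$. Second, you need a uniform tail bound $\pr(\mathscr C\ge k)\le Ck^{-1/2}$, for instance by dominating the exploration by a Galton--Watson tree with $\mathrm{Bin}(m,p)$ offspring and $mp\le 1$. Your sketch uses this implicitly but does not state it. The local estimate on $[a,b]\varepsilon^{-2}$ alone controls neither end of the jump range:
\begin{enumerate}
\item the small-jump mass, which the tail bound makes $\mathcal O(\sqrt\delta)$;
\item the expected number of jumps exceeding $b\lambda_n^{-2}$, which the tail bound makes $\mathcal O(b^{-1/2})$.
\end{enumerate}

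One minor point: when $t=1$, the input $1-W_n(t_n)=\lambda_n t+o_P(\lambda_n)$ must come from Lemma~\ref{lem:Z_bound} together with \eqref{eq:gnt_asymp_t} and $(t_n-t_n^c)/\lambda_n\to\infty$. It cannot come from Remark~\ref{rem:window_curve}, which concerns fixed $t>1$.
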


We prove Theorem~\ref{thm:tempered_stable_scaling} in Section~\ref{sec:levy_proof}.

\begin{remark}[Tempered stable processes]
The process~$\widetilde{D}(\cdot) $ is called a tempered $1/2$-stable L\'evy process. In~\cite{R07} Rosi\'nski gives the standard general framework for tempered stable processes and explains their short-time stable and long-time Gaussian behavior (cf.~Remark~\ref{rem:further_scaling}). This is particularly relevant because Remark~\ref{rem:further_scaling} describes exactly these two crossovers: a stable limit at a finer scale (cf.~Remark~\ref{rem:further_scaling}~\ref{remark_ii_levy}) and a Brownian limit after zooming out (cf.\ Remark~\ref{rem:further_scaling}~\ref{remark_i_br}).
\end{remark}

\begin{remark}[Scaling limit of the empirical distribution of frozen components] Note that the result of
Theorem~\ref{thm:tempered_stable_scaling} agrees with the slightly subcritical statement of \cite[Conjecture~1.1]{R09} (i.e., that the size of the connected component of a frozen vertex is comparable to $\lambda_n^{-2}$) and the result of \cite[Theorem~1.8]{R09} about the scaling limit of the size-biased empirical distribution of frozen components.
\end{remark}

Now we formulate some conjectures about other scaling limits associated to our setup. We believe that these statements can be proved by straightforward modifications of the arguments of this paper. We also believe that these results are less interesting than Theorems~\ref{thm:OU_scaling} and~\ref{thm:tempered_stable_scaling} because the processes that arise in the limit are simpler (and therefore capture less information than our main theorems). However, for the sake of completeness, we wanted to at least mention all possible non-trivial scaling limits that can arise from the processes $W_n(\cdot)$ and $A_n(\cdot)$.

\needspace{3\baselineskip}
\begin{remark}[Further (conjectured) scaling limits in the $\lambda_n$-critical regime]\label{rem:further_scaling}
\mbox{}
\begin{enumerate}[label=(\roman*)]
\item \textbf{ Properties of $U^t(\cdot)$.} The process $U^t(\cdot)$ is a martingale, which grows at constant speed, but decreases with jumps. The set of its jump times is dense in $\mathbb{R}_+$. The variance of $U^t(s)$ is $s/t^2$.
\item\label{remark_i_br} \textbf{ Intermediate Brownian scaling limit.} If we either zoom in on $\widetilde{Z}^{t}(\cdot)$ (cf.~\eqref{eq:OU_SDE}) or zoom out on~$U^{t}(\cdot)$ (cf.~\eqref{eq:defnU(s)}), then we see the same Brownian motion in the limit. Indeed, let us note without proof that if $t$ and $t_n$ satisfy the same conditions as in Theorem~\ref{thm:OU_scaling}, if we introduce the intermediate time scale $1/{(n \lambda_n^2)} \ll \chi_n \ll \lambda_n$ and if we define
\begin{equation}\label{eq:intermediate_scaling}
B_n^{t_n}(s)\coloneqq\frac{\lambda_n \sqrt{n} }{\sqrt{\chi_n}} \cdot \left[ (W_n(t_n+\chi_n s )-W_n(t_n))-(w_n(t_n+\chi_n s )-w_n(t_n)) \right],
\end{equation}
then $B_n^{t_n}(\cdot) \Rightarrow \frac{1}{t} B(\cdot) $ in $D[0,S]$ for each $S<\infty$, where $B(\cdot)$ is a standard Brownian motion. 
Note that the term $w_n(t_n+\chi_n s )-w_n(t_n)$ can be omitted from~\eqref{eq:intermediate_scaling} if $\chi_n \ll 1/{(n \lambda^4_n)}$.
\item\label{remark_ii_levy} \textbf{ Scaling limits of the increments of $A_n(t)$.} 
Let $J_n^t(s) \coloneqq A_n(t) - A_n(t+s) $. The result of Theorem~\ref{thm:tempered_stable_scaling} can be rephrased as $\left( \lambda_n^2 \cdot J_n^{t_n}\mleft( \frac{s}{n \lambda_n^2} \mright)\right)_{s \in [0,S]} \Rightarrow \left(\frac{1}{t^2} \widetilde{D}(s)\right)_{s \in [0,S]}$, where $\widetilde{D}(\cdot)$ was defined in \eqref{eq:defnU(s)}, cf.~Proposition~\ref{prop:Yn_convergence}. Let us mention other possible scaling limits of $J_n^t(\cdot)$ without proof.
If~${t \geq 1}$, then $\left(J^{t}_n(\frac{s}{n \lambda_n})\right)_{s \in [0,S]} \Rightarrow \left(J^t(\cdot)\right)_{s \in [0,S]}$, where $J^t(\cdot)$ jumps at the arrival times of a
homogeneous PPP on~$\mathbb{R}_+$ with intensity~$1/t$ and the jump sizes are i.i.d.\ with critical Borel distribution (i.e., the distribution of the number of vertices of a critical Poisson Galton--Watson tree). If we either zoom in on $\frac{1}{t^2} \widetilde{D}(\cdot)$ or if we zoom out on $J^t(\cdot)$, then we obtain the same stable subordinator process in the limit. Indeed, 
if we introduce the intermediate time scale $\frac{1}{n \lambda_n} \ll \chi_n \ll \frac{1}{n \lambda^2_n}$, then
\begin{equation*}
\left( ( n \chi_n \lambda_n )^{-2} J_n^t(\chi_n s) \right)_{s \in [0,S]} \Rightarrow \left( \frac{1}{t^2} L(s) \right)_{s \in [0,S]},
\end{equation*}
where 
$L(s)\coloneqq
\int_0^s \int_0^\infty y M(\mathrm{d} y,\mathrm{d} u) $
where $M(\cdot,\cdot)$ is a PPP on $\mathbb{R}^2_+$ with intensity measure $\frac{1}{\sqrt{2\pi}}y^{-3/2}\, \mathrm{d} y \, \mathrm{d} u$.
\end{enumerate}
\end{remark}

The SOC regime of lightning rates (i.e., $1/n \ll \lambda_n \ll 1$, cf.~Definition~\ref{def:soc_lambda_regime}) can be subdivided into three sub-regimes: the slightly subcritical regime $n^{-1/3} \ll \lambda_n \ll 1$, the truly critical regime $\lambda_n \asymp n^{-1/3}$, and the slightly supercritical regime $1/n \ll \lambda_n \ll n^{-1/3}$.
This paper focuses on the slightly subcritical regime (assuming an extra $(\ln n)^{4/3}$ correction for technical reasons, cf.~Remark~\ref{rem:log_correction}), but let us briefly mention some conjectures pertaining to the other two regimes and their connections to the literature.

\begin{remark}[Conjectured scaling limit in the $\lambda_n \asymp n^{-1/3}$ regime]\label{rem:crit_mcld}
Note that if we plug $\lambda_n \asymp n^{-1/3}$ into the spatial scaling $(n \lambda_n)^{1/2}$ of Theorem~\ref{thm:OU_scaling} and the spatial scaling $ n\lambda_n^2$ of Theorem~\ref{thm:tempered_stable_scaling}, in both cases we obtain $n^{1/3}$. Similarly, if we plug $\lambda_n \asymp n^{-1/3}$ into the time scaling $\lambda_n$ of Theorem~\ref{thm:OU_scaling} and the time scaling $\frac{1}{n \lambda_n^2}$ of Theorem~\ref{thm:tempered_stable_scaling}, in both cases we obtain $n^{-1/3}$. 
Thus, it is reasonable to conjecture that if $\lambda_n=\lambda \cdot n^{-1/3}$ (where $\lambda \in \mathbb{R}_+$ is fixed), $t \ge 1$ and $t_n$ satisfies ${t_n \to t}$ and ${(t_n - t_n^c)/\lambda_n \to \infty}$ as $n \to \infty$, then $\left(n^{1/3} \cdot \left[ W_n(t_n +n^{-1/3} s)-w_n(t_n +n^{-1/3} s) \right]\right)_{s \in \mathbb{R}}$ converges in distribution as $n \to \infty$, where the limit process is a stationary process (just like the O.-U.~process $\widetilde{Z}^t(\cdot)$), but it increases with constant speed and decreases with jumps that are dense in time (just like the L\'evy process $U^t(\cdot)$). Also note that the law of the limit process depends on $t$ and $\lambda$. The conjectured limit process is intimately related to the \emph{multiplicative coalescent with linear deletion} (introduced in~\cite{MR17, R19}).
\end{remark}

\needspace{3\baselineskip}
\begin{remark}[Conjectured scaling limits in the $\frac{1}{n} \ll \lambda_n \ll n^{-1/3}$ regime] We conjecture that in this regime
baby giants of size comparable to $\sqrt{\frac{n}{\lambda_n}}$ are born and get destroyed again and again, giving rise to alternating time intervals of slight sub- and supercriticality, 
analogously to \cite[Theorem~1.9]{R09}. \end{remark}

\subsection{Related literature}\label{subsec:literature}

\subsubsection{Related modifications of dynamic Bernoulli percolation on infinite graphs}

The term `frozen percolation' was coined by Aldous in~\cite{A00} where he constructed frozen percolation on the infinite 3-regular tree and showed that, after the ordinary Bernoulli bond percolation threshold, a typical finite cluster has the law of a critical percolation cluster, while clusters that freeze have the law of the incipient infinite cluster.
These results are the infinite-volume counterpart of the SOC behavior studied here.
Van den Berg, Kiss and Nolin~\cite{BKN12} showed that freezing clusters at a large finite size threshold on the binary tree recovers Aldous' finite-cluster probabilities and freezing probabilities as the threshold tends to infinity.

We have already mentioned it in the Introduction that a stochastic process satisfying the naive definition of frozen percolation on~$\mathbb{Z}^2$ does not exist (see Section~\ref{sss:two_dim} for further details). It is currently open whether frozen percolation (or the closely related forest fire model) on $\mathbb{Z}^d,\, d \geq 3$ can be constructed. The first step of this construction is to show that \emph{self-destructive percolation} (introduced by van den Berg and Brouwer in~\cite{BB04}) behaves regularly: this is proved on $\mathbb{Z}^d$ for $d \geq 11$ by Ahlberg, Duminil-Copin, Kozma and Sidoravicius in~\cite{ADKS15} and for a large class of non-amenable graphs by Ahlberg, Sidoravicius and Tykesson in~\cite{AST14}. Also note that a spatially inhomogeneous forest fire model on~$\mathbb{Z}_+$ is constructed by van den Berg and T\'oth in~\cite{BT01}.

\subsubsection{Frozen percolation in two dimensions}\label{sss:two_dim}

The non-existence of frozen percolation on~$\mathbb{Z}^2$ is already surprising, but one can define various approximations of it which are well-defined, but behave in a highly counter-intuitive way. One example of this is the irregular behavior of self-destructive percolation on~$\mathbb{Z}^2$ proved by Kiss, Manolescu and Sidoravicius in~\cite{KMS15}.

Van den Berg, de Lima and Nolin~\cite{BLN12} introduced diameter-frozen percolation on $\mathbb{Z}^2$ and proved that the final cluster of a fixed vertex has, with probability bounded away from zero, diameter of order the freezing threshold while remaining unfrozen.
Kiss~\cite{K15} subsequently showed, for the triangular-lattice model, that freezing in a box of comparable diameter occurs in the near-critical window, and that the final cluster of a typical vertex is unfrozen and has diameter of order the threshold with high probability.
This localization of freezing near the ordinary percolation threshold is different from the behavior of frozen percolation on~$K_n$ because on~$K_n$ the freezing starts with a delay compared to the critical window of the Erd\H{o}s--R\'enyi graph (cf.~Claim~\ref{cl:tc_asymp}) and continues to happen at a slowly decaying intensity for a long time after that. 

Replacing diameter by volume leads to a different picture.
Van den Berg and Nolin~\cite{BN17b} identified a sequence of exceptional box sizes for volume-frozen percolation: the probability that the origin eventually freezes stays bounded away from zero at these scales, but vanishes between successive exceptional scales.
Van den Berg, Kiss and Nolin~\cite{BKN18} established a deconcentration property for the holes surrounded by successively formed frozen clusters and used it to prove that, in the full-plane triangular-lattice model, a typical vertex remains unfrozen with high probability as the volume threshold diverges.

The closest planar analogue of our lightning mechanism is the forest fire process without recovery studied by van den Berg and Nolin in~\cite{BN21}: vertices are struck independently, and an entire occupied component is permanently destroyed when one of its vertices is struck: the authors obtain exceptional scales for this variant of frozen percolation, too.

\subsubsection{Related modifications of mean field random graph process}

The direct predecessor of this paper is~\cite{R09}, which introduced the frozen percolation model with linear deletion on~$K_n$ that we study here. 
Merle and Normand~\cite{MN14} considered a slightly different modification of the dynamical Erd\H{o}s--R\'enyi graph (already suggested in \cite[Section~5.5]{A00}) in which components freeze once their size reaches a prescribed threshold $\omega_n$ that satisfies $1 \ll \omega_n \ll n$. They prove that the statement of Theorem~\ref{thm:window_critical} holds in this model and also show that after the critical time the local limit of the graph is a critical Poisson--Galton--Watson branching process tree, giving another mean-field realization of SOC.

A close relative is the mean-field forest fire model of R\'ath and T\'oth~\cite{RT09}, where a lightning strike removes all edges of the affected component but retains its vertices, allowing them to participate in subsequent growth.
They proved convergence of the empirical cluster-size distribution to the solution of a modification of Smoluchowski's coagulation equations in the regime when the lightning rate satisfies $1/n\ll\lambda_n\ll1$, also proving that after the critical time the solution exhibits power-law behavior, a sign of SOC.
Crane, Freeman and T\'oth~\cite{CFT15} identified the limiting size process of the cluster containing a tagged vertex as an explosive, time-inhomogeneous branching process that returns instantaneously to size one after each explosion.
Crane, R\'ath and Yeo~\cite{CRY21} showed that the empirical distribution of vertex ages has a deterministic measure-valued limit, and proved that the local weak limit of the graph is a multi-type Galton--Watson tree that is critical after the critical time.
Yeo~\cite{Y18} studied the same linear-deletion dynamics as ours, started from finite-type inhomogeneous random graphs, and derived a hydrodynamic limit for the surviving type proportions.
The multi-type branching process that one obtains in the local weak limit remains critical after gelation.

Bertoin~\cite{B12} considered fires on the uniform spanning tree of~$K_n$ (i.e., the uniform Cayley tree), where fire spreads through flammable edges and is stopped by edges that have become fireproof, and identified a transition in the fraction of vertices that survive as the ignition rate varies with the tree size.
A different connection with Cayley trees was established by Contat and Curien~\cite{CC23}, who coupled parking on a uniform rooted Cayley tree to a frozen Erd\H{o}s--R\'enyi process and identified its critical component size scaling limit.
In the associated $p$-frozen family, trees freeze when they acquire a cycle, attempted mergers between a tree and a frozen component are accepted with probability $p$, and multicyclic components are forbidden, with the parking coupling corresponding to $p=1/2$; thus freezing is triggered by cycle formation or absorption rather than by external lightning.
Krapivsky~\cite{K24} used kinetic equations to study this random graph model, deriving predictions for the evolving tree and unicycle populations.
Haas and Viau~\cite{HV26} proved functional fluid limits for the main observables of $p$-frozen percolation and obtained precise asymptotics for the time at which the entire graph becomes frozen.

\subsubsection{Scaling limits for dynamic random graph models}\label{sss:scaling_limits_literature}

Aldous~\cite{A97} identified the scaling limit of the evolution of the collection of Erd\H{o}s--R\'enyi component sizes throughout the critical window as the standard multiplicative coalescent~(SMC) process, a process with infinite total mass in which components of masses~$x$ and~$y$ merge at rate~$xy$. He also showed that the joint law of the collection of component sizes at a fixed time in the SMC is the same as the collection of excursion lengths above past minima of Brownian motion with parabolic drift. 
Aldous and Limic~\cite{AL98} classified the entrance boundary of the multiplicative coalescent, describing its extreme eternal versions and excursion representations of their one-time distributions.
Limic~\cite{L19} strengthened this excursion description to a simultaneous encoding of the entire eternal coalescent, with time evolution corresponding to linear tilting of a single L\'evy-type path.

To incorporate our destruction mechanism, Martin and R\'ath~\cite{MR17} developed so-called rigid representations of the multiplicative coalescent with linear deletion~($\mathrm{MCLD}(\lambda)$), where a component of mass $x$ is deleted at rate~$\lambda x$ in addition to the usual multiplicative coalescent dynamics.
In their excursion representation, coalescence is encoded by tilting, while deletion is encoded by a shift of the underlying path.
R\'ath~\cite{R19} proved the Feller property of $\mathrm{MCLD}(\lambda)$ for square-summable component-mass sequences.
These results provide a natural framework for the conjectured component and window-process limits in the truly critical regime $\lambda_n\asymp n^{-1/3}$ of frozen percolation on $K_n$, but do not by themselves establish the stationary window-process limit mentioned in Remark~\ref{rem:crit_mcld}.
The present paper instead considers the slightly subcritical regime, where the fluctuation and individual-jump scales separate.

Martin and Yeo~\cite{MY18} obtained scaling limits for the largest components of uniform random forests and Erd\H{o}s--R\'enyi graphs conditioned to be acyclic, expressed through excursions of a certain reflected diffusion. Such graphs are conjectured to appear in a modification of frozen percolation where components with cycles are frozen.

Viau~\cite{V25} studied the long-time behavior of the total frozen mass in the critical scaling limit of the $p$\nobreakdash-frozen Erd\H{o}s--R\'enyi model, proving convergence of the centered frozen mass to an invariant distribution when~${p=0}$, in contrast to concentration around a deterministic linear trend when $p>0$.
For $p=0$, the unfrozen components evolve as a multiplicative coalescent with quadratic deletion, since a component of mass~$x$ freezes at rate $x^2/2$. The convergence to an invariant distribution in the $p=0$ case is a clear sign of SOC and offers a close parallel to the mean-reverting property (and stationarity) of the O.-U.~process in our Theorem~\ref{thm:OU_scaling}.

\section{Basic properties of Erd\H{o}s--R\'enyi graphs and the window process }\label{sec:ER}

In this section we prove Lemma~\ref{lem:ER}, i.e., we show that, given $\mathcal{F}_n(t)$, the conditional distribution of $G_n(t)$ is an Erd\H{o}s--R\'enyi graph. Then in Section~\ref{subsec:ER_comp} we prove some useful formulas for the component size of a fixed vertex of the Erd\H{o}s--R\'enyi graph and in Section~\ref{subsec:martingales} we construct martingales using the number of alive vertices.

\begin{definition}[PPP of lightnings]
The lightning strikes of $(G_n(\cdot))$ can be described as i.i.d.\ exponential clocks on each vertex with parameter $\lambda_n$ (see Definition~\ref{def:frozen_perc}). We can augment the underlying probability space and describe the lightning strikes as a Poisson point process $\Lambda$ on $[n] \times \mathbb{R}_+$ with rate $1 \otimes \lambda_n \text{Leb}$.
\end{definition}

Lemma~\ref{lem:ER} is similar to \cite[Theorem 2.6]{CRY21}, which is proved in~\cite[Section 3.1]{CRY21}. We prove Lemma~\ref{lem:ER} using an analogous argument.

\begin{proof}[Proof of Lemma~\ref{lem:ER}]
We prove a stronger statement. In particular, we condition also on the whole PPP $\Lambda$ of the lightnings and show that for any $t \ge 0$,
\begin{equation}\label{eq:ER-stronger}
\text{given } \Lambda \text{ and } \mathcal{A}_n(s),\, 0 \leq s \leq t, \text{ the conditional distribution of } G_n(t) \text{ is } \mathcal{G}(\mathcal{A}_n(t),\,1-\mathrm{e}^{-t/n}).
\end{equation}

Assume that $\Lambda$ consists of the points $(v_i, t_i), \, i =1,\, 2,\, \ldots$, i.e., $v_i \in [n]$, $0 < t_1 < t_2 < \ldots$ and at time $t_i$ a lightning hits vertex $v_i$. We prove that~\eqref{eq:ER-stronger} holds for any $t \le t_i$ by induction on $i$.

First, note that~\eqref{eq:ER-stronger} is fulfilled for $t \le t_1$ by Definition~\ref{def:frozen_perc}: $G_n(t)$ is a graph where each edge between any $v,\, w \in [n]$ with $v \neq w$ is present with probability $1-\mathrm{e}^{-t/n}$, independently of all other edges, i.e., ${G_n(t) \sim \mathcal{G}([n],1-\mathrm{e}^{-t/n})}$.

Now assume that~\eqref{eq:ER-stronger} holds for $t \le t_{i-1}$. Then for any $t \in [t_{i-1}, t_i)$, we know that there is no lightning in the interval $(t_{i-1},t]$. Hence, an edge $e$ is present in $G_n(t)$ if and only if $e$ is present in $G_n(t_{i-1})$ or it appears during the interval $(t_{i-1},t]$. By the induction hypothesis, we know that an edge between $v \neq w \in \mathcal{A}_n(t_{i-1})$ is present in $G_n(t_{i-1})$ with probability $1-\exp(-t_{i-1}/n)$ and if not, it appears in the interval $(t_{i-1},t]$ independently from the past with probability $1-\exp(-(t-t_{i-1})/n)$. Therefore, for any $v \neq w \in \mathcal{A}_n(t)$ the edge $\{v, w\}$ is present in $G_n(t)$ 
with probability
\begin{align*}
1-\mathrm{e}^{-\frac{t_{i-1}}{n}} \cdot \mathrm{e}^{-\frac{t-t_{i-1}}{n}} = 1-\mathrm{e}^{-\frac tn},
\end{align*}
independently of other edges.
This proves that~\eqref{eq:ER-stronger} holds for any $t < t_i$, thus it remains to show \eqref{eq:ER-stronger} for $t=t_i$. If $v_i \notin \mathcal{A}_n(t_{i-1})$ (i.e., $v_i$ was deleted earlier), then nothing happens at time $t_i$ and~\eqref{eq:ER-stronger} holds. Otherwise, the connected component $\mathcal{C}_n(v_i, t_i)$ of vertex~$v_i$ in $G_n(t_i)$ is deleted at time $t_i$ and $\mathcal{A}_n(t_i) = \mathcal{A}_n(t_{i-1}) \setminus \mathcal{C}_n(v_i, t_i)$.
Given $\Lambda$ and~$\mathcal{A}_n(s),\, 0 \leq s \leq t_i$, the conditional distribution of the subgraph of $G_n(t_i-)$ spanned by $\mathcal{A}_n(t_{i})$ is $\mathcal{G}(\mathcal{A}_n(t_i),\, 1-\mathrm{e}^{-t_i/n})$, because the identity of the deleted component reveals information about edges within that component and edges from that component to surviving vertices, but it does not reveal any information about edges among the surviving vertices. Since no edge appears exactly at time $t_i$, the subgraphs of $G_n(t_i-)$ and~$G_n(t_i)$ spanned by the vertex set $\mathcal{A}_n(t_i)$ are the same. From these observations \eqref{eq:ER-stronger} follows for $t=t_i$ and this completes the proof of the induction step. The proof of Lemma~\ref{lem:ER} is complete.
\end{proof}

\subsection{Properties of the Erd\H{o}s--R\'enyi graph model}\label{subsec:ER_comp}

Now we recall/deduce some facts about the distribution of the size of the connected component of a fixed vertex of an Erd\H{o}s--R\'enyi graph.

\begin{notation}[Erd\H{o}s--R\'enyi graph with tunable vertex set]
\label{not:ER_notation_A_n_t}
Let $\pr_{A,n,t}(\cdot)$ be a shorthand for the distribution of the Erd\H{o}s--R\'enyi random graph $\mathcal{G}(A,1-\exp(-t/n))$, where $A,\, n \in \mathbb{N}$, $A \leq n$. Let $\ev_{A,n,t}(\cdot)$ denote the expectation of random variables that are defined in terms of $\mathcal{G}(A,1-\exp(-t/n))$.

\noindent Denote by $\mathscr{C}$ the number of vertices in the connected component of a fixed vertex of the Erd\H{o}s--R\'enyi graph.

Given $A$, $n$ and $t$, let us denote
\begin{equation}\label{eq:W_t_A_n_def}
W_n^A(t)=\frac{tA}{n},
\end{equation}
in accordance with~\eqref{eq:Wn_def}.
\end{notation}

\begin{claim}[Moments of component size]\label{cl:component_size}
Let $A,\, n \in \mathbb{N}$, $A \leq n$, $t \ge 0$. There exists a universal constant~$\widetilde{C}$ such that for any $A$, $n$, $t$ if $W_n^A(t)\le 1-A^{-1/3}$, then we have 
\begin{align}
&\left| \ev_{A,n,t}[\mathscr{C}] - \frac{1}{1-W_n^A(t)} \right| \le \frac{\widetilde{C}}{(1-W_n^A(t))^4 A}, \label{eq:first_moment_of_component_size}\\
&\left| \ev_{A,n,t} \mleft[\mathscr{C}^2 \mright] - \frac{1}{(1-W_n^A(t))^3} \right| \le \frac{\widetilde{C}}{(1-W_n^A(t))^6 A}.\label{eq:second_moment_of_component_size}
\end{align}

For any $A$, $n$, $t$, we have
\begin{equation}
\ev_{A,n,t}\mleft[ \prod_{i=0}^{\mathscr{C}-1} \mathrm{e}^{-tj/n}\left( 1+ \frac{j}{A-i} \right) \mright]=1+ \frac{j}{A}, \quad j \in \{-A, \ldots,-1, 0\},\label{eq:magic_A_n_t}
\end{equation}

and if $W_n^A(t) \in (0,1)$, then for any $k \in \mathbb{N}_+$, there exists a universal constant $\widetilde{C}_k$ such that
\begin{equation}
\ev_{A,n,t} \mleft[\mathscr{C}^k \mright] \le \frac{\widetilde{C}_k}{(1-W_n^A(t))^{2k-1}}, \qquad k = 1,2,3,\ldots. \label{eq:general_moment_of_component_size_crude}
\end{equation}

Moreover, if $W_n^A(t) \in (\frac{1}{3},1)$, then
\begin{equation}
\ev_{A,n,t}\mleft[ \mathrm{e}^{\frac{1}{4} (1-W_n^A(t))^2 \mathscr{C}} \mright] \leq 3.\label{eq:magic_branching_proc_estimate_2}
\end{equation}
\end{claim}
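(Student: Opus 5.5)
We work with the exact law of $\mathscr{C}$ in $\mathcal{G}(A,p)$, $p=1-\mathrm{e}^{-t/n}$. The basic identity is the standard formula
\begin{equation*}
\pr_{A,n,t}(\mathscr{C}=k)=\binom{A-1}{k-1}\, c(k)\, p^{k-1}(1-p)^{\binom{k}{2}-(k-1)+k(A-k)},
\end{equation*}
where $c(k)$ is the probability that $\mathcal{G}(k,p)$ is connected. We compare it with a branching process exploration. Throughout we write $x=W_n^A(t)$ and $1-p=\mathrm{e}^{-t/n}$.

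\textbf{Identity \eqref{eq:magic_A_n_t}.} This comes first and is exact. For $j\in\{-A,\dots,0\}$ we have $1+j/A=(A+j)/A$. We interpret $\mathrm{e}^{-tj/n}=(1-p)^{j}$, which for $j=-m\le 0$ equals $(1-p)^{-m}$. Consider $m=-j$ extra "marked" vertices among the $A$ vertices, or equivalently count, as follows. The left side is $\sum_k \pr(\mathscr{C}=k)(1-p)^{-mk}\prod_{i<k}\frac{A-i-m}{A-i}$. Substituting the formula above, $\binom{A-1}{k-1}\prod_{i<k}\frac{A-m-i}{A-i}=\frac{A-m}{A}\binom{A-m-1}{k-1}$. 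Also $(1-p)^{k(A-k)-mk}=(1-p)^{k(A-m-k)}$. So the sum becomes $\frac{A-m}{A}\sum_k\pr_{A-m,n,t}(\mathscr{C}=k)=\frac{A-m}{A}$, since probabilities sum to one. This gives \eqref{eq:magic_A_n_t} directly.

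\textbf{Moment bounds \eqref{eq:general_moment_of_component_size_crude} and \eqref{eq:magic_branching_proc_estimate_2}.} We dominate $\mathscr{C}$ stochastically by the total progeny $T$ of a Galton--Watson tree with $\mathrm{Bin}(A,p)$ offspring. Its mean is $Ap\le tA/n=x$. Since $\mathrm{Bin}(A,p)$ is dominated by $\mathrm{Poi}(-A\ln(1-p))=\mathrm{Poi}(x)$, we may further dominate $T$ by the Borel$(x)$ total progeny. Then:
\begin{itemize}
\item The Borel law has explicit tails $\pr(T=k)=\frac{(kx)^{k-1}\mathrm{e}^{-kx}}{k!}\le Ck^{-3/2}\mathrm{e}^{-kI(x)}$, where $I(x)=x-1-\ln x\ge (1-x)^2/2$. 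Summing $k^{k-3/2}\mathrm{e}^{-k(1-x)^2/2}$ gives $\ev T^k\le \widetilde C_k(1-x)^{-(2k-1)}$.
\item For \eqref{eq:magic_branching_proc_estimate_2}, with $\theta=\frac14(1-x)^2$ we get $\ev \mathrm{e}^{\theta T}\le\sum_k Ck^{-3/2}\mathrm{e}^{-k(1-x)^2/4}$. This is bounded, but we need the constant $3$. We therefore instead use the fixed-point equation $\phi=\mathrm{e}^{\theta}\exp(x(\phi-1))$ for $\phi=\ev\mathrm{e}^{\theta T}$, which has its smallest root $\phi\le 3$ when $\theta\le(1-x)^2/4$ and $x>1/3$. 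This is checked by an elementary inequality, namely that the function $\phi\mapsto \theta+x(\phi-1)-\ln\phi$ is negative at $\phi=1+ (1-x)/x$.
\end{itemize}

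\textbf{Precise first and second moments \eqref{eq:first_moment_of_component_size} and \eqref{eq:second_moment_of_component_size}.} For these we use \eqref{eq:magic_A_n_t} rather than the coupling, since the error must be $O((1-x)^{-4}A^{-1})$. Differentiating \eqref{eq:magic_A_n_t} in $j$ is not allowed since $j$ is integer. Instead we take $j=-1$ and $j=-2$ and expand. With $j=-1$ we have $\prod_{i<\mathscr{C}}\mathrm{e}^{t/n}(1-\frac1{A-i})=1-\frac1A$. The logarithm of the product is $\mathscr{C}t/n-\sum_{i<\mathscr{C}}\frac1{A-i}+O(\sum (A-i)^{-2})$, which equals $\frac{\mathscr{C}}{A}(x-1)-\frac{\mathscr{C}^2}{2A^2}+\dots$. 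Exponentiating and taking expectations gives
\begin{equation*}
\frac{1-x}{A}\,\ev\mathscr{C}=\frac1A+\frac{\text{(terms in } \ev\mathscr{C}^2,\ev\mathscr{C}^3)}{A^2}.
\end{equation*}
The error terms are controlled by the crude bounds \eqref{eq:general_moment_of_component_size_crude} together with a truncation on $\{\mathscr{C}\le \varepsilon A\}$, using \eqref{eq:magic_branching_proc_estimate_2} for the tail. The hypothesis $1-x\ge A^{-1/3}$ ensures that the quadratic term $\frac{(1-x)^2\ev\mathscr{C}^2}{A}$ is $O((1-x)^{-1})\cdot\frac{1}{A}\cdot$ the right size. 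This yields \eqref{eq:first_moment_of_component_size}. The second-order expansion, using a combination of $j=-1$ and $j=-2$ to isolate the $\ev\mathscr{C}^2$ coefficient, gives \eqref{eq:second_moment_of_component_size} similarly.

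\textbf{Main obstacle.} The delicate step is this last expansion. One must bookkeep the Taylor remainders so that every error is $O((1-x)^{-4}/A)$, respectively $O((1-x)^{-6}/A)$, uniformly. The point is that exponentials of $\mathscr{C}(1-x)$ are only of order one on typical scales, while $\mathscr{C}$ can be of order $(1-x)^{-2}$. So the expansion should be done in $\mathscr{C}/A$ and $(1-x)\mathscr{C}$ jointly, with the tails handled by \eqref{eq:magic_branching_proc_estimate_2}.
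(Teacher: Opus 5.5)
Your outline is correct, but it takes a different route from the paper. The paper proves nothing here itself. It quotes \cite{R18}: (1.2) for \eqref{eq:magic_A_n_t}, (3.9) and (1.8) for the precise moments, and (3.1) for the crude moments. For \eqref{eq:magic_branching_proc_estimate_2} it splits into two cases. The case $2/(1-W_n^A(t))\ge A$ is trivial. In the case $2/(1-W_n^A(t))\le A$, estimates as in \cite[(3.6)]{R18} give $\ev_{A,n,t}[\mathrm{e}^{\frac14(1-W_n^A(t))^2\mathscr{C}}]\le 1/W_n^A(t)$.

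You derive everything from scratch:
\begin{itemize}
\item the identity from the explicit law of $\mathscr{C}$;
\item the crude and exponential moments from the domination of $\mathscr{C}$ by Borel$(x)$ progeny, which is valid since $-A\ln(1-p)=x$ gives $\mathrm{Bin}(A,p)\preceq\mathrm{Poi}(x)$;
\item the precise moments by expanding the identity at $j=-1,-2$.
\end{itemize}
Your fixed-point check at $\phi=1/x$, namely $\theta+1-x+\ln x\le\theta-\frac12(1-x)^2<0$, yields exactly the paper's intermediate bound $1/W_n^A(t)$. It does so for every $A$, so no case split is needed. You should add one justification: $\ev\mathrm{e}^{\theta T}$ is the increasing limit of the generation-truncated transforms $\phi_0=\mathrm{e}^\theta$, $\phi_{m+1}=\mathrm{e}^{\theta+x(\phi_m-1)}$, and each of these is bounded by any root $\phi^*\ge 1$. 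The paper's route buys brevity; yours buys a self-contained argument that shows the mechanism.

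A few repairs are needed.

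\textbf{Law of $\mathscr{C}$.} With $c(k)$ the probability that $\mathcal{G}(k,p)$ is connected, the law is $\pr_{A,n,t}(\mathscr{C}=k)=\binom{A-1}{k-1}c(k)(1-p)^{k(A-k)}$. Your extra factor $p^{k-1}(1-p)^{\binom k2-(k-1)}$ is the weight of a single spanning tree and must be removed. Your telescoping argument only uses the shape $\binom{A-1}{k-1}h(k,p)(1-p)^{k(A-k)}$, so it survives the correction.

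\textbf{Borel tail.} Stirling actually gives $\pr(T=k)\le\frac{1}{x\sqrt{2\pi}}k^{-3/2}\mathrm{e}^{-kI(x)}$. The factor $1/x$ you dropped is harmless, by monotonicity of $T$ in $x$.

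\textbf{First moment.} No truncation or tail estimate is needed. The products telescope, so with $u=\mathscr{C}/A\in[0,1]$ and $h(u)=\mathrm{e}^{xu}(1-u)$ the cases $j=-1$ and $j=-2$ read
\[
\ev h(u)=1-\tfrac1A, \qquad \ev[h(u)^2]-\tfrac1A\ev[\mathrm{e}^{2xu}(1-u)]=\left(1-\tfrac1A\right)\left(1-\tfrac2A\right).
\]
All Taylor remainders are uniform on $[0,1]$. The first identity gives $|(1-x)\ev\mathscr{C}-1|\le\mathrm{e}\,\ev[\mathscr{C}^2]/A$. Combined with the crude bound, this gives \eqref{eq:first_moment_of_component_size} without using $1-x\ge A^{-1/3}$.

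\textbf{Second moment.} ``Isolating the coefficient of $\ev\mathscr{C}^2$'' is more delicate than you indicate. That coefficient is only $(1-x)^2$. A combination with an $O(1)$ cubic coefficient would fail, since $\ev u^3/((1-x)^2\ev u^2)\asymp(1-x)^{-4}/A$ can be as large as $A^{1/3}$. What saves you is the exact form of the combination:
\[
\ev[(1-h)^2]=\tfrac1A\ev[\mathrm{e}^{2xu}(1-u)-1]+\tfrac{2}{A^2},
\]
and $(1-h)^2=(1-x)^2u^2+O((1-x)u^3+u^4)$ has cubic coefficient only $O(1-x)$. Insert \eqref{eq:first_moment_of_component_size} on the right. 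The $\frac{2}{A^2}$ cancels against $\frac{2x-1}{(1-x)A^2}=\frac{1}{(1-x)A^2}-\frac{2}{A^2}$, leaving $\frac{1}{(1-x)A^2}+O\big(\frac{(1-x)^{-4}}{A^3}\big)$. Then use $\ev u^4\le\widetilde C_4(1-x)^{-7}A^{-4}\le\widetilde C_4(1-x)^{-4}A^{-3}$, which is exactly where $1-x\ge A^{-1/3}$ enters. This yields \eqref{eq:second_moment_of_component_size}.
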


\begin{proof} Some of these results are classical (see~\cite{R18} for an overview), but we will use~\cite{R18} as a source of reference. Observe that by~\eqref{eq:W_t_A_n_def}, $1-\exp(-t/n) = 1-\exp(-W_n^A(t)/A)$. Therefore, if we apply \cite[Equations~(3.9) and~(1.8)]{R18} for the Erd\H{o}s--R\'enyi random graph $\mathcal{G}(A,1-\exp(-t/n))$, we obtain~\eqref{eq:first_moment_of_component_size} and~\eqref{eq:second_moment_of_component_size}. We also note that the condition $t \le 1-n^{-1/3}$ of the referred equations of~\cite{R18} is equivalent to~${W_n^A(t)\le 1-A^{-1/3}}$ in the case of $\mathcal{G}(A,1-\exp(-t/n))$.

Equation~\eqref{eq:magic_A_n_t} and inequality~\eqref{eq:general_moment_of_component_size_crude} follow from \cite[Equations (1.2), (3.1)]{R18}, respectively. Finally, note that if ${2}/{(1-W_n^A(t))} \ge A$, then inequality~\eqref{eq:magic_branching_proc_estimate_2} is satisfied trivially. On the other hand, if ${2}/{(1-W_n^A(t))} \le A$, then using similar estimations as in~\cite[Inequality~(3.6)]{R18}, we can deduce that
\begin{equation*}
\ev_{A,n,t}\mleft[ \exp \mleft(\frac{1}{4} (1-W_n^A(t))^2 \mathscr{C} \mright) \mright] \le \frac{1}{W_n^A(t)}.
\end{equation*}
Hence, \eqref{eq:magic_branching_proc_estimate_2} holds if $W_n^A(t) \ge 1/3$.
\end{proof}

Note that Janson and Luczak earlier proved a statement (see \cite[Theorem~3.4]{JL08}) that is equivalent to \cite[Equation~(1.8)]{R18} that we used in the proof of Claim~\ref{cl:component_size}. For more details, see \cite[Remark~1.5]{R18}

\subsection{Martingales associated to the window process}\label{subsec:martingales}

\begin{definition}[Drift of a stochastic process]
If $(X_n(t))$ is a stochastic process adapted to $(\mathcal{F}_n(t))$, we define the \textbf{drift} of $X_n(t)$ at time $t$ as
\begin{equation}\label{eq:drift_def}
\mathcal{L} X_n(t)\coloneqq \lim_{h \to 0_+} \frac{1}{h}\ev \mleft[ \, X_n(t+h)-X_n(t)\, | \, \mathcal{F}_n(t) \, \mright]
\end{equation}
\end{definition}

\begin{claim}[Infinitesimal generator formula for time-dependent functions of $(A_n(t))$]\label{cl:time_dep_inf_gen}
$(A_n(t))_{t \geq 0 }$ is a continuous-time Markov process (albeit not time-homogeneous). Moreover, if $g\colon \{0,1,\dots,n\} \times \mathbb{R} \to \mathbb{R}$ denotes a smooth test function and $g_2$ denotes the partial derivative of $g$ with respect to its second variable, then
\begin{equation}\label{eq:A_n_t_infgen}
\mathcal{L} g(A_n(t),t) = \lambda_n A_n(t) \ev_{A_n(t),n,t} \mleft[\, g(A_n(t)-\mathscr{C},t)-g(A_n(t),t) \, \mright] + g_2(A_n(t),t).
\end{equation}
\end{claim}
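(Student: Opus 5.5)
We prove the claim in two steps. First we show that $(A_n(t))$ is a Markov process with explicit jump rates. Then we differentiate $\ev[g(A_n(t+h),t+h)\mid\mathcal{F}_n(t)]$ at $h=0_+$.

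\textbf{Markov property and jump rates.} By Lemma~\ref{lem:ER}, more precisely its stronger form~\eqref{eq:ER-stronger}, the conditional law of $G_n(t)$ given $\mathcal{F}_n(t)$ (and even given $\Lambda$) is $\mathcal{G}(\mathcal{A}_n(t),1-\mathrm{e}^{-t/n})$. This law depends on the past only through $\mathcal{A}_n(t)$. The future evolution after time $t$ is driven by two ingredients: the lightning PPP restricted to $(t,\infty)$, which is independent of $\mathcal{F}_n(t)$, and the edge clocks after time $t$, which are also independent of it. Both act on $G_n(t)$, so the conditional law of $(\mathcal{A}_n(s))_{s\ge t}$ given $\mathcal{F}_n(t)$ is a function of $\mathcal{A}_n(t)$ alone. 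By the exchangeability of vertices, the law of $(A_n(s))_{s\ge t}$ then depends only on the value $A_n(t)$ and on $t$, so $(A_n(t))$ is a (time-inhomogeneous) Markov process.

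\textbf{Generator computation.} Fix $t$ and write $A=A_n(t)$. In $[t,t+h]$ the number of lightning strikes on alive vertices is at most a Poisson variable with mean $\lambda_n A h$. Hence
\[
\pr(\text{exactly one strike}\mid\mathcal{F}_n(t))=\lambda_n A h+o(h),\qquad \pr(\ge 2\text{ strikes}\mid\mathcal{F}_n(t))=o(h).
\]
Condition on exactly one strike, at time $\tau\in[t,t+h]$ and at a uniformly chosen alive vertex $v$. Then $A_n(t+h)=A-|\mathcal{C}_n(v,\tau)|$. Here $G_n(\tau-)$ is $G_n(t)$ plus the edges added during $[t,\tau)$, so it has law $\mathcal{G}(\mathcal{A}_n(t),1-\mathrm{e}^{-\tau/n})$. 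With probability $1-\mathcal{O}(A^2h/n)$ no edge is added among alive vertices in $[t,t+h]$, in which case $|\mathcal{C}_n(v,\tau)|$ is distributed as $\mathscr{C}$ under $\pr_{A,n,t}$.

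Since $g$ is a function on the finite set $\{0,\dots,n\}\times\mathbb{R}$ that is smooth in its second variable, it is bounded near time $t$. The exceptional events therefore contribute $o(h)$. Expanding $g(\cdot,t+h)=g(\cdot,t)+h\,g_2(\cdot,t)+o(h)$ uniformly on the finite state space, we get
\[
\ev[g(A_n(t+h),t+h)-g(A,t)\mid\mathcal{F}_n(t)]=h\,g_2(A,t)+\lambda_n A h\,\ev_{A,n,t}[g(A-\mathscr{C},t)-g(A,t)]+o(h).
\]
Dividing by $h$ and letting $h\to0_+$ gives~\eqref{eq:A_n_t_infgen}.

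\textbf{Main obstacle.} The only delicate point is the Markov property, namely justifying that conditioning on $\mathcal{F}_n(t)$ leaves the graph Erd\H{o}s--R\'enyi on the alive set and independent of future randomness. This is exactly the content of~\eqref{eq:ER-stronger}. The remaining work is routine $o(h)$ bookkeeping, which the finiteness of the state space keeps uniform.
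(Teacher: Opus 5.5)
Your proposal is correct and takes essentially the same route as the paper. The Markov property is read off from Lemma~\ref{lem:ER} (via \eqref{eq:ER-stronger}), and the generator splits into the lightning-jump term, evaluated under $\pr_{A_n(t),n,t}$, plus the time-derivative term $g_2$; you only make explicit the $o(h)$ bookkeeping that the paper's two-line proof leaves implicit (at most one strike, no new edges in $[t,t+h]$, uniformity over the finitely many values of the first argument of $g$).
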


\begin{proof}
The Markov property of $A_n(t)$ follows by Lemma~\ref{lem:ER}. We have
\begin{align*}
\mathcal{L} g(A_n(t),t) \stackrel{\eqref{eq:drift_def}}{=} &\lim_{h \to 0_+} \frac{1}{h}\ev\mleft[ \, g(A_n(t+h),t+h)-g(A_n(t),t+h)\, | \, \mathcal{F}_n(t) \, \mright] +\\
&\lim_{h \to 0_+} \frac{1}{h}\ev\mleft[ \, g(A_n(t),t+h)-g(A_n(t),t)\, | \, \mathcal{F}_n(t) \, \mright].
\end{align*}

Here the first term is equal to $\lambda_n A_n(t) \ev_{A_n(t),n,t}[g(A_n(t)-\mathscr{C},t)-g(A_n(t),t)]$ and the second term is equal to $g_2(A_n(t),t)$, since $A_n(t) \in \mathcal{F}_n(t)$.
\end{proof}

Recall that $Q_n(t)$ denotes the ratio of alive vertices at time $t$ (see~\eqref{eq:Qn_def}).

\begin{definition}[Martingales of $Q_n(t)$]
Let us introduce the martingales
\begin{alignat}{2}
M_n(t) &\coloneqq Q_n(t) -1 - \int \limits_0^t \mathcal{L} Q_n(s)\, \mathrm{d}s, \qquad & M_n(0)=0, \label{eq:alive_corrector_martingale} \\
N_n(t) &\coloneqq M_n^2(t) - \int \limits_0^t \mathcal{L} M_n^2(s) \, \mathrm{d}s, \qquad & N_n(0)=0. \label{eq:quadratic_corrector_martingale}
\end{alignat}
\end{definition}

\begin{claim}[Drift of $Q_n(t)$ and $M_n^2(t)$]
The drift of $Q_n(t)$ and $M_n^2(t)$ are the following
\begin{align}
\mathcal{L} Q_n(t) &= - \lambda_n Q_n(t) \ev_{A_n(t),n,t}[\mathscr{C}], \label{eq:neighbor_drift}\\
\mathcal{L} M_n^2(t) &= \frac{1}{n} \lambda_n Q_n(t) \ev_{A_n(t),n,t}[\mathscr{C}^2]. \label{eq:quadratic_variation}
\end{align}
\end{claim}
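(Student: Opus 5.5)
We derive both formulas from the generator formula \eqref{eq:A_n_t_infgen} of Claim~\ref{cl:time_dep_inf_gen}, applied to two suitable test functions $g(a,t)$.

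\textbf{Drift of $Q_n$.} Take $g(a,t)=a/n$, which has no explicit time dependence, so $g_2\equiv 0$. A strike removes $\mathscr{C}$ vertices, so $g(A_n(t)-\mathscr{C},t)-g(A_n(t),t)=-\mathscr{C}/n$. Then \eqref{eq:A_n_t_infgen} gives
\[
\mathcal{L} Q_n(t)=\lambda_n A_n(t)\,\ev_{A_n(t),n,t}[-\mathscr{C}/n]=-\lambda_n Q_n(t)\,\ev_{A_n(t),n,t}[\mathscr{C}],
\]
which is \eqref{eq:neighbor_drift}.

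\textbf{Drift of $M_n^2$.} Set $I(t)=\int_0^t \mathcal{L} Q_n(s)\,\mathrm{d}s$. By the first part this integrand is a function of $(A_n(s),s)$, and it is bounded, since $\mathscr{C}\le A_n(s)\le n$. The process $(A_n(t),I(t))$ is therefore Markov, and $I$ is continuous and of finite variation. Use the extended test function
\[
g(a,x,t)=\Bigl(\frac{a}{n}-1-x\Bigr)^2, \qquad \text{so that } M_n^2(t)=g(A_n(t),I(t),t).
\]
The proof of Claim~\ref{cl:time_dep_inf_gen} goes through verbatim with $I$ as an extra continuous coordinate. The jump part, conditional on $\mathcal{F}_n(t)$, is
\[
\lambda_n A_n(t)\,\ev_{A_n(t),n,t}\Bigl[\bigl(M_n(t)-\tfrac{\mathscr{C}}{n}\bigr)^2-M_n(t)^2\Bigr]
=\lambda_n A_n(t)\,\ev\Bigl[-\tfrac{2M_n(t)\mathscr{C}}{n}+\tfrac{\mathscr{C}^2}{n^2}\Bigr].
\]
The continuous part comes from $x$ moving at speed $\mathcal{L}Q_n(t)$ and equals $-2M_n(t)\,\mathcal{L}Q_n(t)$.

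By \eqref{eq:neighbor_drift}, $-2M_n(t)\,\mathcal{L}Q_n(t)=2M_n(t)\lambda_n Q_n(t)\ev[\mathscr{C}]$, and the jump part's cross term is $-2M_n(t)\lambda_n Q_n(t)\ev[\mathscr{C}]$. These cancel exactly. What remains is $\lambda_n A_n(t)\ev[\mathscr{C}^2]/n^2=\frac{1}{n}\lambda_n Q_n(t)\ev[\mathscr{C}^2]$, which is \eqref{eq:quadratic_variation}.

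\textbf{Justification.} The only nonroutine point is justifying the limit $h\to0_+$ in \eqref{eq:drift_def} for the second computation. Two bounds make this routine:
\begin{itemize}
\item With probability $1-O(h)$ there is no strike in $(t,t+h]$. On that event $M_n^2$ changes by a smooth amount, because $I$ is Lipschitz with constant at most $\lambda_n n$.
\item The probability of two or more strikes in $(t,t+h]$ is $O(h^2)$, and every quantity involved is bounded by a constant depending on $n$.
\end{itemize}
Dominated convergence then yields both limits. The identification of the jump law uses Lemma~\ref{lem:ER}: conditional on $\mathcal{F}_n(t)$, the component of the struck vertex has the law of $\mathscr{C}$ under $\pr_{A_n(t),n,t}$.
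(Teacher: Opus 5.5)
Your proposal is correct. The first formula is obtained exactly as in the paper, by applying \eqref{eq:A_n_t_infgen} to $g(a,t)=a/n$.

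For \eqref{eq:quadratic_variation} you take a different route. The paper uses the martingale property of $M_n$ in step $(*)$ to write $\ev[M_n^2(t+h)-M_n^2(t)\,|\,\mathcal{F}_n(t)]=\ev[(M_n(t+h)-M_n(t))^2\,|\,\mathcal{F}_n(t)]$. Since the compensator increment is $O(h)$, this equals, to first order in $h$, the conditional second moment of the increment of $Q_n$. That moment has rate $\frac{1}{n^2}\lambda_n A_n(t)\ev_{A_n(t),n,t}[\mathscr{C}^2]$.

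You instead enlarge the state to $(A_n,I)$ and apply the generator directly to $g(a,x,t)=(a/n-1-x)^2$. The jump cross term $-2M_n(t)\lambda_n Q_n(t)\ev_{A_n(t),n,t}[\mathscr{C}]$ is then cancelled by the transport term $-2M_n(t)\,\mathcal{L}Q_n(t)$ coming from the compensator.

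The two arguments rest on the same identity. Your cancellation is the infinitesimal statement $\mathcal{L}M_n(t)=0$. The paper uses the integrated form of this in $(*)$ to remove $2M_n(t)\,\ev[M_n(t+h)-M_n(t)\,|\,\mathcal{F}_n(t)]$.

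The paper's version is shorter. It also identifies the answer as the jump-variance (predictable quadratic variation) rate without enlarging the state space. Yours requires re-running the proof of Claim~\ref{cl:time_dep_inf_gen} for the pair $(A_n,I)$. In exchange, it does not take the martingale property of $M_n$ as an input, and it makes the compensator's role explicit.

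Both routes need the same small-$h$ estimates: at most one effective strike up to $O(h^2)$, $|I(t+h)-I(t)|\le\lambda_n n h$, and all quantities bounded for fixed $n$.
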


\begin{proof}
Equation \eqref{eq:neighbor_drift} follows from~\eqref{eq:A_n_t_infgen}. Using the martingale property in~$(*)$ below, we obtain~\eqref{eq:quadratic_variation}:
\begin{align*}
\mathcal{L} M_n^2(t)
 \coloneqq& \lim_{h \to 0_+} \frac{1}{h} \ev \mleft[ M_n^2(t+h)-M_n^2(t) \, | \, \mathcal{F}_n(t) \mright]
\stackrel{(*)}{=} \lim_{h \to 0_+} \frac{1}{h} \ev \mleft[ (M_n(t+h)-M_n(t))^2 \, | \, \mathcal{F}_n(t) \mright]=\\
&\lim_{h \to 0_+} \frac{1}{h} \ev \mleft[ (Q_n(t+h)-Q_n(t))^2 \, | \, \mathcal{F}_n(t) \mright]=
\frac{1}{n} \lambda_n Q_n(t) \ev_{A_n(t),n,t}\mleft[\mathscr{C}^2\mright].\qedhere
\end{align*}
\end{proof}

\section{Properties of the window curve}\label{sec:ODE}

In this section we study the window curve $w_n(t)$ defined in \eqref{eq:wn_def}. The proofs of the statements of this section are detailed in Appendix~\ref{ap:window_curve_proofs}.

\begin{claim}[ODE of the window curve]
The function $w_n(t)$ defined in \eqref{eq:wn_def} solves the ODE
\begin{equation}\label{eq:w_ODE}
\dot{w}_n(t) = -\lambda_n \cdot \frac{w_n(t)}{1-w_n(t)} +\frac{w_n(t)}{t}, \quad w_n(0)=0, \quad \dot{w}_n(0)=1.
\end{equation}
\end{claim}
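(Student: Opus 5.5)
The plan is to differentiate the defining relation $w_n(t)=t\,q_n(t)$ from~\eqref{eq:wn_def} with the product rule, substitute the ODE~\eqref{eq:alive_ODE} for $\dot q_n$, and then rewrite everything in terms of $w_n$ alone.

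\textbf{The ODE for $t>0$.} By the product rule,
\begin{equation*}
\dot w_n(t)= q_n(t)+t\,\dot q_n(t)= q_n(t) - \lambda_n\frac{t\,q_n(t)}{1-t\,q_n(t)}.
\end{equation*}
For $t>0$ we have $q_n(t)=w_n(t)/t$ and $t\,q_n(t)=w_n(t)$. Substituting these gives exactly the right-hand side of~\eqref{eq:w_ODE}, namely $-\lambda_n \frac{w_n(t)}{1-w_n(t)}+\frac{w_n(t)}{t}$.

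\textbf{Initial conditions.} We have $w_n(0)=0\cdot q_n(0)=0$, and from the product rule $\dot w_n(0)=q_n(0)+0\cdot\dot q_n(0)=1$. Note that $\dot q_n(0)=-\lambda_n$ is finite, since $1-0\cdot q_n(0)=1$. The term $w_n(t)/t$ in~\eqref{eq:w_ODE} is singular at $t=0$, but it equals $q_n(t)\to 1$ as $t\to0$. So the equation holds at $t=0$ in the limiting sense, consistent with $\dot w_n(0)=1$.

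\textbf{Main point requiring care.} The only substantive issue is well-posedness. We must ensure that $q_n$ is defined and differentiable on the relevant interval, i.e., that $1-t\,q_n(t)=1-w_n(t)$ stays positive. This holds on the maximal interval of existence of the solution of~\eqref{eq:alive_ODE}, where the right-hand side is locally Lipschitz.

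To see that $w_n$ never reaches $1$, I would argue as follows. While $w_n<1$, $q_n$ is decreasing, so $q_n\le 1$. Near the level $w_n=1$, the term $-\lambda_n w_n/(1-w_n)$ dominates the bounded term $w_n/t$, which forces $\dot w_n<0$. Therefore $w_n$ stays below $1$ and the solution extends globally.

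Presumably this property is established in Appendix~\ref{ap:window_curve_proofs} together with the other properties of the window curve. Given it, the claim is a direct computation.
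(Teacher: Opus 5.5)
Your proof is correct and is the same as the paper's, which consists of the one-line remark that the claim follows from \eqref{eq:alive_ODE} and \eqref{eq:wn_def}: apply the product rule and substitute $t\,q_n(t)=w_n(t)$. Your extra well-posedness remark is not required for the claim, but it is consistent with the paper. The paper obtains $w_n(t)\le v_n(t)\le \frac{1}{1+\lambda_n}<1$ in Lemma~\ref{lem:w_bounds}, which is the same comparison you sketch: at the level $w_n=\frac{1}{1+\lambda_n}$ one has $\dot w_n=q_n(t)-1<0$ for $t>0$.
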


\begin{proof}
It follows from~\eqref{eq:alive_ODE} and~\eqref{eq:wn_def}.
\end{proof}
The ODE~\eqref{eq:w_ODE} is neither linear nor time-homogeneous.
We define an auxiliary function~$v_n(\cdot)$ that arises as the solution of an autonomous ODE, which can be viewed as an approximation of~\eqref{eq:w_ODE}.

\begin{definition}[Time-homogenized window curve]
We define $v_n(t)$ as the solution of the ODE
\begin{equation}\label{eq:v_ODE}
\dot{v}_n(t) = -\lambda_n \cdot \frac{v_n(t)}{1-v_n(t)} + 1, \quad v_n(0)=0.
\end{equation}
\end{definition}

\begin{claim}[Properties of $v_n$]\label{cl:vn_properties}
The function $v_n(t)$ is increasing and $\lim v_n(t) = 1/(1+\lambda_n)$ as $t \to \infty$. Therefore, we can define its inverse.
\end{claim}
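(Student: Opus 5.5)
The plan is a phase-line analysis of the autonomous ODE~\eqref{eq:v_ODE}. Write $F(v) \coloneqq 1 - \lambda_n \frac{v}{1-v}$ for $v \in [0,1)$, so that $\dot v_n = F(v_n)$.

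\textbf{Step 1: the fixed point.} The function $F$ is continuous and strictly decreasing on $[0,1)$, since $v \mapsto \frac{v}{1-v}$ is strictly increasing there. We have $F(0) = 1 > 0$ and $F(v) \to -\infty$ as $v \to 1^-$. Its unique zero solves $\lambda_n v = 1 - v$, that is, $v^* = 1/(1+\lambda_n) \in (0,1)$. On $[0,v^*]$ the function $F$ is locally Lipschitz, because it is smooth away from $v=1$. Hence the solution of~\eqref{eq:v_ODE} started from $v_n(0)=0$ exists and is unique as long as it stays in $[0,v^*]$.

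\textbf{Step 2: monotonicity and global existence.} The constant function $v^*$ is itself a solution. By uniqueness, the trajectory started at $0 < v^*$ can never reach $v^*$ in finite time. It therefore stays in $[0,v^*)$ for all $t \ge 0$. Being confined to a compact set where $F$ is Lipschitz, it cannot blow up, so it exists globally. On $[0,v^*)$ we have $F > 0$, so $\dot v_n(t) = F(v_n(t)) > 0$ for all $t$. Thus $v_n$ is strictly increasing.

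\textbf{Step 3: the limit.} Being increasing and bounded by $v^*$, the function $v_n$ has a limit $L \le v^*$. Suppose $L < v^*$. Then $\dot v_n(t) \ge F(L) > 0$ for all $t$, because $F$ is decreasing and $v_n(t) \le L$. This forces $v_n(t) \to \infty$, a contradiction. Hence $L = 1/(1+\lambda_n)$.

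\textbf{Step 4: the inverse.} Since $v_n$ is continuous and strictly increasing, it is a bijection from $[0,\infty)$ onto $[0, 1/(1+\lambda_n))$. Its inverse is therefore well defined on that interval. One can even write it explicitly by separation of variables:
\[
v_n^{-1}(x) = \int_0^x \frac{\mathrm{d}u}{1-\lambda_n u/(1-u)}, \qquad 0 \le x < \frac{1}{1+\lambda_n}.
\]

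There is no real obstacle here. The only point needing care is Step~2, namely that the trajectory cannot cross the equilibrium. This is handled by uniqueness of solutions for the locally Lipschitz right-hand side.
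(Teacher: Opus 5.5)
Your proposal is correct and follows the same route as the paper, whose proof simply points to the stationary point $1/(1+\lambda_n)$ of the autonomous ODE~\eqref{eq:v_ODE}. Your phase-line argument supplies the details the paper leaves implicit: uniqueness stops the trajectory from reaching the equilibrium, the right-hand side is positive below it, and a limit strictly below $1/(1+\lambda_n)$ leads to a contradiction. Your separation-of-variables integral for the inverse also agrees with the explicit formula~\eqref{eq:explicit_t_v} that the paper derives later in the appendix.
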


\begin{proof}
Follows from the fact that the stationary point of~\eqref{eq:v_ODE} is $v_n=1/(1+\lambda_n)$.
\end{proof}

\begin{definition}[Inverse of $v_n$]\label{def:t_v_inv}
Denote by $\mathrm{t}_n(\cdot)$ the inverse function of $v_n(\cdot)$,
i.e., 
\begin{equation}\label{eq:t_def_as_v_inverse}
\mathrm{t}_n\colon \left[0, \frac{1}{1+\lambda_n} \right) \to \mathbb{R}_+, \qquad
\begin{array}{ll}
\text{$\mathrm{t}_n(v_n(t))=t$ for any $t \in \mathbb{R}_+$}, \\
\text{$v_n(\mathrm{t}_n(v))=v$ for any $0 \leq v < \frac{1}{1+\lambda_n}$}.
\end{array} 
\end{equation}

Let us also introduce the notation
\begin{equation}\label{eq:t_star_def}
t_n^* \coloneqq \mathrm{t}_n \mleft(\frac{1}{1+2 \lambda_n}\mright).
\end{equation}
\end{definition}

\begin{claim}[Arriving at $\lambda_n$-criticality before $t_n^*$]\label{cl:tc_less_than_t*}
Recall the definition of $t_n^c$ from~\eqref{eq:tnc_def}. We have
\begin{equation}\label{eq:tc_smaller_than_t*}
t_n^c < t_n^*.
\end{equation}
\end{claim}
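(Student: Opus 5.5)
Since $t_n^c$ is the first time $w_n$ reaches $1-4\lambda_n$, and $w_n$ is continuous with $w_n(0)=0$, it suffices to show that $w_n(t_n^*) > 1-4\lambda_n$; the intermediate value theorem then gives a hitting time strictly before $t_n^*$. The plan is to compare the non-autonomous ODE~\eqref{eq:w_ODE} with the autonomous ODE~\eqref{eq:v_ODE}, and to show that $w_n$ dominates $v_n$ up to a small error, at least in a neighbourhood of $t_n^*$. Since $v_n(t_n^*) = \frac{1}{1+2\lambda_n} \ge 1-2\lambda_n$, this will finish the proof.

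First I will locate $t_n^*$. Separating variables in~\eqref{eq:v_ODE} gives
\[
\mathrm{t}_n(v)=\int_0^v \frac{1-u}{1-(1+\lambda_n)u}\,\mathrm{d}u,
\]
which is explicit. Evaluating it at $v=\frac{1}{1+2\lambda_n}$ shows $t_n^* = 1+\lambda_n\ln(1/\lambda_n)+\mathcal{O}(\lambda_n)$, and in particular $t_n^*>1$ for $n$ large.

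Next comes the comparison. On the set where $w_n(t)\ge 1-4\lambda_n$ there is nothing to prove. So suppose, for contradiction, that $w_n(t)<1-4\lambda_n$ on all of $[0,t_n^*]$. Then $q_n$ satisfies $|\dot q_n|\le \lambda_n q_n/(4\lambda_n)\le 1/4$, and this crude bound can be improved: using $1-tq_n\ge 1-t$ for $t<1$, we get $1-q_n(t)=\mathcal{O}(\lambda_n\ln(1/\lambda_n))$ throughout $[0,t_n^*]$. Rewrite the drift term in~\eqref{eq:w_ODE} as $w_n/t = q_n$. Compared with the constant $1$ in~\eqref{eq:v_ODE}, this term is smaller, so $w_n$ is not a supersolution of the $v_n$-equation. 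This is the main obstacle, and a naive comparison $w_n\ge v_n$ is false.

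To handle it, I will instead compare $w_n$ with the solution $\tilde v_n$ of $\dot{\tilde v} = -\lambda_n \frac{\tilde v}{1-\tilde v} + \underline{q}$, where $\underline{q} := q_n(t_n^*) \le q_n(t)$. Since $q_n$ is decreasing, $w_n$ is a supersolution of this equation, so $w_n \ge \tilde v_n$ on $[0,t_n^*]$. The fixed point of the $\tilde v_n$-equation solves $\frac{\lambda_n v}{1-v}=\underline q$, that is, $1-v\approx\lambda_n/\underline q\approx\lambda_n$. Repeating the explicit integral computation for $\tilde v_n$ should show that $\tilde v_n(t_n^*)\ge 1-\lambda_n(2+o(1))-\mathcal{O}(\lambda_n\ln(1/\lambda_n))\cdot\lambda_n$, which exceeds $1-4\lambda_n$. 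This contradicts the assumption.

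The delicate point is that replacing $1$ by $\underline q = 1-\mathcal{O}(\lambda_n\ln(1/\lambda_n))$ shifts the time needed to reach level $1-3\lambda_n$ only by $\mathcal{O}(\lambda_n\ln(1/\lambda_n))$ in relative terms. I expect this to be absorbed because the approach time of $\mathrm{t}_n$ to level $1-c\lambda_n$ depends on $c$ only through an additive $\mathcal{O}(\lambda_n)$ term. If this margin turns out to be too tight, I would fall back on the asymptotics of Claim~\ref{cl:tc_asymp} together with a sharper expansion of $t_n^*$ and compare the $\mathcal{O}(\lambda_n)$ constants directly.
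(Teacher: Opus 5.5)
Your reduction (it suffices to show $w_n(t_n^*)>1-4\lambda_n$) is exactly the paper's, and the comparison $w_n\ge\tilde v_n$ on $[0,t_n^*]$ is valid. The key quantitative step fails, however. Freezing $q_n$ at its final value $\underline q=q_n(t_n^*)$ loses too much, and the estimate you expect for $\tilde v_n(t_n^*)$ is false.

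Put $\epsilon_n:=1-\underline q$ and let $\tilde{\mathrm t}_n$ be the inverse of $\tilde v_n$. Separation of variables gives $\tilde{\mathrm t}_n(v)=\frac{v}{\underline q+\lambda_n}-\frac{\lambda_n}{(\underline q+\lambda_n)^2}\ln\bigl(1-\frac{(\underline q+\lambda_n)v}{\underline q}\bigr)$. Expanding, using $\epsilon_n=\mathcal{O}(\lambda_n\ln(1/\lambda_n))$, gives
\[
\tilde{\mathrm t}_n(1-4\lambda_n)=1+\epsilon_n+\lambda_n\ln(1/\lambda_n)-(5+\ln 3)\lambda_n+o(\lambda_n),\qquad t_n^*=1+\lambda_n\ln(1/\lambda_n)-3\lambda_n+o(\lambda_n).
\]
So $\tilde v_n(t_n^*)>1-4\lambda_n$ holds only if $\epsilon_n<(2+\ln 3)\lambda_n+o(\lambda_n)$.

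But $\epsilon_n\asymp\lambda_n\ln(1/\lambda_n)$. The upper bound is Claim~\ref{claim_q_close_to_one}. For the lower bound, $w_n(u)\ge u\,q_n(t_n^*)$ gives $1-w_n(u)\le 1-u+2\lambda_n\ln(1/\lambda_n)$ for $u\le 1$. Hence $\epsilon_n\ge 1-q_n(1)=\lambda_n\int_0^1\frac{q_n(u)}{1-w_n(u)}\,\mathrm{d}u\ge(1-o(1))\lambda_n\ln(1/\lambda_n)$.

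Consequently, for large $n$ you get $\tilde v_n(t_n^*)<1-4\lambda_n$; in fact $1-\tilde v_n(t_n^*)$ is itself of order $\lambda_n\ln(1/\lambda_n)$. No contradiction arises. Your own observation that the target level enters only through an additive $\mathcal{O}(\lambda_n)$ term is exactly why the error cannot be absorbed. The available slack is $\mathcal{O}(\lambda_n)$, while the artificial delay is about $\epsilon_n\gg\lambda_n$. The fallback does not help either: Claim~\ref{cl:tc_asymp} is derived in the paper from the very estimate in question, and its $\mathcal{O}(\lambda_n)$ term has no explicit constant.

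The missing idea is to measure the deficit in integrated form. One has $\int_0^{t_n^*}(1-q_n(u))\,\mathrm{d}u=\mathcal{O}(\lambda_n)$, because $1-q_n(u)\approx\lambda_n\ln\frac{1}{1-u}$ is integrable and reaches order $\lambda_n\ln(1/\lambda_n)$ only in a short window near $u=1$. The paper exploits this by bounding $v_n-w_n$ from above, rather than bounding $w_n$ from below by an autonomous solution:
\begin{enumerate}[label=(\roman*)]
\item Since $w_n\le v_n$ and $x\mapsto x/(1-x)$ is increasing, $\frac{\mathrm{d}}{\mathrm{d}t}(v_n-w_n)\le 1-q_n(t)$.
\item The explicitly solvable linear ODE \eqref{eq:z_ODE} yields $q_n(t)\ge[1-(1+\lambda_n)v_n(t)]^{\lambda_n/(1+\lambda_n)}$.
\item Integrating after the substitution $v=v_n(t)$, with $\dot v_n\ge\frac12$, gives $v_n(t_n^*)-w_n(t_n^*)\le 2\lambda_n$.
\end{enumerate}
Hence $w_n(t_n^*)\ge\frac{1}{1+2\lambda_n}-2\lambda_n>1-4\lambda_n$.
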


We prove Claim~\ref{cl:tc_less_than_t*} in Appendix~\ref{ap:window_curve_proofs}.

\begin{lemma}[Bounds on the window curve]\label{lem:w_bounds}
For all $t\ge 0$,
\begin{equation}\label{eq:wnvnbound}
0\le w_n(t)\le v_n(t)\le \tfrac{1}{1+\lambda_n}.
\end{equation}
\end{lemma}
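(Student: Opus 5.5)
The plan is to prove the three inequalities in \eqref{eq:wnvnbound} separately, using ODE comparison, and to show along the way that $w_n(t)<1$ for all $t$. Then the right-hand side of \eqref{eq:w_ODE} is well defined and the solution exists globally.

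\emph{Step 1: $v_n(t)\le \frac{1}{1+\lambda_n}$.} This follows from Claim~\ref{cl:vn_properties}. The constant $\frac{1}{1+\lambda_n}$ is a stationary solution of the autonomous ODE \eqref{eq:v_ODE}, and $v_n(0)=0$ lies below it. By uniqueness of solutions (the vector field is smooth on $[0,1)$), $v_n$ cannot cross the stationary point, so $v_n(t)<\frac{1}{1+\lambda_n}$ for all $t$.

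\emph{Step 2: $w_n(t)\ge 0$.} By \eqref{eq:alive_ODE}, $q_n$ solves a linear equation $\dot q_n=-c(t)q_n$ with $c(t)=\lambda_n/(1-tq_n(t))$, which is finite as long as $w_n<1$. Hence $q_n(t)=\exp\bigl(-\int_0^t c\bigr)>0$ and $w_n=tq_n\ge 0$.

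\emph{Step 3: $w_n\le v_n$.} This is the main step. Write $F(x)=-\lambda_n\frac{x}{1-x}$. Then $\dot w_n=F(w_n)+\frac{w_n}{t}$ and $\dot v_n=F(v_n)+1$. The key observation is that $w_n(t)/t=q_n(t)\le q_n(0)=1$, because $q_n$ is decreasing by \eqref{eq:alive_ODE} as long as $tq_n<1$. Thus $w_n$ is a subsolution: $\dot w_n\le F(w_n)+1$, with equal initial value $w_n(0)=v_n(0)=0$.

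The standard comparison theorem for the locally Lipschitz field $F+1$ then gives $w_n\le v_n$. Concretely, let $\tau=\inf\{t:w_n(t)>v_n(t)\}$. On a right-neighbourhood of $\tau$, Grönwall applied to $(w_n-v_n)_+$ with the Lipschitz constant of $F$ on $[0,\frac{1}{1+\lambda_n}+\delta]$ forces $w_n-v_n\le 0$, a contradiction.

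\emph{Keeping the argument consistent.} Combining the steps gives $w_n<\frac{1}{1+\lambda_n}<1$, which justifies a posteriori that $1-tq_n$ stays bounded away from $0$. To avoid circularity, I would run all of the above up to the first time $w_n$ reaches $\frac{1}{1+\lambda_n}+\delta$ and conclude that this time is infinite.

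\emph{Expected obstacle.} The only delicate point is the comparison at $t=0$, where $w_n/t$ is a $0/0$ expression. It is resolved by working with $q_n$ directly: $w_n(t)/t=q_n(t)$ for $t>0$, and $q_n(0)=1$.
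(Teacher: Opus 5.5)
Your proof is correct and takes the same route as the paper. The upper bound on $v_n$ comes from the stationary point of \eqref{eq:v_ODE}, nonnegativity is shown separately, and $w_n\le v_n$ is an ODE comparison that uses the fact that the term $w_n/t$ in \eqref{eq:w_ODE} is at most $1$. The differences are minor and favour your version: the paper checks $\dot w_n\le \dot v_n$ only at touching points $w_n=v_n$, bounding $w_n/t=v_n/t\le 1$ via $\dot v_n\le 1$, while you use $w_n/t=q_n\le 1$ globally to get a subsolution for a Lipschitz comparison, and your positivity argument via $q_n=\exp(-\int_0^t c)$ together with the bootstrap on $w_n<1$ makes rigorous what the paper leaves implicit.
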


We prove Lemma~\ref{lem:w_bounds} in Appendix~\ref{ap:window_curve_proofs}.

\begin{claim}[Bounds on $\dot{v}_n$]
For any $t \in [0, t_n^*]$, we have
\begin{equation}\label{eq:v_dot_lower_bound}
\ffrac 12 \leq \dot{v}_n(t) \leq 1.
\end{equation}
\end{claim}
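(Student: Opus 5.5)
The claim concerns the autonomous ODE \eqref{eq:v_ODE}: $\dot v_n = 1 - \lambda_n \frac{v_n}{1-v_n}$. By Claim~\ref{cl:vn_properties}, $v_n$ is increasing and stays in $[0, \frac{1}{1+\lambda_n})$. So the plan is to read the bounds off the right-hand side of \eqref{eq:v_ODE}, viewed as a function of the value $v_n(t)$, and to use monotonicity of $v_n$ to control that value on $[0,t_n^*]$.

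\emph{Upper bound.} Since $0 \le v_n(t) < 1$ for all $t \ge 0$, the term $\lambda_n \frac{v_n(t)}{1-v_n(t)}$ is nonnegative. Hence $\dot v_n(t) \le 1$ for every $t\ge 0$, in particular on $[0,t_n^*]$.

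\emph{Lower bound.} The map $v \mapsto \frac{v}{1-v}$ is increasing on $[0,1)$. Because $v_n$ is increasing, for $t \in [0,t_n^*]$ we have
\[
v_n(t) \le v_n(t_n^*) = \frac{1}{1+2\lambda_n},
\]
using Definition~\ref{def:t_v_inv} and \eqref{eq:t_star_def}. Plugging in $v = \frac{1}{1+2\lambda_n}$ gives $1-v = \frac{2\lambda_n}{1+2\lambda_n}$, so $\frac{v}{1-v} = \frac{1}{2\lambda_n}$. Therefore
\[
\lambda_n \frac{v_n(t)}{1-v_n(t)} \le \frac{1}{2}, \qquad\text{and hence}\qquad \dot v_n(t) \ge 1 - \tfrac12 = \tfrac12 .
\]

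The only point needing care is that $t_n^*$ is well defined, i.e.\ that $\frac{1}{1+2\lambda_n}$ lies in the range $\left[0,\frac{1}{1+\lambda_n}\right)$ of $v_n$. This holds because $\lambda_n > 0$ and, by Claim~\ref{cl:vn_properties}, $v_n$ is a continuous increasing bijection onto $\left[0,\frac{1}{1+\lambda_n}\right)$. I see no real obstacle beyond this check.
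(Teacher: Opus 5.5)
Your proof is correct and follows essentially the same route as the paper, which observes that $\dot{v}_n$ is decreasing (because $v_n$ is increasing and the right-hand side of \eqref{eq:v_ODE} is decreasing in $v$), with $\dot{v}_n(0)=1$ and $\dot{v}_n(t_n^*)=\tfrac12$. Your endpoint evaluation at $v_n(t_n^*)=\frac{1}{1+2\lambda_n}$ and your upper bound via nonnegativity of $\lambda_n \frac{v_n}{1-v_n}$ amount to the same argument.
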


\begin{proof}
By~\eqref{eq:v_ODE}, $\dot{v}_n(t)$ is decreasing and $\dot{v}_n(0) = 1$. Moreover, by~\eqref{eq:v_ODE}, \eqref{eq:t_def_as_v_inverse} and~\eqref{eq:t_star_def}, we have ${\dot{v}_n(t_n^*) = 1/2}$.
\end{proof}

\begin{claim}[Bound on $t_n^*$] We have
\begin{equation}\label{eq:tstar_n_less_than_2}
t_n^*<2.
\end{equation}
\end{claim}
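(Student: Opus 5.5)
Since $v_n$ is increasing and $\mathrm{t}_n$ is its inverse, $t_n^*$ is the time at which $v_n$ reaches the level $v^*:=\frac{1}{1+2\lambda_n}$. The plan is to write $t_n^*$ as an explicit integral by separation of variables in the autonomous ODE~\eqref{eq:v_ODE}, and then bound that integral.

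First I simplify the right-hand side of~\eqref{eq:v_ODE}:
\begin{equation*}
\dot v_n = 1-\lambda_n\frac{v_n}{1-v_n} = \frac{1-(1+\lambda_n)v_n}{1-v_n}.
\end{equation*}
Separating variables then gives
\begin{equation*}
t_n^* = \int_0^{v^*} \frac{1-v}{1-(1+\lambda_n)v}\,\mathrm{d}v .
\end{equation*}

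Next I evaluate this integral. The integrand can be rewritten as
\begin{equation*}
\frac{1-v}{1-(1+\lambda_n)v}=\frac{1}{1+\lambda_n}+\frac{\lambda_n}{1+\lambda_n}\cdot\frac{1}{1-(1+\lambda_n)v}.
\end{equation*}
At the endpoint we have $1-(1+\lambda_n)v^* = \frac{\lambda_n}{1+2\lambda_n}$. Integrating term by term therefore yields
\begin{equation*}
t_n^*=\frac{1}{(1+\lambda_n)(1+2\lambda_n)}+\frac{\lambda_n}{(1+\lambda_n)^2}\ln\Bigl(\frac{1+2\lambda_n}{\lambda_n}\Bigr).
\end{equation*}

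For the bound I use $\ln(1+x)\le x$, so that $\ln\bigl(\frac{1+2\lambda_n}{\lambda_n}\bigr)=\ln\bigl(1+\frac{1+\lambda_n}{\lambda_n}\bigr)\le \frac{1+\lambda_n}{\lambda_n}$. With this, the second term is at most $\frac{1}{1+\lambda_n}$, and hence
\begin{equation*}
t_n^* \le \frac{1}{1+\lambda_n}\Bigl(\frac{1}{1+2\lambda_n}+1\Bigr)<2 .
\end{equation*}
The last inequality is strict because $\lambda_n>0$.

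There is no real obstacle here. The only point needing care is that the separation of variables is legitimate. This holds because $\dot v_n>0$ on $[0,v^*]$ (Claim~\ref{cl:vn_properties} and \eqref{eq:v_dot_lower_bound}), and $v^*<\frac{1}{1+\lambda_n}$, so $v^*$ lies in the domain of $\mathrm{t}_n$ and the integrand stays finite on $[0,v^*]$.
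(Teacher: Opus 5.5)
Your proof is correct, but it takes a different route from the paper. The paper does not integrate the ODE here. It uses the derivative bound \eqref{eq:v_dot_lower_bound}, $\dot v_n\ge \frac12$ on $[0,t_n^*]$, which is already established at that point, to write $\frac{1}{1+2\lambda_n}=v_n(t_n^*)=\int_0^{t_n^*}\dot v_n(s)\,\mathrm{d}s\ge \frac{t_n^*}{2}$. This gives $t_n^*\le \frac{2}{1+2\lambda_n}<2$ in one line.

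You instead solve for $t_n^*$ in closed form by separation of variables and then bound the logarithm using $\ln(1+x)\le x$. Your closed form is exactly the formula \eqref{eq:explicit_t_v}, which the paper derives later in the appendix to obtain the sharper bounds \eqref{eq:bounds_on_arrival_time}. Your final bound simplifies to $\frac{1}{1+\lambda_n}\bigl(\frac{1}{1+2\lambda_n}+1\bigr)=\frac{2}{1+2\lambda_n}$, which is the same bound the paper gets.

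The paper's argument is shorter and needs no explicit solution. Yours costs a partial-fraction computation, but it gives the exact value of $t_n^*$. From that value one can read off $t_n^*=1+\lambda_n\ln(1/\lambda_n)+\mathcal{O}(\lambda_n)$ directly, which the paper's derivative-comparison argument cannot provide.
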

\begin{proof} Definition~\ref{def:t_v_inv} and \eqref{eq:v_dot_lower_bound} give
$\frac{1}{1+2\lambda_n}=v_n(t_n^*)=\int_0^{t_n^*} \dot{v}_n(s)\, \mathrm{d}s \geq \frac{t_n^*}{2}$, so $t_n^* \leq \frac{2}{1+2\lambda_n}<2$.
\end{proof}

\begin{claim}[Upper bounds by substitution] If $\lambda_n \leq 1/2$ and $t \le t_n^*$, then we have
\begin{equation}\label{eq:substition-integral}
\int \limits_0^t \frac{1}{1-v_n(u)} \, \mathrm{d}u \le 2 \ln \mleft( \frac{1}{\lambda_n} \mright), \qquad
\int \limits_0^t \frac{1}{(1-v_n(u))^k} \, \mathrm{d}u \le \frac{2}{k-1} \cdot \lambda_n^{1-k}, \quad k=2,\,3,\,\ldots
\end{equation}
\end{claim}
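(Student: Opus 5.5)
The plan is to change variables $v=v_n(u)$ in each integral. On $[0,t_n^*]$ the function $v_n$ is increasing and has inverse $\mathrm{t}_n$ (Definition~\ref{def:t_v_inv}). By~\eqref{eq:v_dot_lower_bound} we have $\dot v_n(u)\ge \frac12$ there, so $\mathrm{d}u = \mathrm{d}v/\dot v_n(\mathrm{t}_n(v)) \le 2\,\mathrm{d}v$. Since the integrands are nonnegative, it suffices to treat $t=t_n^*$. For $k\ge 1$ this gives
\begin{equation*}
\int_0^{t} \frac{\mathrm{d}u}{(1-v_n(u))^k} \le \int_0^{t_n^*} \frac{\mathrm{d}u}{(1-v_n(u))^k} \le 2\int_0^{v_n(t_n^*)} \frac{\mathrm{d}v}{(1-v)^k}, \qquad v_n(t_n^*)=\frac{1}{1+2\lambda_n}.
\end{equation*}

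Next compute the elementary integrals. The upper endpoint satisfies $1-v_n(t_n^*) = \frac{2\lambda_n}{1+2\lambda_n}$.

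For $k=1$ we get
\begin{equation*}
2\ln\frac{1+2\lambda_n}{2\lambda_n} = 2\ln\mleft(\frac{1}{\lambda_n}\mright) + 2\ln\frac{1+2\lambda_n}{2}.
\end{equation*}
The hypothesis $\lambda_n\le 1/2$ gives $1+2\lambda_n\le 2$, so the last term is $\le 0$ and the first bound follows.

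For $k\ge2$ we get
\begin{equation*}
\frac{2}{k-1}\left[(1-v)^{1-k}\right]_{0}^{v_n(t_n^*)} \le \frac{2}{k-1}\left(\frac{1+2\lambda_n}{2\lambda_n}\right)^{k-1} \le \frac{2}{k-1}\lambda_n^{1-k},
\end{equation*}
again using $(1+2\lambda_n)/2\le 1$.

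There is no real obstacle here. The only points to check are the following.
\begin{itemize}
\item The change of variables is legitimate because $v_n$ is $C^1$ with a positive derivative on $[0,t_n^*]$, by~\eqref{eq:v_dot_lower_bound}.
\item The hypothesis $\lambda_n\le 1/2$ is exactly what absorbs the constant $(1+2\lambda_n)/2$.
\end{itemize}
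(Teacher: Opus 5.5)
Your proof is correct and follows the paper's argument: you substitute $v=v_n(u)$ using $\dot v_n\ge \frac12$ on $[0,t_n^*]$, then integrate up to $v_n(t_n^*)=\frac{1}{1+2\lambda_n}$. The only cosmetic difference is that the paper uses $\lambda_n\le 1/2$ to enlarge the upper limit to $1-\lambda_n$ before integrating, whereas you integrate first and then absorb the factor $(1+2\lambda_n)/2\le 1$.
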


\begin{proof} Let $k \in \mathbb{N}_+$.
Using the substitution $v=v_n(u)$ we obtain
\begin{equation}\label{int_bound_subbb}
\int \limits_0^{t} \frac{1}{(1-v_n(u))^k}\, \mathrm{d}u \stackrel{\eqref{eq:v_dot_lower_bound} }{\leq} 2 \int \limits_0^{v_n(t)} \frac{1}{(1-v)^k}\, \mathrm{d}v \stackrel{(*)}{\leq} 
2 \int \limits_0^{\frac{1}{1+2 \lambda_n}} \frac{1}{(1-v)^k}\, \mathrm{d}v
 \stackrel{\lambda_n \leq \frac{1}{2}}{\leq} 2 \int \limits_0^{1-\lambda_n} \frac{1}{(1-v)^k}\, \mathrm{d}v,
\end{equation}
where $(*)$ follows from our assumption $t \le t_n^*$, the monotonicity of $v_n(t)$ (see Claim~\ref{cl:vn_properties}) and~\eqref{eq:t_star_def}.
Now both of the inequalities of~\eqref{eq:substition-integral} follow by evaluating the integral on the r.h.s.\ of~\eqref{int_bound_subbb}.
\end{proof}

\begin{claim}[$q_n(t)$ is close to $1$ before $t_n^*$]\label{claim_q_close_to_one}
If $\lambda_n \leq 1/2$ and $t \le t_n^*$, then we have $0 \leq 1 -q_n(t) \leq 2 \lambda_n \ln \mleft( \frac{1}{\lambda_n} \mright) $.
\end{claim}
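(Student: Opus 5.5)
The plan is to integrate the ODE \eqref{eq:alive_ODE} directly and compare $w_n$ with $v_n$ via Lemma~\ref{lem:w_bounds}.

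\textbf{Lower bound.} Since $q_n(0)=1$ and $\dot q_n(t)\le 0$ as long as $q_n(t)>0$ and $t q_n(t)<1$, $q_n$ is nonincreasing. Hence $1-q_n(t)\ge 0$. The quantity $1-tq_n(t)=1-w_n(t)$ stays positive: by \eqref{eq:wnvnbound}, $w_n(t)\le 1/(1+\lambda_n)<1$. So the ODE is well defined, and positivity of $q_n$ is preserved because the right-hand side of \eqref{eq:alive_ODE} is linear in $q_n$ with a bounded coefficient.

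\textbf{Upper bound.} Write \eqref{eq:alive_ODE} as $\frac{\mathrm{d}}{\mathrm{d}t}\ln q_n(t)=-\lambda_n/(1-w_n(t))$. This gives
\begin{equation*}
1-q_n(t)\le -\ln q_n(t)=\lambda_n\int_0^t \frac{1}{1-w_n(u)}\,\mathrm{d}u .
\end{equation*}
By Lemma~\ref{lem:w_bounds}, $w_n(u)\le v_n(u)$, so $\frac{1}{1-w_n(u)}\le \frac{1}{1-v_n(u)}$. Since $t\le t_n^*$ and $\lambda_n\le 1/2$, the first inequality in \eqref{eq:substition-integral} bounds the integral by $2\ln(1/\lambda_n)$. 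This yields $1-q_n(t)\le 2\lambda_n\ln(1/\lambda_n)$.

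There is no real obstacle beyond checking that the ODE solution exists on $[0,t_n^*]$ with $q_n>0$ and $w_n<1$. Both properties follow from Lemma~\ref{lem:w_bounds}, which we are allowed to assume.
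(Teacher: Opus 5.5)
Your argument is correct and is essentially the paper's proof: integrate \eqref{eq:alive_ODE}, replace $1/(1-w_n(u))$ by the larger $1/(1-v_n(u))$ using Lemma~\ref{lem:w_bounds}, and finish with the first bound in \eqref{eq:substition-integral}. The only cosmetic difference is that you pass through $-\ln q_n(t)$ and use $1-q\le -\ln q$, whereas the paper writes $1-q_n(t)=\lambda_n\int_0^t \frac{q_n(u)}{1-u\,q_n(u)}\,\mathrm{d}u$ and drops the factor $q_n(u)\le 1$ in the numerator.
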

\begin{proof}
Recalling \eqref{eq:alive_ODE} we see that $q_n(t)$ is a decreasing function of $t$, thus $0 \leq 1 -q_n(t)$ follows from $q_n(0)=1$.
As for the other bound, we have
\begin{equation*}
1-q_n(t) \stackrel{\eqref{eq:alive_ODE}}{=} \lambda_n \int \limits_0^t \frac{ q_n(u)}{1-u \cdot q_n(u)}\, \mathrm{d}u \leq \lambda_n \int \limits_0^t \frac{1}{1-u \cdot q_n(u)}\, \mathrm{d}u \stackrel{\eqref{eq:wn_def}, \eqref{eq:wnvnbound}}{\leq} \lambda_n \int \limits_0^t \frac{1}{1-v_n(u)} \, \mathrm{d}u \stackrel{\eqref{eq:substition-integral}}{\leq} 2 \lambda_n \ln \mleft( \frac{1}{\lambda_n} \mright).\qedhere
\end{equation*}
\end{proof}

\begin{notation}[Scaled distance of window curve from criticality]
Let us define $g_n(t)$ by
\begin{equation}\label{eq:def_g_n_t}
g_n(t)\coloneqq\frac{1-w_n(t)}{\lambda_n}.
\end{equation}
\end{notation}

\begin{lemma}[Properties of $g_n(t)$]\label{lem:gn_properties}
Recall $t_n^c$ from~\eqref{eq:tnc_def} and consider $g_n(t)$ defined in \eqref{eq:def_g_n_t}. We have
\begin{equation}\label{eq:gnt_bounds}
1 \leq \liminf_{n \to \infty} \inf_{t_n^c \leq t \leq T} g_n(t) \leq \limsup_{n \to \infty} \sup_{t_n^c \leq t \leq T} g_n(t) \leq 4 \vee T.
\end{equation}

We have
\begin{equation}\label{eq:gnt_asymp_t}
\lim \limits_{s \to \infty} \limsup_{n \to \infty} \sup_{t_n^c + s \cdot \lambda_n \leq t \leq T} |g_n(t)-t|=0.
\end{equation}
\end{lemma}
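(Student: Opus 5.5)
We derive an autonomous-looking ODE for $g_n$ from \eqref{eq:w_ODE}. Substituting $w_n=1-\lambda_n g_n$ gives
\begin{equation*}
\dot g_n(t) = -\frac{\dot w_n(t)}{\lambda_n} = \frac{w_n(t)}{\lambda_n}\left(\frac{\lambda_n}{\lambda_n g_n(t)} - \frac{1}{t}\right) = \frac{w_n(t)}{\lambda_n\, t\, g_n(t)}\bigl(t - g_n(t)\bigr).
\end{equation*}
This is the key identity. Since $w_n(t)\to 1$ uniformly on $[t_n^c,T]$ (because $w_n=1-\lambda_n g_n$ with $g_n$ bounded, which we check along the way), the prefactor is of order $\frac{1}{\lambda_n t g_n}$. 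The curve $g=t$ is therefore strongly attracting on the fast time scale $\lambda_n$. The initial condition is $g_n(t_n^c)=4$ by \eqref{eq:tnc_def}, and Claim~\ref{cl:tc_asymp} gives $t_n^c\to 1$.

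For \eqref{eq:gnt_bounds} we use barrier arguments. For the upper bound, note that $\dot g_n<0$ whenever $g_n>t$ (and $g_n>0$, which holds by Lemma~\ref{lem:w_bounds}, since $w_n\le 1/(1+\lambda_n)<1$). Hence on $[t_n^c,T]$ the function $g_n$ can never exceed $\max(g_n(t_n^c),T)=4\vee T$. Indeed, if $g_n$ first exceeded $4\vee T$ at some time, then at that time $g_n>t$ would hold, forcing $\dot g_n<0$, a contradiction.

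For the lower bound, $\dot g_n>0$ whenever $g_n<t$. On $[t_n^c,T]$ we have $t\ge t_n^c\ge 1-o(1)$, so $g_n$ cannot cross downward below $\min(4, t_n^c)\ge 1-o(1)$. The same first-crossing argument applies: at a downward crossing of the level $t_n^c-\delta$ we would have $g_n<t$, hence $\dot g_n>0$. This yields the liminf $\ge 1$.

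For \eqref{eq:gnt_asymp_t} we quantify the attraction. Let $h(t)=g_n(t)-t$. On $[t_n^c,T]$ the bounds just proved give $c\le \frac{w_n}{t g_n}\le C$ with constants $0<c\le C$ (taking $c$ of order $1/(T(4\vee T))$). Then $\dot h = -\kappa_n(t)\,h/\lambda_n - 1$ with $\kappa_n\in[c,C]$. When $|h|$ is bounded away from $0$, $h$ is driven toward $0$ at speed at least $c|h|/\lambda_n - 1$.

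Concretely, fix $\delta>0$. As long as $h\ge\delta$, we get $\dot h\le -c\delta/\lambda_n+1\le -c\delta/(2\lambda_n)$. Since $|h(t_n^c)|\le 4+T$, $h$ drops below $\delta$ within time $O(\lambda_n/\delta)$. Similarly, while $h\le-\delta$ we have $\dot h\ge c\delta/\lambda_n-1>0$, so $h$ rises above $-\delta$ within time $O(\lambda_n/\delta)$. Once $|h|<\delta$, it cannot leave the band $(-\delta,\delta)$: at the boundary $h=\delta$ the derivative is negative, and at $h=-\delta$ it is positive (for large $n$). Thus for $s\ge s_0(\delta)$ and large $n$, $\sup_{t\ge t_n^c+s\lambda_n}|g_n(t)-t|\le\delta$, which gives \eqref{eq:gnt_asymp_t}. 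A Grönwall comparison gives even exponential decay $|h|\lesssim e^{-cs}+\lambda_n$.

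The main obstacle is bookkeeping rather than substance. The prefactor bounds must be uniform in $n$ and in $t$, which is why the upper bound on $g_n$ is needed before proving convergence. We also need $t_n^c\to1$ from Claim~\ref{cl:tc_asymp} to handle the slight possibility $t_n^c<1$ in the lower barrier.
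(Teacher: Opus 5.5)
Your proof is correct, but it reorganizes the argument rather than following the paper's route.

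\textbf{The paper's order.} It proves \eqref{eq:gnt_asymp_t} first, via Proposition~\ref{prop:soc_fixed_point}. It works with $x(t)=t-g_n(t)$ and starts from the auxiliary time $t_n^*$, where $x(t_n^*)\ge -3$ is obtained from the comparison curves $v_n$ and $z$. It runs a linear comparison ODE up to the first time $\theta$ with $x(\theta)=0$, and then shows $0\le x\le 2\lambda_n T^2$ after $\theta$. Only afterwards does it get the upper bound in \eqref{eq:gnt_bounds}, from the monotonicity Lemma~\ref{lem:g_monoton} together with $g_n(t_n^c)=4$ and $g_n(T)\to T$. So that bound depends on \eqref{eq:gnt_asymp_t}.

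\textbf{Your order.} You reverse this. The identity $\dot g_n=\frac{w_n(t-g_n)}{\lambda_n t g_n}$ is the same one the paper uses to prove Lemma~\ref{lem:g_monoton}, but you use it directly as a barrier. This gives $\sup_{[t_n^c,T]}g_n\le 4\vee T$ non-asymptotically, i.e.\ for every $n$ for which $t_n^c\le T$ exists. These a priori bounds then give a uniform lower bound $c$ on the relaxation rate $w_n/(tg_n)$. From that, the invariant band $|g_n-t|<\delta$ is reached within time $O(\lambda_n/\delta)$ after $t_n^c$.

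\textbf{What each route buys.} Yours is shorter and more uniform: it starts from $t_n^c$ itself and needs neither $t_n^*$ nor $\theta$. The paper's route yields explicit quantitative bounds, namely the $4\mathrm{e}^{-(s-6)/20}$ and $2T^2\lambda_n$ of \eqref{eq:soc_fixed_point}. It also shares its machinery with Claims~\ref{cl:tc_asymp} and~\ref{cl:tc_less_than_t*}, which are needed elsewhere anyway. You still rely on those claims for the existence of $t_n^c$ and for $t_n^c\to1$.

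\textbf{Minor remarks.}
\begin{itemize}
\item Your lower barrier argument can be replaced by the one-line bound $g_n\ge 1/(1+\lambda_n)$. This follows immediately from $w_n\le 1/(1+\lambda_n)$, which you already cite, and it removes the need for $t_n^c\to1$ in that step.
\item For $h\ge\delta$ the exact estimate is $\dot h\le -c\delta/\lambda_n-1$, not $+1$. This is harmless.
\item Naming the prefactor $\kappa_n(t)$ clashes with the safety barrier $\kappa_n$ of Definition~\ref{def:kappa}.
\end{itemize}
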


We prove Lemma~\ref{lem:gn_properties} in Appendix~\ref{ap:window_curve_proofs}.
We have already mentioned the relevance of~\eqref{eq:gnt_asymp_t} in Remark~\ref{rem:window_curve}. Let us also mention that \eqref{eq:gnt_asymp_t} is the reason why the parameter~$t$ appears on the r.h.s.\ of~\eqref{eq:OU_SDE} and~\eqref{eq:defnU(s)}.

\section{Arrival of the window process at \texorpdfstring{$\lambda_n$}{lambda-n}-criticality}\label{sec:arrive-crit}

In this section we study $W_n(t)$ before it arrives to the $\lambda_n$-critical state and prove Lemma~\ref{lem:Z_bound} for $t \le t_n^c$.

First, we state a stronger statement and we show that it indeed implies Lemma~\ref{lem:Z_bound} for $t \le t_n^c$.

\begin{proposition}[Concentration before the arrival at $\lambda_n$-criticality]\label{prop:arrival_at_OU}
There exists a constant $C \in \mathbb{R}_+$ such that for any $K>0$ there exists $n_0=n_0(K)$ such that for any $n \geq n_0$, we have
\begin{equation}\label{eq:arrive_at_crit}
\pr \mleft( \max_{0 \leq t \leq t_n^c } |W_n(t)-w_n(t)| \geq K \cdot (n \cdot \lambda_n )^{-1/2} \mright) \leq \frac{C}{ K^2}.
\end{equation}
\end{proposition}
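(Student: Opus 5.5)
Since $W_n(t)=tQ_n(t)$, $w_n(t)=tq_n(t)$ and $t\le t_n^c<t_n^*<2$ by \eqref{eq:tc_smaller_than_t*} and \eqref{eq:tstar_n_less_than_2}, it suffices to control $\Delta_n(t)\coloneqq Q_n(t)-q_n(t)$ and show $\max_{t\le t_n^c}|\Delta_n(t)|\le \frac K2 (n\lambda_n)^{-1/2}$ except on an event of probability $C/K^2$. I would decompose, using \eqref{eq:alive_corrector_martingale} and \eqref{eq:neighbor_drift},
\begin{equation*}
\Delta_n(t)=M_n(t)+\int_0^t \Bigl(\mathcal{L}Q_n(s)+\lambda_n \frac{q_n(s)}{1-sq_n(s)}\Bigr)\mathrm{d}s ,
\end{equation*}
and then run a Gr\"onwall argument on the integrand, with a martingale maximal inequality for $M_n$.

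\textbf{Martingale term.} By Doob's $L^2$ inequality and \eqref{eq:quadratic_variation}, $\ev[\max_{t\le t_n^c}M_n(t)^2]\le 4\ev\int_0^{t_n^c}\frac{\lambda_n}{n}Q_n(s)\ev_{A_n(s),n,s}[\mathscr{C}^2]\,\mathrm{d}s$. On the good event where $W_n(s)\le w_n(s)+\frac12\cdot 4\lambda_n\cdot(\text{small})$, i.e.\ $1-W_n(s)\ge \frac12(1-w_n(s))\ge \frac12(1-v_n(s))$ by \eqref{eq:wnvnbound}, the crude bound \eqref{eq:general_moment_of_component_size_crude} with $k=2$ gives integrand $\lesssim \frac{\lambda_n}{n}(1-v_n(s))^{-3}$, and \eqref{eq:substition-integral} with $k=3$ gives total $\lesssim \frac{\lambda_n}{n}\lambda_n^{-2}=\frac{1}{n\lambda_n}$. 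To handle the good event properly I will introduce the stopping time $\tau\coloneqq\inf\{t:|W_n(t)-w_n(t)|\ge K(n\lambda_n)^{-1/2}\}\wedge t_n^c$ and work with the stopped martingale; before $\tau$ we have $|W_n-w_n|\le K(n\lambda_n)^{-1/2}\ll\lambda_n\le\frac14(1-w_n)$ for $n\ge n_0(K)$, thanks to \eqref{eq:subcrit_lambda_assumption} (indeed $(n\lambda_n)^{-1/2}\ll \lambda_n$ iff $\lambda_n\gg n^{-1/3}$). Chebyshev then yields $\pr(\max_{t\le\tau}|M_n(t)|\ge \frac K{8}(n\lambda_n)^{-1/2})\le C/K^2$.

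\textbf{Drift term.} For $s<\tau$ write the integrand as $-\lambda_n Q_n(s)\bigl(\ev_{A_n(s),n,s}[\mathscr{C}]-\frac1{1-W_n(s)}\bigr)-\lambda_n\bigl(\frac{Q_n}{1-sQ_n}-\frac{q_n}{1-sq_n}\bigr)$. The first piece is bounded via \eqref{eq:first_moment_of_component_size} (its hypothesis $1-W_n\ge A^{-1/3}$ holds since $1-W_n\gtrsim\lambda_n\gg n^{-1/3}$ and $A_n\ge n/2$ by Claim~\ref{claim_q_close_to_one}) by $\lambda_n\widetilde C/(n(1-v_n(s))^4)$, whose integral is $\lesssim 1/(n\lambda_n^2)$ by \eqref{eq:substition-integral}, $k=4$; this is $\ll (n\lambda_n)^{-1/2}$ exactly when $\lambda_n\gg n^{-1/3}$. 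The second piece is Lipschitz: $|\frac{x}{1-sx}-\frac{y}{1-sy}|=\frac{|x-y|}{(1-sx)(1-sy)}\le \frac{4|x-y|}{(1-v_n(s))^2}$ before $\tau$. Thus $|\Delta_n(t\wedge\tau)|\le \varepsilon_n+\max|M_n|+\int_0^{t\wedge\tau}\frac{4\lambda_n}{(1-v_n(s))^2}|\Delta_n(s)|\mathrm{d}s$, and Gr\"onwall gives the factor $\exp(4\lambda_n\int_0^{t_n^c}(1-v_n)^{-2})\le e^{8}$ by \eqref{eq:substition-integral}, $k=2$. Hence on the complement of the bad martingale event, $\max_{t\le\tau}|W_n-w_n|\le 2e^8(\frac K8+o(1))(n\lambda_n)^{-1/2}$; adjusting constants ($K/8$ replaced by $K e^{-8}/8$, absorbed into $C$) this is $<K(n\lambda_n)^{-1/2}$, so $\tau=t_n^c$ with no exit, completing the bound.

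The main obstacle is the bootstrap: all moment estimates need $1-W_n(s)$ comparable to $1-v_n(s)$, which is only guaranteed before $\tau$, so the argument must be carefully organized with the stopping time (jumps at $\tau$ only decrease $W_n$, which only helps the subcriticality requirement; continuity upward ensures no overshoot of the upper bound).
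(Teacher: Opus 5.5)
Your proposal is correct and follows essentially the paper's argument. You apply Gr\"onwall to $Q_n-q_n$ using \eqref{eq:first_moment_of_component_size} together with the Lipschitz bound, bootstrap it with a stopping time, and control the martingale part through \eqref{eq:quadratic_variation}, \eqref{eq:substition-integral} and Chebyshev. Your use of Doob's inequality for $\max_{t\le\tau}|M_n(t)|$ already covers the jump at $\tau$, so it neatly replaces the paper's separate stopping time $\tau_M$ and the jump comparison in Lemma~\ref{lem:martingale_jumps_first}.

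One small fix is needed. A stopping time on $|W_n-w_n|$ only bounds $|Q_n-q_n|$ up to a factor $1/s$, so it does not by itself give $A_n(s)\ge n/2$ for $s\lesssim K(n\lambda_n)^{-1/2}$. It is cleaner to stop on $|Q_n-q_n|$ as the paper does, which still controls $|W_n-w_n|\le 2|Q_n-q_n|$.
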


We prove Proposition~\ref{prop:arrival_at_OU} later in this section. Before that, we deduce the main result of this section.

\begin{proof}[Proof of Lemma~\ref{lem:Z_bound} for $t \le t_n^c$]
Our goal is to show~\eqref{eq:window_fluctuation_bound} for $t \le t_n^c$, i.e., that for any $\varepsilon>0$ there exists $K<+\infty$ such that $
\sup_{n \in \mathbb{N}} \sup_{ 0 \leq t \leq t_n^c} \pr( \,|Z_n(t)| \geq K ) \leq \varepsilon$.
Observe that by~\eqref{eq:Z_n_t_def}, equation~\eqref{eq:arrive_at_crit} is equivalent to 
$\pr( \max_{0 \leq t \leq t_n^c } |Z_n(t)| \geq K) \leq \frac{C}{ K^2}$ 
and the bound \eqref{eq:window_fluctuation_bound} for $t \le t_n^c$
immediately follows from this.
\end{proof}

Now we turn to the proof of Proposition~\ref{prop:arrival_at_OU}. We first recall a classical result in analysis.

\begin{claim}[Grönwall's inequality]\label{cl:gronwall}
Let $0<T$, let $\beta \colon [0, T) \to \mathbb{R}_+$ be a continuous function, let $\alpha >0$ and let $D\colon [0, T) \to \mathbb{R}_+$ be a piecewise continuous function. Moreover let us assume that
\begin{equation*}
D(t) \leq \alpha + \int \limits_0^t \beta(u) D(u) \, \mathrm{d}u, \qquad \forall\; t \in [0, T).
\end{equation*}
Then we have
\begin{equation*}
D(t) \leq \alpha \cdot \exp \mleft( \int \limits_0^t \beta(u) \, \mathrm{d}u \mright), 
\qquad \forall\; t \in [0, T).
\end{equation*}
\end{claim}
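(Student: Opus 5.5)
The plan is to use the standard iteration argument that proves Grönwall's inequality. Set $R(t) \coloneqq \int_0^t \beta(u) D(u)\, \mathrm{d}u$. This is well defined and absolutely continuous on $[0,T)$, because $\beta D$ is piecewise continuous and hence locally integrable. (If the pieces of $D$ could have unbounded oscillation one would need local boundedness of $D$; I will read "piecewise continuous" as implying local boundedness on compact subintervals of $[0,T)$.) At every continuity point $t$ of $D$ we have $R'(t) = \beta(t) D(t)$, and the hypothesis $D(t) \le \alpha + R(t)$ together with $\beta \ge 0$ gives
\begin{equation*}
R'(t) \le \beta(t)\,\alpha + \beta(t)\,R(t).
\end{equation*}

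Next I introduce the integrating factor $B(t) \coloneqq \int_0^t \beta(u)\,\mathrm{d}u$. This is $C^1$ since $\beta$ is continuous. The product $\mathrm{e}^{-B(t)} R(t)$ is absolutely continuous, and for almost every $t$ its derivative is
\begin{equation*}
\mathrm{e}^{-B(t)}\bigl(R'(t) - \beta(t) R(t)\bigr) \le \alpha\, \beta(t)\, \mathrm{e}^{-B(t)} = -\alpha \frac{\mathrm{d}}{\mathrm{d}t} \mathrm{e}^{-B(t)}.
\end{equation*}
Integrating from $0$ to $t$ and using $R(0)=0$ gives $\mathrm{e}^{-B(t)} R(t) \le \alpha\bigl(1 - \mathrm{e}^{-B(t)}\bigr)$, that is, $R(t) \le \alpha\bigl(\mathrm{e}^{B(t)} - 1\bigr)$.

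Substituting this into the hypothesis yields $D(t) \le \alpha + R(t) \le \alpha\, \mathrm{e}^{B(t)}$ for every $t \in [0,T)$, which is the claim.

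The only mildly delicate step is justifying the integration of the differential inequality when $D$ has jumps. Absolute continuity of $R$ (an indefinite integral) makes the fundamental theorem of calculus available, so the inequality only needs to hold almost everywhere. It does: it holds at all continuity points of $D$, and piecewise continuity means the discontinuities form a locally finite set, which has measure zero.
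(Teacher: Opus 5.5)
Your argument is correct. Since $R$ is absolutely continuous with $R'=\beta D\le \alpha\beta+\beta R$ almost everywhere, the integrating factor gives $\frac{\mathrm{d}}{\mathrm{d}t}\bigl(\mathrm{e}^{-B(t)}R(t)\bigr)\le \alpha\beta(t)\mathrm{e}^{-B(t)}$ a.e. Hence $R\le\alpha(\mathrm{e}^{B}-1)$ and $D\le\alpha\,\mathrm{e}^{B}$.

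The paper gives no proof of this claim. It recalls it as a classical fact and only applies it, in the proof of Lemma~\ref{lem:martingale_jumps_first}, to the bounded, piecewise continuous function $\delta_n^*$, so your textbook argument is all that is needed. Two small remarks. What you actually carry out is the integrating-factor argument, not the ``iteration argument'' you announce. Also, piecewise continuity is not really needed: Lebesgue's differentiation theorem gives $R'=\beta D$ a.e.\ for any locally integrable $\beta D$.
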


We need some further definitions before we can state Lemma~\ref{lem:tau_not_happens_early}, which immediately implies Proposition~\ref{prop:arrival_at_OU}. Recall that $Q_n(t)$ denotes the ratio of alive vertices (see~\eqref{eq:Qn_def}) and that $q_n(t)$ is the solution of the differential equation~\eqref{eq:alive_ODE}.

\begin{definition}[Difference to the ODE approximation]
We define
\begin{equation}\label{eq:delta_as_difference}
\delta_n(t) \coloneqq |Q_n(t)-q_n(t)|.
\end{equation}
\end{definition}

Let us also recall the martingale $M_n(t)$ from~\eqref{eq:alive_corrector_martingale} and $t_n^*$ from~\eqref{eq:t_star_def}. Now we define some stopping times, which depend on~$n$ and the parameter~$K$ (but we omit this dependence from our notation).

\begin{definition}[Stopping times]
Let us fix $n$ and $K$. Then we define the following stopping times
\begin{align} 
\label{eq:tau_delta_def}
\tau_\delta &\coloneqq \min \mleft\{ t \; : \; \delta_n(t) \geq K \cdot \left( n \lambda_n \right)^{-1/2} \mright\}, \\
\label{eq:tau_M_def}
\tau_M &\coloneqq \min \mleft\{ t \; : \; |M_n(t)| \geq \frac{K}{4\mathrm{e}^{4}} \cdot \left( n \lambda_n \right)^{-1/2} \mright\}.
\end{align}

\begin{equation}\label{eq:tau_def}
\tau \coloneqq \tau_\delta \wedge \tau_M \wedge t_n^*.
\end{equation}
\end{definition}

Note that $\tau_{\delta}$, $\tau_M$ and $\tau$ depend on $n$ and $K$, but we omit this dependence for ease of notation.

\begin{lemma}[$\tau$ does not happen early]\label{lem:tau_not_happens_early}
There exists $C \in \mathbb{R}_+$ such that for any $K>0$, there exists $n_0=n_0(K)$ such that for any $n \geq n_0$, we have 
\begin{equation}\label{eq:tau_smaller_than_t*}
\pr \mleft( \tau < t_n^* \mright) \leq \frac{C}{ K^2}.
\end{equation}
\end{lemma}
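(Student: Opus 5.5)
The plan is to show that on the event $\{\tau<t_n^*\}$ the martingale part must have been large, i.e.\ $\tau_M\le\tau_\delta$ essentially forces $\tau=\tau_M$. Then we bound $\pr(\tau_M<t_n^*)$ by Doob's $L^2$ maximal inequality.

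\textbf{Step 1 (Grönwall on $[0,\tau]$).} For $t\le \tau$ write
\begin{equation*}
Q_n(t)-q_n(t)=M_n(t)+\int_0^t \bigl(\mathcal{L}Q_n(u)-\dot q_n(u)\bigr)\,\mathrm{d}u ,
\end{equation*}
using \eqref{eq:alive_corrector_martingale}. By \eqref{eq:neighbor_drift} and \eqref{eq:first_moment_of_component_size}, $\mathcal{L}Q_n(u)=-\lambda_n Q_n(u)/(1-uQ_n(u))$ up to an error $\lambda_n Q_n\widetilde C/((1-W_n(u))^4A_n(u))$. This requires $W_n(u)\le 1-A_n(u)^{-1/3}$, which holds for $u\le\tau$. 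Indeed, $|W_n-w_n|\le 2\delta_n\le 2K(n\lambda_n)^{-1/2}\ll\lambda_n$ by \eqref{eq:subcrit_lambda_assumption}, and $1-w_n(u)\ge 1-v_n(u)\ge 1-\frac1{1+2\lambda_n}\ge\lambda_n$ by \eqref{eq:wnvnbound} and $u\le t_n^*$. Hence $1-W_n(u)\ge \frac12(1-v_n(u))\ge\lambda_n/2\gg n^{-1/3}$, and $A_n\ge n/2$ by Claim~\ref{claim_q_close_to_one}.

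The function $x\mapsto -\lambda_n x/(1-ux)$ has $x$-derivative $-\lambda_n/(1-ux)^2$. So the difference of drifts is bounded by $\beta(u)\delta_n(u)+\mathrm{err}(u)$, with $\beta(u)=C\lambda_n/(1-v_n(u))^2$ and $\mathrm{err}(u)=C\lambda_n/(n(1-v_n(u))^4)$. By \eqref{eq:substition-integral}, $\int_0^{t_n^*}\beta\le C'$ and $\int_0^{t_n^*}\mathrm{err}\le C\lambda_n^{-2}/n$.

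Grönwall (Claim~\ref{cl:gronwall}) on $[0,\tau)$ then gives
\begin{equation*}
\delta_n(t)\le \Bigl(\tfrac{K}{4\mathrm e^4}(n\lambda_n)^{-1/2}+C(n\lambda_n^2)^{-1}\Bigr)e^{C'} .
\end{equation*}
One must check that the constants work out so that $e^{C'}\le e^4$; this likely needs care with the factor 2 and a tighter derivative bound $\lambda_n/(1-v_n)^2$ integrated, giving $\le 2$. Since $(n\lambda_n^2)^{-1}\ll (n\lambda_n)^{-1/2}$ by \eqref{eq:subcrit_lambda_assumption}, for $n\ge n_0(K)$ this yields $\delta_n<\frac K2(n\lambda_n)^{-1/2}$ up to and including time $\tau$. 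This uses that jumps of $Q_n$ are controlled, since jumps of $\delta_n$ are also jumps of $M_n$. Thus $\tau_\delta$ cannot be the first to occur, and $\{\tau<t_n^*\}\subseteq\{\tau_M\le t_n^*\}$.

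\textbf{Step 2 (Doob).} Stop $M_n$ at $\sigma=\tau_\delta\wedge t_n^*$. By Doob's inequality and \eqref{eq:quadratic_variation},
\begin{equation*}
\pr\bigl(\sup_{t\le\sigma}|M_n(t)|\ge \tfrac{K}{4\mathrm e^4}(n\lambda_n)^{-1/2}\bigr)\le \frac{16\mathrm e^8 n\lambda_n}{K^2}\,\ev\int_0^{\sigma}\frac{\lambda_n}{n}\ev_{A_n,n,u}[\mathscr C^2]\,\mathrm du .
\end{equation*}
Before $\sigma$, \eqref{eq:general_moment_of_component_size_crude} with $k=2$ and $1-W_n\ge(1-v_n)/2$ bound the integrand by $C\lambda_n n^{-1}(1-v_n(u))^{-3}$. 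Then \eqref{eq:substition-integral} with $k=3$ gives integral $\le C\lambda_n^{-1}n^{-1}$, so the probability is $\le C/K^2$.

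\textbf{Main obstacle.} The delicate point is the bookkeeping at the stopping time: jumps at $\tau$ can overshoot. The cleanest handling is to apply Grönwall on $[0,\tau)$ and then treat the final jump. That jump changes $Q_n$ and $M_n$ by the same amount, so $\delta_n(\tau)\le$ (bound of the Grönwall display) $+|\Delta M_n(\tau)|$. When $\tau<\tau_M$ this is still controlled by the $M$-threshold, whence $\tau=\tau_M$ or $\tau=t_n^*$.
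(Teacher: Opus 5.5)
Your proposal is correct and follows essentially the paper's route. You apply Grönwall on $[0,\tau)$ and use $|\Delta\delta_n|\le|\Delta M_n|$ at $\tau$ to rule out $\tau_\delta<\tau_M$, which is the paper's Lemma~\ref{lem:martingale_jumps_first}; you then bound $M_n$ in $L^2$ via \eqref{eq:quadratic_variation}, \eqref{eq:general_moment_of_component_size_crude} and \eqref{eq:substition-integral} with $k=3$, and replacing the paper's optional stopping of $N_n$ at $\tau=\tau_M\wedge t_n^*$ plus Chebyshev by Doob's inequality for $M_n$ stopped at $\tau_\delta\wedge t_n^*$ is an inessential variation.

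The constant check you left open does work out, but only with the exact integral: with $\beta(u)=4\lambda_n(1-v_n(u))^{-2}$ one gets $\int_0^{t_n^*}\beta\le 8\lambda_n\int_0^{1/(1+2\lambda_n)}(1-v)^{-2}\,\mathrm{d}v=4$, which is precisely the $\mathrm{e}^4$ built into \eqref{eq:tau_M_def}. The cruder bound \eqref{eq:substition-integral} would give $8$, which is too large. Your sharper alternative $\beta=(1+o(1))\lambda_n(1-v_n)^{-2}$ is also valid, since $u\,\delta_n(u)\le 2K(n\lambda_n)^{-1/2}\ll\lambda_n\le 1-v_n(u)$ before $\tau$.
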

As soon as we prove Lemma~\ref{lem:tau_not_happens_early}, the proof of Proposition~\ref{prop:arrival_at_OU} will follow.

\begin{proof}[Proof of Proposition~\ref{prop:arrival_at_OU}]
If $\tau = t_n^*$, then for any $0 \le t \le t_n^c$, we have
\begin{equation}\label{eq:W_w_difference_small}
|W_n(t)-w_n(t)| \stackrel{\eqref{eq:Wn_def},\, \eqref{eq:wn_def},\, \eqref{eq:delta_as_difference}}{=} t\delta_n(t) \stackrel{\eqref{eq:tc_smaller_than_t*},\, \eqref{eq:tau_delta_def},\, \eqref{eq:tau_def}}{<} t \cdot K \cdot (n \lambda_n )^{-1/2} \stackrel{\eqref{eq:tc_smaller_than_t*}, \eqref{eq:tstar_n_less_than_2}}{<} 2 \cdot K \cdot (n \lambda_n )^{-1/2}.
\end{equation}
Therefore, we obtain the desired \eqref{eq:arrive_at_crit} (with $2K$ in place of $K$):
\begin{equation*}
\pr \mleft( \max_{0 \leq t \leq t_n^c } |W_n(t)-w_n(t)| \geq 2 \cdot K \cdot (n \cdot \lambda_n )^{-1/2} \mright) \stackrel{\eqref{eq:W_w_difference_small}}{\leq} \pr \mleft( \tau < t_n^* \mright) \stackrel{\eqref{eq:tau_smaller_than_t*}}{\le} \frac{C}{K^2}.\qedhere
\end{equation*}
\end{proof}

In order to prove Lemma~\ref{lem:tau_not_happens_early}, we first prove the following statement.
\begin{lemma}[Connection between stopping times]\label{lem:martingale_jumps_first}
For any $K>0$, there exists $n_0=n_0(K)$ such that for any $n \geq n_0$, we almost surely have 
\begin{equation}\label{martingale_jumps_first}
\tau_M \wedge t_n^* \leq \tau_\delta \wedge t_n^*.
\end{equation} 
\end{lemma}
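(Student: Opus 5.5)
The plan is a Grönwall argument run on the interval $[0,\tau_M\wedge\tau_\delta\wedge t_n^*)$. On this interval both $|M_n|$ and $\delta_n$ are below their thresholds. We show that this forces $\delta_n$ to stay strictly below $K(n\lambda_n)^{-1/2}$, so $\tau_\delta$ cannot occur strictly before $\tau_M\wedge t_n^*$.

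\textbf{Step 1: integral equation for $Q_n-q_n$.} By \eqref{eq:alive_corrector_martingale}, \eqref{eq:neighbor_drift} and \eqref{eq:alive_ODE},
\begin{equation*}
Q_n(t)-q_n(t)=M_n(t)-\lambda_n\int_0^t\Big(Q_n(u)\,\ev_{A_n(u),n,u}[\mathscr{C}]-\frac{q_n(u)}{1-uq_n(u)}\Big)\mathrm{d}u .
\end{equation*}
We split the integrand into three pieces:
\begin{enumerate*}[label=(\roman*)]
\item $Q_n(u)\big(\ev[\mathscr{C}]-\frac{1}{1-W_n(u)}\big)$;
\item $\frac{Q_n(u)}{1-uQ_n(u)}-\frac{q_n(u)}{1-uq_n(u)}$.
\end{enumerate*}
For $u<\tau_\delta\wedge t_n^*$ we have $|W_n(u)-w_n(u)|\le 2K(n\lambda_n)^{-1/2}$, using $u<2$ from \eqref{eq:tstar_n_less_than_2}. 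We also have $1-w_n(u)\ge 1-v_n(u)\ge 1-\frac1{1+2\lambda_n}\ge\lambda_n$ by \eqref{eq:wnvnbound}. Assumption \eqref{eq:subcrit_lambda_assumption} gives $(n\lambda_n)^{-1/2}\ll\lambda_n$, so for $n\ge n_0(K)$ we get $1-W_n(u)\ge\frac12(1-v_n(u))$. In particular $W_n(u)\le 1-A_n(u)^{-1/3}$, since $A_n\approx n$ by Claim~\ref{claim_q_close_to_one} and $\lambda_n\gg n^{-1/3}$.

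\textbf{Step 2: bounding the pieces.} For (i) we use \eqref{eq:first_moment_of_component_size}. Its contribution is at most
\begin{equation*}
\lambda_n\int_0^{t}\frac{C}{(1-v_n)^4 n}\,\mathrm{d}u\le \frac{C}{n\lambda_n^{2}}
\end{equation*}
by \eqref{eq:substition-integral}. This is $\ll (n\lambda_n)^{-1/2}$ iff $n\lambda_n^3\gg1$, which holds. Piece (ii) is Lipschitz in $Q$. The map $x\mapsto x/(1-ux)$ has derivative $1/(1-ux)^2$, and on our range this is at most $4/(1-v_n(u))^2$. Hence
\begin{equation*}
\delta_n(t)\le |M_n(t)|+o\big((n\lambda_n)^{-1/2}\big)+\int_0^t\frac{4\lambda_n}{(1-v_n(u))^2}\,\delta_n(u)\,\mathrm{d}u .
\end{equation*}

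\textbf{Step 3: Grönwall.} For $t<\tau_M\wedge\tau_\delta\wedge t_n^*$ the first two terms are at most $\alpha:=\frac{K}{4\mathrm{e}^4}(n\lambda_n)^{-1/2}+o\big((n\lambda_n)^{-1/2}\big)\le \frac{K}{2\mathrm{e}^4}(n\lambda_n)^{-1/2}$. Claim~\ref{cl:gronwall} together with \eqref{eq:substition-integral} for $k=2$ gives $\int_0^t\beta\le 4\lambda_n\cdot 2\lambda_n^{-1}=8$. This constant is too big: it yields $\mathrm{e}^8$ rather than the $\mathrm{e}^4$ built into $\tau_M$. This constant bookkeeping is the main obstacle. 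The fix is to use the sharper Lipschitz constant $(1+\epsilon)^2/(1-v_n)^2$, which is available because $1-W_n\ge(1-\epsilon)(1-v_n)$ for large $n$. Combined with the exact bound $2\int_0^{1-\lambda_n}(1-v)^{-2}\mathrm{d}v\le 2/\lambda_n$, this gives $\int\beta\le 2(1+\epsilon)^3<4$. We then get $\delta_n(t)\le\alpha\mathrm{e}^4<K(n\lambda_n)^{-1/2}$.

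\textbf{Step 4: conclusion.} The bound extends to $t=\tau_M\wedge\tau_\delta\wedge t_n^*$ itself. The integral term is continuous, and $Q_n-q_n-M_n$ is continuous (jumps of $Q_n$ equal jumps of $M_n$), so the jump of $Q_n$ at that time is controlled by the jump of $M_n$, which is at most $\frac{K}{4\mathrm{e}^4}(n\lambda_n)^{-1/2}$ before $\tau_M$. If $\tau_\delta<\tau_M\wedge t_n^*$, then at $t=\tau_\delta$ we would have $\delta_n(\tau_\delta)<K(n\lambda_n)^{-1/2}$, contradicting the definition of $\tau_\delta$. Hence $\tau_M\wedge t_n^*\le\tau_\delta\wedge t_n^*$.
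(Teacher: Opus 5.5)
Your proposal is correct and is essentially the paper's proof: the same integral equation split into the susceptibility error (handled with \eqref{eq:first_moment_of_component_size} and \eqref{eq:substition-integral}) and a Lipschitz term, then Gr\"onwall, then the observation that at $\tau_\delta$ the jump of $\delta_n$ is controlled by the jump of $M_n$. Before $\tau_M$ that jump is bounded by $\frac{K}{2\mathrm{e}^4}(n\lambda_n)^{-1/2}$, twice the threshold rather than $\frac{K}{4\mathrm{e}^4}(n\lambda_n)^{-1/2}$, which still suffices. The only real difference is the constant: the paper avoids your ``main obstacle'' by integrating $8\lambda_n(1-v)^{-2}$ only up to $v_n(t_n^*)=1/(1+2\lambda_n)$ instead of $1-\lambda_n$, which gives exactly $4$ even with the crude Lipschitz constant $4/(1-v_n)^2$, so your $(1+\epsilon)$-sharpening is valid but unnecessary.
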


\begin{remark}
Loosely speaking, Lemma~\ref{lem:martingale_jumps_first} states that if $|M_n(t)|$ is well-behaved, then $\delta_n(t)$ is also well-behaved. We will use Gr\"onwall's inequality to show this implication. Given Lemma~\ref{lem:martingale_jumps_first}, we will only need to control the submartingale $M_n^2(t)$ in order to prove Lemma~\ref{lem:tau_not_happens_early}.
\end{remark}

To prove Lemma~\ref{lem:martingale_jumps_first}, we first derive an upper bound on $\delta_n(t)$ in Lemma~\ref{lem:alive_delta_upper_bound_integral_1}. 

\begin{lemma}[Upper bound on $\delta_n(t)$]\label{lem:alive_delta_upper_bound_integral_1}
For any $K>0$, there exists $n_0=n_0(K)$ such that for any $n \geq n_0$ and any $t < \tau$, we almost surely have 
\begin{equation}\label{eq:alive_delta_upper_bound_integral}
\delta_n(t) \leq \frac{1}{2\mathrm{e}^{4}} K \cdot (n \lambda_n)^{-1/2}+ 8 \lambda_n \int \limits_0^{v_n(t)} \frac{ \delta_n(\mathrm{t}_n(v)) }{( 1-v )^2} \, \mathrm{d} v, 
\end{equation}
where $\mathrm{t}_n(v)$ is the inverse function of $v_n(t)$, see~\eqref{eq:v_ODE} and~\eqref{eq:t_def_as_v_inverse}.
\end{lemma}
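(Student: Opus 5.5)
We write $Q_n(t)-q_n(t)$ as a martingale term plus an integral of a drift difference, and then change variables $u=\mathrm{t}_n(v)$. By \eqref{eq:alive_corrector_martingale}, \eqref{eq:neighbor_drift} and \eqref{eq:alive_ODE} we have
\begin{equation*}
Q_n(t)-q_n(t) = M_n(t) - \lambda_n \int \limits_0^t \left( Q_n(u)\, \ev_{A_n(u),n,u}[\mathscr{C}] - \frac{q_n(u)}{1-u q_n(u)} \right) \mathrm{d}u .
\end{equation*}
For $t<\tau\le\tau_M$ we have $|M_n(t)|< \frac{K}{4\mathrm{e}^4}(n\lambda_n)^{-1/2}$. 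This is half of the constant in \eqref{eq:alive_delta_upper_bound_integral}, and the other half is reserved for the error coming from the susceptibility approximation.

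Next we split the integrand into two parts. The first part is the error in \eqref{eq:first_moment_of_component_size}, namely $Q_n(u)\bigl(\ev[\mathscr{C}] - \frac{1}{1-W_n(u)}\bigr)$. The second part is $\frac{Q_n(u)}{1-uQ_n(u)}-\frac{q_n(u)}{1-uq_n(u)}$. The function $x\mapsto x/(1-ux)$ has derivative $1/(1-ux)^2$. So the mean value theorem bounds the second part by $\delta_n(u)/(1-\xi)^2$, where $\xi$ lies between $W_n(u)$ and $w_n(u)$.

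To control $\xi$ we use $w_n(u)\le v_n(u)\le \frac{1}{1+2\lambda_n}$ for $u\le t_n^*$ (Lemma~\ref{lem:w_bounds}). We also use $|W_n(u)-w_n(u)|=u\,\delta_n(u)\le 2K(n\lambda_n)^{-1/2}\ll\lambda_n$, which holds for $u<\tau_\delta$ by \eqref{eq:tstar_n_less_than_2} and \eqref{eq:subcrit_lambda_assumption}. Together these give $1-W_n(u)\ge \frac12(1-v_n(u))$ once $n\ge n_0(K)$, since $1-v_n(u)\ge \lambda_n$. Hence $(1-\xi)^{-2}\le 4(1-v_n(u))^{-2}$. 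Substituting $v=v_n(u)$ with $\mathrm{d}u\le 2\,\mathrm{d}v$ (by \eqref{eq:v_dot_lower_bound}) turns the contribution of the second part into $8\lambda_n\int_0^{v_n(t)}\delta_n(\mathrm{t}_n(v))(1-v)^{-2}\,\mathrm{d}v$, which is exactly the integral term in \eqref{eq:alive_delta_upper_bound_integral}.

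For the first part we must check that Claim~\ref{cl:component_size} applies, i.e.\ $W_n(u)\le 1-A_n(u)^{-1/3}$. This holds because $1-W_n(u)\ge\lambda_n/2$, $A_n(u)\ge n/2$ (using Claim~\ref{claim_q_close_to_one} and $\delta_n$ small), and $\lambda_n\gg n^{-1/3}$. The error is then at most $\widetilde C\big/\bigl((1-W_n(u))^4 A_n(u)\bigr)\le 32\widetilde C\big/\bigl(n(1-v_n(u))^4\bigr)$. Integrating with \eqref{eq:substition-integral} for $k=4$ bounds the total contribution by $\lambda_n\cdot\frac{64\widetilde C}{3n}\lambda_n^{-3}=\mathcal{O}\bigl(1/(n\lambda_n^2)\bigr)$.

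The main check is that this error is at most $\frac{K}{4\mathrm{e}^4}(n\lambda_n)^{-1/2}$ for large $n$. This is equivalent to $(n\lambda_n^3)^{-1/2}\to 0$, which is precisely where $\lambda_n\gg n^{-1/3}$ from \eqref{eq:subcrit_lambda_assumption} enters. Adding the martingale bound and this error gives the constant $\frac{1}{2\mathrm{e}^4}K(n\lambda_n)^{-1/2}$, which proves \eqref{eq:alive_delta_upper_bound_integral}.
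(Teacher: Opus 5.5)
Your proposal is correct and follows essentially the same route as the paper's proof: the decomposition \eqref{eq:diff_of_random_and_deterministic_alive}, splitting the drift difference into the susceptibility error from \eqref{eq:first_moment_of_component_size} and a Lipschitz term, bounding all denominators by $1-v_n(u)$ via the estimates of Lemma~\ref{lem:if_t_less_than_tau_estimates}, substituting $v=v_n(u)$ with $\mathrm{d}u\le 2\,\mathrm{d}v$, and absorbing the $\mathcal{O}(1/(n\lambda_n^2))$ error using $n\lambda_n^3\to\infty$. The only cosmetic differences are that you use the mean value theorem instead of the bound by $(1-t[q_n\vee Q_n])^{-2}$, and you re-derive those estimates inline rather than citing the lemma.
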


In order to show the upper bound \eqref{eq:alive_delta_upper_bound_integral} on $\delta_n(t)$, we will use the integral equation~\eqref{eq:diff_of_random_and_deterministic_alive} and some technical estimates detailed in Lemma~\ref{lem:if_t_less_than_tau_estimates}.

\begin{claim}[Integral equation for $\delta_n(t)$]
\begin{equation}\label{eq:diff_of_random_and_deterministic_alive}
Q_n(t)-q_n(t) = \int \limits_0^t \left( \mathcal{L} Q_n(u)- \dot{q}_n(u) \right)\, \mathrm{d}u + M_n(t)
\end{equation}
\end{claim}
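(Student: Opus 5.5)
The identity~\eqref{eq:diff_of_random_and_deterministic_alive} is a direct rearrangement of the definitions. I would first rewrite the definition~\eqref{eq:alive_corrector_martingale} of $M_n(t)$ as
\begin{equation*}
Q_n(t) = 1 + \int \limits_0^t \mathcal{L} Q_n(u)\, \mathrm{d}u + M_n(t).
\end{equation*}
Next, since $q_n$ is the (continuously differentiable) solution of~\eqref{eq:alive_ODE} with $q_n(0)=1$, the fundamental theorem of calculus gives
\begin{equation*}
q_n(t) = 1 + \int \limits_0^t \dot{q}_n(u)\, \mathrm{d}u.
\end{equation*}
Subtracting the second identity from the first, the constants $1$ cancel, and combining the two integrals yields exactly~\eqref{eq:diff_of_random_and_deterministic_alive}.

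The only points needing care are the integrability facts that make these manipulations legitimate. First, $u\mapsto \mathcal{L}Q_n(u)$ must be integrable on $[0,t]$. By~\eqref{eq:neighbor_drift} it equals $-\lambda_n Q_n(u)\,\ev_{A_n(u),n,u}[\mathscr{C}]$, which is piecewise continuous in $u$ (between jumps of $A_n$) and bounded in absolute value by $\lambda_n A_n(u)\le \lambda_n n$ because $\mathscr{C}\le A_n(u)$. So the integral in~\eqref{eq:alive_corrector_martingale} is well defined and $M_n$ is the usual Dynkin martingale of the bounded-rate Markov chain $A_n$ from Claim~\ref{cl:time_dep_inf_gen}. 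Second, $\dot q_n$ must be integrable on $[0,t]$, i.e.\ $q_n$ must be well defined on the relevant range. This holds as long as $1-uq_n(u)>0$, which by Lemma~\ref{lem:w_bounds} holds for all $u$, since $uq_n(u)=w_n(u)\le \frac{1}{1+\lambda_n}<1$.

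I expect no real obstacle here. The one subtle issue is justifying the well-posedness of $q_n$ (no blow-up of the denominator), and that is supplied by Lemma~\ref{lem:w_bounds}.
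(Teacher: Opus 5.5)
Your proposal is correct and matches the paper's proof, which likewise just rewrites the definition~\eqref{eq:alive_corrector_martingale} of $M_n(t)$ and subtracts the Newton--Leibniz formula $q_n(t)=1+\int_0^t \dot q_n(u)\,\mathrm{d}u$. Your extra remarks on the integrability of $\mathcal{L}Q_n$ and on global well-posedness of $q_n$ are sound. The latter combines the a priori bound $w_n\le \frac{1}{1+\lambda_n}$ of Lemma~\ref{lem:w_bounds} with a standard continuation argument, and both remarks only make explicit what the paper leaves implicit.
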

\begin{proof}
Follows from~\eqref{eq:alive_corrector_martingale} and the Newton--Leibniz formula for $q_n(t)$.
\end{proof}

\begin{lemma}[Bounds on the number and fraction of alive vertices]\label{lem:if_t_less_than_tau_estimates}
Given $K>0$, there exists $n_0= n_0(K)$ such that for $n \geq n_0$ and for any $t<\tau$, we have
\begin{align}
\label{eq:alive_simple_lower_bound}
A_n(t) &\geq \ffrac12 n,\\
\label{eq:alive_blow_up_upper_bound}
\frac{1}{1- t \cdot Q_n(t)} &\leq \frac{2}{1-v_n(t)},\\
\label{eq:both_subcrit}
1-t \cdot (q_n(t) \vee Q_n(t) ) &> \ffrac{1}{2}\left(1-v_n(t)\right)>0,\\
\label{eq:window_sufficient_condition}
W_n(t) &<1-(A_n(t))^{-1/3},
\end{align}
where $v(\cdot)$ is the solution of the ODE~\eqref{eq:v_ODE}.
\end{lemma}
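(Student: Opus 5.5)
The four bounds follow from the facts that for $t<\tau$ we have $t<t_n^*$ and $\delta_n(t)<K(n\lambda_n)^{-1/2}$, together with the deterministic estimates of Section~\ref{sec:ODE}. The key quantitative point is that $(n\lambda_n)^{-1/2}$ is much smaller than $\lambda_n$, which is the natural scale of $1-v_n(t)$ for $t\le t_n^*$. Indeed, by \eqref{eq:t_star_def} and monotonicity of $v_n$, for $t\le t_n^*$ we have
\[
1-v_n(t)\ge 1-\tfrac{1}{1+2\lambda_n}=\tfrac{2\lambda_n}{1+2\lambda_n}\ge \lambda_n
\]
once $\lambda_n\le 1/2$. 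By \eqref{eq:subcrit_lambda_assumption} we have $n\lambda_n^3\gg (\ln n)^4\to\infty$, so $(n\lambda_n)^{-1/2}\ll\lambda_n$. Hence, given $K$, we choose $n_0(K)$ so that $2K(n\lambda_n)^{-1/2}\le \lambda_n/4$ and $\lambda_n\le 1/8$ for $n\ge n_0$.

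\textbf{Step 1: \eqref{eq:alive_simple_lower_bound}.} By Claim~\ref{claim_q_close_to_one}, $q_n(t)\ge 1-2\lambda_n\ln(1/\lambda_n)$, which tends to $1$. Since $\delta_n(t)<K(n\lambda_n)^{-1/2}\to0$, we get $Q_n(t)\ge 1/2$ for $n$ large, i.e.\ $A_n(t)\ge n/2$.

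\textbf{Step 2: \eqref{eq:both_subcrit}.} By \eqref{eq:wn_def} and \eqref{eq:wnvnbound}, $tq_n(t)=w_n(t)\le v_n(t)$. By \eqref{eq:tstar_n_less_than_2},
\[
tQ_n(t)\le tq_n(t)+t\delta_n(t)\le v_n(t)+2K(n\lambda_n)^{-1/2}\le v_n(t)+\tfrac14\lambda_n\le v_n(t)+\tfrac14(1-v_n(t)).
\]
Thus $1-t(q_n\vee Q_n)\ge \tfrac34(1-v_n(t))>\tfrac12(1-v_n(t))>0$. Taking reciprocals immediately gives \eqref{eq:alive_blow_up_upper_bound}.

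\textbf{Step 3: \eqref{eq:window_sufficient_condition}.} From Step 2, $1-W_n(t)\ge \tfrac34(1-v_n(t))\ge \tfrac34\lambda_n$. By Step 1, $A_n(t)^{-1/3}\le (n/2)^{-1/3}$. Since $\lambda_n\gg n^{-1/3}$, enlarging $n_0$ gives $\tfrac34\lambda_n>(n/2)^{-1/3}$, so $W_n(t)<1-A_n(t)^{-1/3}$.

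The only mildly delicate point is uniformity. Every threshold above depends only on $K$ and the deterministic sequence $\lambda_n$, not on $t$ or on the randomness, so a single $n_0(K)$ works almost surely for all $t<\tau$.
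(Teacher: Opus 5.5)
Your proof is correct and follows essentially the same route as the paper: $Q_n\ge q_n-\delta_n$ together with Claim~\ref{claim_q_close_to_one} for the first bound, and $1-tQ_n(t)\ge 1-w_n(t)-t\delta_n(t)$ with $w_n\le v_n$, $t_n^*<2$ and $2K(n\lambda_n)^{-1/2}\ll\lambda_n\le 1-v_n(t)$ for the remaining three. The only difference is cosmetic: you keep the slack constant $\tfrac34$ and deduce \eqref{eq:alive_blow_up_upper_bound} from \eqref{eq:both_subcrit}, which makes the strict inequality in \eqref{eq:both_subcrit} come out cleanly, whereas the paper first proves the $\tfrac12$ bound \eqref{eq:db_bound}.
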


Note that the usefulness of~\eqref{eq:window_sufficient_condition} is that we can use formulas \eqref{eq:first_moment_of_component_size} and~\eqref{eq:general_moment_of_component_size_crude} of Claim~\ref{cl:component_size} later in this section.

\begin{proof}
Let $t < \tau$. It follows from our assumption $\lambda_n \to 0$ (cf.~\eqref{eq:subcrit_lambda_assumption}), \eqref{eq:tau_def} and Claim~\ref{claim_q_close_to_one} that we have $q_n(t) \to 1$ as $n \to \infty$. On the other hand, $\delta_n(t) \to 0$ as $n \to \infty$ (see~\eqref{eq:subcrit_lambda_assumption}, \eqref{eq:tau_delta_def}, \eqref{eq:tau_def}). Hence, we obtain~\eqref{eq:alive_simple_lower_bound}:
\begin{equation*}
A_n(t) \stackrel{\eqref{eq:Qn_def}}{=} n \cdot Q_n(t) \stackrel{\eqref{eq:delta_as_difference} }{\geq} n \cdot \left(q_n(t)-\delta_n(t) \right) \geq \ffrac12 n,
\end{equation*}
where the last inequality follows from the fact that $q_n(t) \approx 1$ and $\delta_n(t) \approx 0$.

Now we turn to the proof of~\eqref{eq:alive_blow_up_upper_bound}. By~\eqref{eq:subcrit_lambda_assumption}, there exists $n_0= n_0(K)$ such that for $n \geq n_0$, we have
\begin{equation}\label{eq:lambda_Delta_ineq}
2 K \cdot (n \lambda_n)^{-1/2} \leq
\frac{\lambda_n}{1+2\lambda_n}=
\frac{1}{2}\left(1-\frac{1}{1+2\lambda_n} \right) \stackrel{\eqref{eq:t_def_as_v_inverse}, \eqref{eq:t_star_def} }{=} \ffrac{1}{2}\left(1-v_n(t_n^*)\right).
\end{equation}
Also note that we have
\begin{equation}\label{eq:v_t_a_t_ineq}
1-v_n(t) \stackrel{\eqref{eq:wnvnbound} }{\leq} 1-w_n(t) \stackrel{\eqref{eq:wn_def}}{=}1-t \cdot q_n(t).
\end{equation}
Next we use $t < \tau=\tau_\delta \wedge \tau_M \wedge t_n^*$ (see~\eqref{eq:tau_def}) to bound
\begin{equation}\label{eq:t_delta_n_bound}
t \delta_n(t) \stackrel{\eqref{eq:tstar_n_less_than_2} }{\leq} 2 \delta_n(t) \stackrel{ \eqref{eq:tau_delta_def} }{\leq} 2 K \cdot (n \lambda_n)^{-1/2}
\stackrel{ \eqref{eq:lambda_Delta_ineq} }{\leq} \ffrac{1}{2}\left(1-v_n(t_n^*)\right) \stackrel{ \text{Claim }\ref{cl:vn_properties} }{\leq} 
\ffrac{1}{2}\left(1-v_n(t)\right).
\end{equation}
Using this we obtain
\begin{equation}\label{eq:db_bound}
1- t Q_n(t) \geq 1-t q_n(t)-t |Q_n(t)-q_n(t) | \stackrel{ \eqref{eq:delta_as_difference} }{=} 1-t q_n(t) - t \delta_n(t) \stackrel{ \eqref{eq:v_t_a_t_ineq}, \eqref{eq:t_delta_n_bound} }{\geq} \ffrac{1}{2}\left(1-v_n(t)\right),
\end{equation}
from which~\eqref{eq:alive_blow_up_upper_bound} immediately follows. 

The proof of~\eqref{eq:both_subcrit} follows from~\eqref{eq:v_t_a_t_ineq}, \eqref{eq:db_bound} and the inequality $v_n(t)\leq v_n(t_n^*)=\frac{1}{1+2\lambda_n}<1$.

Finally, we prove~\eqref{eq:window_sufficient_condition}:
\begin{align*}
1-W_n(t) \stackrel{\eqref{eq:Wn_def},\, \eqref{eq:db_bound}}{\ge} \ffrac 12 (1-v_n(t)) \stackrel{\eqref{eq:wnvnbound}}{\ge} \frac{\lambda_n}{2(1+\lambda_n)} \stackrel{\eqref{eq:subcrit_lambda_assumption},\, \eqref{eq:alive_simple_lower_bound}}{\gg} (A_n(t))^{-1/3}.\tag*{\qedhere}
\end{align*}
\end{proof}

\begin{proof}[Proof of Lemma~\ref{lem:alive_delta_upper_bound_integral_1}]
Let us first bound the term inside the integral in \eqref{eq:diff_of_random_and_deterministic_alive}:
\begin{align}
\begin{split}\label{eq:derivative_difference_bound_alive_1}
| \mathcal{L} Q_n(t)- \dot{q}_n(t) | 
\stackrel{\eqref{eq:alive_ODE},\, \eqref{eq:neighbor_drift}}{ \leq}&
\lambda_n \left| Q_n(t) \ev_{A_n(t),n,t}[\mathscr{C}]- \frac{Q_n(t)}{1-t \cdot Q_n(t)} \right| + \lambda_n \left| \frac{Q_n(t)}{1-t \cdot Q_n(t)}- \frac{q_n(t)}{1-t \cdot q_n(t)} \right| \stackrel{\eqref{eq:first_moment_of_component_size},\, \eqref{eq:both_subcrit} }{\leq} \\
&\lambda_n \frac{\widetilde{C}}{A_n(t)} \frac{1}{(1-t \cdot Q_n(t))^4}+ \lambda_n \frac{|Q_n(t)-q_n(t)| }{( 1-t \cdot [q_n(t) \vee Q_n(t) ] )^2}.
\end{split}
\end{align}

We obtain for any $u < \tau$ 
\begin{equation}\label{eq:derivative_difference_bound_alive_2}
| \mathcal{L} Q_n(u)- \dot{q}_n(u) | 
 \stackrel{ \eqref{eq:alive_simple_lower_bound},\,
\eqref{eq:alive_blow_up_upper_bound},\,
\eqref{eq:both_subcrit}, \, \eqref{eq:derivative_difference_bound_alive_1} }{\leq} 
 \lambda_n \frac{2\widetilde{C}}{n} \frac{2^4}{(1-v_n(u))^4}+
 \lambda_n \frac{4 \delta_n(u) }{( 1-v_n(u) )^2}.
\end{equation}
Recall the notion of $\mathrm{t}_n(v)$ from~\eqref{eq:t_def_as_v_inverse}.
For any $t < \tau$, we have
\begin{align*}
\delta_n(t) \stackrel{\eqref{eq:delta_as_difference}}{=}& 
|Q_n(t)-q_n(t)| \stackrel{\eqref{eq:diff_of_random_and_deterministic_alive}}{\leq}
\int \limits_0^t \left| \mathcal{L} Q_n(u)- \dot{q}_n(u) \right| \, \mathrm{d}u + |M_n(t)| 
\stackrel{\eqref{eq:derivative_difference_bound_alive_2} }{\leq}\\
&C' \frac{\lambda_n}{n} \int \limits_0^t \frac{1}{(1-v_n(u))^4} \, \mathrm{d} u +
\lambda_n \int \limits_0^t \frac{ 4\delta_n(u) }{( 1-v_n(u) )^2} \, \mathrm{d} u + |M_n(t)|
\stackrel{\eqref{eq:v_dot_lower_bound},\, \eqref{eq:substition-integral}, \, \eqref{eq:tau_M_def},\, \eqref{eq:tau_def}}{\leq}\\
& \frac23 C' \frac{\lambda_n^{-2}}{n} +
\lambda_n \int \limits_0^{v_n(t)} \frac{ 8 \delta(\mathrm{t}_n(v)) }{( 1-v )^2} \, \mathrm{d} v 
+ \frac{K}{4\mathrm{e}^{4}} \cdot (n \lambda_n)^{-1/2} \stackrel{(*)}{\leq} 8 \lambda_n \int \limits_0^{v_n(t)} \frac{ \delta(\mathrm{t}_n(v)) }{( 1-v )^2} \, \mathrm{d} v + \frac{K}{2\mathrm{e}^{4}} \cdot (n \lambda_n)^{-1/2},
\end{align*}
where $(*)$ holds for $n \geq n_0(K)$ by~\eqref{eq:subcrit_lambda_assumption}.
The proof of Lemma~\ref{lem:alive_delta_upper_bound_integral_1} is complete.
\end{proof}

\begin{proof}[Proof of Lemma~\ref{lem:martingale_jumps_first}]
If we define $\delta_n^*(v)= \delta_n(\mathrm{t}_n(v))$, then $\delta^*(v_n(t))= \delta_n(t)$ (see~\eqref{eq:t_def_as_v_inverse}), and we can rewrite~\eqref{eq:alive_delta_upper_bound_integral} as
\begin{equation}\label{eq:integral_ineq_for_diff_alive}
\delta_n^*(v) \leq
\frac{1}{2\mathrm{e}^{4}} K \cdot (n \lambda_n)^{-1/2} +
8 \lambda_n \int \limits_0^{v} \frac{ \delta_n^*(u) }{( 1-u )^2} \, \mathrm{d}u, \quad v < v(\tau).
\end{equation}
Now we can use Grönwall's inequality (i.e., Claim~\ref{cl:gronwall}) to infer from~\eqref{eq:integral_ineq_for_diff_alive} that for any $v < v(\tau)$, we have
\begin{equation*}
\delta_n^*(v) \leq \frac{1}{2\mathrm{e}^{4}} K \cdot (n \lambda_n)^{-1/2} \cdot \exp \mleft( \int \limits_0^{v} \frac{ 8 \lambda_n }{( 1-u )^2} \, \mathrm{d} u \mright) \stackrel{(*)}{\leq}
\frac{K}{2} \cdot (n \lambda_n)^{-1/2},
\end{equation*}
where $(*)$ follows from $v<v(\tau)<\frac{1}{1+2 \lambda_n}$ by~\eqref{eq:t_star_def}. Thus we have
\begin{equation}\label{eq:gronwall_control_on_delta}
\delta_n(t)=|Q_n(t)-q_n(t)| \leq \frac{K}{2} \cdot (n \lambda_n)^{-1/2}, \qquad t < \tau.
\end{equation}

Let us denote by $\delta_n(\tau_+)$, $\delta_n(\tau_-)$, $M_n(\tau_+)$ and $M_n(\tau_-)$ the one-sided limits of $\delta_n$ and $M_n$ at $\tau$. If we let~${t \to \tau}$ in~\eqref{eq:diff_of_random_and_deterministic_alive}, we see that we have
\begin{equation}\label{eq:jumps_of_delta_and_M}
\delta_n(\tau_+)-\delta_n(\tau_-) \leq | M_n(\tau_+)-M_n(\tau_-)|.
\end{equation}
We will prove~\eqref{martingale_jumps_first} by contradiction. If \eqref{martingale_jumps_first} does not hold, then we have
\begin{equation}\label{eq:contradiction_stopping_times}
\tau= \tau_\delta<\tau_M,
\end{equation}
therefore, 
\begin{equation}\label{eq:tau_delta_before_after}
\delta_n(\tau_-) \stackrel{\eqref{eq:gronwall_control_on_delta} }{\leq} \frac{K}{2} \cdot (n \lambda_n)^{-1/2},
\qquad \delta_n(\tau_+) \stackrel{\eqref{eq:tau_delta_def},\, \eqref{eq:contradiction_stopping_times} }{\geq} K \cdot (n \lambda_n)^{-1/2}.
\end{equation}
Also
\begin{equation*}
|M_n(\tau_-)| \stackrel{\eqref{eq:tau_M_def},\,\eqref{eq:contradiction_stopping_times}}{\leq} \frac{K}{4\mathrm{e}^{4}} \cdot \left( n \lambda_n \right)^{-1/2}, 
 \qquad 
|M_n(\tau_+)| \stackrel{\eqref{eq:tau_M_def},\,\eqref{eq:contradiction_stopping_times}}{\leq} \frac{K}{4\mathrm{e}^{4}} \cdot \left( n \lambda_n \right)^{-1/2},
\end{equation*}
therefore, $| M_n(\tau_+)-M_n(\tau_-)| < \frac{K}{2} \cdot (n \lambda_n)^{-1/2}$, but \eqref{eq:tau_delta_before_after} implies $\delta_n(\tau_+)-\delta_n(\tau_-) \geq \frac{K}{2} \cdot (n \lambda_n)^{-1/2}$, which is a contradiction with~\eqref{eq:jumps_of_delta_and_M}. This completes the proof of Lemma~\ref{lem:martingale_jumps_first}.
\end{proof}

\begin{proof}[Proof of Lemma~\ref{lem:tau_not_happens_early}]
The definition of $\tau$ (see~\eqref{eq:tau_def}) and Lemma~\ref{lem:martingale_jumps_first} imply that for any $n \ge n_0(K)$ (determined in Lemma~\ref{lem:martingale_jumps_first}), we have
\begin{equation}\label{eq:def_tau_2}
\tau = \tau_M \wedge t_n^*.
\end{equation}

Recall the notion of the martingale $N_n(t)$ from~\eqref{eq:quadratic_corrector_martingale}. Now it follows from the optional stopping theorem that $\ev[N_n(\tau)]=0$, therefore,

\begin{align}
\begin{split}\label{eq:M_second_moment_upper_bound}
\ev \mleft( M_n^2(\tau) \mright) \stackrel{\eqref{eq:quadratic_corrector_martingale}}{=}& \ev \mleft( \int \limits_0^\tau \mathcal{L} M_n^2(u) \, \mathrm{d}u \mright)
\stackrel{\eqref{eq:quadratic_variation}}{=} \ev \mleft( \int \limits_0^\tau \frac 1n \lambda_n Q_n(u) \ev_{A_n(u),n,u}[\mathscr{C}^2] \, \mathrm{d}u \mright) 
\stackrel{\eqref{eq:general_moment_of_component_size_crude}}{\leq}\\
&\widetilde{C}_2 \frac{\lambda_n}{n} \ev \mleft( \int \limits_0^\tau \frac{1}{(1-u \cdot Q_n(u) )^3 } \, \mathrm{d}u \mright)
\stackrel{\eqref{eq:tau_def},\, \eqref{eq:alive_blow_up_upper_bound} }{\leq}
8\widetilde{C}_2 \frac{\lambda_n}{n} \int \limits_0^{t_n^*} \frac{1}{(1-v_n(u))^3}\, \mathrm{d}u \stackrel{\eqref{eq:substition-integral}}{\leq} 8\widetilde{C}_2 (n\lambda_n)^{-1}.
\end{split}
\end{align}
Now we can apply Chebyshev's inequality in $(*)$ below to deduce
\begin{equation*}
\pr \mleft( \tau < t_n^* \mright) 
\stackrel{\eqref{eq:tau_M_def},\, \eqref{eq:def_tau_2} }{\leq} 
\pr \mleft( |M_n(\tau)| \geq \frac{K}{4\mathrm{e}^{4}} \cdot \left( n \lambda_n \right)^{-1/2} \mright)
\stackrel{(*)}{\leq}
16\mathrm{e}^8 \frac{ \ev \mleft( M_n^2(\tau) \mright) }{ K^2 \cdot \left( n \lambda_n \right)^{-1} } 
\stackrel{\eqref{eq:M_second_moment_upper_bound} }{\leq} \frac{C}{K^2}.
\end{equation*}
This proves Lemma~\ref{lem:tau_not_happens_early}, which completes the proof of Proposition~\ref{prop:arrival_at_OU}.
\end{proof}

\section{Proof of the Ornstein--Uhlenbeck scaling limit result }\label{sec:SOC}

In this section our goal is to prove Lemma~\ref{lem:Z_bound} for $t \ge t_n^c$ and Theorem~\ref{thm:OU_scaling}.

\subsection{Infinitesimal generator asymptotics}\label{subsec:infgen_asymp}

The main result of Section~\ref{subsec:infgen_asymp} is Proposition~\ref{prop:exp_drift_precise_ou_lyap_momgen}, which compares the infinitesimal generator of the centered scaled window process to that of an Ornstein--Uhlenbeck (O.-U.) process.
We use it for two purposes: in Section~\ref{subsec:stays_crit}, using a Lyapunov argument, we show that the window process stays $\lambda_n$-critical on the long run, and we prove the O.-U.~scaling limit process convergence result in Section~\ref{subsec:ou_proof}.

Recall the definition of $Z_n(t)$ from~\eqref{eq:Z_n_t_def}, the definition of $t_n^c$ from~\eqref{eq:tnc_def} and that we have fixed $T \in (1,+\infty)$.

We now define an auxiliary sequence $(\kappa_n)$ that goes to infinity slow enough so that the infinitesimal generator of $Z_n(t)$ is close enough to that of an O.-U.~process (the drift and diffusion coefficients of which do depend on $t$) as long as $t \in [t_n^c,T]$ and $Z_n(t) \in [-\kappa_n,\kappa_n]$ (cf.~Proposition~\ref{prop:exp_drift_precise_statement}), but on the other hand~$\kappa_n$ goes to infinity fast enough so that the process $Z_n(t)$ never leaves the interval $[-\kappa_n,\kappa_n]$ on $[t_n^c,T]$ with high probability (we will show this in Section~\ref{subsec:stays_crit}).

\begin{definition}[Safety barrier]\label{def:kappa}
Under the assumption \eqref{eq:subcrit_lambda_assumption}, we can define $(\kappa_n)_{n=1}^{\infty}$ in such a way that
\begin{equation}\label{eq:kappa_threshold_assumption}
\ln \mleft( \frac{1}{\lambda_n}\mright) \ll \kappa_n, \qquad \lambda_n \ll \frac{1}{\kappa_n}, \qquad \kappa_n^{4/3} n^{-1/3} \ll \lambda_n
\end{equation}
hold. Let us fix such a sequence $(\kappa_n)_{n=1}^{\infty}$. 
\end{definition}

\begin{remark}[Construction of $\kappa_n$]\label{rem:kappa_construction}
A possible choice for the sequence $(\kappa_n)_{n=1}^\infty$ satisfying the conditions in Definition~\ref{def:kappa} is to take the geometric mean of the lower bound and the more restrictive upper bound of~\eqref{eq:kappa_threshold_assumption}. In particular, one may define:
\begin{equation}\label{eq:kappa_const}
\kappa_n \coloneqq \sqrt{ \ln\mleft(\frac{1}{\lambda_n}\mright) \cdot \min\mleft( \frac{1}{\lambda_n},\, (n\lambda_n^3)^{1/4} \mright) }.
\end{equation}
Under the assumption~\eqref{eq:subcrit_lambda_assumption}, the lower bound $\ln(1/\lambda_n)$ is asymptotically smaller than the minimum of the upper bounds $\lambda_n^{-1}$ and $(n\lambda_n^3)^{1/4}$, therefore, the construction in~\eqref{eq:kappa_const} satisfies~\eqref{eq:kappa_threshold_assumption}.
\end{remark}

Our next claim follows from~\eqref{eq:Wn_def}, \eqref{eq:Z_n_t_def} and Claim~\ref{cl:time_dep_inf_gen}.

\begin{claim}[$Z_n$ is Markov]\label{cl:Z_n_markov} 
The process~$(Z_n(t))_{t \geq 0 }$ is a continuous-time Markov process (albeit not time-homogeneous).
\end{claim}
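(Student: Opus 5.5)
The plan is to show that $Z_n(t)$ is a deterministic, time-dependent, injective function of the pair $(A_n(t),t)$, and then transfer the Markov property of $(A_n(t))_{t\ge0}$ from Claim~\ref{cl:time_dep_inf_gen}.

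\textbf{Step 1: $Z_n$ is a function of $(A_n(t),t)$.} By \eqref{eq:Wn_def} and \eqref{eq:Z_n_t_def},
\begin{equation*}
Z_n(t)=\phi_n(A_n(t),t),\qquad \phi_n(a,t)\coloneqq (n\lambda_n)^{1/2}\Big(\frac{ta}{n}-w_n(t)\Big).
\end{equation*}
Here $w_n(\cdot)$ is deterministic and smooth; by \eqref{eq:w_ODE} it is $C^1$ on $(0,\infty)$. For each $t>0$ the map $a\mapsto\phi_n(a,t)$ is affine with nonzero slope $t(n\lambda_n)^{1/2}/n$, so it is injective. Hence $A_n(t)$ can be recovered from the value $Z_n(t)$ as $A_n(t)=\psi_n(Z_n(t),t)$, with $\psi_n$ the explicit inverse.

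\textbf{Step 2: transfer the Markov property.} By Claim~\ref{cl:time_dep_inf_gen}, $(A_n(t))$ is a time-inhomogeneous Markov process with respect to $\mathcal{F}_n(t)$. For $s<t$ and a bounded measurable $f$, write $f(Z_n(t))=f(\phi_n(A_n(t),t))$. Then
\begin{equation*}
\ev[f(Z_n(t))\mid\mathcal{F}_n(s)] = h(A_n(s)) = h(\psi_n(Z_n(s),s)),
\end{equation*}
for some measurable $h$ supplied by the Markov property of $A_n$. The right-hand side depends only on $Z_n(s)$.

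It remains to check the filtration. Since $\phi_n(\cdot,u)$ is a bijection for $u>0$, the $\sigma$-algebra generated by $(Z_n(u))_{u\le s}$ coincides with $\mathcal{F}_n(s)$ up to time $0$. At $t=0$ we have $Z_n(0)=0$ and $A_n(0)=n$, both deterministic, so nothing is lost there. Therefore the displayed identity is exactly the Markov property of $Z_n$ in its own filtration.

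\textbf{Step 3: the generator.} The transition mechanism of $Z_n$ is read off from \eqref{eq:A_n_t_infgen} with $g(a,t)=f(\phi_n(a,t))$. This gives jumps of size $-(n\lambda_n)^{1/2}t\,\mathscr{C}/n$ at rate $\lambda_n A_n(t)$, together with the deterministic drift $\partial_t\phi_n$. The only point requiring care is the time-dependence of $\phi_n$, which rules out time-homogeneity; this is not an obstacle to the argument.
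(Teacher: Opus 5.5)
Your argument is correct and matches the paper's: the paper deduces the claim directly from \eqref{eq:Wn_def}, \eqref{eq:Z_n_t_def} and the Markov property of $A_n$ (Claim~\ref{cl:time_dep_inf_gen}), using that $Z_n(t)$ is a deterministic function of $(A_n(t),t)$, injective in $A_n(t)$ for $t>0$. One small correction: $\mathcal{F}_n(s)$ is generated by the vertex sets $\mathcal{A}_n(u)$, so it is in general strictly larger than $\sigma(Z_n(u):u\le s)=\sigma(A_n(u):u\le s)$ rather than equal to it; this is harmless, because conditioning your identity on the smaller $\sigma$-algebra (tower property) still gives $h(\psi_n(Z_n(s),s))$.
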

\begin{definition}[Auxiliary O.-U.~process]
Given some parameters $n \in \mathbb{N}$ and $t \in \lbrack t_n^c, T \rbrack$, we consider the Ornstein--Uhlenbeck process $(\widehat{Z}_n^t(s))$, which solves the SDE
\begin{equation}\label{eq:ou_SDE_n_t_g}
\mathrm{d} \widehat{Z}_n^t(s)= - \frac{1}{(g_n(t))^2} \widehat{Z}_n^t(s)\, \mathrm{d}s + \sqrt{\frac{t}{(g_n(t))^3}} \, \mathrm{d} B(s),
\end{equation}
where $g_n(t)=(1-w_n(t))/\lambda_n$ was defined in \eqref{eq:def_g_n_t}.
\end{definition}

\begin{definition}[Infinitesimal generator of $\widehat{Z}_n^t$]
The infinitesimal generator of $(\widehat{Z}_n^t(s))$ is defined for any smooth test function $f\colon \mathbb{R} \to \mathbb{R}$ by
\begin{equation}\label{eq:OU_infgen_def}
\widetilde{L}(n,z,t,f)\coloneqq- \frac{1}{(g_n(t))^2} z f'(z) +
\frac 12 \frac{t}{(g_n(t))^3} f''(z) = \lim_{h \to 0_+} \frac 1h \ev \mleft( f(\widehat{Z}_n^t(h))-f(\widehat{Z}_n^t(0)) \, \middle| \, \widehat{Z}_n^t(0)= z \mright).
\end{equation}
\end{definition}
Thus if $f(z)=\mathrm{e}^{\beta z}$ then $\frac{-\widetilde{L}(n,z,t,f)}{\mathrm{e}^{\beta z}}= \frac{ \beta z}{(g_n(t))^2} - \frac{ \beta^2 t}{2(g_n(t))^3} $.

Recall that we denote by $\mathcal{L}$ the drift of a stochastic process (see~\eqref{eq:drift_def}). 
Now we state the result relating the infinitesimal generator of $Z_n(t)$ to that of the Ornstein--Uhlenbeck process $\widehat{Z}_n^t$.

\begin{proposition}[Exponential test function]\label{prop:exp_drift_precise_statement}
For any $\beta \in \mathbb{C}$, we have
\begin{equation}\label{eq:exp_drift_precise_statement}
\lim_{n \to \infty} \, \sup_{t_n^c \leq t \leq T} \indic{ |Z_n(t)| \leq \kappa_n }
\left| \lambda_n \frac{\mathcal{L} \mathrm{e}^{\beta Z_n(t)} }{\mathrm{e}^{\beta Z_n(t)} } + \frac{ \beta Z_n(t)}{(g_n(t))^2} - \frac{ \beta^2 t}{2(g_n(t))^3} \right| =0.
\end{equation}
\end{proposition}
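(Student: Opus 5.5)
We apply the generator formula \eqref{eq:A_n_t_infgen} of Claim~\ref{cl:time_dep_inf_gen} to the test function
\[
g(A,t)=\exp\bigl(\beta (n\lambda_n)^{1/2}(tA/n-w_n(t))\bigr),
\]
so that $g(A_n(t),t)=\mathrm{e}^{\beta Z_n(t)}$. Write $\varepsilon_n\coloneqq(\lambda_n/n)^{1/2}$, so that $(n\lambda_n)^{1/2}/n=\varepsilon_n$. Removing a component of size $\mathscr{C}$ multiplies $g$ by $\mathrm{e}^{-\beta t\varepsilon_n\mathscr{C}}$, and $g_2/g=\beta(n\lambda_n)^{1/2}(Q_n(t)-\dot w_n(t))$. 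This gives the exact identity
\[
\frac{\mathcal{L}\mathrm{e}^{\beta Z_n(t)}}{\mathrm{e}^{\beta Z_n(t)}}=\lambda_n A_n(t)\,\ev_{A_n(t),n,t}\bigl[\mathrm{e}^{-x_n\mathscr{C}}-1\bigr]+\beta(n\lambda_n)^{1/2}\bigl(Q_n(t)-\dot w_n(t)\bigr),\qquad x_n\coloneqq\beta t\varepsilon_n .
\]
The plan is to Taylor expand $\mathrm{e}^{-x_n\mathscr{C}}-1=-x_n\mathscr{C}+\tfrac12x_n^2\mathscr{C}^2+R$, where $|R|\le|x_n|^3\mathscr{C}^3\mathrm{e}^{|x_n|\mathscr{C}}$. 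We then use Claim~\ref{cl:component_size} for the moments and the ODE \eqref{eq:w_ODE} for $\dot w_n$.

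\textbf{Preliminary a priori bounds.} These hold on the event $\{|Z_n(t)|\le\kappa_n\}$, uniformly in $t\in[t_n^c,T]$.
\begin{enumerate*}[label=(\alph*)]
\item We have $|W_n(t)-w_n(t)|\le\kappa_n(n\lambda_n)^{-1/2}\ll\lambda_n$, since $\kappa_n^2\ll n\lambda_n^3$ by \eqref{eq:kappa_threshold_assumption}.
\item Lemma~\ref{lem:gn_properties} gives $1-w_n(t)=\lambda_ng_n(t)$ with $g_n(t)\in[1-o(1),4\vee T+o(1)]$. Hence $1-W_n(t)=\lambda_ng_n(t)(1+o(1))$.
\item We have $Q_n(t)=W_n(t)/t\ge 1/(2T)$, so $A_n(t)\asymp n$, and $1-W_n(t)\gg A_n(t)^{-1/3}$ by \eqref{eq:subcrit_lambda_assumption}. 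So \eqref{eq:first_moment_of_component_size}, \eqref{eq:second_moment_of_component_size} and \eqref{eq:general_moment_of_component_size_crude} apply, and $W_n(t)>1/3$.
\end{enumerate*}

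\textbf{First-order terms.} The linear part of the expansion is $-\beta(n\lambda_n)^{1/2}\lambda_nW_n\ev[\mathscr{C}]$. Replace $\ev[\mathscr{C}]$ by $1/(1-W_n)$. By \eqref{eq:first_moment_of_component_size}, after multiplying by $\lambda_n$ the error is $O\bigl((n\lambda_n)^{1/2}\lambda_n^2/(\lambda_n^4n)\bigr)=O(n^{-1/2}\lambda_n^{-3/2})\to0$. Using \eqref{eq:w_ODE} and $Q_n-w_n/t=(W_n-w_n)/t$, the full first-order contribution equals
\[
\beta(n\lambda_n)^{1/2}\Bigl[\frac{W_n-w_n}{t}-\lambda_n\Bigl(\frac{W_n}{1-W_n}-\frac{w_n}{1-w_n}\Bigr)\Bigr].
\]
After multiplication by $\lambda_n$, the first piece is $\lambda_n\beta Z_n/t=O(\lambda_n\kappa_n)\to0$. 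For the second piece we use the identity
\[
\frac{W}{1-W}-\frac{w}{1-w}=\frac{W-w}{(1-w)^2}+\frac{(W-w)^2}{(1-w)^2(1-W)} .
\]
Its leading part gives exactly $-\beta Z_n/g_n(t)^2$. Its quadratic part, multiplied by $\lambda_n$, is $O\bigl(\kappa_n^2(n\lambda_n)^{-1/2}\lambda_n^{-2}\bigr)$, which tends to $0$ precisely because $\kappa_n^4\ll n\lambda_n^3$, i.e.\ the third condition of \eqref{eq:kappa_threshold_assumption}.

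\textbf{Second-order term and remainder.} The quadratic term is
\[
\lambda_n\cdot\lambda_nnQ_n\cdot\tfrac12\beta^2t^2\varepsilon_n^2\,\ev[\mathscr{C}^2]=\tfrac12\beta^2t\,W_n\,\lambda_n^3\,\ev[\mathscr{C}^2].
\]
By \eqref{eq:second_moment_of_component_size} and bound (b), $\lambda_n^3\ev[\mathscr{C}^2]\to g_n(t)^{-3}$ uniformly, and $W_n\to1$; this yields $\beta^2t/(2g_n(t)^3)$. The main technical step, and where care is needed for complex $\beta$, is the remainder. There $|x_n|\asymp\varepsilon_n\ll\lambda_n^2\asymp(1-W_n)^2$, because $n\lambda_n^3\to\infty$. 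So by Cauchy--Schwarz, \eqref{eq:general_moment_of_component_size_crude} with $k=6$, and \eqref{eq:magic_branching_proc_estimate_2}, for large $n$
\[
\ev\bigl[\mathscr{C}^3\mathrm{e}^{|x_n|\mathscr{C}}\bigr]\le\ev[\mathscr{C}^6]^{1/2}\,\ev\bigl[\mathrm{e}^{\frac14(1-W_n)^2\mathscr{C}}\bigr]^{1/2}=O(\lambda_n^{-11/2}).
\]
The remainder contribution is therefore $O\bigl(\lambda_n^2n\varepsilon_n^3\lambda_n^{-11/2}\bigr)=O\bigl(n^{-1/2}\lambda_n^{-2}\bigr)\to0$. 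Collecting the three contributions gives \eqref{eq:exp_drift_precise_statement}, uniformly in $t\in[t_n^c,T]$.
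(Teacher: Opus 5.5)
Your plan is the paper's route. The paper packages the same computation through general $C^3$ test functions $f\in\mathcal{D}(z,\beta)$, treating real and imaginary parts separately. Your first- and second-order parts are correct, apart from one slip. After multiplying by $\lambda_n$, the quadratic correction $\lambda_n^2\beta(n\lambda_n)^{1/2}(W_n-w_n)^2/((1-w_n)^2(1-W_n))$ is of order $\kappa_n^2(n\lambda_n)^{-1/2}\lambda_n^{-1}=\kappa_n^2(n\lambda_n^3)^{-1/2}$, not $\kappa_n^2(n\lambda_n)^{-1/2}\lambda_n^{-2}$. Only the former is killed by $\kappa_n^4\ll n\lambda_n^3$.

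The genuine gap is the remainder estimate, which fails in part of the regime \eqref{eq:subcrit_lambda_assumption}. Cauchy--Schwarz with the sixth moment costs a factor $\lambda_n^{-1/2}$. Indeed $\ev[\mathscr{C}^6]^{1/2}$ is of order $(1-W_n)^{-11/2}$ while $\ev[\mathscr{C}^3]$ is of order $(1-W_n)^{-5}$, and this loss is real, not an artifact of the crude bound \eqref{eq:general_moment_of_component_size_crude}.

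Your bound $O(n^{-1/2}\lambda_n^{-2})$ therefore tends to $0$ only if $n\lambda_n^4\to\infty$, i.e.\ $\lambda_n\gg n^{-1/4}$. The hypothesis \eqref{eq:subcrit_lambda_assumption} does not give this: for $\lambda_n=n^{-3/10}$ your bound equals $n^{1/10}$. Switching to H\"older with any fixed exponent does not help once $\lambda_n$ is close to $n^{-1/3}$.

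What is needed is a truncation, as in the paper's proof of \eqref{eq:control_of_f_expect_taylor_remainder}.
\begin{itemize}
\item On $\{|x_n|\mathscr{C}<1\}$, use $|R|\le \mathrm{e}\,|x_n|^3\mathscr{C}^3$ and \eqref{eq:general_moment_of_component_size_crude} with $k=3$. The contribution is $O(\lambda_n^2 n\,\varepsilon_n^3\lambda_n^{-5})=O((n\lambda_n^3)^{-1/2})\to0$.
\item On $\{|x_n|\mathscr{C}\ge1\}$, use $|x_n|^3\mathscr{C}^3\mathrm{e}^{|x_n|\mathscr{C}}\le 6\,\mathrm{e}^{2|x_n|\mathscr{C}}$. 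On this event, $\mathrm{e}^{2|x_n|\mathscr{C}}\le \mathrm{e}^{\frac14(1-W_n)^2\mathscr{C}}\exp\bigl(-\bigl(\tfrac14(1-W_n)^2-2|x_n|\bigr)/|x_n|\bigr)$. Then \eqref{eq:magic_branching_proc_estimate_2} gives a contribution $O\bigl(n\lambda_n^2\exp(-c\sqrt{n\lambda_n^3})\bigr)$.
\end{itemize}
The second contribution vanishes because $\ln n\ll\sqrt{n\lambda_n^3}$. This is one of the two places where the logarithmic factor in \eqref{eq:subcrit_lambda_assumption} is actually needed.
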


Next we state and prove Proposition~\ref{prop:exp_drift_precise_ou_lyap_momgen}, which generalizes Proposition~\ref{prop:exp_drift_precise_statement}. In order to do so, we need some further notation.

\begin{notation}
Let $f\colon \mathbb{R} \to \mathbb{C}$ denote a smooth test function. We will now derive an infinitesimal generator formula for $Z_n(t)$. First, let us introduce the parameter $A \in \left\{ 1, \ldots, n \right\}$ and let us denote
\begin{equation}\label{eq:z_n_t_A_def}
z_n^A(t)= \sqrt{ n \lambda_n } \left( \frac{t A}{n} - w_n(t) \right).
\end{equation}
Note that if $A=A_n(t)$, then $z_n^A(t)=Z_n(t)$ by~\eqref{eq:Z_n_t_def}. Recall the notation $\ev_{A,n,t}$ and $\mathscr{C}$ from Notation~\ref{not:ER_notation_A_n_t} and the function $w_n(t)$ from~\eqref{eq:wn_def}. We define
\begin{align}
E(n,A,t,f) & \coloneqq\ev_{A,n,t}\mleft[ f\mleft(z_n^A(t)- t \sqrt{\frac{\lambda_n}{n}} \mathscr{C}\mright)-f\mleft(z_n^A(t)\mright) \mright], \label{eq:E_n_A_t_f_def}\\
H(n,A,t,f) & \coloneqq f'(z_n^A(t)) \sqrt{n \lambda_n} \left( \frac{A}{n} - \dot{w}_n(t) \right)+ \lambda_n A E(n,A,t,f). \label{eq:H_def}
\end{align}
\end{notation}

\begin{claim}[Infinitesimal generator of $Z_n(t)$]
The infinitesimal generator formula for $Z_n(t)$ is the following
\begin{equation}\label{eq:Z_n_t_infgen}
\mathcal{L} f(Z_n(t))= H(n,A_n(t),t,f).
\end{equation}
\end{claim}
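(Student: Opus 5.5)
The plan is to write $Z_n(t)$ as a deterministic smooth function of the pair $(A_n(t),t)$ and then read off \eqref{eq:Z_n_t_infgen} from the time-dependent generator formula of Claim~\ref{cl:time_dep_inf_gen}. Concretely, for a smooth $f\colon\mathbb{R}\to\mathbb{C}$, set
\[
g(A,t)\coloneqq f\bigl(z_n^A(t)\bigr), \qquad z_n^A(t)=\sqrt{n\lambda_n}\Bigl(\frac{tA}{n}-w_n(t)\Bigr),
\]
as in \eqref{eq:z_n_t_A_def}. Since $z_n^{A_n(t)}(t)=Z_n(t)$, we have $f(Z_n(t))=g(A_n(t),t)$. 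For complex-valued $f$ I would apply Claim~\ref{cl:time_dep_inf_gen} to the real and imaginary parts separately; the claim is linear in $g$, so this is harmless. Smoothness of $g$ in $t$ follows because $w_n$ is $C^1$ by \eqref{eq:w_ODE}.

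The proof then computes the two terms of \eqref{eq:A_n_t_infgen} in turn.

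\textbf{Time-derivative term.} By the chain rule,
\[
g_2(A,t)=f'\bigl(z_n^A(t)\bigr)\,\sqrt{n\lambda_n}\Bigl(\frac{A}{n}-\dot w_n(t)\Bigr).
\]
This is exactly the first summand of $H$ in \eqref{eq:H_def}.

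\textbf{Jump term.} A lightning strike that kills a component of size $\mathscr{C}$ changes $A$ to $A-\mathscr{C}$, while $t$ and $w_n(t)$ stay fixed. The argument of $f$ therefore becomes
\[
z_n^{A-\mathscr{C}}(t)=z_n^A(t)-\sqrt{n\lambda_n}\,\frac{t\mathscr{C}}{n}=z_n^A(t)-t\sqrt{\frac{\lambda_n}{n}}\,\mathscr{C}.
\]
Hence $\ev_{A,n,t}\bigl[g(A-\mathscr{C},t)-g(A,t)\bigr]=E(n,A,t,f)$ as defined in \eqref{eq:E_n_A_t_f_def}. Multiplying by the total strike rate $\lambda_n A$ gives the second summand of $H$.

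\textbf{Conclusion and main technical point.} Substituting $A=A_n(t)$ in both terms yields $\mathcal{L}f(Z_n(t))=H(n,A_n(t),t,f)$. The only point needing care is whether Claim~\ref{cl:time_dep_inf_gen} applies to test functions $f$ such as $e^{\beta z}$, which are unbounded. This causes no problem: $A$ ranges over the finite set $\{0,\dots,n\}$, so $g$ and its $t$-derivative are bounded on compact time intervals. The limit in \eqref{eq:drift_def} then follows by the same argument as in the proof of Claim~\ref{cl:time_dep_inf_gen}. In that limit, the probability of two or more strikes in $[t,t+h]$ is $O(h^2)$, and the distribution of the destroyed component given $\mathcal{F}_n(t)$ is governed by Lemma~\ref{lem:ER}.
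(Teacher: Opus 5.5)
Your proposal is correct and is essentially the paper's argument. The paper's proof just says the formula follows directly from \eqref{eq:A_n_t_infgen}, and you have carried out that substitution with $g(A,t)=f(z_n^A(t))$, computing the time-derivative term and the jump term explicitly.
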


\begin{proof}
Follows directly from~\eqref{eq:A_n_t_infgen}.
\end{proof}

\begin{notation}
Given $z , \beta \in \mathbb{R}$, we define the class of functions
\begin{equation}\label{eq:D_z_def}
\mathcal{D}(z, \beta)\coloneqq \Bigg\{ \, f: \mathbb{R} \to \mathbb{R} \, : \, f \in C^3(\mathbb{R}), \; |f'(z)|\leq |\beta|, \; |f''(z)|\leq |\beta|^2, \; \sup_{s \in \mathbb{R} } \frac{|f'''(s)|}{ \mathrm{e}^{|\beta (s-z)|}} \leq |\beta|^3 \, \Bigg\}.
\end{equation}
For $n \in \mathbb{N}$, $\beta \in \mathbb{R}$ and $T \in \mathbb{R}_+$, we define the set of good parameters
$\mathcal{R}(n,\beta,T) \subseteq \{ 1, \ldots, n \} \times \mathbb{R}_+ \times C^3(\mathbb{R})$~by
\begin{equation}\label{eq:R_n_def}
\mathcal{R}(n,\beta,T) \coloneqq \Big\{ \, (A,t,f) \, : \, t_n^c \leq t \leq T, \, |z_n^A(t)| \leq \kappa_n, \, f \in \mathcal{D}(z_n^A(t), \beta) \, \Big\}.
\end{equation}
\end{notation}

\begin{proposition}[Test function with bounded derivatives]\label{prop:exp_drift_precise_ou_lyap_momgen}
For any $\beta \in \mathbb{R}$, we have
\begin{equation}\label{eq:f_test_fn_drift_precise_statement}
\lim_{n \to \infty} \sup_{(A,t,f) \in \mathcal{R}(n,\beta,T) } \left| \lambda_n H(n,A,t,f) - \widetilde{L}(n,z_n^A(t),t,f) \right| =0.
\end{equation}
\end{proposition}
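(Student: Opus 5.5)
We will Taylor expand $f$ around $z=z_n^A(t)$ inside $E(n,A,t,f)$ to third order, with jump $h\coloneqq t\sqrt{\lambda_n/n}\,\mathscr{C}$:
\begin{equation*}
E(n,A,t,f)= -f'(z)\, t\sqrt{\tfrac{\lambda_n}{n}}\,\ev_{A,n,t}[\mathscr{C}] + \tfrac12 f''(z)\, t^2\tfrac{\lambda_n}{n}\,\ev_{A,n,t}[\mathscr{C}^2] + R,
\end{equation*}
where $|R|\le \frac16 |\beta|^3 t^3(\lambda_n/n)^{3/2}\,\ev_{A,n,t}[\mathscr{C}^3 \mathrm{e}^{|\beta| h}]$ by the definition~\eqref{eq:D_z_def} of $\mathcal{D}(z,\beta)$. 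Multiplying by $\lambda_n\cdot\lambda_n A$ and adding the drift term $\lambda_n f'(z)\sqrt{n\lambda_n}(A/n-\dot w_n(t))$, we will show separately that the first-order terms combine into $-z f'(z)/g_n(t)^2+o(1)$, the second-order term gives $\frac12 t f''(z)/g_n(t)^3+o(1)$, and $\lambda_n^2 A|R|\to0$, all uniformly over $\mathcal{R}(n,\beta,T)$.

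\textbf{Preliminary bounds on the window.} On $\mathcal{R}$ we have $|z|\le\kappa_n$, so $W_n^A(t)=w_n(t)+z/\sqrt{n\lambda_n}$. Since $\kappa_n/\sqrt{n\lambda_n}\ll\lambda_n$ by~\eqref{eq:kappa_threshold_assumption}, Lemma~\ref{lem:gn_properties} yields $1-W_n^A(t)=\lambda_n g_n(t)(1+o(1))\asymp\lambda_n$ and $A\approx n q_n(t)\asymp n$ uniformly. Hence $W_n^A(t)\le 1-A^{-1/3}$ (as $\lambda_n\gg n^{-1/3}$), and Claim~\ref{cl:component_size} applies.

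\textbf{Second-order and remainder terms.} By~\eqref{eq:second_moment_of_component_size}, $\ev[\mathscr{C}^2]=(1-W_n^A)^{-3}(1+O(1/(\lambda_n^3 n)))$. The second-order contribution is therefore $\frac12 f''(z)\,t^2\lambda_n^3 (A/n)(\lambda_n g_n)^{-3}(1+o(1))$. Since $tA/n=W_n^A\to1$, this equals $\frac12 t f''(z)/g_n(t)^3+o(1)$, using $|f''(z)|\le\beta^2$ and the fact that $g_n$ is bounded away from $0$ and $\infty$. For the remainder, we bound $\mathrm{e}^{|\beta|h}\le \mathrm{e}^{\frac18(1-W)^2\mathscr{C}}$ for large $n$, since $\sqrt{\lambda_n/n}\ll\lambda_n^2$ (equivalent to $\lambda_n\gg n^{-1/3}$). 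Cauchy--Schwarz together with~\eqref{eq:magic_branching_proc_estimate_2} and~\eqref{eq:general_moment_of_component_size_crude} with $k=6$ then gives $\ev[\mathscr{C}^3\mathrm{e}^{|\beta|h}]\le C\lambda_n^{-11/2}$. Consequently $\lambda_n^2A|R|\lesssim n\lambda_n^2(\lambda_n/n)^{3/2}\lambda_n^{-11/2}=(n\lambda_n^3)^{-1/2}\to0$.

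\textbf{First-order terms (main obstacle).} The coefficient of $f'(z)$ is
\begin{equation*}
\lambda_n\sqrt{n\lambda_n}\Big(\tfrac An-\dot w_n(t)-\lambda_n \tfrac{tA}{n}\ev[\mathscr{C}]\Big).
\end{equation*}
By~\eqref{eq:first_moment_of_component_size}, $\ev[\mathscr{C}]=\frac1{1-W}+O(\lambda_n^{-4}n^{-1})$; the error contributes $\lambda_n^2\sqrt{n\lambda_n}\lambda_n^{-4}n^{-1}=(n\lambda_n^3)^{-1/2}\to0$. Using~\eqref{eq:w_ODE}, $\dot w_n=w_n/t-\lambda_n w_n/(1-w_n)$, so the main bracket becomes
\begin{equation*}
\tfrac1t(W-w_n)-\lambda_n\Big(\tfrac{W}{1-W}-\tfrac{w_n}{1-w_n}\Big).
\end{equation*}
The first piece gives $\lambda_n\sqrt{n\lambda_n}\,(W-w_n)/t=\lambda_n z/t=O(\lambda_n\kappa_n)\to0$, which is exactly where $\lambda_n\ll1/\kappa_n$ is used. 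The second piece equals $-\lambda_n(W-w_n)/((1-W)(1-w_n))$, contributing $-\lambda_n^2 z/(\lambda_n^2 g_n^2(1+o(1)))=-z/g_n^2+o(1)\cdot|z|/g_n^2$. This is the delicate point: the relative error multiplies $|z|\le\kappa_n$, so we need it to be $o(1/\kappa_n)$. It equals $O(\kappa_n/(\lambda_n\sqrt{n\lambda_n}))$, so we need $\kappa_n^2\ll\lambda_n^{3/2}n^{1/2}$, i.e.\ $\kappa_n^{4/3}n^{-1/3}\ll\lambda_n$, which is precisely the third condition of~\eqref{eq:kappa_threshold_assumption}. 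Combining the three parts with $|f'(z)|\le|\beta|$ yields~\eqref{eq:f_test_fn_drift_precise_statement}.
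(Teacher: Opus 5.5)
Your treatment of the first- and second-order terms is correct and matches the paper's computation. You handle the drift via \eqref{eq:first_moment_of_component_size} and the identity $\frac{W}{1-W}-\frac{w_n}{1-w_n}=\frac{W-w_n}{(1-W)(1-w_n)}$, and the volatility via \eqref{eq:second_moment_of_component_size}. You keep $W=tA/n$ throughout, instead of splitting $\lambda_nH=w_n(t)\mathcal{Y}+f'(z)\lambda_n z/t+\sqrt{n\lambda_n^3}\,zE/t$ as the paper does, and this spares you the paper's separate a priori bound on $E(n,A,t,f)$.

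The gap is in the Taylor remainder. Your bound $\ev_{A,n,t}[\mathscr{C}^3\mathrm{e}^{|\beta|h}]\le C\lambda_n^{-11/2}$ is valid, but the final arithmetic is wrong:
\[
n\lambda_n^2\Big(\frac{\lambda_n}{n}\Big)^{3/2}\lambda_n^{-11/2}=n^{-1/2}\lambda_n^{-2}=(n\lambda_n^4)^{-1/2},
\]
not $(n\lambda_n^3)^{-1/2}$. This tends to $0$ only if $\lambda_n\gg n^{-1/4}$. The hypothesis \eqref{eq:subcrit_lambda_assumption} allows e.g.\ $\lambda_n=n^{-3/10}$, for which your bound diverges like $n^{1/10}$. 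The loss comes from applying Cauchy--Schwarz globally: it trades $\ev[\mathscr{C}^3]\lesssim(1-W)^{-5}$ for $\ev[\mathscr{C}^6]^{1/2}\lesssim(1-W)^{-11/2}$. For a cluster at distance $1-W\asymp\lambda_n$ from criticality these are the true orders, so the factor $\lambda_n^{-1/2}$ is genuinely lost and no sharper moment bound will repair it.

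The fix, which is the paper's, is to use the exponential moment only on the event where the exponential weight is large. Set $a\coloneqq|\beta|t\sqrt{\lambda_n/n}$, so that $|\beta|h=a\mathscr{C}$. Using $y^3/6\le\mathrm{e}^{y}$ gives
\[
|R_2|\le\frac{\mathrm{e}}{6}(a\mathscr{C})^3+\mathrm{e}^{2a\mathscr{C}}\indic{a\mathscr{C}\ge1}.
\]
The first part needs only $\ev[\mathscr{C}^3]\le\widetilde{C}_3(1-W)^{-5}$, and it yields the $(n\lambda_n^3)^{-1/2}$ you were aiming for.

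For the second part, note that $2a<\frac14(1-W)^2$ for large $n$. Hence on $\{a\mathscr{C}\ge1\}$,
\[
\mathrm{e}^{2a\mathscr{C}}\le\mathrm{e}^{\frac14(1-W)^2\mathscr{C}}\exp\big(2-\tfrac{(1-W)^2}{4a}\big)\le\mathrm{e}^{\frac14(1-W)^2\mathscr{C}}\,\mathrm{e}^{-c\sqrt{n\lambda_n^3}}.
\]
Then \eqref{eq:magic_branching_proc_estimate_2} bounds this contribution by $3n\lambda_n^2\,\mathrm{e}^{-c\sqrt{n\lambda_n^3}}$. That vanishes because $\ln n\ll\sqrt{n\lambda_n^3}$. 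This is slightly more than $\lambda_n\gg n^{-1/3}$, and it is guaranteed by the logarithmic factor in \eqref{eq:subcrit_lambda_assumption}. Your proposal never uses that factor outside the choice of $\kappa_n$, which should have been a warning sign.
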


We prove Proposition~\ref{prop:exp_drift_precise_ou_lyap_momgen} in Section~\ref{subsec:inf_gen_proof}. Now we deduce Proposition~\ref{prop:exp_drift_precise_statement} from it.

\begin{proof}[Proof of Proposition~\ref{prop:exp_drift_precise_statement}]
Let $\beta \in \mathbb{C}$. Let us denote $f_z(s)=\mathrm{e}^{\beta(s-z)}$. Note that
\begin{equation*}
\mathcal{L} \mathrm{e}^{\beta Z_n(t)}=
\mathcal{L} f_0(Z_n(t)) \stackrel{\eqref{eq:Z_n_t_infgen}}{=}
H(n,A_n(t),t, f_0 ) \stackrel{\eqref{eq:H_def}}{=}
\mathrm{e}^{\beta Z_n(t)}
H(n,A_n(t),t, f_{Z_n(t)} ).
\end{equation*}
Also note that
$\widetilde{L}(n,z,t,f_z)= - \frac{ \beta z}{(g_n(t))^2} + \frac{ \beta^2 t}{2(g_n(t))^3}$. Thus we want to show
\begin{equation}\label{eq:exp_drift_precise_statement2}
\lim_{n \to \infty} \, \sup_{t_n^c \leq t \leq T} \indic{ |Z_n(t)| \leq \kappa_n }
\left| \lambda_n H(n,A_n(t),t, f_{Z_n(t)} ) - 
\widetilde{L}(n,Z_n(t),t,f_{Z_n(t)})\right| =0.
\end{equation}
Since we only stated Proposition~\ref{prop:exp_drift_precise_ou_lyap_momgen} for real-valued~$f$, we will prove~\eqref{eq:exp_drift_precise_statement2} by treating the real and imaginary parts of~$f_{Z_n(t)}$ separately. First we note that both~$H$ and~$\widetilde{L}$ are linear in~$f$. 
Next we observe that for any $z \in \mathbb{R}$, we have $\mathrm{Re}(f_z), \mathrm{Im}(f_z) \in \mathcal{D}(z, |\beta|)$ by~\eqref{eq:D_z_def} and Euler's formula. Thus Proposition~\ref{prop:exp_drift_precise_ou_lyap_momgen} indeed implies~\eqref{eq:exp_drift_precise_statement2}.
The proof of Proposition~\ref{prop:exp_drift_precise_statement} is complete.
\end{proof}

\subsection{Window process stays~\texorpdfstring{$\lambda_n$}{lambda-n}-critical}\label{subsec:stays_crit}

In this section, we prove Lemma~\ref{lem:Z_bound} for $t_n^c \le t \le T$ using Proposition~\ref{prop:arrival_at_OU} and Proposition~\ref{prop:exp_drift_precise_statement}.

Recall that we fixed a sequence $(\kappa_n)_{n=1}^\infty$ that satisfies \eqref{eq:kappa_threshold_assumption}.

\begin{definition}[Exit time from $(-\kappa_n, \kappa_n)$]
We define a stopping time
\begin{equation}\label{eq:tauZ_def}
\tau_Z \coloneqq \min \mleft\{\, t \ge t_n^c \; : \; |Z_n(t)| \geq \kappa_n \, \mright\}.
\end{equation}
\end{definition}

Note that $\tau_Z$ depends on $n$, but we omit this dependence for ease of notation. Note that by Proposition~\ref{prop:arrival_at_OU}, $\tau_Z > t_n^c$ with high probability.

\begin{lemma}[Upper bound for the infinitesimal generator]\label{lem:chZ_drift_upper_bound}
There exists $n_0 \in \mathbb{N}$ such that for any $n \ge n_0$ and for any $t_n^c \le t \le T$, we have
\begin{equation}\label{eq:chZ_drift_upper_bound}
\indic{t < \tau_Z} \cdot \mathcal{L}( \cosh(Z_n(t))) \le \indic{t < \tau_Z} \cdot \frac{1}{\lambda_n} \cdot \left( 27 - \frac{1}{32T^2} \cdot \cosh(Z_n(t)) \right).
\end{equation}
\end{lemma}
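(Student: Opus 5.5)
The plan is to write $\cosh(z)=\frac12(\mathrm{e}^{z}+\mathrm{e}^{-z})$ and apply Proposition~\ref{prop:exp_drift_precise_statement} with $\beta=1$ and $\beta=-1$. On the event $t<\tau_Z$ we have $|Z_n(t)|<\kappa_n$, so the indicator in \eqref{eq:exp_drift_precise_statement} equals one. Hence, uniformly in $t\in[t_n^c,T]$,
\begin{equation*}
\lambda_n \mathcal{L}\mathrm{e}^{\pm Z_n(t)} = \mathrm{e}^{\pm Z_n(t)}\left( \mp\frac{Z_n(t)}{(g_n(t))^2} + \frac{t}{2(g_n(t))^3} + o(1)\right).
\end{equation*}
Because $\mathcal{L}$ is linear, adding the two identities gives
\begin{equation*}
\lambda_n\mathcal{L}\cosh(Z_n(t)) = -\frac{Z_n(t)\sinh(Z_n(t))}{(g_n(t))^2} + \frac{t\cosh(Z_n(t))}{2(g_n(t))^3} + o(1)\cosh(Z_n(t)).
\end{equation*}
The error term is a multiple of $\cosh$, since $\mathrm{e}^{\pm z}\le 2\cosh z$.

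Next I bound the coefficients using Lemma~\ref{lem:gn_properties}. By \eqref{eq:gnt_bounds}, for large $n$ and all $t\in[t_n^c,T]$ we have $\frac12\le g_n(t)\le 2(4\vee T)$. I will take $1-\epsilon$ as the lower bound and simply write $g_n\ge 1/2$ for safety. This gives
\begin{equation*}
\frac{1}{(g_n(t))^2}\ge \frac{1}{4(4\vee T)^2}\ge\frac{1}{64T^2}\cdot c,
\end{equation*}
with the constants to be tuned so that the final $\frac{1}{32T^2}$ emerges. For example, using $g_n\le 4\vee T+\epsilon\le 4T+\epsilon$ gives $1/g_n^2 \ge 1/(16T^2(1+\epsilon))$. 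In the same way, $\frac{t}{2g_n^3}\le \frac{T}{2}(1-\epsilon)^{-3}\le T$.

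The key inequality is elementary: for any $a>0$, $z\sinh z\ge a\cosh z - a\cosh(a)\cdot(\text{const})$. Concretely, for $|z|\ge R$ we have $z\sinh z\ge R\tanh(R)\cosh z$, while for $|z|<R$ we have $\cosh z\le\cosh R$. So I choose $R$ with $\frac{R\tanh R}{16T^2}\ge T+\frac{1}{32T^2}+o(1)$, in particular absorbing the positive term $\frac{t}{2g_n^3}\cosh$ and the $o(1)\cosh$ term. Then the drift is at most $-\frac{1}{32T^2}\cosh(Z_n(t))$ plus a bounded constant coming from $|z|<R$. That constant is of the form $(T+\frac1{32T^2}+1)\cosh R$, and it must be at most $27$.

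\textbf{The main obstacle} is getting the explicit constant $27$ independent of $T$. A naive $R$ depending on $T$ gives $\cosh R$ growing in $T$. To handle this, I would use the sharper bound $g_n\le \max(4,T)+\epsilon$, which is better than $4T$. I would also use the fact that the positive coefficient $\frac{t}{2g_n^3}$ is small relative to $\frac{1}{g_n^2}$: by $g_n\ge 1-\epsilon$ and $t\le T$, it compares as $\frac{t}{2g_n}\cdot\frac{1}{g_n^2}$. The idea is to keep everything as multiples of $1/g_n^2$, namely
\begin{equation*}
\lambda_n\mathcal{L}\cosh\le \frac{1}{g_n^2}\Bigl(-z\sinh z+\frac{t}{2g_n}\cosh z\Bigr)+o(1)\cosh z.
\end{equation*}
Here $\frac{t}{2g_n}$ is bounded by roughly $\frac{T}{2}$ when $g_n\approx t$. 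However, I only have this for $t\ge t_n^c+s\lambda_n$, by \eqref{eq:gnt_asymp_t}; near $t_n^c$ the value of $g_n$ is about $4$ while $t\approx1$, so $\frac{t}{2g_n}\le 1$ there. Uniformly, $\frac{t}{2g_n}\le \frac{T}{2}(1+\epsilon)$ is safe. Then I use $-z\sinh z+b\cosh z\le -\frac12\cosh z+C(b)$ with $C(b)$ explicit. I must check that $\frac{1}{g_n^2}C(b)\le 27$, using $\frac1{g_n^2}\le (1-\epsilon)^{-2}$. If a $T$-uniform constant fails, I would recheck the bounds on $g_n$ in \eqref{eq:gnt_bounds} and adjust the split point $R$ accordingly.
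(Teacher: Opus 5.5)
There is a genuine gap: your argument never produces a constant $27$ that is independent of $T$, and the statement requires one (only $n_0$ may depend on $T$). You end with a hedge rather than a resolution. The problem is your bound on the diffusion coefficient. You settle on $\frac{t}{2g_n(t)}\le\frac T2(1+\epsilon)$. With $b\approx T/2$, the split point in $-z\sinh z+b\cosh z\le-\frac12\cosh z+C(b)$ must satisfy $R\tanh R\ge b+\frac12$, so $C(b)\approx(b+\frac12)\cosh(b+\frac12)$ grows exponentially in $T$. Your first attempt is worse still: it pits the smallest restoring coefficient $\frac{1}{16T^2}$ against the largest diffusion coefficient, which forces $R$ of order $T^3$. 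Also, your remark that $\frac{t}{2g_n}$ is ``roughly $\frac T2$ when $g_n\approx t$'' is a slip: when $g_n(t)\approx t$, that ratio is about $\frac12$.

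That slip hides exactly the missing idea: $\frac{t}{2g_n(t)}\le 1+o(1)$ uniformly on $[t_n^c,T]$, with a bound that does not depend on $T$. For $t\le t_n^c+s\lambda_n$ we have $t=1+o(1)$ by Claim~\ref{cl:tc_asymp}, while $g_n(t)\ge1-o(1)$ by \eqref{eq:gnt_bounds}. For larger $t$, $|g_n(t)-t|$ is small by \eqref{eq:gnt_asymp_t}, once $s$ is taken large. The paper splits at $t=2$ instead, with the same effect. With this bound, every term is a multiple of $1/g_n^2$ with $T$-free coefficients. Since $1/g_n^2\ge(1-o(1))/(4\vee T)^2$ is bounded below, the $o(1)\cosh Z_n$ error can be absorbed, giving for large $n$
\begin{equation*}
\lambda_n\mathcal{L}\cosh(Z_n(t))\le\frac{1}{g_n(t)^2}\bigl(2\cosh Z_n(t)-Z_n(t)\sinh Z_n(t)\bigr).
\end{equation*}
Now use the elementary inequality $2\cosh z-z\sinh z\le 26-\frac12\cosh z$. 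For $|z|\ge3$ it follows from $|z|\tanh|z|\ge3\tanh3>\frac52$; for $|z|<3$ it follows from $\frac52\cosh3<26$. Bound $\frac{1}{g_n^2}\le1+o(1)$ on the constant term. On the negative term use $\frac{1}{2g_n^2}\ge\frac{1-o(1)}{2(4\vee T)^2}\ge\frac{1}{32T^2}$, which holds for large $n$ because $(4\vee T)^2<16T^2$ when $T>1$. Together these give \eqref{eq:chZ_drift_upper_bound}. This is essentially the paper's proof: it bounds the two exponential drifts by $\frac{1}{g_n^2}\bigl(\frac{t}{2g_n}\mp Z_n\bigr)+\frac{1}{(4\vee T)^2}\le\frac{1}{g_n^2}(2\mp Z_n)$ and then splits at $|Z_n|=3$.
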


\begin{proof}
By Proposition~\ref{prop:exp_drift_precise_statement} and the definition \eqref{eq:tauZ_def} of $\tau_Z$, we know that for every $T>1$, there exists $n_0 \in \mathbb{N}$ such that for any $n \ge n_0$ and $t \in [t_n^c ,\, T]$, we have
\begin{equation}
\begin{alignedat}{2}\label{eq:exp_inf_gen_upper_bound}
&\indic{t < \tau_Z} \cdot \frac{\lambda_n \cdot \mathcal{L} \mleft( \mathrm{e}^{Z_n(t))} \mright)}{\mathrm{e}^{Z_n(t)}} &&\le
\indic{t < \tau_Z} \cdot \left[ \frac{1}{\left(g_n(t)\right)^2} \cdot \left( \frac{t}{2g_n(t)} - Z_n(t) \right) + \frac{1}{(4 \vee T)^2} \right], \\
&\indic{t < \tau_Z} \cdot \frac{\lambda_n \cdot \mathcal{L} \mleft( \mathrm{e}^{-Z_n(t))} \mright)}{\mathrm{e}^{-Z_n(t)}} &&\le
\indic{t < \tau_Z} \cdot \left[ \frac{1}{\left(g_n(t)\right)^2} \cdot \left( \frac{t}{2g_n(t)} + Z_n(t) \right) + \frac{1}{(4 \vee T)^2} \right].
\end{alignedat}
\end{equation}

Using that $\frac{t}{2g_n(t)} \le 1$ (this inequality follows from~\eqref{eq:gnt_bounds} for $t_n^c \le t \le 2$ and by~\eqref{eq:gnt_asymp_t} for $2 \le t \le T$) and
$\frac{1}{4 \vee T} \le \frac{1}{g_n(t)}$ (see~\eqref{eq:gnt_bounds}), rearranging \eqref{eq:exp_inf_gen_upper_bound}, we obtain
\begin{equation}\label{eq:chZ_drift_first_bound}
\indic{t < \tau_Z} \cdot \mathcal{L}( \cosh(Z_n(t))) \le \frac{\indic{t < \tau_Z}}{2\lambda_n \cdot \left(g_n(t)\right)^2 } \cdot \left( \mathrm{e}^{Z_n(t)} \cdot (2-Z_n(t)) + \mathrm{e}^{-Z_n(t)} \cdot (2+Z_n(t)) \right).
\end{equation}
We will show that the r.h.s.\ of~\eqref{eq:chZ_drift_upper_bound} is larger than the r.h.s.\ of~\eqref{eq:chZ_drift_first_bound}, i.e.,
\begin{equation}\label{eq:chZ_drift_enough}
\frac{1}{2\left(g_n(t)\right)^2 } \cdot \left( \mathrm{e}^{Z_n(t)} \cdot (2-Z_n(t)) + \mathrm{e}^{-Z_n(t)} \cdot (2+Z_n(t)) \right) \le 27 - \frac{1}{32T^2} \cdot \cosh(Z_n(t)).
\end{equation}
Since both sides of~\eqref{eq:chZ_drift_enough} are even functions of $Z_n(t)$, it is enough to show \eqref{eq:chZ_drift_enough} if~${Z_n(t)\ge 0}$.

If $0 \le Z_n(t) < 3$, then
\begin{equation}\label{eq:Z_less_than_3}
\frac{1}{2\left(g_n(t)\right)^2 } \cdot \left( \mathrm{e}^{Z_n(t)} \cdot (2-Z_n(t)) + \mathrm{e}^{-Z_n(t)} \cdot (2+Z_n(t)) \right) <\frac{5 \cosh(3)}{2\left(g_n(t)\right)^2 } \stackrel{\eqref{eq:gnt_bounds}}{\le} 26 \stackrel{(*)}{<} 27 - \frac{1}{32T^2} \cdot \cosh(Z_n(t)),
\end{equation}
where at $(*)$ we used that $\frac{1}{32T^2} \cdot \cosh(Z_n(t)) < 1$ if $0 \le Z_n(t) < 3$.

On the other hand, if $Z_n(t) \ge 3$, then we have
\begin{align}
\begin{split}\label{eq:Z_greater_than_3}
\frac{1}{2\left(g_n(t)\right)^2 } \cdot \left( \mathrm{e}^{Z_n(t)} \cdot (2-Z_n(t)) + \mathrm{e}^{-Z_n(t)} \cdot (2+Z_n(t)) \right) \stackrel{(**)}{\le}& \frac{1}{2\left(g_n(t)\right)^2 } \left( -\mathrm{e}^{Z_n(t)} + 2 \right) \stackrel{\eqref{eq:gnt_bounds}}{\le} \frac{-\mathrm{e}^{Z_n(t)}}{2(4\vee T)^2} + 1 \le\\
& \frac{-\cosh(Z_n(t))}{2(4\vee T)^2} + 2 \le 27 - \frac{1}{32T^2} \cdot \cosh(Z_n(t)),
\end{split}
\end{align}
where at $(**)$ we used that $(2+x)\mathrm{e}^{-x} \le 2$ for any $x \ge 0$.
Putting together \eqref{eq:Z_less_than_3} and~\eqref{eq:Z_greater_than_3}, we obtain~\eqref{eq:chZ_drift_enough}. This concludes the proof of~\eqref{eq:chZ_drift_upper_bound}.
\end{proof}

\begin{remark}
Note that a statement similar to~\eqref{eq:chZ_drift_upper_bound} also holds if the Lyapunov function is an even function that does not grow exponentially (unlike $\cosh(\cdot)$), but only has a slower (e.g.\ linear) growth rate. We will use the fact that the Lyapunov function has an exponential growth rate in the proof of Lemma~\ref{lem:opt_stop}.
\end{remark}

\begin{lemma}[$Z_n$ does not exit the $\kappa_n$-window]\label{lem:opt_stop}
We have $\tau_Z \geq T$ with high probability.
\end{lemma}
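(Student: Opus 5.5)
We will use the Lyapunov bound of Lemma~\ref{lem:chZ_drift_upper_bound} together with optional stopping, and exploit the exponential growth of $\cosh$. Define the process
\begin{equation*}
Y_n(t) \coloneqq \mathrm{e}^{\gamma (t-t_n^c)/\lambda_n}\cosh(Z_n(t)), \qquad \gamma \coloneqq \frac{1}{32T^2}, \qquad t \ge t_n^c.
\end{equation*}
By the product rule for the drift (the time factor is deterministic and smooth) and~\eqref{eq:chZ_drift_upper_bound}, on $\{t<\tau_Z\}$ we get
\begin{equation*}
\mathcal{L} Y_n(t) \le \mathrm{e}^{\gamma (t-t_n^c)/\lambda_n}\, \frac{27}{\lambda_n}.
\end{equation*}
Hence $Y_n(t\wedge\tau_Z) - \int_{t_n^c}^{t\wedge \tau_Z} \frac{27}{\lambda_n}\mathrm{e}^{\gamma(u-t_n^c)/\lambda_n}\,\mathrm{d}u$ is a supermartingale on $[t_n^c,T]$. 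Optional stopping at $\tau_Z\wedge t$, conditionally on $\mathcal{F}_n(t_n^c)$, then gives
\begin{equation*}
\ev\bigl[\cosh(Z_n(\tau_Z\wedge t))\bigr] \le \mathrm{e}^{-\gamma(t-t_n^c)/\lambda_n}\,\ev\bigl[\cosh(Z_n(t_n^c))\bigr] + \frac{27}{\gamma}
\end{equation*}
on the event where the initial value is controlled.

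The initial condition needs care, since $\cosh(Z_n(t_n^c))$ need not have bounded expectation. By Proposition~\ref{prop:arrival_at_OU}, with probability at least $1-C/K^2$ we have $|Z_n(t_n^c)|\le K$. So we restrict to the $\mathcal{F}_n(t_n^c)$-measurable event $G_K=\{|Z_n(t_n^c)|\le K\}$, where $\cosh(Z_n(t_n^c))\le \cosh K$, and run the argument conditionally on $G_K$, using the Markov property (Claim~\ref{cl:Z_n_markov}). On $G_K$ this yields a uniform bound $\sup_{t\in[t_n^c,T]}\ev[\cosh(Z_n(\tau_Z\wedge t))\mid G_K]\le \cosh K + 27/\gamma$.

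The main obstacle is the supremum over times: a uniform-in-$t$ bound on $\ev[\cosh(Z_n(t\wedge\tau_Z))]$ does not by itself control $\pr(\tau_Z<T)$. Here the exponential weight helps. I would split $[t_n^c,T]$ into about $T/\lambda_n$ intervals of length $\lambda_n$, i.e.\ $O(1)$ in the O.-U.\ time scale. On each interval $[a,a+\lambda_n]$, apply optional stopping to $Y_n$ restarted at $a$, from time $a$ up to $\tau_Z\wedge(a+\lambda_n)$. The weight stays between $1$ and $\mathrm{e}^{\gamma}$, so
\begin{equation*}
\ev\bigl[\cosh(Z_n(\tau_Z))\,\indic{\tau_Z\in[a,a+\lambda_n)}\bigr] \le \mathrm{e}^{\gamma}\Bigl(\ev[\cosh(Z_n(a\wedge\tau_Z))]+27\mathrm{e}^{\gamma}\Bigr) = \mathcal{O}(1).
\end{equation*}
On $\{\tau_Z\in[a,a+\lambda_n)\}$ we have $\cosh(Z_n(\tau_Z))\ge \cosh(\kappa_n)\ge \frac12\mathrm{e}^{\kappa_n}$; jumps of $Z_n$ overshoot downward only, and this bound remains valid. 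Markov's inequality then bounds each interval's exit probability by $\mathcal{O}(\mathrm{e}^{-\kappa_n})$, and a union bound gives
\begin{equation*}
\pr\bigl(\tau_Z<T \mid G_K\bigr) = \mathcal{O}\Bigl(\frac{1}{\lambda_n}\,\mathrm{e}^{-\kappa_n}\Bigr).
\end{equation*}
This tends to $0$ precisely because $\ln(1/\lambda_n)\ll\kappa_n$ in~\eqref{eq:kappa_threshold_assumption}, and this is where the exponential growth of the Lyapunov function is essential.

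Finally, let $n\to\infty$ and then $K\to\infty$: $\pr(\tau_Z<T)\le C/K^2+o(1)$. We also need $\pr(\tau_Z=t_n^c)\to0$, which follows from Proposition~\ref{prop:arrival_at_OU} since $\kappa_n\to\infty$. This proves $\tau_Z\ge T$ with high probability. One technical point to check is that the drift inequality is used only strictly before $\tau_Z$ and that the relevant local martingales are true martingales up to $\tau_Z\wedge T$. This holds because before $\tau_Z$ the process $|Z_n|$ is bounded by $\kappa_n$, jumps only decrease $Z_n$, and the overshoot at $\tau_Z$ has finite exponential moments by~\eqref{eq:magic_branching_proc_estimate_2}.
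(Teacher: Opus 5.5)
Your route works once one step is repaired, and it differs from the paper's. The optional-stopping consequence you display for the weighted process does not follow. At the stopping time the weight is $\mathrm{e}^{\gamma(\tau_Z\wedge t-t_n^c)/\lambda_n}$, which on $\{\tau_Z\le t\}$ can be much smaller than $\mathrm{e}^{\gamma(t-t_n^c)/\lambda_n}$, so you cannot divide through on that event. What the computation does give, after discarding the contribution of $\{\tau_Z\le t\}$ (nonnegative as soon as $\cosh\kappa_n\ge 27/\gamma$), is a bound for the \emph{killed} process rather than the stopped one, essentially Lemma~\ref{lem:lyap_control}:
\[
\ev\bigl[\indic{\tau_Z>t}\cosh(Z_n(t))\mid G_K\bigr]\le \mathrm{e}^{-\gamma(t-t_n^c)/\lambda_n}\cosh K+\frac{27}{\gamma}.
\]
A uniform $\mathcal{O}(1)$ bound on the stopped quantity $\ev[\cosh(Z_n(t\wedge\tau_Z))]$ would immediately give $\pr(\tau_Z\le T)=\mathcal{O}(1/\cosh\kappa_n)$, because the stopped process keeps a value $\ge\cosh\kappa_n$ after exit. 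So your remark that such a bound ``does not by itself control'' the exit probability shows the two were conflated. The restart on $[a,a+\lambda_n]$ only matters on $\{\tau_Z>a\}$, so the initial datum you actually need there is $\ev[\indic{\tau_Z>a}\cosh(Z_n(a))]$. Replace $\cosh(Z_n(a\wedge\tau_Z))$ by $\indic{\tau_Z>a}\cosh(Z_n(a))$, in the uniform bound and in the block estimate. Then the block estimate, Markov's inequality and the union bound over $\mathcal{O}(T/\lambda_n)$ blocks are correct and give $\pr(\tau_Z<T,\,G_K)=\mathcal{O}(\lambda_n^{-1}\mathrm{e}^{-\kappa_n})$.

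The paper gets the same rate in one step, with no weight, no mean-reverting term and no blocks. It keeps only the crude consequence $\mathcal{L}\cosh(Z_n(t))\le 27/\lambda_n$ on $\{t<\tau_Z\}$ of \eqref{eq:chZ_drift_upper_bound}. Hence $\frac{1}{27}\cosh(Z_n(t\wedge\tau_Z^T))-\frac{(t\wedge\tau_Z^T)-t_n^c}{\lambda_n}$ is a supermartingale. Optional stopping at $\tau_Z^T=\tau_Z\wedge T$ (conditionally on $|Z_n(t_n^c)|<K'$), together with $\cosh(Z_n(\tau_Z))\ge\cosh\kappa_n$ on $\{\tau_Z<T\}$, yields $\cosh(\kappa_n)\,\pr(\tau_Z<T)\le\cosh K'+27T/\lambda_n$, which is \eqref{eq:tauZ_prob_upper_bound}. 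The linear compensator accumulated over time $T/\lambda_n$ does the job of your union bound over $T/\lambda_n$ blocks. In fact, if you run your block estimates with the additive compensator instead of the weight, they telescope into essentially this inequality. So the uniform moment bound on the killed process is not needed here; the paper uses mean reversion only for Lemma~\ref{lem:lyap_control} and Lemma~\ref{lem:Z_bound}. What your version buys is a localized statement, an $\mathcal{O}(\mathrm{e}^{-\kappa_n})$ exit probability per O.-U.\ time unit, at the price of extra machinery. Both arguments use $\ln(1/\lambda_n)\ll\kappa_n$ in the same way. Your closing integrability concern is moot: for fixed $n$, $A_n$ takes finitely many values, so $\cosh(Z_n(\cdot))$ is bounded.
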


\begin{proof} Recall the definition~\eqref{eq:Z_n_t_def} of $Z_n(t)$.
Proposition~\ref{prop:arrival_at_OU} implies that for any $\varepsilon>0$, there exists $K'$ such that if $n$ is large enough, then we have
\begin{equation}\label{eq:Z(tc)_bound_for_tauZ}
\pr(|Z_n(t_n^c)| \ge K') \le \frac{\varepsilon}{2}.
\end{equation}
Let us now condition on the event $\{Z_n(t_n^c)=z\}$ for some $z \in (-K',K')$ for which $\mathbb{P}\left( Z_n(t_n^c)=z \right)>0$. By the Markov property of $(Z_n(\cdot))$ (cf.\ Claim~\ref{cl:Z_n_markov}), we will work with a process $Z_n(\cdot)$ that starts at time $t_n^c$ from the initial condition $ Z_n(t_n^c)= z \in (-K',K')$.

Let $\tau_Z^T \coloneqq \tau_Z \wedge T$. By~\eqref{eq:chZ_drift_upper_bound}, we know that for any $n \ge n_0$ (where $n_0$ is determined in Lemma~\ref{lem:chZ_drift_upper_bound}), the process
\begin{equation*}
\frac{1}{27} \cosh (Z_n(t \wedge \tau_Z^T)) - \frac{(t \wedge \tau_Z^T) - t_n^c}{\lambda_n} , \qquad t \ge t_n^c
\end{equation*}
is a supermartingale. Thus by the optional stopping theorem, we obtain
\begin{equation}\label{eq:supermartingale}
\ev \mleft[ \frac{\cosh( Z_n(\tau_Z^T))}{27}-\frac{\tau_Z^T-t_n^c}{\lambda_n} \mright] \leq
\ev\left[ \frac{\cosh (Z_n(t_n^c)) }{27} \right]
\leq
\frac{\cosh (K')}{27}.
\end{equation}
Thus we obtain 
\begin{equation*}
\frac{T}{\lambda_n} \geq \ev \mleft[ \frac{\tau_Z^T}{\lambda_n} \mright] \stackrel{\eqref{eq:supermartingale}}{\geq} \frac{1}{27} \left( \ev \mleft[ \cosh( Z_n(\tau_Z^T) )\mright]-\cosh (K') \right)\stackrel{\eqref{eq:tauZ_def}}{\geq}
\frac{1}{27} \left[ \pr \mleft( \tau_Z < T \mright) \cdot \cosh( \kappa_n ) -\cosh(K') \right],
\end{equation*}
therefore,
\begin{equation}\label{eq:tauZ_prob_upper_bound}
\pr \mleft(\tau_Z < T \mright) \leq \frac{27T + \lambda_n \cdot \cosh (K')}{\lambda_n \cosh( \kappa_n ) }.
\end{equation}
By~\eqref{eq:subcrit_lambda_assumption} and~\eqref{eq:kappa_threshold_assumption}, the r.h.s.\ of~\eqref{eq:tauZ_prob_upper_bound} converges to $0$ as $n \to \infty$. Thus, returning to the version of $Z_n(\cdot)$, which starts at time $t=0$ from the initial state dictated by $A_n(0)=n$, if $n$ is large enough, we have
\begin{equation*}
\pr \mleft(|Z_n(t_n^c)| < K',\, \tau_Z < T \mright) \le \frac{\varepsilon}{2}.
\end{equation*}
Together with~\eqref{eq:Z(tc)_bound_for_tauZ}, this completes the proof of Lemma~\ref{lem:opt_stop}.
\end{proof}

Our next lemma is an ingredient of the proof of Lemma~\ref{lem:Z_bound} for $t_n^c \le t \le T$.

\begin{lemma}[Lyapunov function control on the long run]\label{lem:lyap_control}
For any arbitrarily fixed $K'>0$, let us introduce the process
\begin{equation}\label{eq:xi_def}
\xi_n(t)\coloneqq \indic{|Z_n(t_n^c)| < K', \, \tau_Z > t} \cdot \cosh(Z_n(t))
\end{equation}
and the notation $\rho \coloneqq \left( 27 \cdot 32T^2 \right) \vee \cosh \mleft( K' \mright)$. Then $\ev(\xi_n(t)) \le \rho$ for any $t \in [t_n^c, T]$.
\end{lemma}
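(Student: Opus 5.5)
We use the Lyapunov drift bound of Lemma~\ref{lem:chZ_drift_upper_bound} to derive a differential inequality for $\phi(t)\coloneqq\ev(\xi_n(t))$ on $[t_n^c,T]$, and then close it with a comparison (Grönwall-type) argument. The point is that the drift $\frac{1}{\lambda_n}\bigl(27-\frac{1}{32T^2}\cosh(Z_n)\bigr)$ is negative as soon as $\cosh(Z_n)>27\cdot 32T^2$. So the mean of $\cosh(Z_n)$ can never be pushed above $27\cdot 32T^2$ unless it already started there, and at the start it is at most $\cosh(K')$.

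First step. Work on the event $E\coloneqq\{|Z_n(t_n^c)|<K'\}\in\mathcal F_n(t_n^c)$. Set $\sigma\coloneqq\tau_Z\wedge T$ and consider the stopped process $Y(t)\coloneqq\indic{E}\cosh(Z_n(t\wedge\sigma))$. Before $\tau_Z$ we have $|Z_n|<\kappa_n$. A jump at $\tau_Z$ may overshoot, but $Z_n$ only jumps downward, with jumps of size $t\sqrt{\lambda_n/n}\,\mathscr C$, whose exponential moments are controlled via~\eqref{eq:magic_branching_proc_estimate_2}. Hence the martingale part is a genuine martingale, and Dynkin's formula applies:
\[
\ev Y(t)=\ev Y(t_n^c)+\ev\int_{t_n^c}^{t}\indic{E}\,\indic{u<\tau_Z}\,\mathcal L\cosh(Z_n(u))\,\mathrm du .
\]
By Lemma~\ref{lem:chZ_drift_upper_bound}, the integrand is at most $\frac1{\lambda_n}\bigl(27\,\indic{E,u<\tau_Z}-\frac{1}{32T^2}\xi_n(u)\bigr)$. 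Since $\indic{E,u<\tau_Z}\le 1$, this gives
\[
\ev Y(t)\le \cosh(K')+\frac1{\lambda_n}\int_{t_n^c}^t\Bigl(27-\tfrac{1}{32T^2}\phi(u)\Bigr)\mathrm du .
\]

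Second step. Pass from $Y$ to $\xi_n$. We have $\xi_n(t)\le Y(t)$, because on $\{\tau_Z>t\}$ the two coincide and otherwise $\xi_n(t)=0$. It is cleaner, however, to run the argument directly with $\psi(t)\coloneqq\ev Y(t)$. Note that $Y(u)\ge\xi_n(u)$, but the drift term involves $\xi_n(u)$ rather than $Y(u)$, so the inequality does not close in $\psi$ alone. The way out is to observe that $Y(u)-\xi_n(u)=\indic{E,\tau_Z\le u}\cosh(Z_n(\sigma))$ is nondecreasing in $u$; equivalently, the frozen part only accumulates mass. With this in hand, I would argue by contradiction on $\phi$ directly. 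Fix $\rho'>\rho$, and suppose $t_1\coloneqq\inf\{t:\phi(t)>\rho'\}<T$. Let $t_0\coloneqq\sup\{u<t_1:\phi(u)\le\rho\}$; this is well defined since $\phi(t_n^c)\le\cosh(K')\le\rho$. On $(t_0,t_1)$ we have $\phi>\rho\ge 27\cdot 32T^2$, so the drift of $\xi_n$ is negative there in expectation. Indeed, $\xi_n(t)$ equals $Y(t)$ minus the nondecreasing frozen part, so $\phi(t_1)-\phi(t_0)\le \psi(t_1)-\psi(t_0)\le \frac{1}{\lambda_n}\int_{t_0}^{t_1}(27-\phi/(32T^2))\le 0$. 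This contradicts $\phi(t_1)\ge\rho'>\rho\ge\phi(t_0)$.

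The main obstacle is exactly this last comparison. The contradiction needs left-continuity of $\phi$ at $t_0$, or at least $\phi(t_0)\le\rho$ after a limiting argument, and I would obtain it from dominated convergence using the domination discussed next. The other delicate point is justifying Dynkin's formula for the unbounded function $\cosh$. Here I would use the $\kappa_n$-cap before $\tau_Z$, together with the downward-only jumps: after a jump at $\tau_Z$, the value $\cosh(Z_n)$ is at most $\cosh(\kappa_n)\,\mathrm e^{t\sqrt{\lambda_n/n}\,\mathscr C}$. Since $\sqrt{\lambda_n/n}\ll(1-W_n)^2\asymp\lambda_n^2$, this quantity is integrable by~\eqref{eq:magic_branching_proc_estimate_2}.
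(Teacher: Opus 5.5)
Your proposal is correct and is essentially the paper's argument: the drift bound of Lemma~\ref{lem:chZ_drift_upper_bound} yields an inequality for $\phi(t)=\ev(\xi_n(t))$ whose drift is negative whenever $\phi$ exceeds $27\cdot 32T^2\le\rho$, and a comparison argument started from $\phi(t_n^c)\le\cosh(K')\le\rho$ closes it. The paper does this in one line, writing $\mathcal{L}\xi_n(t)\le\frac{1}{32T^2\lambda_n}(\rho-\xi_n(t))$ directly (implicitly using that killing at $\tau_Z$ only lowers $\xi_n$) and integrating the resulting differential inequality for $\phi$, so your stopped process $Y$, its nondecreasing frozen part and the first-crossing contradiction are a more careful rendering of the same step. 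Two small remarks: for fixed $n$ the state space is finite, so $\cosh(Z_n)$ is bounded and no exponential-moment domination is needed; and $\phi$ need not be left-continuous, because upward exits through $\kappa_n$ are continuous and so $\tau_Z$ can have atoms, but $\phi$ can only jump downward, which is exactly what gives $\phi(t_0)\le\rho$.
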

\begin{proof}
Observe that $\xi_n(t_n^c) < \rho$. 
By~\eqref{eq:chZ_drift_upper_bound}, we know that if $n$ is large enough, then
\begin{equation}\label{eq:xi_drift}
\mathcal{L} \xi_n(t) \le \frac{1}{32T^2 \cdot \lambda_n} \cdot \left( \rho - \xi_n(t) \right) \qquad \forall \, t \in [t_n^c, T].
\end{equation}

Taking the expected value of both sides of~\eqref{eq:xi_drift}, we obtain that
\begin{equation}\label{eq:xi_expectation}
\frac{\mathrm{d}}{\mathrm{d}t}\ev(\xi_n(t)) \le \frac{1}{32T^2 \cdot \lambda_n} \cdot \left( \rho - \ev(\xi_n(t)) \right) \qquad \forall \, t \in [t_n^c, T].
\end{equation}

We have $\ev(\xi_n(t_n^c)) < \rho$. Therefore, inequality~\eqref{eq:xi_expectation} implies $\ev(\xi_n(t)) \le \rho$ for any $t \in [t_n^c, T]$.
\end{proof}

\begin{proof}[Proof of Lemma~\ref{lem:Z_bound}]
We have already treated the case $t \leq t_n^c$ in Proposition~\ref{prop:arrival_at_OU}. In order to finish the proof in the $t_n^c \le t \le T$ case, we use the union bound:
\begin{equation}\label{eq:Z_n_t_uniqon_3}
\pr \mleft( |Z_n(t)| > K \mright) \stackrel{\eqref{eq:xi_def}}{\le} \pr \mleft( |Z_n(t_n^c)| \ge K' \mright) + \pr(\tau_Z \le t) + \pr \mleft( \xi_n(t) > \cosh(K) \, \mright), \qquad t \in [t_n^c, T].
\end{equation}
Thus the proof of
\begin{equation*}
\lim_{K \to \infty} \sup_{n \in \mathbb{N}} \sup_{ 0 \leq t \leq T} \pr \mleft( |Z_n(t)| > K \mright)=0
\end{equation*}
follows if we apply Proposition~\ref{prop:arrival_at_OU}, Lemma~\ref{lem:opt_stop} and Lemma~\ref{lem:lyap_control} (together with Markov's inequality) respectively to the three terms on the r.h.s.\ of~\eqref{eq:Z_n_t_uniqon_3} and minimize the r.h.s.\ over the value of~$K'$.
\end{proof}

\subsection{Proof of the Ornstein--Uhlenbeck scaling limit result}\label{subsec:ou_proof}

In this section we prove Theorem~\ref{thm:OU_scaling}. First, we define the space $D[a,b]$.

\begin{definition}[Space $D$]
Let $-\infty <a < b <+\infty$. The space $D[a,b]$ consists of functions $z\colon [a,b] \to \mathbb{R}$ that are right-continuous and have left limits.
\end{definition}

Billingsley introduced a metric on $D[a,b]$ (see \cite[Section 14]{B68}) that induces the Skorokhod topology (see~\cite[Theorem~14.1]{B68} or \cite[Theorem~(6.2) in Section~8]{D96}), and $D[a,b]$ is a complete, separable metric space under this metric (see \cite[Theorem~14.2]{B68} or \cite[Theorem~(6.1) in Section~8]{D96}).

Recall the processes $\widetilde{Z}_n^{t_n}$, $\widetilde{Z}^t$ from equations \eqref{eq:tilde_Z_def} and~\eqref{eq:OU_SDE}. In order to prove Theorem~\ref{thm:OU_scaling}, we use the following classical result on convergence of processes.

\begin{theorem}[Convergence in $D$, {\cite[Theorem 15.1]{B68}}]\label{thm:convergence_in_D}
Let $\{X_n\}$ be a sequence of random elements of $D[a,b]$ and $X \in D[a,b]$.
If the sequence $X_n$ is tight on $D[a,b]$ and the finite dimensional distributions converge weakly, i.e., for any points $a \le s_1< s_2< \ldots < s_r \le b$, we have
\begin{equation}\label{eq:joint_dist_conv}
\big( X_n(s_1), \, X_n(s_2), \, \ldots, \, X_n(s_r) \big) \Rightarrow
\big( X(s_1), \, X(s_2), \, \ldots, \, X(s_r) \big) \qquad \text{as } n \to \infty,
\end{equation}
then $X_n(\cdot) \Rightarrow X(\cdot)$ in $D[a,b]$.
\end{theorem}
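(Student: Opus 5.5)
The plan is the standard ``tightness plus identification of the limit'' argument; nothing from the frozen percolation model is needed, only general facts about the Skorokhod space $D[a,b]$. Recall from above that $D[a,b]$, with Billingsley's metric, is a complete separable metric space inducing the Skorokhod topology. Prokhorov's theorem therefore applies: tightness of $\{X_n\}$ is equivalent to relative compactness of the laws of $X_n$ in the weak topology. The goal is to show that every subsequential weak limit has the law of $X$. Granted this, the usual subsequence principle finishes the proof. Indeed, if $X_n \not\Rightarrow X$, there is a bounded continuous $F$ and a subsequence along which $|\ev F(X_n)-\ev F(X)|\ge \varepsilon$. By relative compactness this subsequence has a further subsequence converging weakly to some $Y$, which must then satisfy $Y\stackrel{d}{=}X$, a contradiction.

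To identify a subsequential limit $Y$ (say $X_{n'}\Rightarrow Y$), I would use the projections $\pi_{s_1,\dots,s_r}(z)=(z(s_1),\dots,z(s_r))$. The key topological fact is that $\pi_s$ is continuous at every $z\in D[a,b]$ that is continuous at $s$. It is also continuous everywhere when $s\in\{a,b\}$, because time changes in the Skorokhod metric fix the endpoints. Let $T_Y$ be the set of $s\in[a,b]$ with $\pr(Y(s)\neq Y(s-))=0$, with $a,b\in T_Y$. For $s_1,\dots,s_r\in T_Y$, the map $\pi_{s_1,\dots,s_r}$ is continuous $Y$-a.s. The continuous mapping theorem then gives
\begin{equation*}
\pi_{s_1,\dots,s_r}(X_{n'})\Rightarrow \pi_{s_1,\dots,s_r}(Y).
\end{equation*}
By hypothesis~\eqref{eq:joint_dist_conv} the same sequence converges to $\pi_{s_1,\dots,s_r}(X)$. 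Hence $Y$ and $X$ have the same finite-dimensional distributions at all times in $T_Y$.

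Two further facts are needed. First, the complement of $T_Y$ is at most countable. For each $\varepsilon>0$, a càdlàg path has only finitely many jumps of size exceeding $\varepsilon$, so $\{s:\pr(|Y(s)-Y(s-)|>\varepsilon)>\delta\}$ is finite for every $\delta>0$. In particular $T_Y$ is dense and contains $a$ and $b$. Second, for any dense $T_0\subseteq[a,b]$ containing $b$, the finite-dimensional cylinder sets with times in $T_0$ generate the Borel $\sigma$-algebra of $D[a,b]$ and form a $\pi$-system. This is the step I expect to be the main technical obstacle. The proof approximates each $z$ in the Skorokhod metric by step functions built from the values $z(t)$, $t\in T_0$, using right-continuity; this shows that the Borel sets are measurable with respect to the cylinder $\sigma$-algebra. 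Given this, Dynkin's $\pi$--$\lambda$ theorem shows that laws on $D[a,b]$ are determined by their finite-dimensional distributions on $T_0$.

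Applying this with $T_0=T_Y$ yields $Y\stackrel{d}{=}X$. Combined with the subsequence argument of the first paragraph, this proves $X_n\Rightarrow X$ in $D[a,b]$.
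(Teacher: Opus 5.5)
Your proposal is correct and is essentially Billingsley's own proof of his Theorem 15.1, which the paper cites without reproving: Prokhorov gives relative compactness, and subsequential limits are identified through projections that are a.s.\ continuous outside the countable set of fixed jump times. It then uses the fact that projections at a dense set of times containing $b$ generate the Borel $\sigma$-field of $D[a,b]$. Because the hypothesis here gives finite-dimensional convergence at all times, you only need times in $T_Y$ rather than Billingsley's $T_X\cap T_Y$, which is a harmless simplification.
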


In order to apply Theorem~\ref{thm:convergence_in_D} for the sequence $\widetilde{Z}_n^{t_n}$ and the process $\widetilde{Z}^t$ on $D[-S,S]$, we need the next two lemmas that we will prove later.

\begin{lemma}[Tightness of $\widetilde{Z}_n^{t_n}$]\label{lem:tightness_Ztilde}
The sequence $\widetilde{Z}_n^{t_n}$ is tight on $D[-S,S]$ for any $S < +\infty$.
\end{lemma}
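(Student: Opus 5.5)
We prove tightness of $\widetilde{Z}_n^{t_n}$ on $D[-S,S]$ through Aldous' criterion, or equivalently through the Jakubowski/Rebolledo criterion for semimartingales. The two inputs are the one-dimensional bound of Lemma~\ref{lem:Z_bound} and the generator asymptotics of Proposition~\ref{prop:exp_drift_precise_ou_lyap_momgen}.

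First, we decompose $\widetilde{Z}_n^{t_n}(s)$ into a drift part and a martingale part. Apply \eqref{eq:Z_n_t_infgen} with $f(z)=z$, and rescale time by $\lambda_n$. This gives
\[
\widetilde{Z}_n^{t_n}(s)=\widetilde{Z}_n^{t_n}(-S)+\int_{-S}^{s}\lambda_n H\bigl(n,A_n(t_n+\lambda_n u),t_n+\lambda_n u,\mathrm{id}\bigr)\,\mathrm{d}u+\mathcal{M}_n(s),
\]
where $\mathcal{M}_n$ is a martingale. Its predictable quadratic variation is computed from \eqref{eq:Z_n_t_infgen} with $f(z)=z^2$ minus $2z$ times the drift. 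Because $t_n\to t\ge 1$ and $(t_n-t_n^c)/\lambda_n\to\infty$, for large $n$ the whole interval $[t_n-\lambda_nS,t_n+\lambda_nS]$ lies in $[t_n^c,T']$ for some $T'>t$; we use $T'$ in place of $T$ throughout. Lemma~\ref{lem:opt_stop} gives $|Z_n|\le\kappa_n$ on this interval with high probability. We therefore work with the process stopped at $\tau_Z$, which changes the law only on an event of vanishing probability. On $\{t<\tau_Z\}$, Proposition~\ref{prop:exp_drift_precise_ou_lyap_momgen} applies to $f(z)=z$ (it lies in $\mathcal{D}(z,\beta)$ for $\beta=1$) and to $f(s)=(s-z)^2$ (it lies in $\mathcal{D}(z,\beta)$ for $\beta=2$). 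It shows that the rescaled drift equals $-\widetilde{Z}_n^{t_n}(u)/g_n^2+o(1)$ and that the rescaled quadratic-variation density equals $t/g_n^3+o(1)$, uniformly. By \eqref{eq:gnt_bounds}, $g_n$ is bounded above and below, so both densities are bounded by $C(1+|\widetilde{Z}_n^{t_n}(u)|)$ and by $C$, respectively.

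Second, we verify Aldous' condition. The compact containment part $\sup_{s\in[-S,S]}|\widetilde{Z}_n^{t_n}(s)|=O_P(1)$ requires a uniform-in-time bound, which is stronger than the pointwise statement of Lemma~\ref{lem:Z_bound}. We obtain it from Doob's inequality applied to $\mathcal{M}_n$, whose quadratic variation over $[-S,S]$ is $O(1)$, together with the drift bound and Grönwall (Claim~\ref{cl:gronwall}) started from $|\widetilde{Z}_n^{t_n}(-S)|=O_P(1)$. Alternatively, one could use the supermartingale from the proof of Lemma~\ref{lem:opt_stop} with a maximal inequality. For the modulus part, take stopping times $\sigma_n\le S$ and $h_n\to0$. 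The increment of the drift part over $[\sigma_n,\sigma_n+h_n]$ is at most $Ch_n(1+\sup|\widetilde{Z}_n^{t_n}|)\to0$ in probability. The increment of the martingale part has second moment at most $Ch_n$ by optional stopping, as in \eqref{eq:M_second_moment_upper_bound}. Hence $\widetilde{Z}_n^{t_n}(\sigma_n+h_n)-\widetilde{Z}_n^{t_n}(\sigma_n)\to0$ in probability, and Aldous' criterion yields tightness in $D[-S,S]$.

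The main obstacle is justifying the replacement of $\mathcal{L}(Z_n^2)$ by the Ornstein--Uhlenbeck coefficients. The function $z^2$ is not exactly of the form required by $\mathcal{D}(z,\beta)$ at every base point. We handle this by centering, using $f(s)=(s-z)^2$ at base point $z=Z_n(t)$, for which $f'(z)=0$, $f''=2$ and $f'''=0$. For this $f$, the quantity $\lambda_n H$ equals the rescaled conditional variance rate. A related subtlety is that jumps of $\widetilde{Z}_n^{t_n}$ must be small for Aldous' modulus estimate to be meaningful. A jump has size $t\sqrt{\lambda_n/n}\,\mathscr{C}$, and by \eqref{eq:magic_branching_proc_estimate_2} component sizes are $O(\lambda_n^{-2}\ln n)$ with high probability, so jumps are $O(\ln n/\sqrt{n\lambda_n^3})\to0$ by \eqref{eq:subcrit_lambda_assumption}. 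This same estimate also shows that any weak limit is continuous.
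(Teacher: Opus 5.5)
Your proposal is correct and follows essentially the paper's route. It uses Aldous' criterion and a sup-tightness bound on $[-S,S]$; the paper obtains that bound from the $\cosh$ supermartingale, which you list as your alternative. It also uses Proposition~\ref{prop:exp_drift_precise_ou_lyap_momgen} with quadratic test functions to bound the second moment of the increment by a constant times the time step.

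The only difference is packaging. The paper bounds the generator of $\bigl(\widetilde Z_n^{t_n}(s)-\widetilde Z_n^{t_n}(\tau_n^K)\bigr)^2$ directly, stopped when $|\widetilde Z_n^{t_n}|$ hits $K$ or the increment hits $1$. You instead split into a drift part ($f=\mathrm{id}$) and a quadratic-variation part (centered square), which is the same computation. Your jump-size remark is true but not needed.
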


We prove Lemma~\ref{lem:tightness_Ztilde} in Section~\ref{subsec:Z_tightness}.

\begin{lemma}[Convergence of joint characteristic functions]\label{lem:Z_joint_char_fn_conv}
Let us fix $-S \le s_1 < s_2 < \ldots < s_r \le S$ and coefficients $\nu_1, \, \nu_2, \, \ldots, \, \nu_r \in \mathbb{R}$ arbitrarily. Then we have
\begin{equation}\label{eq:conv_of_characteristic_fn}
\lim \limits_{n \to \infty} \ev \mleft[ \exp \mleft( i \cdot \sum \limits_{j=1}^r \nu_j \cdot \widetilde{Z}_n^{t_n}(s_j) \mright) \mright] = \ev \mleft[ \exp \mleft( i \cdot \sum \limits_{j=1}^r \nu_j \cdot \widetilde{Z}^t(s_j) \mright) \mright],
\end{equation}
where $i = \sqrt{-1}$ is the imaginary unit.
\end{lemma}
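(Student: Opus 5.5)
We prove \eqref{eq:conv_of_characteristic_fn} by induction on $r$, using the exponential test function generator estimate of Proposition~\ref{prop:exp_drift_precise_statement}, the Markov property of $Z_n$ (Claim~\ref{cl:Z_n_markov}), and the tightness bound of Lemma~\ref{lem:Z_bound} together with Lemma~\ref{lem:opt_stop}.

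The core object is the conditional characteristic function of a single increment. For the limit we have, for $s \ge 0$,
\[
\ev\bigl[\mathrm{e}^{i\nu \widetilde{Z}^t(u+s)} \,\big|\, \widetilde{Z}^t(u)=z\bigr]=\exp\bigl(\phi(s) z+\psi(s)\bigr),
\]
where $\phi(s)=i\nu \mathrm{e}^{-s/t^2}$ and $\psi(s)=-\frac{\nu^2}{4}(1-\mathrm{e}^{-2s/t^2})$. These solve the backward equations $\dot\phi=-\phi/t^2$ and $\dot\psi=\frac{1}{2t^2}\phi^2$. The finite-$n$ analogue is the time-dependent test function
\[
F_n(u,z)=\exp\bigl(\phi_n(u)z+\psi_n(u)\bigr),\qquad u\in[t_n+\lambda_n s_{r-1},\,t_n+\lambda_n s_r],
\]
where $\phi_n,\psi_n$ solve the backward ODEs with coefficients frozen from $\widetilde{L}$. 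Concretely, $\lambda_n\dot\phi_n=\phi_n/g_n(u)^2$ and $\lambda_n\dot\psi_n=-\frac{u}{2g_n(u)^3}\phi_n^2$, with terminal values $\phi_n=i\nu_r$ and $\psi_n=0$ at $u=t_n+\lambda_n s_r$. Since $t_n\to t$ and $(t_n-t_n^c)/\lambda_n\to\infty$, \eqref{eq:gnt_asymp_t} gives $g_n(t_n+\lambda_n s)\to t$ uniformly for $s\in[-S,S]$. Hence in the scaled time variable the coefficients $\phi_n,\psi_n$ converge uniformly to $\phi,\psi$, and $|\phi_n|$ stays bounded by $|\nu_r|\mathrm{e}^{O(S)}$.

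Next I apply the generator to $F_n(u,Z_n(u))$. The drift is $\partial_u F_n+\mathcal{L}$ acting on $\mathrm{e}^{\beta Z_n(u)}$ with $\beta=\phi_n(u)\in\mathbb{C}$. By Proposition~\ref{prop:exp_drift_precise_statement}, the latter equals
\[
\frac{1}{\lambda_n}\Bigl(-\frac{\beta Z_n}{g_n^2}+\frac{\beta^2 u}{2g_n^3}+o(1)\Bigr)\mathrm{e}^{\beta Z_n}
\]
on $\{|Z_n|\le\kappa_n\}$, uniformly. The choice of $\phi_n,\psi_n$ cancels the main terms, so the total drift is $\lambda_n^{-1}o(1)\,|F_n|$. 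Integrating over a time interval of length $\lambda_n(s_r-s_{r-1})$ gives an error of order $o(1)\sup|F_n|$, stopped at $\tau_Z$.

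The main obstacle is that $|F_n|=\mathrm{e}^{\mathrm{Re}(\phi_n)Z_n+\mathrm{Re}\psi_n}$ can be large, because $\phi_n$ leaves the imaginary axis when $g_n$ is only approximately $t$. I would handle this in two ways. First, the real part of $\phi_n$ is small: it is $o(1)$ times $|\nu_r|$, because the ODE for $\phi_n$ is linear with real coefficients and purely imaginary terminal data. So in fact $\phi_n$ stays purely imaginary, $\mathrm{Re}\,\phi_n=0$, and $|F_n|=\mathrm{e}^{\mathrm{Re}\psi_n}\le 1$, which removes the problem. Second, the localization to $\tau_Z\ge T$ costs $o(1)$ by Lemma~\ref{lem:opt_stop}. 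Dynkin's formula for the stopped process then gives
\[
\ev\bigl[G\,\mathrm{e}^{i\nu_r Z_n(t_n+\lambda_n s_r)}\bigr]=\ev\bigl[G\,F_n(t_n+\lambda_n s_{r-1},Z_n(t_n+\lambda_n s_{r-1}))\bigr]+o(1)
\]
for any bounded $\mathcal{F}_n(t_n+\lambda_n s_{r-1})$-measurable $G$.

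Finally, take $G=\exp\bigl(i\sum_{j<r}\nu_j Z_n(t_n+\lambda_n s_j)\bigr)$. Then $F_n$ at $s_{r-1}$ equals $\mathrm{e}^{\psi_n}\mathrm{e}^{\phi_n Z_n}$, where $\phi_n\to i\nu_r \mathrm{e}^{-(s_r-s_{r-1})/t^2}$. The tightness from Lemma~\ref{lem:Z_bound} lets us replace $\phi_n,\psi_n$ by their limits at cost $o(1)$, reducing the problem to $r-1$ points with a modified coefficient $\nu_{r-1}$. By induction we reach the one-dimensional statement $\ev[\mathrm{e}^{i\mu Z_n(t_n-\lambda_n S')}]\to \mathrm{e}^{-\mu^2 t/4}$, which is the stationary law $N(0,t/2)$ of \eqref{eq:OU_SDE}. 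To prove this base case, start the same backward argument at time $t_n^c+\lambda_n R$ with $R\to\infty$; the bound $|Z_n(t_n^c+\lambda_n R)|=O_P(1)$ is provided by Lemma~\ref{lem:Z_bound}. Over a scaled duration $L$, the coefficient $\phi$ contracts by $\mathrm{e}^{-\int g_n^{-2}}\le \mathrm{e}^{-L/(4\vee T)^2}$, using the bound \eqref{eq:gnt_bounds}. This forgets the initial condition, and $\psi$ tends to $-\mu^2 t/4$ provided $g_n\to t$ on the bulk of the window, which holds because $(t_n-t_n^c)/\lambda_n\to\infty$. Matching the resulting Gaussian product formula with the OU finite-dimensional distributions completes the proof.
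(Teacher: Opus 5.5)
Your induction step is sound. In substance it is the paper's argument organized differently. The paper builds the Doob martingale $\varphi^t(s)$ of the O.-U.\ process for all $r$ points at once and proves convergence conditionally on $\widetilde Z_n^{t_n}(-S^*)=z$. You instead peel off one time point at a time with the exponential test function $F_n$.

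Your $g_n$-dependent coefficients are a real convenience. $\phi_n$ stays purely imaginary and $\psi_n\le 0$, so $|F_n|\le 1$. The main generator terms then cancel exactly, so no residual $\beta Z_n\,(g_n^{-2}-t_n^{-2})$ has to be controlled on $\{|Z_n|\le\kappa_n\}$. The paper, with its frozen coefficient $t_n^{-2}$, must absorb such a term via \eqref{eq:gnt_asymp_t}.

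One small point: Proposition~\ref{prop:exp_drift_precise_statement} is stated for a fixed $\beta$, while your $\beta=\phi_n(u)$ varies with $n$ and $u$. Invoke Proposition~\ref{prop:exp_drift_precise_ou_lyap_momgen} directly instead. This works because the real and imaginary parts of $\mathrm{e}^{\beta(\cdot-z)}$ lie in $\mathcal{D}(z,B)$ whenever $|\beta|\le B$.

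The gap is in the base case, and it has two parts.

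\emph{Wrong target.} The stationary law of \eqref{eq:OU_SDE} is $N(0,1/2)$, since the mean-reversion rate and the squared volatility are both $t^{-2}$. So the one-point limit is $\mathrm{e}^{-\mu^2/4}$, which is exactly what your own $\psi(s)\to-\nu^2/4$ gives. The value $N(0,t/2)$ would contradict the theorem for $t\neq 1$.

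\emph{Uncontrolled error.} This is the more substantive problem. Starting the backward argument at $t_n^c+\lambda_n R$ gives a window of scaled length about $(t_n-t_n^c)/\lambda_n\to\infty$, which is of order $\lambda_n^{-1}$ when $t>1$. Your only error control is ``$o(1)\sup|F_n|$ per unit of scaled time''. Integrated over this window it gives $o(1)\cdot(t_n-t_n^c)/\lambda_n$, which you have not shown to vanish. The $o(1)$ in Proposition~\ref{prop:exp_drift_precise_ou_lyap_momgen} has no rate; for instance it contains $\kappa_n\lambda_n$, which after division by $\lambda_n$ is $\kappa_n\to\infty$.

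The simplest repair is to go back only a fixed scaled distance $L$, starting at $t_n+\lambda_n(s_1-L)$.
\begin{itemize}
\item Lemma~\ref{lem:Z_bound} gives tightness of $Z_n$ at every $t\le T$, so there is no need to go back to $t_n^c$.
\item On this fixed window $g_n\to t$ uniformly, so $\phi_n\to i\mu\mathrm{e}^{-L/t^2}$ and $\psi_n\to-\frac{\mu^2}{4}(1-\mathrm{e}^{-2L/t^2})$.
\item The integrated generator error is $o(1)\cdot L$, and tightness makes $\ev[\mathrm{e}^{\phi_n Z_n}]-1$ small for large $L$.
\item Let $n\to\infty$ first and $L\to\infty$ afterwards.
\end{itemize}
This is precisely the paper's device of conditioning at $-S^*$ and using \eqref{eq:joint_char_close_to_conditional}.

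Alternatively, keep the long window but sharpen the error estimate. Since $H$ and $\widetilde L$ are linear in $f$ and annihilate constants, apply Proposition~\ref{prop:exp_drift_precise_ou_lyap_momgen} to the real and imaginary parts of $(\mathrm{e}^{\beta(\cdot-z)}-1)/|\beta|$. These lie in $\mathcal{D}(z,B)$ for $B\ge 1\vee|\beta|$. This shows the error for $\mathrm{e}^{\beta z}$ is at most $2|\beta|\varepsilon_n$ with $\varepsilon_n\to 0$. Because $|\phi_n|$ decays exponentially backward in scaled time (with rate controlled by \eqref{eq:gnt_bounds}), $\int|\phi_n|$ is bounded, and the integrated error is $O(\varepsilon_n)$.
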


We prove Lemma~\ref{lem:Z_joint_char_fn_conv} in Section~\ref{subsec:Z_char_fn_conv}.

Now we can conclude Theorem~\ref{thm:OU_scaling}.

\begin{proof}[Proof of Theorem~\ref{thm:OU_scaling}]
By Theorem~\ref{thm:convergence_in_D} and Lemma~\ref{lem:tightness_Ztilde}, it is enough to show that equation~\eqref{eq:joint_dist_conv} holds for the sequence $\widetilde{Z}_n^{t_n}$ and the process $\widetilde{Z}^t$, i.e., the finite dimensional marginal distributions of $\widetilde{Z}_n^{t_n}$ converge to the finite dimensional marginal distributions of $\widetilde{Z}^t$. Since joint convergence in distribution is equivalent to the point-wise convergence of the joint characteristic functions, Lemma~\ref{lem:Z_joint_char_fn_conv} completes the proof of Theorem~\ref{thm:OU_scaling}.
\end{proof}

\subsubsection{Tightness of~\texorpdfstring{$\widetilde{Z}_n^{t_n}$}{Z}}\label{subsec:Z_tightness}

In this section our goal is to show the tightness of $\{\widetilde{Z}_n^{t_n}(\cdot)\}$, i.e., Lemma~\ref{lem:tightness_Ztilde}. For that purpose, we use Aldous' sufficient condition for tightness.

\begin{theorem}[Aldous criterion for tightness, {\cite[Theorem 1]{A78}}]\label{thm:aldous_crit}
Let $a<b \in \mathbb{R}$ and $\{X_n\}$ be a sequence of random elements of $D[a,b]$. Assume that $\{X_n(s) \}$ is tight on the line for each $s \in [a,b]$, moreover, that
\begin{equation}\label{eq:aldous_crit}
X_n \big((\tau_n + \delta_n) \wedge b \big) - X_n(\tau_n) \stackrel{\pr}{\to} 0
\quad \text{as} \quad
n \to \infty
\end{equation}
for each sequence $\{\tau_n, \, \delta_n\}$ that satisfies 
\begin{enumerate}
\item for each $n$, $\tau_n$ is a stopping time on the process $\{X_n(s), \, a \le s \le b\}$, with respect to the natural $\sigma$-fields, and~$\tau_n$ takes only finitely many values,
\item for each $n$, $\delta_n$ is a nonnegative constant, and $\delta_n \to 0$ as $n \to \infty$,
\end{enumerate}
then $\{X_n\}$ is tight in $D[a,b]$.
\end{theorem}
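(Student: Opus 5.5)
Since this is the classical criterion of~\cite{A78}, I would reprove it by reducing to Billingsley's characterization of tightness in $D[a,b]$ (\cite[Theorem~15.2]{B68}). That characterization asks for two things:
\begin{enumerate*}[label=(\roman*)]
\item $\sup_{s\in[a,b]}|X_n(s)|$ is tight, and
\item $\lim_{\delta\to0}\limsup_n \pr(w'(X_n,\delta)\ge\varepsilon)=0$ for every $\varepsilon>0$.
\end{enumerate*}
Here $w'$ is the modulus that allows jumps at the points of a partition with mesh larger than~$\delta$. The plan has three parts: first upgrade the hypothesis to a uniform statement, then use stopping times that record $\varepsilon$-oscillations, and finally read off (i) and (ii).

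\textbf{Step 1 (uniformization).} I would first show that the hypothesis~\eqref{eq:aldous_crit} forces a uniform bound: for every $\varepsilon,\eta>0$ there exist $\delta_0>0$ and $n_0$ such that
\[
\pr\bigl(|X_n((\tau+\delta)\wedge b)-X_n(\tau)|>\varepsilon\bigr)\le\eta
\]
for all $n\ge n_0$, all $\delta\in[0,\delta_0]$, and all finitely-valued stopping times~$\tau$. This goes by contradiction: if it failed, one could extract a sequence $(\tau_n,\delta_n)$ with $\delta_n\to0$ violating~\eqref{eq:aldous_crit}. The restriction to finitely-valued $\tau$ is then removed by approximating a general stopping time from above by dyadic discretizations $\tau^{(m)}\downarrow\tau$. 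Right-continuity of paths lets us pass to the limit, at the cost of replacing $\delta$ by a slightly larger deterministic shift.

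\textbf{Step 2 (random increments).} Next I would extend this to stopping times $\tau\le\sigma\le\tau+\delta$, where the gap $\sigma-\tau$ is random. For this I would use Aldous' averaging trick. For $u$ uniform on $[0,2\delta]$, independent of everything,
\[
|X(\sigma)-X(\tau)|\le|X(\tau+u)-X(\tau)|+|X(\tau+u)-X(\sigma)|.
\]
Since $\tau+u-\sigma$ has, given $\sigma-\tau$, a density bounded by $1/(2\delta)$, Fubini turns each term into an average over deterministic shifts of the bound from Step~1. This gives $\pr(|X_n(\sigma)-X_n(\tau)|>2\varepsilon)\le 4\eta$ uniformly. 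It also gives a lower-tail bound on the gap: $\pr(\sigma-\tau\le\delta,\ |X_n(\sigma)-X_n(\tau)|\ge\varepsilon)$ is small.

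\textbf{Step 3 (oscillation times and conclusion).} Define $\tau_0=a$ and
\[
\tau_{k+1}=\inf\{t>\tau_k:|X_n(t)-X_n(\tau_k)|\ge\varepsilon\}\wedge b.
\]
Step~2, applied to the pair $(\tau_k,\tau_{k+1})$, shows that $\pr(\tau_{k+1}-\tau_k\le\delta,\ \tau_k<b)$ is small uniformly in $k$ and $n$. A standard argument then follows. Conditional bounds on the gaps, obtained via the strong Markov-free estimate $\ev[e^{-(\tau_{k+1}-\tau_k)}\mid\mathcal F_{\tau_k}]\le 1-c$ on $\{\tau_k<b\}$, yield that the number $K_n$ of indices with $\tau_k<b$ is tight. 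The same input shows that $\min_{k<K_n}(\tau_{k+1}-\tau_k)>\delta$ with probability close to one. On that event the points $\tau_k$ form a partition witnessing $w'(X_n,\delta)\le2\varepsilon$, which is (ii). For (i), I would combine tightness of $X_n(a)$ with the bound $\sup|X_n|\le|X_n(a)|+K_n\varepsilon+\sum_k|\Delta X_n(\tau_k)|$. Each jump at an oscillation time is controlled by tightness of one-dimensional marginals plus Step~2, so $\sup|X_n|$ is tight.

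I expect the main obstacle to be Step~3, specifically turning the per-step small-gap estimate into tightness of the random number $K_n$ of oscillations. This is where the conditional Laplace-transform bound is needed, and the care lies in handling $b$ as a truncation point.
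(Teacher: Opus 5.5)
The paper does not prove this statement; it imports it from~\cite{A78}. Your overall plan is the standard way to prove it: Billingsley's $w'$-criterion, uniformization by contradiction, Aldous' averaging over an independent random shift, and counting $\varepsilon$-oscillation times. However, the step you single out as the crux fails.

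\textbf{The conditional Laplace bound is false.} Hypothesis~\eqref{eq:aldous_crit} only controls \emph{unconditional} probabilities. An almost-sure bound $\ev[e^{-(\tau_{k+1}-\tau_k)}\mid\mathcal F_{\tau_k}]\le 1-c$ on $\{\tau_k<b\}$ cannot be derived from it, and in general it is false. Take $[a,b]=[0,1]$, $\xi_n\sim\mathrm{Bernoulli}(1/n)$ and $X_n(t)=\xi_n\mathbf 1\{\lfloor 2nt\rfloor\text{ odd}\}$.
\begin{itemize}
\item All hypotheses hold, since every increment is bounded by $\xi_n\to0$ in probability.
\item With $\varepsilon=1/2$, for $1\le k<2n$ one has $\{\tau_k<1\}=\{\xi_n=1\}\in\mathcal F_{\tau_k}$.
\item On this event $\tau_{k+1}-\tau_k=1/(2n)$, so the conditional Laplace transform equals $e^{-1/(2n)}\to1$ there.
\end{itemize}
Conditioning on $\mathcal F_{\tau_k}$ can select exactly the rare bad event.

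What you need instead is an additive counting argument that uses only the unconditional per-step bound. Suppose $\pr(\tau_k-\tau_{k-1}\le\delta,\ \tau_k<b)\le\eta$ for all $k$ and $n$. Your Step~2 gives this, because $\tau_k<b$ forces $|X_n(\tau_k)-X_n(\tau_{k-1})|\ge\varepsilon$. Then, with $\tau_0=a$,
\[
b-a\;\ge\;\ev\bigl[(\tau_q-a)\mathbf 1\{\tau_q<b\}\bigr]\;\ge\;\delta\sum_{k=1}^{q}\pr\bigl(\tau_k-\tau_{k-1}>\delta,\ \tau_q<b\bigr)\;\ge\;q\delta\bigl(\pr(\tau_q<b)-\eta\bigr),
\]
so $\pr(\tau_q<b)\le (b-a)/(q\delta)+\eta$. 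Taking $q$ large gives tightness of the number of oscillations. A union bound over $k\le q$, at a smaller $\delta$, then makes all these gaps exceed $\delta$ with high probability.

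\textbf{The endpoint $b$.} The argument above controls only gaps ending at a genuine oscillation time, i.e.\ with $\tau_k<b$, and that restriction is necessary. You claim that $\pr(\tau_{k+1}-\tau_k\le\delta,\ \tau_k<b)$ is small, and hence that $\min_{k<K_n}(\tau_{k+1}-\tau_k)>\delta$ with high probability. This fails for the final, truncated gap. Consider the deterministic, tight ramps $X_n(t)=\varepsilon\min\{1,t/(1-1/n)\}$ on $[0,1]$. Here $\tau_1=1-1/n$ and $\tau_2=1$, so that probability equals $1$ once $n\ge1/\delta$, and the $\tau_k$ never form a $\delta$-sparse partition.

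The last short interval must therefore be merged into the previous one, and this needs a separate input at the endpoint. Apply Step~2 at the deterministic time $b-2\delta$ to get $\sup_{s\in[b-2\delta,b]}|X_n(s)-X_n(b-2\delta)|\le\varepsilon$ with high probability. The merged interval then has length $>\delta$ and oscillation at most $4\varepsilon$. This is the care at $b$ you anticipated, but as written your Step~3 asserts false statements there.

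\textbf{A small slip in Step~2.} The shift $u$ must be uniform on $[\delta,2\delta]$, not on $[0,2\delta]$. Otherwise $\tau+u<\sigma$ has positive probability, and then $X(\tau+u)-X(\sigma)$ is a backward increment that Step~1 does not control. With $u\in[\delta,2\delta]$ one has $\tau+u-\sigma\in[0,2\delta]$ pathwise, and the Fubini argument goes through.
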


In order to prove the tightness of $\{\widetilde{Z}_n^{t_n}(\cdot)\}$ (i.e., Lemma~\ref{lem:tightness_Ztilde}), we will check the conditions of Theorem~\ref{thm:aldous_crit}. By Lemma~\ref{lem:Z_bound} the sequence $\{ \widetilde{Z}_n^{t_n}(s) \}$ of real-valued random variables is tight for each fixed $s \in \mathbb{R}$ and any sequence~$(t_n)$ that satisfies the criteria of Theorem~\ref{thm:OU_scaling}.
We will now state a strengthening of this statement, which will turn out to be useful when we check the other conditions of Theorem~\ref{thm:aldous_crit}

\begin{lemma}[Tightness of the supremum of $\widetilde{Z}^{t_n}(\cdot)$ on compact intervals]\label{lem:tight_sup_Z} For any $S \in \mathbb{R}_+$ and any sequence~$(t_n)$ that satisfies the criteria of Theorem~\ref{thm:OU_scaling}, we have
\begin{equation*}
\lim_{K \to \infty} \sup_n \pr \mleft( \max \limits_{-S \le s \le S} \left| \widetilde{Z}_n^{t_n}(s) \right| \geq K \mright)=0.
\end{equation*}
\end{lemma}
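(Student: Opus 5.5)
The plan is to combine the pointwise bound of Lemma~\ref{lem:Z_bound} with the Lyapunov machinery of Section~\ref{subsec:stays_crit}, now run from a deterministic starting time close to the window. Set $r_n \coloneqq t_n - \lambda_n S$ and $r_n' \coloneqq t_n+\lambda_n S$. Since $(t_n-t_n^c)/\lambda_n\to\infty$, for $n$ large we have $t_n^c \le r_n$ and $r_n' \le T$ (after enlarging $T$ if needed). Hence the whole window $[r_n,r_n']$ lies in the range where Proposition~\ref{prop:exp_drift_precise_statement} and Lemma~\ref{lem:chZ_drift_upper_bound} apply. Moreover, by Lemma~\ref{lem:opt_stop}, $\tau_Z\ge T$ with high probability, so we may work on the event $\{\tau_Z > r_n'\}$.

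Fix $\varepsilon>0$. By Lemma~\ref{lem:Z_bound}, choose $K'$ such that $\pr(|Z_n(r_n)|\ge K')\le \varepsilon/3$ for all $n$. Condition on $Z_n(r_n)=z$ with $|z|<K'$, using the Markov property (Claim~\ref{cl:Z_n_markov}). Consider the stopping time
\[
\sigma_K \coloneqq \min\{ t\ge r_n : |Z_n(t)|\ge K\}\wedge \tau_Z \wedge r_n'.
\]
By Lemma~\ref{lem:chZ_drift_upper_bound}, the process $\cosh(Z_n(t\wedge\sigma_K)) - 27\,(t\wedge\sigma_K - r_n)/\lambda_n$ is a supermartingale for $t\ge r_n$. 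Optional stopping then gives
\[
\ev[\cosh(Z_n(\sigma_K))] \le \cosh(K') + 27\cdot \frac{r_n'-r_n}{\lambda_n} = \cosh(K') + 54 S .
\]
This is exactly where the time scale $\lambda_n$ of the window is matched to the $1/\lambda_n$ factor in the drift bound. On $\{\max_{r_n\le t\le r_n'}|Z_n(t)|\ge K,\ \tau_Z>r_n'\}$ we have $\cosh(Z_n(\sigma_K))\ge \cosh(K)$. Markov's inequality therefore yields a bound on the probability of this event of $(\cosh(K')+54S)/\cosh(K)$, which is at most $\varepsilon/3$ for $K$ large, uniformly in $n$.

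Two technical points remain. First, $|Z_n|$ can overshoot $K$ by a jump, but $\cosh$ is increasing in $|z|$, so overshoot only helps the lower bound. Second, the supermartingale property requires $\sigma_K\le\tau_Z$, which we built into the definition. Adding the three error terms (the initial value, $\pr(\tau_Z\le r_n')$, and the Markov bound) gives $\sup_n \pr(\max_{|s|\le S}|\widetilde Z_n^{t_n}(s)|\ge K)\le\varepsilon$ for $n\ge n_0$. The finitely many small $n$ are handled trivially, since $|Z_n|\le (n\lambda_n)^{1/2}(T+1)$ is bounded for each fixed $n$.

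The main obstacle is only bookkeeping. Lemma~\ref{lem:chZ_drift_upper_bound} holds only for $n\ge n_0$ and only before $\tau_Z$, so $\sigma_K$ must be defined with the cap $\tau_Z$. We must also check that $r_n\ge t_n^c$ eventually, which is precisely what the hypothesis $(t_n-t_n^c)/\lambda_n\to\infty$ provides.
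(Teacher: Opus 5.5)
Your argument is correct and is essentially the paper's proof. It uses the same window $[t_n-\lambda_n S,\,t_n+\lambda_n S]\subseteq[t_n^c,T]$, the same $\cosh$ supermartingale with drift bounded by $27/\lambda_n$, optional stopping giving $\cosh(K')+54S$, and Markov's inequality.

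The only difference is that the paper does not cap the stopping time with $\tau_Z$. It simply notes that $K<\kappa_n$ for large $n$, so stopping at the first time $|Z_n|\ge K$ already keeps $|Z_n|\le\kappa_n$, where the drift bound from Proposition~\ref{prop:exp_drift_precise_statement} holds. This spares it Lemma~\ref{lem:opt_stop}. It also avoids a small issue in your write-up: $\tau_Z$ depends on the path before $r_n$, so conditioning only on $Z_n(r_n)$ is not quite enough. Working on an $\mathcal{F}_n(r_n)$-measurable event, as the paper does, or dropping the cap fixes this.
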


\begin{proof}
Let us fix $S \in \mathbb{R}_+$, and define $a_n \coloneqq t_n-S \lambda_n$, $b_n \coloneqq t_n+S\lambda_n$ and $I_n\coloneqq[a_n,b_n]$.
By our assumptions on $(t_n)$, we have $I_n \subseteq [t_n^c,T]$ if $n$ is large enough.

It follows from~\eqref{eq:tilde_Z_def} that we have $\max_{-S\leq s\leq S}\left|\widetilde Z_n^{t_n}(s)\right| =\max_{u\in I_n}|Z_n(u)|$.
Furthermore, Lemma~\ref{lem:Z_bound} gives
$ \lim_{K' \to \infty} \sup_n \pr \mleft( \left| Z_n(a_n) \right| \geq K' \mright)=0$, thus we only need to show that for any $K' \in \mathbb{R}_+$ we have
\begin{equation}\label{eq:tight_sup_Z_init}
\lim_{K \to \infty} \sup_n \pr \mleft( \max \limits_{u \in I_n} \left| Z_n(u) \right| \geq K \, \text{ and } \, \left| Z_n(a_n) \right| \leq K' \mright)=0.
\end{equation}

We now use the drift estimate of Lemma~\ref{lem:chZ_drift_upper_bound}.
More precisely, by Proposition~\ref{prop:exp_drift_precise_statement}, exactly as in the proof of
Lemma~\ref{lem:chZ_drift_upper_bound}, for all sufficiently large $n$ and all
$u\in I_n$, we have
\begin{equation}\label{eq:infgen_bound_chZ}
\indic{|Z_n(u)|\leq \kappa_n}\, \cdot \mathcal{L}\cosh(Z_n(u))\leq \indic{ |Z_n(u)|\leq \kappa_n}\cdot \frac{27}{\lambda_n}.
\end{equation}
Since $\kappa_n\to\infty$, the inequality $K'\leq K<\kappa_n$ holds if $n$ is large enough. Let us define the stopping time
$\tau_n\coloneqq\inf\{u\in I_n:\ |Z_n(u)|\geq K\}$. 
Using~\eqref{eq:infgen_bound_chZ}, we see that the stopped process
\begin{equation*}
Y_n(u)\coloneqq \cosh\mleft(Z_n(u\wedge\tau_n)\mright)-\frac{27}{\lambda_n} \big((u\wedge\tau_n)-a_n\big), \qquad a_n\leq u\leq b_n,
\end{equation*}
is a supermartingale. Let $E_n =\{ \left| Z_n(a_n) \right| \leq K' \}$.
An application of the optional stopping theorem gives
\begin{equation*}
\ev\mleft[\, \cosh\mleft(Z_n(\tau_n \wedge b_n)\mright)\indic{E_n}\, \mright]
\leq
\ev\mleft[\,\cosh\mleft(Z_n(a_n)\mright)\indic{E_n}\,\mright]+
\frac{27}{\lambda_n}\,(b_n-a_n) 
\leq
\cosh\mleft(K'\mright)+54S.
\end{equation*}
Markov's inequality then gives
$\pr \left(E_n\cap\{\tau_n< b_n\}\right) \leq \frac{\cosh(K')+54S}{\cosh(K)}$, which completes the proof of~\eqref{eq:tight_sup_Z_init}.
\end{proof}

\begin{proof}[Proof of Lemma~\ref{lem:tightness_Ztilde}] As we have already mentioned, we will check the conditions of Theorem~\ref{thm:aldous_crit}. We have already discussed that $\{ \widetilde{Z}_n^{t_n}(s) \}$ is tight for each fixed $s \in \mathbb{R}$. It remains to verify Aldous' increment condition~\eqref{eq:aldous_crit}. Let $\{\tau_n,\delta_n\}$ be a sequence as in Theorem~\ref{thm:aldous_crit}, applied to the process~$\widetilde Z_n^{t_n}$ on~${[-S,S]}$. We want to show $\widetilde Z_n^{t_n}((\tau_n+\delta_n)\wedge S) -\widetilde Z_n^{t_n}(\tau_n) \xrightarrow{\mathbb P}0$. 
Given any $K \in \mathbb{R}_+$, let us define the stopping time $\tau_n^K \coloneqq \tau_n \wedge \min\{ \, s \in [-S,S] \,: \, | \widetilde{Z}_n^{t_n}(s) | \geq K \, \}$.
By Lemma~\ref{lem:tight_sup_Z}, it is enough to show that
\begin{equation}\label{eq:stop_aldous}
\widetilde Z_n^{t_n}((\tau_n^K+\delta_n)\wedge S) -\widetilde Z_n^{t_n}(\tau_n^K) \xrightarrow{\mathbb P}0
\end{equation}
holds for any $K \in \mathbb{R}_+$. Let 
$\tau_n^*\coloneqq\min\{ \, s \,: \; |\widetilde Z_n^{t_n}(\tau_n^K \vee s ) -\widetilde Z_n^{t_n}(\tau_n^K)| \geq 1 \, \}$. Recall from~\eqref{eq:OU_infgen_def} that for any fixed $t \in \mathbb{R}_+$ and $z^* \in \mathbb{R}$ the O.-U.~process $\widehat{Z}_n^t(\cdot)$ satisfies
\begin{equation*}
\lim_{h \to 0_+} \frac 1h \ev \mleft[ \left( \widehat{Z}_n^t(s+h)-z^* \right)^2 \; \middle| \; \widehat{Z}_n^t(s) \mright]=
- \frac{1}{(g_n(t))^2} \widehat{Z}_n^t(s) \cdot 2 \cdot \left( \widehat{Z}_n^t(s)-z^*\right) +
 \frac{t}{(g_n(t))^3}.
\end{equation*}
Thus, using Proposition~\ref{prop:exp_drift_precise_ou_lyap_momgen}, the definitions of $\tau_n^K$ and $\tau_n^*$ as well as the properties of $(t_n)$ and $g_n(\cdot)$ (cf.~\eqref{eq:gnt_asymp_t}), we obtain that for any $\varepsilon>0$ we have
\begin{equation}\label{eq:infgen_square_Z}
\mathcal{L}\left( \widetilde Z_n^{t_n}((\tau_n^K \vee s) \wedge \tau_n^* ) -\widetilde Z_n^{t_n}(\tau_n^K) \right)^2 \leq \left[(K + 1) \cdot 2 +T +\varepsilon \right] \cdot \indic{ \tau_n^K < s < \tau_n^* } 
\end{equation}
if $n$ is large enough (where, with a slight abuse of notation we switched to the scaled time variable $s$ in the definition of $\mathcal{L}$). Integrating~\eqref{eq:infgen_square_Z} and taking expectations we obtain
\begin{equation*}
\ev\mleft[ \left( 
\widetilde Z_n^{t_n}((\tau_n^K +\delta_n) \wedge \tau_n^* ) -\widetilde Z_n^{t_n}(\tau_n^K) \right)^2\mright] \leq \delta_n \cdot \left[(K + 1) \cdot 2 +T +\varepsilon \right],
\end{equation*}
and then Markov's inequality and $\delta_n \to 0$ together imply $\lim_{n \to \infty}\mathbb{P}(\tau_n^* < \tau_n^K +\delta_n)=0$ and~\eqref{eq:stop_aldous}.
Thus all conditions of Theorem~\ref{thm:aldous_crit} are satisfied, and the sequence $\widetilde Z_n^{t_n}(\cdot)$ is tight in $D[-S,S]$. The proof of Lemma~\ref{lem:tightness_Ztilde} is complete.
\end{proof}

\subsubsection{Convergence of joint characteristic functions}\label{subsec:Z_char_fn_conv}

In this section our goal is to prove the convergence of the joint characteristic function of $\widetilde{Z}_n^{t_n}(\cdot)$ to the joint characteristic function of $\widetilde{Z}^t(\cdot)$, i.e., Lemma~\ref{lem:Z_joint_char_fn_conv}.

Recall that in Lemma~\ref{lem:Z_joint_char_fn_conv} we fixed $s_1 < s_2 < \ldots < s_r$ in the interval $[-S,S]$ and coefficients $\nu_1, \, \nu_2, \, \ldots, \nu_r$.

\begin{notation}
Let us introduce the following notation for the joint characteristic functions that appeared in Lemma~\ref{lem:Z_joint_char_fn_conv}:
\begin{align}
\varphi_n^{t_n} &\coloneqq \ev \mleft[ \exp \mleft( i \cdot \sum \limits_{j=1}^r \nu_j \cdot \widetilde{Z}_n^{t_n}(s_j) \mright) \mright],\label{eq:varphi_n_def}\\
\varphi^t &\coloneqq \ev \mleft[ \exp \mleft( i \cdot \sum \limits_{j=1}^r \nu_j \cdot \widetilde{Z}^t(s_j) \mright) \mright].\label{eq:varphi^t_def}
\end{align}
\end{notation}

Using the above notation, our goal is to show that $\lim \limits_{n \to \infty} \varphi_n^{t_n} = \varphi^t$ (cf.~\eqref{eq:conv_of_characteristic_fn}).

Now we introduce a Doob-martingale related to the joint characteristic function of $\widetilde{Z}^t$, and derive an explicit formula for it.

\begin{definition}[Martingale related to the joint characteristic function]
Let us denote by $(\widetilde{\mathcal{F}}^t(\cdot))$ the filtration generated by the stationary Ornstein--Uhlenbeck process $(\widetilde{Z}^t(\cdot))$, and introduce the Doob-martingale
\begin{equation}\label{eq:varphi^t(s)_def}
\varphi^t(s) \coloneqq \ev \mleft[ \exp \mleft( i \cdot \sum \limits_{j=1}^r \nu_j \cdot \widetilde{Z}^t(s_j) \mright) \; \middle| \; \widetilde{\mathcal{F}}^t(s) \mright], \qquad s \in \mathbb{R}.
\end{equation}
\end{definition}

\begin{lemma}[Explicit formula for $\varphi^t(s)$]\label{lem:varphi_explicit}
Let $s \in \mathbb{R}$ and $k \in \{0,\,1,\,\ldots,\,r\}$ be the index such that $s_k \le s < s_{k+1}$ (where $s_0 \coloneqq -\infty$, $s_{r+1} \coloneqq +\infty$). Then we have
\begin{equation}\label{eq:varphi_explicit}
\varphi^t(s) = \exp \mleft[ i \cdot \left( \sum \limits_{j=1}^k \nu_j \cdot \widetilde{Z}^t(s_j) + \sum \limits_{j=k+1}^r \nu_j \cdot \tilde{\mu}^t_j(s) \right) - \frac 12 \sum \limits_{j=k+1}^r \left(\tilde{\nu}^t_j \cdot \tilde{\sigma}^t_j(s_{j-1} \vee s) \right)^2 \mright],
\end{equation}
where
\begin{equation}\label{eq:mu_sigma_def}
\tilde{\mu}^t_j(s) \coloneqq \widetilde{Z}^t(s) \cdot \exp \mleft( -\frac{1}{t^2} \cdot (s_j - s) \mright), \qquad (\tilde{\sigma}^t_j(s))^2 \coloneqq \frac 12 \left( 1-\exp \mleft(-\frac{2}{t^2} \cdot (s_j-s) \mright) \right)
\end{equation}
and
\begin{equation*}
\tilde{\nu}^t_j \coloneqq \nu_j + \sum \limits_{m=j+1}^r \nu_m \cdot \exp \mleft( -\frac{1}{t^2} \cdot (s_m-s_j) \mright).
\end{equation*}
\end{lemma}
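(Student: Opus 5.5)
We prove \eqref{eq:varphi_explicit} by backward induction on the interval index $k$, using only the Markov property of the O.-U.~process $\widetilde{Z}^t$ and its Gaussian transition law. Solving \eqref{eq:OU_SDE} by variation of constants gives, for $u \ge s$,
\begin{equation*}
\widetilde{Z}^t(u) = \mathrm{e}^{-(u-s)/t^2}\,\widetilde{Z}^t(s) + \frac{1}{t}\int_s^u \mathrm{e}^{-(u-r)/t^2}\,\mathrm{d}B(r).
\end{equation*}
Hence, conditionally on $\widetilde{\mathcal{F}}^t(s)$, the variable $\widetilde{Z}^t(u)$ is Gaussian with mean $\mathrm{e}^{-(u-s)/t^2}\widetilde{Z}^t(s)$ and variance $\frac{1}{t^2}\cdot\frac{t^2}{2}\bigl(1-\mathrm{e}^{-2(u-s)/t^2}\bigr)=\frac12\bigl(1-\mathrm{e}^{-2(u-s)/t^2}\bigr)$. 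These are exactly $\tilde\mu^t_j(s)$ and $(\tilde\sigma^t_j(s))^2$ of \eqref{eq:mu_sigma_def} with $u=s_j$. The Gaussian characteristic function $\ev[\mathrm{e}^{i\theta X}]=\mathrm{e}^{i\theta m-\theta^2\sigma^2/2}$ is the only computational tool needed.

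The case $k=r$ is trivial, since all $\widetilde{Z}^t(s_j)$ are then $\widetilde{\mathcal{F}}^t(s)$-measurable. For $k<r$ we peel off the future points one at a time using the tower property, conditioning successively on $\widetilde{\mathcal{F}}^t(s_{r-1}),\dots,\widetilde{\mathcal{F}}^t(s_{k+1})$, and finally on $\widetilde{\mathcal{F}}^t(s)$. The key algebraic observation is as follows. Conditioning at time $s_{j-1}$ turns $\tilde\nu^t_j\widetilde{Z}^t(s_j)$ into $\tilde\nu^t_j\mathrm{e}^{-(s_j-s_{j-1})/t^2}\widetilde{Z}^t(s_{j-1})$, with the variance factor $\exp(-\frac12(\tilde\nu^t_j\tilde\sigma^t_j(s_{j-1}))^2)$. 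Adding $\nu_{j-1}\widetilde{Z}^t(s_{j-1})$ then gives coefficient $\tilde\nu^t_{j-1}$, by the recursion
\begin{equation*}
\tilde\nu^t_{j-1}=\nu_{j-1}+\mathrm{e}^{-(s_j-s_{j-1})/t^2}\,\tilde\nu^t_j .
\end{equation*}
This recursion follows directly from the definition of $\tilde\nu^t_j$. The last step, from $s_{k+1}$ back to $s$, produces the variance term with $\tilde\sigma^t_{k+1}(s)$, which is why the formula reads $\tilde\sigma^t_j(s_{j-1}\vee s)$: it equals $s$ for $j=k+1$ and $s_{j-1}$ for $j>k+1$.

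Finally, we must match the mean part. After collapsing, the exponent contains $\tilde\nu^t_{k+1}\mathrm{e}^{-(s_{k+1}-s)/t^2}\widetilde{Z}^t(s)$. Expanding $\tilde\nu^t_{k+1}$ and using $\mathrm{e}^{-(s_m-s_{k+1})/t^2}\mathrm{e}^{-(s_{k+1}-s)/t^2}=\mathrm{e}^{-(s_m-s)/t^2}$, this equals $\sum_{j=k+1}^r\nu_j\tilde\mu^t_j(s)$, as claimed. The bookkeeping of the coefficients in this induction is the only delicate point; the rest is routine.

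For $k=0$, i.e.\ $s<s_1$, the same computation applies, and stationarity is not needed since we condition on $\widetilde{\mathcal{F}}^t(s)$. As a sanity check, letting $s\to-\infty$ recovers the stationary $N(0,1/2)$ marginal, since $\tilde\sigma^t_1(s)^2\to\frac12$ and $\tilde\mu^t_1(s)\to0$ in law.
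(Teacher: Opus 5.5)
Your proposal is correct and follows essentially the same route as the paper, which also argues by descending induction on $k$. For $s\in[s_{l-1},s_l)$ the paper conditions the already-established formula at $s_l$ on $\widetilde{\mathcal{F}}^t(s)$ and applies the Gaussian conditional characteristic function. That is exactly your peeling step, with the conditional means absorbed into $\tilde\nu^t_{l}=\nu_{l}+\sum_{j>l}\nu_j \mathrm{e}^{-(s_j-s_l)/t^2}$; your variation-of-constants computation of the transition law simply spells out what the paper takes as known from the SDE.
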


\begin{proof}
First, observe that if $k=r$, then~\eqref{eq:varphi_explicit} follows from the fact that the expression in the expected value in~\eqref{eq:varphi^t(s)_def} is measurable with respect to $\widetilde{\mathcal{F}}^t(s)$. Now we prove~\eqref{eq:varphi_explicit} by descending induction on $k$. In particular, we show that if \eqref{eq:varphi_explicit} holds for any $s \ge s_l$ for some $l \in \{1, \, \ldots , \, r\}$, then it also holds for $s \in [s_{l-1}, s_l)$.

By~\eqref{eq:OU_SDE}, it is known that for any $u \in \mathbb{R}$ and $s_j > u$, the conditional distribution of $\widetilde{Z}^t(s_j)$ given $\widetilde{\mathcal{F}}^t(u)$ is~${N \mleft( \tilde{\mu}^t_j(u), (\tilde{\sigma}^t_j(u))^2 \mright)}$. Therefore,
\begin{equation}\label{eq:normal_characteristic_fn}
\ev \mleft[ \exp \mleft( i \cdot \nu \cdot \widetilde{Z}^t(s_j) \mright) \, \middle| \, \widetilde{\mathcal{F}}^t(u) \mright] = \exp \mleft( i \cdot \nu \cdot \tilde{\mu}^t_j(u) - \frac 12 \cdot \left(\nu \cdot \tilde{\sigma}^t_j(u) \right)^2 \mright) \qquad \forall \, \nu \in \mathbb{R}.
\end{equation}
Using the tower rule, equation~\eqref{eq:normal_characteristic_fn} and the induction hypothesis, we obtain~\eqref{eq:varphi_explicit} for $s \in [s_{l-1},s_l)$. The proof of Lemma~\ref{lem:varphi_explicit} is complete.
\end{proof}

We will show a conditional version of~\eqref{eq:conv_of_characteristic_fn} in Lemma~\ref{lem:conditional_char_fn_conv}. Before that, we introduce some notation.

\begin{notation}
Let $n\in \mathbb{N}$ and $s \in \mathbb{R}$. Then we denote the set of possible values of $\widetilde{Z}_n^{t_n}(s)$ by $\mathcal{D}_n^s$, i.e.,
\begin{equation*}
\mathcal{D}_n^s \coloneqq \left\{ \, \sqrt{n \lambda_n} \cdot \left( \frac kn \cdot (t_n + \lambda_n s) - w_n(t_n+\lambda_n s) \right) \; \middle| \; k =0,\,1,\,\ldots,\, n \, \right\}.
\end{equation*}
Given some $K \in \mathbb{R}_+$, let us also introduce the notation $\mathcal{D}^s_{n, K} \coloneqq \mathcal{D}_n^s \cap [-K, K]$.
\end{notation}

\begin{definition}[Conditional joint characteristic functions]
Let $S^*>0$ such that $-S^*<s_1$. Then we define the following conditional expectations
\begin{align}
\phi_n^{t_n}(-S^*,z) &\coloneqq \ev \mleft[ \exp \mleft( i \cdot \sum \limits_{j=1}^r \nu_j \cdot \widetilde{Z}_n^{t_n}(s_j) \mright) \; \middle| \; \widetilde{Z}_n^{t_n}(-S^*) = z \mright] , \quad z \in \mathcal{D}^{-S^*}_n,\label{eq:phi_n_def} \\
\phi^t(-S^*,z) &\coloneqq \ev \mleft[ \exp \mleft( i \cdot \sum \limits_{j=1}^r \nu_j \cdot \widetilde{Z}^t(s_j) \mright) \; \middle| \; \widetilde{Z}^t(-S^*) = z \mright],\quad z \in \mathbb{R}.\label{eq:phi^t_def}
\end{align}
\end{definition}

Note that, since $\widetilde{Z}^t$ is a Markov process, $\varphi^t(-S^*) = \phi^t \mleft(-S^*,\, \widetilde{Z}^t(-S^*) \mright)$ (see~\eqref{eq:varphi^t(s)_def} and~\eqref{eq:phi^t_def}).

\begin{lemma}[Convergence of conditional joint characteristic functions]\label{lem:conditional_char_fn_conv}
For any $S^*>0$ such that ${-S^*<s_1}$ and for any $K \in \mathbb{R}_+$, the conditional expectations defined in~\eqref{eq:phi_n_def} and~\eqref{eq:phi^t_def} satisfy
\begin{equation}\label{eq:conv_of_characteristic_fn_conditional}
\lim \limits_{n\to \infty} \sup \limits_{z \in \mathcal{D}^{-S^*}_{n,K}} \left| \phi_n^{t_n}(-S^*,z) - \phi^t(-S^*,z) \right| = 0.
\end{equation}
\end{lemma}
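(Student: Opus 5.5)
We will compare the pre-limit process with the limiting O.-U.~process by a martingale-type argument, in the spirit of a Trotter--Kurtz generator comparison. The explicit Doob martingale $\varphi^t(\cdot)$ from Lemma~\ref{lem:varphi_explicit} plays the role of the test function. On each interval $[s_{k},s_{k+1})$ (with $s_0=-S^*$), formula \eqref{eq:varphi_explicit} writes $\varphi^t(s)$ as
\begin{equation*}
\Phi_k\bigl(s,\widetilde Z^t(s)\bigr)\cdot \exp\Bigl(i\sum_{j\le k}\nu_j \widetilde Z^t(s_j)\Bigr),
\end{equation*}
where the function $\Phi_k(s,z)$ is of the form $\exp(i a_k(s) z + b_k(s))$, with $a_k,b_k$ smooth and deterministic and $\mathrm{Re}\,b_k\le 0$. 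Since $\varphi^t$ is a martingale, $\Phi_k$ satisfies the backward Kolmogorov equation $\partial_s\Phi_k + L^t\Phi_k=0$. Here $L^t f = -t^{-2}zf' + \frac{1}{2}t^{-2}f''$ is the generator of \eqref{eq:OU_SDE}. At the times $s_k$ the functions glue via $\Phi_{k-1}(s_k,z)=e^{i\nu_k z}\Phi_k(s_k,z)$.

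We then define the analogous process for $\widetilde Z_n^{t_n}$:
\begin{equation*}
\Psi_n(s)\coloneqq \Phi_k\bigl(s,\widetilde Z_n^{t_n}(s)\bigr)\, \exp\Bigl(i\sum_{j\le k}\nu_j \widetilde Z_n^{t_n}(s_j)\Bigr).
\end{equation*}
It satisfies $\Psi_n(-S^*)=\phi^t(-S^*,z)$ when the process starts at $z$, and $\Psi_n(s_r)=\exp(i\sum_j \nu_j \widetilde Z_n^{t_n}(s_j))$. Hence
\begin{equation*}
\phi_n^{t_n}(-S^*,z)-\phi^t(-S^*,z)=\ev_z\bigl[\Psi_n(s_r)-\Psi_n(-S^*)\bigr],
\end{equation*}
and by Dynkin's formula, with the time change $u=t_n+\lambda_n s$ so that the generator in $s$ is $\lambda_n\mathcal{L}$, this equals
\begin{equation*}
\ev_z\int_{-S^*}^{s_r}\bigl(\partial_s\Phi_k+\lambda_n H(n,A_n,u,\Phi_k(s,\cdot))\bigr)\,\mathrm{d}s
\end{equation*}
times the bounded prefactor. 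Subtracting $\partial_s\Phi_k+L^t\Phi_k=0$, the integrand becomes $\lambda_n H - L^t\Phi_k$. We split this into $\lambda_n H-\widetilde L(n,z,u,\cdot)$ and $\widetilde L(n,z,u,\cdot)-L^t$.

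\textbf{First term.} Its real and imaginary parts lie in $\mathcal D(z,\beta)$ for some $\beta$ depending only on $\sup|a_k|$, because $|\Phi_k|\le 1$ and the derivatives of $e^{ia z}$ are bounded by powers of $|a|$. So Proposition~\ref{prop:exp_drift_precise_ou_lyap_momgen} makes it $o(1)$ uniformly whenever $|\widetilde Z_n^{t_n}(s)|\le\kappa_n$.

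\textbf{Second term.} Here the coefficients are $g_n(u)^{-2}$ and $u\,g_n(u)^{-3}$, to be compared with $t^{-2}$ and $t\cdot t^{-3}=t^{-2}$. Since $u=t_n+\lambda_n s$ with $(t_n-t_n^c)/\lambda_n\to\infty$, \eqref{eq:gnt_asymp_t} gives $g_n(u)\to t$ uniformly in $s\in[-S^*,S]$, and $u\to t$. The coefficient differences therefore vanish, but they multiply $z\Phi_k'$ and $\Phi_k''$. These are bounded by $C(1+|z|)$, so this error is $o(1)\cdot(1+|\widetilde Z_n^{t_n}(s)|)$.

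\textbf{Main obstacle: uniformity in the initial point $z$.} Everything so far must hold uniformly over $z\in\mathcal D^{-S^*}_{n,K}$, including controlling the event where $\widetilde Z_n^{t_n}$ leaves $[-\kappa_n,\kappa_n]$ and bounding $\ev_z|\widetilde Z_n^{t_n}(s)|$. For this we reuse the conditional Lyapunov argument from the proofs of Lemma~\ref{lem:lyap_control} and Lemma~\ref{lem:tight_sup_Z}, started from $Z_n(t_n-\lambda_nS^*)=z$ with $|z|\le K$ and using the Markov property (Claim~\ref{cl:Z_n_markov}). That argument gives $\sup_{|z|\le K}\ev_z\sup_{s}\cosh(\widetilde Z_n^{t_n}(s))\le \cosh K+54S'$, with $S'=S^*+s_r$ the length of the time window.

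We then introduce the stopping time of exiting $[-M,M]$ and apply Dynkin up to it. On $\{\text{exit before } s_r\}$, which has probability at most $(\cosh K+54S')/\cosh M$, the difference of $\Psi_n$ is bounded by $2$. Letting $n\to\infty$ and then $M\to\infty$ yields \eqref{eq:conv_of_characteristic_fn_conditional}.

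A minor point is the jumps of $\widetilde Z_n^{t_n}$ across the gluing times $s_k$, but it is harmless: the Dynkin formula applies on each interval separately, and $\Psi_n$ is continuous in $s$ at $s_k$ up to the a.s.\ absence of a jump exactly at $s_k$.
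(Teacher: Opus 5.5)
Your proposal is correct and follows essentially the same route as the paper. You evaluate the explicit space--time harmonic function underlying $\varphi^t(\cdot)$ (the function $f^t$ of \eqref{eq:f_z_s_def}) along $\widetilde Z_n^{t_n}$, apply Dynkin's formula/optional stopping, remove the drift using Proposition~\ref{prop:exp_drift_precise_ou_lyap_momgen}, \eqref{eq:gnt_asymp_t} and the backward equation \eqref{eq:f_ode}, and handle the exit event with the $\cosh$-Lyapunov argument of Lemma~\ref{lem:tight_sup_Z}.

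The differences are minor. You build the test function with the limiting $t$ instead of $t_n$, so the initial value is exactly $\phi^t(-S^*,z)$. You localize at a fixed level $M$ instead of $\kappa_n$, which makes the $o(1)\cdot|z|$ coefficient error immediate. One overstatement: optional stopping only bounds $\ev_z\cosh(\widetilde Z_n^{t_n}(\sigma))$ at stopped times, hence gives the exit-probability bound you actually use, not a bound on $\ev_z\sup_s\cosh(\widetilde Z_n^{t_n}(s))$.
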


\begin{proof}
We introduce a sequence of processes (cf.~\eqref{eq:varphi_explicit} and~\eqref{eq:mu_sigma_def}):
\begin{equation}\label{eq:varphi_n_explicit}
\varphi_n^{t_n}(s) \coloneqq \exp \mleft[ i \cdot \left( \sum \limits_{j=1}^k \nu_j \cdot \widetilde{Z}_n^{t_n}(s_j) + \sum \limits_{j=k+1}^r \nu_j \cdot \tilde{\mu}^{t_n}_{n,j}(s) \right) - \frac 12 \sum \limits_{j=k+1}^r \left(\tilde{\nu}^{t_n}_j \cdot \tilde{\sigma}^{t_n}_j(s_{j-1} \vee s) \right)^2 \mright],
\end{equation}
where
\begin{equation}\label{eq:mu_n_def}
\tilde{\mu}^{t_n}_{n,j}(s) \coloneqq \widetilde{Z}_n^{t_n}(s) \cdot \exp \mleft( -\frac{1}{t_n^2} \cdot (s_j - s) \mright).
\end{equation}

Let us define
\begin{equation}\label{eq:phi_tilde_def}
\tilde{\varphi}_n^{t_n}(s) \coloneqq \varphi_n^{t_n}(s) - \int \limits_{-S^*}^s \mathcal{L}\varphi_n^{t_n}(u) \, \mathrm{d}u \qquad s \in [-S^*,S^*].
\end{equation}

Recall~$\kappa_n$ from Definition~\ref{def:kappa}.
The process $\tilde{\varphi}_n^{t_n}(s)$ is a martingale.
Let us define the stopping time $\tau \coloneqq \min \mleft\{\, s \geq -S^* \,:\, \left| \widetilde{Z}_n^{t_n}(s) \right| \geq \kappa_n \, \mright\}$. Let us denote $\ev_z(\cdot) \coloneqq \ev \mleft( \, \cdot \, \big| \, \widetilde{Z}_n^{t_n}(-S^*) = z \mright)$.
By the optional stopping theorem for any $\varepsilon >0$ we obtain
\begin{equation}\label{eq:phi_tilde_ost}
\ev_z \mleft[ \tilde{\varphi}_n^{t_n}((s_r+\varepsilon)\wedge \tau) \mright] = \ev_z \mleft[ \tilde{\varphi}_n^{t_n}(-S^*) \mright], \qquad z \in \mathcal{D}^{-S^*}_{n,K}.
\end{equation}
The conditional expectation that appeared on the left-hand side of~\eqref{eq:phi_tilde_ost} satisfies:
\begin{multline}\label{eq:phi_tilde_ost_lhs}
\ev_z \mleft[ \tilde{\varphi}_n^{t_n}((s_r+\varepsilon)\wedge \tau) \mright] 
\stackrel{\eqref{eq:phi_tilde_def}}{=} 
\ev_z \mleft[\varphi_n^{t_n}((s_r+\varepsilon)\wedge \tau)\mright] - 
\ev_z\mleft(\int \limits_{-S^*}^{(s_r+\varepsilon)\wedge \tau} \mathcal{L}\varphi_n^{t_n}(u) \, \mathrm{d}u\mright) =\\
\ev_z \mleft[\varphi_n^{t_n}(s_r+\varepsilon)\mright]+
\ev_z \mleft[ (\varphi_n^{t_n}(\tau)-\varphi_n^{t_n}(s_r+\varepsilon))\indic{\tau<s_r+\varepsilon} \mright] -
\ev_z\mleft(\int \limits_{-S^*}^{(s_r+\varepsilon)\wedge \tau} \mathcal{L}\varphi_n^{t_n}(u) \, \mathrm{d}u\mright)
\stackrel{\eqref{eq:phi_n_def},\, \eqref{eq:varphi_n_explicit} }{=} \\
\phi_n^{t_n}(-S^*,z) + \ev_z \mleft[ (\varphi_n^{t_n}(\tau)-\varphi_n^{t_n}(s_r+\varepsilon))\indic{\tau<s_r+\varepsilon} \mright] -
\ev_z\mleft(\int \limits_{-S^*}^{(s_r+\varepsilon)\wedge \tau} \mathcal{L}\varphi_n^{t_n}(u) \, \mathrm{d}u\mright).
\end{multline}

On the other hand, if we consider the right-hand side of~\eqref{eq:phi_tilde_ost}, $\tilde{\varphi}_n^{t_n}(-S^*)$ is measurable with respect to~$\widetilde{Z}_n^{t_n}(-S^*)$, therefore, we can determine its limit as $n \to \infty$:
\begin{align}
\begin{split}\label{eq:phi_tilde_ost_rhs}
&\lim \limits_{n \to \infty} \ev_z \mleft[ \tilde{\varphi}_n^{t_n}(-S^*) \, \mright] \stackrel{\eqref{eq:varphi_n_explicit},\, \eqref{eq:phi_tilde_def}}{=}\\
&\lim \limits_{n \to \infty} \exp \mleft[ i \cdot \sum \limits_{j=1}^r \nu_j \cdot z \cdot \exp \mleft(-\frac{1}{t_n^2}(s_j+S^*) \mright) - \frac 12 \sum \limits_{j=1}^r \left(\tilde{\nu}^{t_n}_j \cdot \tilde{\sigma}^{t_n}_j(s_{j-1} \vee -S^*) \right)^2 \mright] \stackrel{t_n \to t}{=}\\
&\exp \mleft[ i \cdot \sum \limits_{j=1}^r \nu_j \cdot z \cdot \exp \mleft(-\frac{1}{t^2}(s_j+S^*) \mright) - \frac 12 \sum \limits_{j=1}^r \left(\tilde{\nu}^{t}_j \cdot \tilde{\sigma}^{t}_j(s_{j-1} \vee -S^*) \right)^2 \mright] \stackrel{\eqref{eq:phi^t_def}, \,\eqref{eq:varphi^t(s)_def}, \, \eqref{eq:varphi_explicit}}{=} \phi^t(-S^*,z).
\end{split}
\end{align}

We will show that the last two terms on the right-hand side of~\eqref{eq:phi_tilde_ost_lhs} go to zero as $n \to \infty$. The second term on the r.h.s.\ of~\eqref{eq:phi_tilde_ost_lhs} goes to zero because $\varphi_n^{t_n}(\cdot)$ is bounded (cf.~\eqref{eq:varphi_n_explicit}) and for any $K \in \mathbb{R}_+$ we have $ \lim_{n \to \infty} \max_{ z \in \mathcal{D}^{-S^*}_{n,K} } \mathbb{P}(\tau<s_r+\varepsilon \, | \, \widetilde{Z}_n^{t_n}(-S^*) = z \, ) = 0$ (the proof of this is similar to that of Lemma~\ref{lem:opt_stop} for $t \geq t_n^c$ or the proof of Lemma~\ref{lem:tight_sup_Z}). Now we show that the third term on the r.h.s.\ of~\eqref{eq:phi_tilde_ost_lhs} goes to zero.
As soon as we prove this, equations~\eqref{eq:phi_tilde_ost}, \eqref{eq:phi_tilde_ost_lhs} and~\eqref{eq:phi_tilde_ost_rhs} imply the desired~\eqref{eq:conv_of_characteristic_fn_conditional}. 

In order to prove that the third term on the r.h.s.\ of~\eqref{eq:phi_tilde_ost_lhs} goes to zero, we derive a formula for $\mathcal{L}\varphi_n^{t_n}(s)$. Let $f\colon\mathbb{R}^2 \to \mathbb{R}$ be a smooth test function, and let us denote by~$f_1$ and~$f_2$ the partial derivatives of $f$ with respect to its first and second variable and by~$f_{11}$ the second order partial derivative of~$f$ with respect to its first variable. We derive an analogous formula to~\eqref{eq:Z_n_t_infgen} for the bivariate test function~$f$ and for the process~$\widetilde{Z}_n^{t_n}(s)$ (instead of~$Z_n(t)$). Let us introduce the notation $\tilde{t}_n \coloneqq t_n+\lambda_n s$, i.e., $\widetilde{Z}_n^{t_n}(s) = Z_n(\tilde{t}_n)$ (see~\eqref{eq:tilde_Z_def}). By equations~\eqref{eq:tilde_Z_def} and~\eqref{eq:A_n_t_infgen}, we obtain
\begin{equation}\label{eq:Z_tilde_time_dependent_inf_gen}
\mathcal{L} \mleft(f \mleft( \widetilde{Z}_n^{t_n}(s), s \mright) \mright) =
\lambda_n \cdot H^{(2)}(n,A_n(\tilde{t}_n),\tilde{t}_n,f,s) + f_2 \mleft( \widetilde{Z}_n^{t_n}(s), s \mright),
\end{equation}
where the function $H^{(2)}(n,A,t,f,s)$ is defined analogously to the function $H(n,A,t,f)$ (see~\eqref{eq:H_def}), but for bivariate test function $f\colon\mathbb{R}^2 \to \mathbb{R}$ and with an additional time parameter~$s$:
\begin{equation*}
H^{(2)}(n,A,t,f,s) \coloneqq f_1(z_n^A(t),s) \sqrt{n \lambda_n} \left( \frac{A}{n} - \dot{w}_n(t) \right)+ \lambda_n A \ev_{A,n,t}\mleft[ f\mleft(z_n^A(t)- t \sqrt{\frac{\lambda_n}{n}} \mathscr{C},s\mright)-f\mleft(z_n^A(t),s\mright) \mright]. 
\end{equation*}

Let us consider the function (cf.~\eqref{eq:varphi_n_explicit} and~\eqref{eq:mu_n_def}):
\begin{equation}\label{eq:f_z_s_def}
f^t(z,s) \coloneqq \exp \mleft[ i \cdot z \cdot \left( \sum \limits_{j=k+1}^r \nu_j \cdot \exp \mleft( -\frac{1}{t^2} \cdot (s_j - s) \mright) \right) - \frac 12 \left(\tilde{\nu}^{t}_{k+1} \cdot \tilde{\sigma}^{t}_{k+1}(s) \right)^2 \mright], \qquad s \ge s_k.
\end{equation}
It satisfies the following equation (since~$\varphi^t(s)$ is a martingale, cf.~\eqref{eq:varphi^t(s)_def})
\begin{equation}\label{eq:f_ode}
f^t_2(z,s) - f^t_1(z,s) \cdot \frac{z}{t^2} + \frac 12 \cdot f^t_{11}(z,s) \cdot \frac{1}{t^2} = 0.
\end{equation}
Note also that
\begin{equation*}
\varphi_n^{t_n}(s) = \exp \mleft[ i \cdot \sum \limits_{j=1}^k \nu_j \cdot \widetilde{Z}_n^{t_n}(s_j) - \frac 12 \sum \limits_{j=k+2}^r \left(\tilde{\nu}^{t_n}_j \cdot \tilde{\sigma}^{t_n}_j(s_{j-1}) \right)^2 \mright] \cdot f^{t_n}\mleft(\widetilde{Z}_n^{t_n}(s), s \mright).
\end{equation*}
Therefore, applying the formula~\eqref{eq:Z_tilde_time_dependent_inf_gen} for the function~$f^{t_n}$ (defined in~\eqref{eq:f_z_s_def}), we can derive a formula for~$\mathcal{L}\varphi_n^{t_n}(s)$. 
By the definition of the stopping time $\tau$, we can apply \eqref{eq:exp_drift_precise_statement}. 
This and equations~\eqref{eq:gnt_asymp_t} and~\eqref{eq:f_ode} together imply that the third term on the r.h.s.\ of \eqref{eq:phi_tilde_ost_lhs} goes to zero, and the proof of Lemma~\ref{lem:conditional_char_fn_conv} is complete.
\end{proof}

\begin{lemma}[Joint characteristic function of $\widetilde{Z}^t$ is close to the conditional version]
For any $\varepsilon>0$ and $K>0$, there exists $S^* >0$ such that $-S^*<s_1$ and
\begin{equation}\label{eq:joint_char_close_to_conditional}
 \sup \limits_{|z| \le K} \left| \phi^t(-S^*,z) - \varphi^t \right| \le \frac{\varepsilon}{4}.
\end{equation}
\end{lemma}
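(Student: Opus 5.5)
The plan is to compare the two quantities through the Markov property of $\widetilde{Z}^t$ at time $-S^*$ and the explicit formula of Lemma~\ref{lem:varphi_explicit}. Since $\widetilde{Z}^t$ is stationary, $\varphi^t=\ev[\varphi^t(-S^*)]=\ev\bigl[\phi^t(-S^*,\widetilde{Z}^t(-S^*))\bigr]$. Here $\widetilde{Z}^t(-S^*)$ has the stationary law $N(0,t^2/2)$; this follows from the SDE~\eqref{eq:OU_SDE}, since the stationary variance is $\frac{1/t^2}{2/t^2}=\frac{1}{2}\cdot\frac{t^2}{t^2}\cdot t^2/t^2$, consistent with $\tilde\sigma^t_j(-\infty)^2=\frac12$ as read off from~\eqref{eq:mu_sigma_def}. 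So it suffices to show that $\phi^t(-S^*,z)$ converges, uniformly in $|z|\le K$ as $S^*\to\infty$, to a limit that does not depend on $z$. That limit must then equal $\varphi^t$, by taking expectations and using dominated convergence together with the tightness of the stationary law.

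For the computation I apply~\eqref{eq:varphi_explicit} with $s=-S^*<s_1$, so $k=0$:
\begin{equation*}
\phi^t(-S^*,z)=\exp\mleft[i\sum_{j=1}^r\nu_j z\,\mathrm{e}^{-(s_j+S^*)/t^2}-\frac12\sum_{j=1}^r\bigl(\tilde\nu^t_j\,\tilde\sigma^t_j(s_{j-1}\vee -S^*)\bigr)^2\mright].
\end{equation*}
In this expression only the $j=1$ variance term depends on $S^*$, and it tends to $\frac12\bigl(1-0\bigr)$ as $S^*\to\infty$. The linear term is bounded in absolute value by $K\sum_j|\nu_j|\mathrm{e}^{-(s_1+S^*)/t^2}$. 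This tends to $0$ uniformly in $|z|\le K$.

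Hence $\phi^t(-S^*,z)\to\phi^t_\infty$, a deterministic constant, uniformly on $|z|\le K$. Using $|\mathrm{e}^{ia}-1|\le|a|$ and the continuity of the exponential in the real part, I can choose $S^*$ so that the sup-difference is at most $\varepsilon/8$. I then need to identify $\phi^t_\infty=\varphi^t$. The same formula holds for every $z\in\mathbb{R}$ with the error bound $|z|\sum_j|\nu_j|\mathrm{e}^{-(s_1+S^*)/t^2}$ plus the variance discrepancy. Integrating against $N(0,t^2/2)$ gives
\begin{equation*}
|\varphi^t-\phi^t_\infty|\le C\bigl(\ev|\widetilde{Z}^t(0)|+1\bigr)\mathrm{e}^{-S^*/t^2}\to0.
\end{equation*}
Since the left side does not depend on $S^*$, this gives $\varphi^t=\phi^t_\infty$.

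Enlarging $S^*$ if necessary, the triangle inequality yields~\eqref{eq:joint_char_close_to_conditional}. I do not expect a genuine obstacle here. The only points needing care are that the stationarity of $\widetilde{Z}^t$ (as specified in Theorem~\ref{thm:OU_scaling}) justifies $\varphi^t=\ev[\varphi^t(-S^*)]$, and that the convergence is uniform in $z$ only on compacts, which is why the restriction $|z|\le K$ is needed.
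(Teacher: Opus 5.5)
Your proof is correct and is essentially the paper's argument: write $\varphi^t=\ev[\phi^t(-S^*,\widetilde Z^t(-S^*))]$ via the Doob martingale, then use the explicit formula of Lemma~\ref{lem:varphi_explicit}, in which $z$ enters only through $z\sum_j\nu_j\mathrm{e}^{-(s_j+S^*)/t^2}$. Your detour through the constant $\phi^t_\infty$ is valid, but you can skip it by bounding directly $|\phi^t(-S^*,z)-\varphi^t|\le \bigl|\sum_j\nu_j\mathrm{e}^{-(s_j+S^*)/t^2}\bigr|\,(K+\ev|\widetilde Z^t(-S^*)|)$.

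Two slips do not affect the argument. First, $\varphi^t=\ev[\varphi^t(-S^*)]$ is the tower property, not stationarity. Second, the stationary law of~\eqref{eq:OU_SDE} is $N(0,1/2)$, not $N(0,t^2/2)$; your own computation $\frac{1/t^2}{2/t^2}$ gives $\frac12$. Only a bound on $\ev|\widetilde Z^t(-S^*)|$ that is uniform in $S^*$ is needed.
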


\begin{proof}
Recall $\varphi^t(s)$ from~\eqref{eq:varphi^t(s)_def}. Observe that by the martingale property of $\varphi^t(s)$, for any $S^*>0$ such that~${-S^*<s_1}$, we have $\varphi^t = \ev[ \varphi^t(-S^*)]$. Hence the lemma follows from the explicit formula for $\varphi^t(s)$ proved in Lemma~\ref{lem:varphi_explicit} and~\eqref{eq:phi^t_def}.
\end{proof}

\begin{proof}[Proof of Lemma~\ref{lem:Z_joint_char_fn_conv}]
We show that $\lim_{n \to \infty} \varphi_n^{t_n} = \varphi^t$ (this convergence is equivalent to~\eqref{eq:conv_of_characteristic_fn}, see~\eqref{eq:varphi_n_def} and~\eqref{eq:varphi^t_def}). Let~${\varepsilon>0}$ be arbitrary. Then by~\eqref{eq:window_fluctuation_bound}, we can fix $K<+\infty$ such that for every $n \in \mathbb{N}$ and for any~${s <s_1}$, we have
\begin{equation}\label{eq:char_fn_conv_Z_not_large}
\pr \mleft( \, \left|\widetilde{Z}_n^{t_n}(s) \right| \ge K \, \mright) \le \frac{\varepsilon}{4}.
\end{equation}

Now let $S^* < +\infty$ be the constant such that~\eqref{eq:joint_char_close_to_conditional} holds for the fixed $\varepsilon$ and $K$.
Moreover, for any $K$ by~\eqref{eq:conv_of_characteristic_fn_conditional}, there exists $n_0$ such that for any $n \ge n_0$
\begin{equation}\label{eq:char_fn_conv_varphi_n_varphi^t_close}
\sup \limits_{z \in \mathcal{D}^{-S^*}_{n,K}} \left| \phi_n^{t_n}(-S^*,z) - \phi^t(-S^*,z) \right| \le \frac{\varepsilon}{4}.
\end{equation}
Also observe that
\begin{equation}\label{eq:varphi_n_phi_n_relation}
\varphi_n^{t_n} \stackrel{\eqref{eq:varphi_n_def}, \, \eqref{eq:phi_n_def}}{=} \ev \mleft[ \phi_n^{t_n} \mleft( -S^*, \, \widetilde{Z}_n^{t_n}(-S^*) \mright) \mright].
\end{equation}
Therefore,
\begin{align*}
\left| \varphi_n^{t_n} - \varphi^t \right| &\stackrel{\eqref{eq:varphi_n_phi_n_relation}}{=}
\left| \ev \mleft[ \phi_n^{t_n} \mleft( -S^*, \, \widetilde{Z}_n^{t_n}(-S^*) \mright) \mright] - \varphi^t \right| \stackrel{\eqref{eq:char_fn_conv_Z_not_large}}{\le}\\
&\frac{\varepsilon}{4}+\left| \ev \mleft[ \phi_n^{t_n} \mleft( -S^*, \, \widetilde{Z}_n^{t_n}(-S^*) \mright) \cdot \indic{ \left| \widetilde{Z}_n^{t_n}(-S^*)\right| \le K} \mright] - \varphi^t \right| \stackrel{\eqref{eq:char_fn_conv_varphi_n_varphi^t_close}}{\le}\\
&\frac{\varepsilon}{2} + \left| \ev \mleft[ \phi^t \mleft( -S^*, \, \widetilde{Z}_n^{t_n}(-S^*) \mright) \cdot \indic{ \left| \widetilde{Z}_n^{t_n}(-S^*)\right| \le K} \mright] - \varphi^t \right| \stackrel{\eqref{eq:char_fn_conv_Z_not_large}}{\le}\\
&\frac{3\varepsilon}{4} + \left| \ev \mleft[ \phi^t \mleft( -S^*, \, \widetilde{Z}_n^{t_n}(-S^*) \mright) \cdot \indic{ \left| \widetilde{Z}_n^{t_n}(-S^*)\right| \le K} \mright] - \ev \mleft[ \varphi^t \cdot \indic{ \left| \widetilde{Z}_n^{t_n}(-S^*)\right| \le K}\mright] \right| \stackrel{\eqref{eq:joint_char_close_to_conditional}}{\le} \varepsilon.
\end{align*}
The proof of Lemma~\ref{lem:Z_joint_char_fn_conv} is complete.
\end{proof}

\section{Proof of the infinitesimal generator formula}\label{subsec:inf_gen_proof}
In this section we prove Proposition~\ref{prop:exp_drift_precise_ou_lyap_momgen}.

\subsection{Relation between infinitesimal generators}

Recall the notation $z^A_n(t)$, $E(n,A,t,f)$ and $\mathcal{R}(n,\beta,T)$ from~\eqref{eq:z_n_t_A_def}, \eqref{eq:E_n_A_t_f_def} and~\eqref{eq:R_n_def}, respectively.

\begin{notation}
Let $n \in \mathbb{N}$, $\beta \in \mathbb{R}$ and $(A,t,f) \in \mathcal{R}(n,\beta,T)$. We introduce the notation
\begin{equation}\label{eq:Y_n_A_t_f_def}
\mathcal{Y}(n,A,t,f)\coloneqq f'(z_n^A(t)) \frac{\sqrt{n \lambda^5_n}}{1-w_n(t)}+ \frac{n \lambda^2_n }{t} E(n,A,t,f).
\end{equation}
\end{notation}

Recall the process $\widehat{Z}_n^t$ from~\eqref{eq:ou_SDE_n_t_g} and its infinitesimal generator $\widetilde{L}(n, z, t, f)$ from~\eqref{eq:OU_infgen_def}. We show that $\mathcal{Y}(n,A,t,f)$ is close to the infinitesimal generator.

\begin{lemma}[Approximation of the infinitesimal generator]\label{lem:Y_L_close}
For any $\beta \in \mathbb{R}$, we have
\begin{equation}\label{eq:Y_L_close}
\lim_{n \to \infty} \sup_{(A,t,f) \in \mathcal{R}(n,\beta,T) }
 \left| \mathcal{Y}(n,A,t,f) - \widetilde{L}(n,z_n^A(t),t,f) \right| =0.
\end{equation}
\end{lemma}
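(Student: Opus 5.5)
Throughout the proof $(A,t,f)\in\mathcal{R}(n,\beta,T)$, $z\coloneqq z_n^A(t)$, $W\coloneqq W_n^A(t)=tA/n$ and $\epsilon_n\coloneqq t\sqrt{\lambda_n/n}$. The plan is to Taylor expand $E(n,A,t,f)$ to third order around $z$, evaluate the first two moments of $\mathscr{C}$ with Claim~\ref{cl:component_size}, and bound the remainder with the crude moment bounds. Writing $f(z-\epsilon_n\mathscr{C})-f(z)=-f'(z)\epsilon_n\mathscr{C}+\tfrac12 f''(z)\epsilon_n^2\mathscr{C}^2+R$, the definition of $\mathcal{D}(z,\beta)$ gives
\[
|R|\le \tfrac16|\beta|^3\epsilon_n^3\mathscr{C}^3\mathrm{e}^{|\beta|\epsilon_n\mathscr{C}}.
\]
Hence
\[
\frac{n\lambda_n^2}{t}E=-f'(z)\sqrt{n\lambda_n^5}\,\ev_{A,n,t}[\mathscr{C}]+\frac{t\lambda_n^3}{2}f''(z)\,\ev_{A,n,t}[\mathscr{C}^2]+\frac{n\lambda_n^2}{t}\ev_{A,n,t}[R].
\]

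\textbf{Step 1 (the window of $A$ is $\lambda_n$-critical).} Since $|z|\le\kappa_n$, we have $|W-w_n(t)|\le \kappa_n(n\lambda_n)^{-1/2}$, and \eqref{eq:kappa_threshold_assumption} gives $\kappa_n(n\lambda_n)^{-1/2}\ll\lambda_n$. Combined with \eqref{eq:gnt_bounds}, this yields $1-W=\lambda_n g_n(t)(1+o(1))$ uniformly, with $g_n(t)\in[1-o(1),4\vee T+o(1)]$. In particular $1-W\gg A^{-1/3}$ (since $A\approx n/t$), so \eqref{eq:first_moment_of_component_size} and \eqref{eq:second_moment_of_component_size} apply.

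\textbf{Step 2 (first-order terms).} The first moment is $\ev[\mathscr{C}]=\frac1{1-W}+O(\lambda_n^{-4}n^{-1})$. Multiplied by $\sqrt{n\lambda_n^5}$, the error is $O((n\lambda_n^3)^{-1/2})\to0$. The main part combines with the drift term in $\mathcal{Y}$:
\[
\sqrt{n\lambda_n^5}\Big(\frac1{1-w_n(t)}-\frac1{1-W}\Big)=-\sqrt{n\lambda_n^5}\,\frac{W-w_n(t)}{(1-W)(1-w_n(t))}.
\]
Using $W-w_n(t)=z/\sqrt{n\lambda_n}$, this equals $-\lambda_n^2 z/((1-W)(1-w_n))=-z/g_n(t)^2\,(1+O(\kappa_n(n\lambda_n^3)^{-1/2}))$. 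The relative error multiplies $|z|\le\kappa_n$, so it is $O(\kappa_n^2(n\lambda_n^3)^{-1/2})$. By \eqref{eq:kappa_threshold_assumption}, $\kappa_n^{4/3}\ll n^{1/3}\lambda_n$, which gives $\kappa_n^4\ll n\lambda_n^3$, so this error vanishes. Since $|f'(z)|\le|\beta|$, the first-order part converges to $-zf'(z)/g_n(t)^2$ uniformly.

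\textbf{Step 3 (second-order term).} The second moment is $\ev[\mathscr{C}^2]=(1-W)^{-3}(1+o(1))$, with the error from \eqref{eq:second_moment_of_component_size} relatively $O(\lambda_n^{-3}/n)=o(1)$. Hence $\frac{t\lambda_n^3}{2}f''(z)\ev[\mathscr{C}^2]\to\frac{t}{2g_n(t)^3}f''(z)$ uniformly, using $|f''|\le\beta^2$. Together with Step 2, this reproduces $\widetilde{L}(n,z,t,f)$.

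\textbf{Step 4 (remainder, the main obstacle).} It remains to show
\[
n\lambda_n^2\,\epsilon_n^3\,\ev\big[\mathscr{C}^3\mathrm{e}^{|\beta|\epsilon_n\mathscr{C}}\big]\to0.
\]
The prefactor is $\asymp\lambda_n^{7/2}n^{-1/2}$. The exponential cannot be bounded by moments alone, so I would use Cauchy--Schwarz:
\[
\ev\big[\mathscr{C}^3\mathrm{e}^{|\beta|\epsilon_n\mathscr{C}}\big]\le\ev[\mathscr{C}^6]^{1/2}\,\ev\big[\mathrm{e}^{2|\beta|\epsilon_n\mathscr{C}}\big]^{1/2}.
\]
By \eqref{eq:general_moment_of_component_size_crude}, $\ev[\mathscr{C}^6]^{1/2}=O((1-W)^{-11/2})=O(\lambda_n^{-11/2})$. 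For the exponential factor, $2|\beta|\epsilon_n\asymp\sqrt{\lambda_n/n}\ll\frac14(1-W)^2\asymp\lambda_n^2$ because $n\lambda_n^3\to\infty$. By Jensen and \eqref{eq:magic_branching_proc_estimate_2}, $\ev[\mathrm{e}^{2|\beta|\epsilon_n\mathscr{C}}]\le 3$ for large $n$. The remainder is therefore $O(\lambda_n^{7/2}n^{-1/2}\lambda_n^{-11/2})=O((n\lambda_n^4)^{-1/2})\to0$, since $n\lambda_n^3\to\infty$ and $\lambda_n\le1$ only imply the weaker $n\lambda_n^4\gg\lambda_n$. If this crude bound turns out to be insufficient, I would instead compute $\ev[\mathscr{C}^3]\asymp(1-W)^{-5}$ sharply: split on $\{\epsilon_n\mathscr{C}\le1\}$ and its complement, and control the tail with the exponential moment \eqref{eq:magic_branching_proc_estimate_2}. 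This gives a remainder of order $\lambda_n^{7/2}n^{-1/2}\lambda_n^{-5}=(n\lambda_n^3)^{-1/2}\to0$. I expect this last estimate to be the delicate point, since it is where the exponent bookkeeping is tight.
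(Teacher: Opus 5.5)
Steps 1--3 are the paper's argument: they are the first two limits of Lemma~\ref{lem:control_of_f_expectations}, with the same algebra. They are correct. The gap is in Step~4. Your main bound there does not work.

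\textbf{The Cauchy--Schwarz bound fails.} It yields a remainder of order $(n\lambda_n^4)^{-1/2}$, which need not tend to zero under \eqref{eq:subcrit_lambda_assumption}. For example, $\lambda_n=n^{-3/10}$ is admissible, and then $n\lambda_n^4=n^{-1/5}\to0$. Your own parenthetical remark concedes that $n\lambda_n^3\to\infty$ only gives $n\lambda_n^4\gg\lambda_n$. So the claim ``$\to0$'' is unjustified, and indeed false, whenever $\lambda_n\lesssim n^{-1/4}$. The loss comes from replacing $\ev[\mathscr{C}^3]\lesssim\lambda_n^{-5}$ by $\ev[\mathscr{C}^6]^{1/2}\lesssim\lambda_n^{-11/2}$. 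That costs a factor $\lambda_n^{-1/2}$ you cannot afford.

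\textbf{Your fallback is the right fix, but it is only sketched.} It is what the paper does. However, the step you leave vague is exactly where the hypothesis enters nontrivially. Here is what has to be done.
\begin{itemize}
\item On $\{|\beta|\epsilon_n\mathscr{C}\le1\}$, bound $|R|\le\frac{\mathrm{e}}{6}|\beta|^3\epsilon_n^3\mathscr{C}^3$. Then use only the crude bound $\ev[\mathscr{C}^3]\le\widetilde C_3(1-W)^{-5}$ from \eqref{eq:general_moment_of_component_size_crude}; no sharp asymptotics are needed. This gives $O((n\lambda_n^3)^{-1/2})$.
\item On the complement, $|R|\le\mathrm{e}^{2|\beta|\epsilon_n\mathscr{C}}$, because $y^3/6\le\mathrm{e}^{y}$. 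Once $8|\beta|\epsilon_n\le(1-W)^2$, which holds eventually, on $\{|\beta|\epsilon_n\mathscr{C}\ge1\}$ you have
\[
\mathrm{e}^{2|\beta|\epsilon_n\mathscr{C}}\le\mathrm{e}^{\frac14(1-W)^2\mathscr{C}}\exp\Bigl(2-\tfrac{(1-W)^2}{4|\beta|\epsilon_n}\Bigr),
\]
where $(1-W)^2/\epsilon_n\ge c\sqrt{n\lambda_n^3}$ by Step~1.
\item By \eqref{eq:magic_branching_proc_estimate_2}, this part of the expectation is therefore at most $3\exp(2-c'\sqrt{n\lambda_n^3})$.
\end{itemize}
This still has to be multiplied by the polynomial prefactor $n\lambda_n^2$. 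So you need $n\lambda_n^2\,\mathrm{e}^{-c'\sqrt{n\lambda_n^3}}\to0$, that is, $\ln n\ll\sqrt{n\lambda_n^3}$. This is precisely where a logarithmic correction in \eqref{eq:subcrit_lambda_assumption} is used; $(\ln n)^{2/3}n^{-1/3}\ll\lambda_n$ suffices. Saying ``control the tail with the exponential moment'' does not by itself show that the tail beats this prefactor, so Step~4 is not yet a proof.
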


We prove Lemma~\ref{lem:Y_L_close} in Section~\ref{subsubsec:inf_gen_approx}, let us first deduce Proposition~\ref{prop:exp_drift_precise_ou_lyap_momgen} from it.

\begin{proof}[Proof of Proposition~\ref{prop:exp_drift_precise_ou_lyap_momgen}]
In order to prove Proposition~\ref{prop:exp_drift_precise_ou_lyap_momgen}, we rewrite
\begin{align}\label{eq:H_rewrite}
\begin{split}
\lambda_n H(n,A,t,f)\stackrel{\eqref{eq:w_ODE},\, \eqref{eq:H_def} }{=}& f'(z_n^A(t)) \sqrt{n \lambda^3_n} \left( \frac{A}{n} + \lambda_n \frac{w_n(t)}{1-w_n(t)} -\frac{w_n(t)}{t} \right)+ \lambda^2_n A E(n,A,t,f) \stackrel{\eqref{eq:z_n_t_A_def} }{=} \\
&f'(z_n^A(t))\sqrt{n \lambda^5_n} \frac{w_n(t)}{1-w_n(t)}+\lambda^2_n A E(n,A,t,f)+f'(z_n^A(t))\frac{\lambda_n}{t}z^A_n(t) \stackrel{\eqref{eq:z_n_t_A_def} }{=} \\
&w_n(t)\mathcal{Y}(n,A,t,f)+ f'(z_n^A(t))\frac{\lambda_n}{t}z^A_n(t)+\frac{\sqrt{n \lambda^3_n}}{t} z^A_n(t)E(n,A,t,f).
\end{split}
\end{align}
In order to prove~\eqref{eq:f_test_fn_drift_precise_statement}, we show that the last two terms on the r.h.s.\ of~\eqref{eq:H_rewrite} are insignificant, while $w_n(t)\mathcal{Y}(n,A,t,f)$ is close to $\widetilde{L}(n,z^A_n(t),t,f)$. We begin by showing that~\eqref{eq:Y_L_close} implies
\begin{equation}\label{eq:wY_L_close}
\lim_{n \to \infty} \sup_{(A,t,f) \in \mathcal{R}(n,\beta,T) }
\left| w_n(t) \mathcal{Y}(n,A,t,f) - \widetilde{L}(n,z_n^A(t),t,f) \right| =0.
\end{equation}

First, note that it follows from~\eqref{eq:gnt_bounds}, \eqref{eq:OU_infgen_def}, \eqref{eq:D_z_def} and~\eqref{eq:R_n_def} that there exists some finite positive constant~$C_{T,\beta}$, which only depends on $T$ and $\beta$ such that
\begin{equation}\label{eq:L_absolute_value_bound}
\forall \; n \in \mathbb{N}: \sup_{(A,t,f) \in \mathcal{R}(n,\beta,T) } |\widetilde{L}(n,z_n^A(t),t,f)| \leq C_{T,\beta} \kappa_n.
\end{equation}

Denoting $\mathcal{Y}=\mathcal{Y}(n,A,t,f)$ and $\widetilde{L}=\widetilde{L}(n,z_n^A(t),t,f)$, we have
\begin{equation}\label{eq:Y_L_w_add_subtract}
w_n(t)\mathcal{Y}-\widetilde{L}=w_n(t)(\mathcal{Y}-\widetilde{L})+(w_n(t)-1)\widetilde{L}.
\end{equation}
On the one hand, $0 \leq w_n(t) \leq 1$ by~\eqref{eq:wnvnbound}, so $|w_n(t)(\mathcal{Y}-\widetilde{L})| \to 0$ as $n \to \infty$ by~\eqref{eq:Y_L_close}. On the other hand, $(1-w_n(t))=\mathcal{O}(\lambda_n)$ by~\eqref{eq:gnt_bounds}, so $|(w_n(t)-1)\widetilde{L}| \ll 1$ by~\eqref{eq:L_absolute_value_bound} and our assumption $\kappa_n \lambda_n \ll 1$ (cf.~\eqref{eq:kappa_threshold_assumption}). From these observations and~\eqref{eq:Y_L_w_add_subtract}, we obtain~\eqref{eq:wY_L_close}.

Now we show that the last two terms on the r.h.s.\ of~\eqref{eq:H_rewrite} are insignificant. From~\eqref{eq:Y_L_close} and~\eqref{eq:L_absolute_value_bound}, we obtain
\begin{equation}\label{B_absolute_value_bound}
\forall \; n \in \mathbb{N}: \sup_{(A,t,f) \in \mathcal{R}(n,\beta,T) } |\mathcal{Y}(n,A,t,f)| \leq C'_{T,\beta} \kappa_n.
\end{equation}

Rearranging~\eqref{eq:Y_n_A_t_f_def}, we use~\eqref{B_absolute_value_bound}, \eqref{eq:D_z_def} and $1/{(1-w_n(t))}=\mathcal{O} \mleft(1/{\lambda_n} \mright)$ (cf.~\eqref{eq:gnt_bounds}) to obtain
\begin{equation}\label{eq:E_absolute_value_bound}
\forall \; n \in \mathbb{N}: \sup_{(A,t,f) \in \mathcal{R}(n,\beta,T) } |E(n,A,t,f)| \leq \frac{C''_{T,\beta}}{n \lambda^2_n} \left( \kappa_n+\frac{\sqrt{n \lambda^5_n}}{\lambda_n} \right).
\end{equation}

Now we can show that the third term on the r.h.s.\ of~\eqref{eq:H_rewrite} is insignificant $\forall \; n \in \mathbb{N}:$
\begin{align}\label{eq:third_term_small}
\begin{split}
\sup_{(A,t,f) \in \mathcal{R}(n,\beta,T) } \left|\frac{\sqrt{n \lambda^3_n}}{t} z^A_n(t)E(n,A,t,f)\right| \stackrel{\eqref{eq:R_n_def},\, \eqref{eq:E_absolute_value_bound} }{\leq}&
C'''_{T,\beta} \sqrt{n \lambda^3_n} \kappa_n \frac{1}{n \lambda^2_n} \left( \kappa_n + \frac{\sqrt{n \lambda^5_n}}{\lambda_n} \right) =\\
& C'''_{T,\beta} \left( \frac{\kappa_n^2}{\sqrt{n \lambda_n}}+ \kappa_n \lambda_n \right) \stackrel{(*)}{\ll} 1,
\end{split}
\end{align}
where $(*)$ holds, since $\kappa_n \lambda_n \ll 1$ holds by~\eqref{eq:kappa_threshold_assumption}, moreover,
$\kappa_n \ll \lambda_n^{-1}$ and $\kappa_n \ll \sqrt{n \lambda^3_n}$ both hold by~\eqref{eq:kappa_threshold_assumption}, and multiplying these two bounds we obtain $\kappa_n^2 \ll \sqrt{n \lambda_n}$.

Finally, the second term on the r.h.s.\ of~\eqref{eq:H_rewrite} is also insignificant:
\begin{equation}\label{eq:second_term_small}
\forall \; n \in \mathbb{N}: \sup_{(A,t,f) \in \mathcal{R}(n,\beta,T) } \left|f'(z_n^A(t))\frac{\lambda_n}{t}z^A_n(t) \right| \stackrel{\eqref{eq:D_z_def},\, \eqref{eq:R_n_def} }{\leq} |\beta| \lambda_n \kappa_n \stackrel{\eqref{eq:kappa_threshold_assumption}}{\ll} 1.
\end{equation}
Putting together \eqref{eq:H_rewrite} with the bounds \eqref{eq:wY_L_close}, \eqref{eq:third_term_small} and~\eqref{eq:second_term_small}, we obtain~\eqref{eq:f_test_fn_drift_precise_statement}. This concludes the proof of Proposition~\ref{prop:exp_drift_precise_ou_lyap_momgen}, given Lemma~\ref{lem:Y_L_close}.
\end{proof}

\subsection{Approximation of the infinitesimal generator}\label{subsubsec:inf_gen_approx}

In this section we prove Lemma~\ref{lem:Y_L_close}.

\begin{notation}
Given some $f \in \mathcal{D}(z, \beta)$ (cf.~\eqref{eq:D_z_def}), let us denote the remainder term of the second order Taylor expansion of $f$ about $z$ by
\begin{equation}\label{eq:error_term_taylor}
R_2(f,z,x) \coloneqq f(z-x)-f(z)+f'(z)x- \ffrac 12 f''(z)x^2.
\end{equation}
\end{notation}

\begin{claim}[Upper bound on the difference]
We have the following upper bound for the difference that occurred in \eqref{eq:Y_L_close}:
\begin{align}\label{eq:triangle_separating_derivatives}
\begin{split}
\left| \mathcal{Y}(n,A,t,f) - \widetilde{L}(n,z_n^A(t),t,f) \right|\le&
|f'(z_n^A(t))| \cdot \left| \sqrt{n \lambda^5_n} \left( \frac{1}{1-w_n(t)}- \ev_{A,n,t}\mleft[ \mathscr{C} \mright] \right) + \frac{ z_n^A(t) }{g^2_n(t)} \right| +\\
& \frac{|f''(z_n^A(t)) | }{2} t \cdot \left| \lambda_n^3 \ev_{A,n,t}\mleft[ \mathscr{C}^2 \mright] -
\frac{1 }{g^3_n(t)} \right| +\\
&\frac{n \lambda^2_n}{t} \left| \ev_{A,n,t}\mleft[ R_2 \left(f,z_n^A(t),t \sqrt{\frac{\lambda_n}{n}}\mathscr{C} \right) \mright] \right|.
\end{split}
\end{align}
\end{claim}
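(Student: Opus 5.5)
The bound follows by expanding $E(n,A,t,f)$ to second order in Taylor's formula, substituting into the definition \eqref{eq:Y_n_A_t_f_def} of $\mathcal{Y}$, and matching terms with $\widetilde{L}$ from \eqref{eq:OU_infgen_def}; the triangle inequality then gives the claim.

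\textbf{Step 1: Taylor expansion of $E$.} Write $z=z_n^A(t)$ and $x=t\sqrt{\lambda_n/n}\,\mathscr{C}$. The definition \eqref{eq:error_term_taylor} of $R_2$ gives
\[
f(z-x)-f(z) = -f'(z)x+\tfrac12 f''(z)x^2 + R_2(f,z,x).
\]
Taking $\ev_{A,n,t}$ yields
\[
E(n,A,t,f) = -f'(z)\,t\sqrt{\tfrac{\lambda_n}{n}}\,\ev_{A,n,t}[\mathscr{C}] + \tfrac12 f''(z)\,t^2\tfrac{\lambda_n}{n}\,\ev_{A,n,t}[\mathscr{C}^2] + \ev_{A,n,t}[R_2(f,z,x)].
\]
All expectations are finite, since $\mathscr{C}\le A$.

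\textbf{Step 2: Substitution into $\mathcal{Y}$.} Multiplying by $n\lambda_n^2/t$, the three pieces become:
\begin{itemize}
\item the first-order piece $-f'(z)\sqrt{n\lambda_n^5}\,\ev_{A,n,t}[\mathscr{C}]$;
\item the second-order piece $\tfrac12 f''(z)\,t\,\lambda_n^3\,\ev_{A,n,t}[\mathscr{C}^2]$;
\item the remainder piece $\frac{n\lambda_n^2}{t}\ev_{A,n,t}[R_2]$.
\end{itemize}
Adding the first summand of $\mathcal{Y}$, the $f'$-terms combine to $f'(z)\sqrt{n\lambda_n^5}\left(\frac{1}{1-w_n(t)}-\ev_{A,n,t}[\mathscr{C}]\right)$.

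\textbf{Step 3: Subtracting $\widetilde{L}$ and grouping.} We have
\[
\widetilde{L}(n,z,t,f) = -\frac{z f'(z)}{g_n(t)^2} + \frac{t f''(z)}{2g_n(t)^3}.
\]
Subtracting it, the coefficient of $f'(z)$ becomes
\[
\sqrt{n\lambda_n^5}\left(\frac{1}{1-w_n(t)}-\ev_{A,n,t}[\mathscr{C}]\right)+\frac{z}{g_n^2(t)}.
\]
The coefficient of $\tfrac12 f''(z)\,t$ becomes $\lambda_n^3\ev_{A,n,t}[\mathscr{C}^2]-g_n^{-3}(t)$. The remainder piece is unchanged.

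Applying the triangle inequality and pulling $|f'(z)|$ and $|f''(z)|$ out of their terms gives exactly \eqref{eq:triangle_separating_derivatives}. Since this is a pure algebraic identity followed by the triangle inequality, there is no real obstacle; the only care needed is the bookkeeping of the powers of $\lambda_n$ and $n$ in each term.
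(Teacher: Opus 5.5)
Your proposal is correct and follows the paper's proof essentially line for line. The paper does the same thing: it writes $E(n,A,t,f)$ through the second-order Taylor expansion with remainder $R_2$, substitutes into $\mathcal{Y}-\widetilde{L}$, groups the $f'$- and $f''$-coefficients, and applies the triangle inequality. Your power-counting of $n$ and $\lambda_n$ in each term matches the paper's.
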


\begin{proof}
Observe that we can rewrite the l.h.s.\ of~\eqref{eq:triangle_separating_derivatives}. Denoting $\mathcal{Y}=\mathcal{Y}(n,A,t,f)$ and $\widetilde{L}=\widetilde{L}(n,z_n^A(t),t,f)$:
\begin{equation}\label{eq:Y_L_difference}
\left| \mathcal{Y} - \widetilde{L} \right| \stackrel{\eqref{eq:OU_infgen_def},\, \eqref{eq:Y_n_A_t_f_def} }{=} \left| f'(z_n^A(t)) \frac{\sqrt{n \lambda^5_n}}{1-w_n(t)}+ \frac{n \lambda^2_n }{t} E(n,A,t,f) + \frac{ z_n^A(t) f'(z_n^A(t)) }{g^2_n(t)} - \frac{t f''(z_n^A(t)) }{2 g^3_n(t)} \right|.
\end{equation}
Using the notation $R_2(f,z,x)$ from~\eqref{eq:error_term_taylor}, we can write
\begin{align}\label{eq:taylor_E_n_A_t_f}
\begin{split}
E(n,A,t,f) \stackrel{\eqref{eq:E_n_A_t_f_def}}{=}
&-f'(z_n^A(t))t \sqrt{ \frac{\lambda_n}{n} }
\ev_{A,n,t}\mleft[ \mathscr{C} \mright] +\frac{1}{2} f''(z_n^A(t))t^2 \frac{\lambda_n}{n}
\ev_{A,n,t}\mleft[ \mathscr{C}^2 \mright] +\\
&\ev_{A,n,t}\mleft[ R_2 \mleft(f,z_n^A(t),t \sqrt{\frac{\lambda_n}{n}}\mathscr{C} \mright) \mright].
\end{split}
\end{align}
Therefore, \eqref{eq:triangle_separating_derivatives} follows from~\eqref{eq:Y_L_difference} and~\eqref{eq:taylor_E_n_A_t_f}.
\end{proof}

In order to prove Lemma~\ref{lem:Y_L_close}, we need to show that the three terms on the r.h.s.\ of
\eqref{eq:triangle_separating_derivatives} are small:

\begin{lemma}[Terms of the upper bound are insignificant]\label{lem:control_of_f_expectations}
For any $\beta \in \mathbb{R}$, we have
\begin{align}
\label{eq:OU_drift_comes_up}
& \lim_{n \to \infty} \, \sup_{t_n^c \leq t \leq T} \, \sup_{ A: \, |z_n^A(t)| \leq \kappa_n } \left| \sqrt{n \lambda^5_n}
\left( \frac{1}{1-w_n(t)}- \ev_{A,n,t}\mleft[ \mathscr{C} \mright] \right)
+ \frac{ z_n^A(t) }{g^2_n(t)} \right|=0, \\
\label{eq:OU_volatility_comes_up}
& \lim_{n \to \infty} \, \sup_{t_n^c \leq t \leq T} \, \sup_{ A: \, |z_n^A(t)| \leq \kappa_n }
\left| \lambda_n^3 \ev_{A,n,t}\mleft[ \mathscr{C}^2 \mright] - \frac{1 }{g^3_n(t)} \right|=0, \\
\label{eq:control_of_f_expect_taylor_remainder}
 & \lim_{n \to \infty} \sup_{(A,t,f) \in \mathcal{R}(n,\beta,T) }
n \lambda^2_n\ev_{A,n,t}\mleft[ \, \left| R_2 \mleft(f,z_n^A(t),t \sqrt{\frac{\lambda_n}{n}}\mathscr{C} \mright) \right| \, \mright]=0.
\end{align}
\end{lemma}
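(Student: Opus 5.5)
All three limits will be reduced to the moment estimates of Claim~\ref{cl:component_size}, applied with $W:=W_n^A(t)=tA/n$. First I record what the constraint $|z_n^A(t)|\le\kappa_n$ gives. By \eqref{eq:z_n_t_A_def}, $W-w_n(t)=z_n^A(t)(n\lambda_n)^{-1/2}$, so $|W-w_n(t)|\le \kappa_n(n\lambda_n)^{-1/2}$. Condition \eqref{eq:kappa_threshold_assumption} gives $\kappa_n\ll\sqrt{n\lambda_n^3}$, hence $\kappa_n (n\lambda_n)^{-1/2}\ll\lambda_n$. Combined with \eqref{eq:gnt_bounds}, this yields $1-W=\lambda_n g_n(t)(1+o(1))\asymp\lambda_n$, uniformly over $t_n^c\le t\le T$ and admissible $A$. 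Since $W\to1$, also $A\asymp n$. Therefore $1-W\gg n^{-1/3}\gg A^{-1/3}$, and \eqref{eq:first_moment_of_component_size} and \eqref{eq:second_moment_of_component_size} apply, as do \eqref{eq:general_moment_of_component_size_crude} and \eqref{eq:magic_branching_proc_estimate_2}.

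For \eqref{eq:OU_drift_comes_up}, I replace $\ev_{A,n,t}[\mathscr{C}]$ by $1/(1-W)$. The error is $\sqrt{n\lambda_n^5}\cdot O(\lambda_n^{-4}n^{-1})=O((n\lambda_n^3)^{-1/2})\to0$. Next I expand exactly:
\[
\frac{1}{1-w_n}-\frac{1}{1-W}=-\frac{W-w_n}{(1-w_n)(1-W)}.
\]
Writing $W-w_n=z(n\lambda_n)^{-1/2}$ and multiplying by $\sqrt{n\lambda_n^5}$ gives
\[
-\frac{z\lambda_n^2}{(1-w_n)(1-W)}=-\frac{z}{g_n^2}\cdot\frac{1-w_n}{1-W}.
\]
The ratio $\frac{1-w_n}{1-W}$ is $1+O\bigl(\kappa_n/\sqrt{n\lambda_n^3}\bigr)$. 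Thus the total error is $O\bigl(\kappa_n^2/\sqrt{n\lambda_n^3}\bigr)$. This tends to zero because $\kappa_n^{4/3}n^{-1/3}\ll\lambda_n$ gives $\kappa_n^4\ll n\lambda_n^3$. This is exactly where the third condition in \eqref{eq:kappa_threshold_assumption} (and hence the $(\ln n)^{4/3}$ assumption) is used.

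For \eqref{eq:OU_volatility_comes_up}, \eqref{eq:second_moment_of_component_size} gives
\[
\lambda_n^3\ev[\mathscr{C}^2]=\frac{\lambda_n^3}{(1-W)^3}+O\bigl(\lambda_n^{-3}n^{-1}\bigr).
\]
The error term vanishes because $\lambda_n^3 n\to\infty$. The main term equals $g_n^{-3}(1+o(1))$, with $g_n$ bounded above and below.

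For \eqref{eq:control_of_f_expect_taylor_remainder}, put $x=t\sqrt{\lambda_n/n}\,\mathscr{C}\ge0$. Taylor's theorem and $f\in\mathcal{D}(z,\beta)$ give $|R_2|\le\frac{1}{6}|\beta|^3x^3e^{|\beta|x}$. Hence
\[
n\lambda_n^2\ev|R_2|\le C\, n\lambda_n^2(\lambda_n/n)^{3/2}\ev\bigl[\mathscr{C}^3e^{c\sqrt{\lambda_n/n}\,\mathscr{C}}\bigr].
\]
Since $\sqrt{\lambda_n/n}\ll\lambda_n^2\asymp(1-W)^2$, the exponential factor is controlled by \eqref{eq:magic_branching_proc_estimate_2} via Cauchy--Schwarz. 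Specifically, I bound the expectation by $\ev[\mathscr{C}^6]^{1/2}\ev[e^{2c\sqrt{\lambda_n/n}\mathscr{C}}]^{1/2}$. For large $n$ we have $2c\sqrt{\lambda_n/n}\le\frac14(1-W)^2$, so the second factor is at most $\sqrt3$. By \eqref{eq:general_moment_of_component_size_crude} with $k=6$, the first factor is $O(\lambda_n^{-11/2})$. The total is therefore
\[
O\bigl(n^{-1/2}\lambda_n^{7/2}\lambda_n^{-11/2}\bigr)=O\bigl((n\lambda_n^4)^{-1/2}\bigr)\to0.
\]
All bounds are uniform over $\mathcal{R}(n,\beta,T)$. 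The main obstacle I expect is the bookkeeping in \eqref{eq:OU_drift_comes_up}: the second-order term $(W-w_n)^2$ must be small on the scale of the huge prefactor $\sqrt{n\lambda_n^5}$. This is the step that uses the constraint on $\kappa_n$.
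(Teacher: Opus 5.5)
Your arguments for \eqref{eq:OU_drift_comes_up} and \eqref{eq:OU_volatility_comes_up} are correct and essentially the paper's. You replace the moments of $\mathscr{C}$ by $(1-W)^{-1}$ and $(1-W)^{-3}$ via \eqref{eq:first_moment_of_component_size} and \eqref{eq:second_moment_of_component_size}. You then use the exact identity for $\frac{1}{1-w_n}-\frac{1}{1-W}$ together with $\kappa_n^4\ll n\lambda_n^3$.

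The gap is in \eqref{eq:control_of_f_expect_taylor_remainder}. Your final bound $O\bigl((n\lambda_n^4)^{-1/2}\bigr)$ tends to $0$ only if $\lambda_n\gg n^{-1/4}$, and this is not implied by \eqref{eq:subcrit_lambda_assumption}. For instance, $\lambda_n=n^{-3/10}$ is admissible, but it gives $(n\lambda_n^4)^{-1/2}=n^{1/10}\to\infty$. The loss comes from Cauchy--Schwarz. The point masses $\pr(\mathscr{C}=m)$ decay like $m^{-3/2}$ up to a cutoff $\asymp\lambda_n^{-2}$, so $\ev[\mathscr{C}^k]\asymp\lambda_n^{-(2k-1)}$ is carried by the event $\{\mathscr{C}\asymp\lambda_n^{-2}\}$, which has probability $\asymp\lambda_n$. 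Consequently $\ev[\mathscr{C}^6]^{1/2}\asymp\lambda_n^{-11/2}$ overshoots the true order $\lambda_n^{-5}$ of $\ev\bigl[\mathscr{C}^3\mathrm{e}^{c\sqrt{\lambda_n/n}\,\mathscr{C}}\bigr]$ by exactly the fatal factor $\lambda_n^{-1/2}$. H\"older with any fixed exponent $p>1$ has the same defect, only with a smaller polynomial loss.

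The fix, which is what the paper does, is to separate the exponential factor by truncation rather than by H\"older. With $x=t\sqrt{\lambda_n/n}\,\mathscr{C}$, use $\frac{|\beta|^3}{6}x^3\mathrm{e}^{|\beta|x}\le\frac{\mathrm{e}|\beta|^3}{6}x^3+\mathrm{e}^{2|\beta|x}\indic{|\beta|x\ge1}$.

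The first term, via \eqref{eq:general_moment_of_component_size_crude} with $k=3$, contributes $O\bigl(n\lambda_n^2(\lambda_n/n)^{3/2}\lambda_n^{-5}\bigr)=O\bigl((n\lambda_n^3)^{-1/2}\bigr)$.

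For the second term, $|\beta|x\ge1$ forces $\mathscr{C}\ge\sqrt{n/\lambda_n}/(|\beta|t)\gg\lambda_n^{-2}$. Since $2|\beta|t\sqrt{\lambda_n/n}\ll(1-W)^2$, you get $\mathrm{e}^{2|\beta|x}\indic{|\beta|x\ge1}\le\mathrm{e}^{\frac14(1-W)^2\mathscr{C}}\exp\bigl(-c\sqrt{n\lambda_n^3}\bigr)$. By \eqref{eq:magic_branching_proc_estimate_2}, this term contributes at most $3n\lambda_n^2\exp\bigl(-c\sqrt{n\lambda_n^3}\bigr)$, which vanishes because $\ln n\ll\sqrt{n\lambda_n^3}$.

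So, contrary to your closing remark, this step is where a logarithmic strengthening of $\lambda_n\gg n^{-1/3}$ enters the lemma; it needs only $(\ln n)^{2/3}n^{-1/3}\ll\lambda_n$. The third condition in \eqref{eq:kappa_threshold_assumption}, used in \eqref{eq:OU_drift_comes_up}, needs no logarithm by itself. The $(\ln n)^{4/3}$ is required only to make that condition compatible with $\ln(1/\lambda_n)\ll\kappa_n$.
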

Before we prove Lemma~\ref{lem:control_of_f_expectations}, let us deduce Lemma~\ref{lem:Y_L_close} from it.
\begin{proof}[Proof of Lemma~\ref{lem:Y_L_close}] The inequalities
$|f'(z_n^A(t))| \leq |\beta|$, $|f''(z_n^A(t)) | \leq |\beta|^2$ and $1 \leq t \leq T$ follow from the
assumption $(A,t,f) \in \mathcal{R}(n,\beta,T)$ (cf.~\eqref{eq:R_n_def}), thus the desired \eqref{eq:Y_L_close}
follows from~\eqref{eq:triangle_separating_derivatives} together with the statements of Lemma~\ref{lem:control_of_f_expectations}.
\end{proof}

\subsection{Terms of the upper bound are insignificant}

In this section we prove Lemma~\ref{lem:control_of_f_expectations}. Recalling \eqref{eq:W_t_A_n_def}, let us denote $ W_n^A(t)\coloneqq tA/n$.

\begin{claim}[Bound on $W_n^A$]
We have
\begin{equation}\label{eq:w_geq_1}
\liminf_{n \to \infty} \inf_{t_n^c \leq t \leq T} \inf_{ A: \, |z_n^A(t)| \leq \kappa_n } \frac{1-W_n^A(t)}{\lambda_n} \geq 1.
\end{equation}
\end{claim}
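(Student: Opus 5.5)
We write $1-W_n^A(t)$ as the distance of the window curve from $1$ minus the fluctuation, and use the constraint $|z_n^A(t)|\le\kappa_n$ to show the fluctuation is negligible on the scale $\lambda_n$. By \eqref{eq:W_t_A_n_def}, \eqref{eq:z_n_t_A_def} and \eqref{eq:def_g_n_t} we have
\begin{equation*}
W_n^A(t) = w_n(t) + \frac{z_n^A(t)}{\sqrt{n\lambda_n}},
\qquad\text{hence}\qquad
\frac{1-W_n^A(t)}{\lambda_n} = g_n(t) - \frac{z_n^A(t)}{\lambda_n\sqrt{n\lambda_n}} .
\end{equation*}
This identity is the whole mechanism; the rest is checking that the second term vanishes uniformly.

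First, by the left inequality of \eqref{eq:gnt_bounds} in Lemma~\ref{lem:gn_properties},
\begin{equation*}
\liminf_{n\to\infty}\inf_{t_n^c\le t\le T} g_n(t)\ge 1 .
\end{equation*}
Second, on the set $|z_n^A(t)|\le\kappa_n$ we have, uniformly in $t$ and $A$,
\begin{equation*}
\frac{|z_n^A(t)|}{\lambda_n\sqrt{n\lambda_n}}\le \frac{\kappa_n}{\sqrt{n\lambda_n^3}} .
\end{equation*}
The third condition in \eqref{eq:kappa_threshold_assumption}, $\kappa_n^{4/3}n^{-1/3}\ll\lambda_n$, gives $\kappa_n^4\ll n\lambda_n^3$, hence $\kappa_n^2\ll\sqrt{n\lambda_n^3}$. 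Since $\kappa_n\to\infty$, this implies $\kappa_n\ll\sqrt{n\lambda_n^3}$, which is also the bound already used in the proof of \eqref{eq:third_term_small}. So the fluctuation term tends to $0$ uniformly over $t\in[t_n^c,T]$ and over admissible $A$.

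Combining the two bounds, the infimum of $(1-W_n^A(t))/\lambda_n$ is at least $\inf_t g_n(t)-o(1)$, and taking $\liminf$ yields \eqref{eq:w_geq_1}. There is no real obstacle; the only point needing care is that both the $\liminf$ of $g_n$ and the vanishing of the error are uniform in $t$ and $A$, which holds because the error bound depends only on $n$.
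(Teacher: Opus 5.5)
Your proposal is correct and follows the paper's own argument: bound $|W_n^A(t)-w_n(t)|/\lambda_n$ by $\kappa_n/\sqrt{n\lambda_n^3}$, use $\kappa_n^{4}\ll n\lambda_n^3$ from \eqref{eq:kappa_threshold_assumption} to make this vanish, and combine with the lower bound in \eqref{eq:gnt_bounds}. Your step from $\kappa_n^2\ll\sqrt{n\lambda_n^3}$ to $\kappa_n\ll\sqrt{n\lambda_n^3}$, using $\kappa_n\to\infty$, is the same comparison the paper makes.
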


\begin{proof}
Our assumption $|z_n^A(t)| \leq \kappa_n $ together with~\eqref{eq:z_n_t_A_def} imply $|W_n^A(t)-w_n(t)|/\lambda_n \leq \kappa_n /\sqrt{n \lambda^3_n}$. Next, we observe that
\begin{equation*}
\frac{\kappa_n}{\sqrt{n \lambda^3_n}} \ll \frac{\kappa_n^2}{\sqrt{n \lambda^3_n}} = \left(\frac{\kappa_n^{4/3}}{n^{1/3} \lambda_n}\right)^{3/2} \stackrel{\eqref{eq:kappa_threshold_assumption}}{\ll} 1.
\end{equation*} 
This, together with the lower bound \eqref{eq:gnt_bounds} on $(1-w_n(t))/\lambda_n$, implies~\eqref{eq:w_geq_1}.
\end{proof}

\begin{claim}[Asymptotic relations for $W^A_n$]
Let $(t_n)_{n=1}^\infty$ and $(A_n)_{n=1}^\infty$ be sequences such that $t_n^c \le t_n \leq T$ and $|z_n^{A_n}(t_n)| \leq \kappa_n$. Then the following asymptotic relations hold as $n \to \infty$:
\begin{equation}\label{eq:tA_n_asymp}
\mathrm{(a)}\, \left| \frac{t_n A_n }{n} -(1-\lambda_n g_n(t_n) ) \right| = o(\lambda_n), \quad 
\mathrm{(b)}\, \left|\frac{t_n A_n}{n} - 1 \right| \to 0, \quad \mathrm{(c)}\, A_n \asymp n.
\end{equation}

Moreover, if we assume that $\frac{t_n - t_n^c}{\lambda_n} \to \infty$ as $n \to \infty$, then
\begin{equation}\label{eq:tA_n_asymp_stronger}
 \left| \frac{t_n A_n }{n} -(1-\lambda_n t_n ) \right| = o(\lambda_n)
\end{equation}
\end{claim}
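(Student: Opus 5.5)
The plan is to read all three relations directly off the definitions \eqref{eq:z_n_t_A_def}, \eqref{eq:def_g_n_t} and \eqref{eq:wn_def}, and then use the bounds on $g_n$ from Lemma~\ref{lem:gn_properties}.

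\textbf{Step 1: the basic identity.} By \eqref{eq:z_n_t_A_def} and \eqref{eq:def_g_n_t},
\begin{equation*}
\frac{t_n A_n}{n} - \bigl(1-\lambda_n g_n(t_n)\bigr) = \frac{t_n A_n}{n} - w_n(t_n) = \frac{z_n^{A_n}(t_n)}{\sqrt{n\lambda_n}}.
\end{equation*}
Since $|z_n^{A_n}(t_n)| \le \kappa_n$, dividing by $\lambda_n$ bounds the left-hand side of (a) over $\lambda_n$ by $\kappa_n/\sqrt{n\lambda_n^3}$. This is exactly the quantity controlled in the proof of \eqref{eq:w_geq_1}: by \eqref{eq:kappa_threshold_assumption},
\begin{equation*}
\frac{\kappa_n^2}{\sqrt{n\lambda_n^3}} = \left(\frac{\kappa_n^{4/3}}{n^{1/3}\lambda_n}\right)^{3/2} \to 0,
\end{equation*}
and $\kappa_n \to \infty$, so $\kappa_n/\sqrt{n\lambda_n^3} \to 0$. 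This proves (a).

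\textbf{Step 2: (b) and (c).} For (b), we use \eqref{eq:gnt_bounds}: since $t_n^c \le t_n \le T$, we have $g_n(t_n) \le (4\vee T)+1$ for $n$ large. Hence $\lambda_n g_n(t_n) \to 0$ because $\lambda_n \to 0$. Combining this with (a) gives $t_nA_n/n \to 1$.

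For (c), note that $t_n \in [t_n^c, T]$ and $t_n^c \to 1$ by Claim~\ref{cl:tc_asymp}. So $t_n$ lies in $[1/2, T]$ for large $n$, and (b) gives $A_n/n \in [1/(2T), 3]$ eventually. For the finitely many small $n$, we have $1 \le A_n \le n$, so the constants in $\asymp$ can be adjusted; alternatively, we read (c) as an eventual statement.

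\textbf{Step 3: the stronger relation.} By (a), it suffices to show $\lambda_n |g_n(t_n) - t_n| = o(\lambda_n)$, that is, $|g_n(t_n)-t_n| \to 0$. This follows from \eqref{eq:gnt_asymp_t}. Fix $\varepsilon>0$ and choose $s$ so that $\limsup_n \sup_{t_n^c + s\lambda_n \le t \le T} |g_n(t)-t| < \varepsilon$. Since $(t_n - t_n^c)/\lambda_n \to \infty$, we eventually have $t_n \ge t_n^c + s\lambda_n$, so $\limsup_n |g_n(t_n)-t_n| \le \varepsilon$.

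\textbf{Main obstacle.} The only genuine point is the rate comparison in Step 1, namely that $\kappa_n \ll \sqrt{n\lambda_n^3}$. This is guaranteed by the third condition in \eqref{eq:kappa_threshold_assumption}, which is already used in the proof of \eqref{eq:w_geq_1}. Everything else is bookkeeping.
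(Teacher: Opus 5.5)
Your proposal is correct and follows essentially the same route as the paper: the identity $t_nA_n/n-(1-\lambda_n g_n(t_n))=z_n^{A_n}(t_n)/\sqrt{n\lambda_n}$ together with the rate condition in \eqref{eq:kappa_threshold_assumption} gives (a); \eqref{eq:gnt_bounds} gives (b); the location of $t_n$ gives (c); and \eqref{eq:gnt_asymp_t} gives the stronger relation. The only cosmetic difference is in (a): the paper states the needed rate directly as the equivalent condition $\kappa_n^{2/3}n^{-1/3}\ll\lambda_n$, whereas you pass through the bound $\kappa_n^2/\sqrt{n\lambda_n^3}\to 0$ already used for \eqref{eq:w_geq_1}.
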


\begin{proof} We first prove statement (a):
\begin{equation*}
\left| \frac{t_n A_n }{n} -(1-\lambda_n g_n(t_n) ) \right|
\stackrel{\eqref{eq:def_g_n_t},\,\eqref{eq:z_n_t_A_def}}{=} 
\left| \frac{ z_n^{A_n}(t_n) }{ \sqrt{n \lambda_n} } \right| 
\stackrel{ |z_n^{A_n}(t_n)| \leq \kappa_n}{\leq} \frac{\kappa_n}{\sqrt{n \lambda_n}} \stackrel{(*)}{=} o(\lambda_n),
\end{equation*}
where $(*)$ holds since $\kappa_n/\sqrt{n \lambda_n} \ll \lambda_n$ is equivalent to $\kappa_n^{2/3} n^{-1/3} \ll \lambda_n$, which follows from~\eqref{eq:kappa_threshold_assumption}.

Statement (b) now follows from statement (a), $\lambda_n \ll 1$ (cf.~\eqref{eq:subcrit_lambda_assumption}) and~\eqref{eq:gnt_bounds}. Statement (c) follows from statement (b) and the assumptions on $t_n$.

In order to prove equation~\eqref{eq:tA_n_asymp_stronger}, by the assumption $(t_n - t_n^c)/\lambda_n \to \infty$ as $n \to \infty$, we can use \eqref{eq:gnt_asymp_t}. Hence $\lambda_n(t_n-g_n(t_n)) = o(\lambda_n)$, therefore, \eqref{eq:tA_n_asymp}(a) implies~\eqref{eq:tA_n_asymp_stronger}.
\end{proof}

\begin{corollary}[Conditions for Erd\H{o}s--R\'enyi component size formulas]\label{cor:ER_conditions}
Let $(t_n)_{n=1}^\infty$ and $(A_n)_{n=1}^\infty$ be sequences such that $t_n^c \le t_n \leq T$ and $|z_n^{A_n}(t_n)| \leq \kappa_n$. Then we have $1/3 \le W_n^{A_n}(t_n) \le 1-A_n^{-1/3}$ if $n$ is large enough.
\end{corollary}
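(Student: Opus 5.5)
The plan is to read off both inequalities from the asymptotic relations \eqref{eq:tA_n_asymp} together with the lower bound \eqref{eq:w_geq_1}, since $W_n^{A_n}(t_n)=t_nA_n/n$ by \eqref{eq:W_t_A_n_def}.

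\textbf{Lower bound.} By \eqref{eq:tA_n_asymp}(b) we have $W_n^{A_n}(t_n)\to 1$. Hence $W_n^{A_n}(t_n)\ge 1/3$ once $n$ is large enough.

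\textbf{Upper bound.} By \eqref{eq:w_geq_1}, for $n$ large we have $1-W_n^{A_n}(t_n)\ge \tfrac12\lambda_n$, uniformly over admissible $t_n$ and $A_n$. By \eqref{eq:tA_n_asymp}(c) there is a constant $c>0$ with $A_n\ge cn$, so
\[
A_n^{-1/3}\le c^{-1/3}n^{-1/3}.
\]
The assumption \eqref{eq:subcrit_lambda_assumption} gives $n^{-1/3}\ll\lambda_n$, so $c^{-1/3}n^{-1/3}\le \tfrac12\lambda_n$ for $n$ large. Combining these,
\[
W_n^{A_n}(t_n)\le 1-\tfrac12\lambda_n\le 1-A_n^{-1/3}.
\]

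The only point needing care is the uniformity in the choice of sequences. The liminf statement \eqref{eq:w_geq_1} is already an infimum over all admissible $(t,A)$, so it causes no trouble. For (b) and (c), we can either use that they hold for every admissible pair of sequences, or argue by contradiction via subsequences. No genuine obstacle is expected.
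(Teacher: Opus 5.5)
Your proof is correct and follows essentially the same route as the paper: the lower bound comes from \eqref{eq:tA_n_asymp}(b), and the upper bound comes from showing that $1-W_n^{A_n}(t_n)$ is of order $\lambda_n$ and comparing this with $A_n^{-1/3}\asymp n^{-1/3}\ll\lambda_n$ via \eqref{eq:tA_n_asymp}(c). The only difference is that the paper obtains $1-W_n^{A_n}(t_n)\gtrsim\lambda_n$ from \eqref{eq:tA_n_asymp}(a) together with the lower bound on $g_n$, whereas you cite \eqref{eq:w_geq_1}, which packages the same estimate.
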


\begin{proof}
If $n$ is large enough, then $1/3 \le W_n^{A_n}(t_n)$ follows from~\eqref{eq:tA_n_asymp}(b) and $W_n^{A_n}(t_n) \le 1-A_n^{-1/3}$ follows from~\eqref{eq:tA_n_asymp}(a):
\begin{equation*}
W_n^{A_n}(t_n) \stackrel{\eqref{eq:tA_n_asymp}\mathrm{(a)}}{\le} 1-\lambda_n g_n(t_n) + o(\lambda_n) \stackrel{\eqref{eq:subcrit_lambda_assumption},\, \eqref{eq:tA_n_asymp}\mathrm{(c)}}{\le} 1-A_n^{-1/3}.\qedhere
\end{equation*}
\end{proof}

The motivation of Corollary~\ref{cor:ER_conditions} is to ensure that we can use the formulas of Claim~\ref{cl:component_size} under the assumptions $t_n^c \le t_n \leq T$ and $|z_n^{A_n}(t)| \leq \kappa_n$.

Now we prove the three limits stated in Lemma~\ref{lem:control_of_f_expectations}.

\begin{proof}[Proof of~\eqref{eq:OU_drift_comes_up}] We will use
\eqref{eq:first_moment_of_component_size} to estimate $\ev_{A,n,t}\mleft[ \mathscr{C} \mright]$. We begin with
\begin{align}\label{eq:w_W_compare}
\begin{split}
\sqrt{n \lambda^5_n} \left( \frac{1}{1-W^A_n(t)}- \frac{1}{1-w_n(t)} \right) \stackrel{\eqref{eq:z_n_t_A_def},\, \eqref{eq:W_t_A_n_def}}{=} &\lambda_n^2 \frac{ z^A_n(t) }{(1-W^A_n(t))(1-w_n(t))} \stackrel{\eqref{eq:def_g_n_t}}{=}\\
&\frac{z^A_n(t)}{g_n^2(t)} \frac{1-w_n(t)}{1-W^A_n(t)} \stackrel{\eqref{eq:z_n_t_A_def},\, \eqref{eq:W_t_A_n_def}}{=} \frac{z^A_n(t)}{g_n^2(t)} + \frac{ (z^A_n(t))^2 /\sqrt{n \lambda_n } }{g_n^2(t)(1-W^A_n(t)) }.
\end{split}
\end{align}

We can thus bound
\begin{align}\label{eq:W_w_diff_goes_to_zero}
\begin{split}
&\lim_{n \to \infty} \, \sup_{t_n^c \leq t \leq T} \, \sup_{ A: \, |z_n^A(t)| \leq \kappa_n }
\left| \sqrt{n \lambda^5_n} \left( \frac{1}{1-w_n(t)}- \frac{1}{1-W^A_n(t)} \right) + \frac{ z_n^A(t) }{g^2_n(t)} \right| \stackrel{\eqref{eq:R_n_def},\, \eqref{eq:w_W_compare}}{\leq} \\
&\lim_{n \to \infty} \, \sup_{t_n^c \leq t \leq T} \, \sup_{ A: \, |z_n^A(t)| \leq \kappa_n } \left| \frac{ \kappa_n^2 /\sqrt{n \lambda_n } }{g_n^2(t)(1-W^A_n(t)) } \right| \stackrel{\eqref{eq:gnt_bounds},\, \eqref{eq:w_geq_1}}{\leq} \lim_{n \to \infty}
\frac{ \kappa_n^2 /\sqrt{n \lambda_n } }{ \lambda_n } \stackrel{\eqref{eq:kappa_threshold_assumption}}{=}0.
\end{split}
\end{align}

We can also bound
\begin{align}\label{eq:W_EC_diff_goes_to_zero}
\begin{split}
&\lim_{n \to \infty} \, \sup_{t_n^c \leq t \leq T} \, \sup_{ A: \, |z_n^A(t)| \leq \kappa_n } \left| \sqrt{n \lambda^5_n} \left( \frac{1}{1-W^A_n(t)} - \ev_{A,n,t}\mleft[ \mathscr{C} \mright] \right) \right| \stackrel{\eqref{eq:first_moment_of_component_size}}{\leq } \\
&\lim_{n \to \infty} \, \sup_{t_n^c \leq t \leq T} \, \sup_{ A: \, |z_n^A(t)| \leq \kappa_n } \sqrt{n \lambda^5_n} \frac{\widetilde{C}}{(1-W_n^A(t))^4 A } \stackrel{\eqref{eq:w_geq_1}, \, \eqref{eq:tA_n_asymp}\mathrm{(c)}}{\leq } \lim_{n \to \infty} \frac{C'}{\sqrt{n \lambda_n^3 }} \stackrel{\eqref{eq:subcrit_lambda_assumption} }{=}0.
\end{split}
\end{align}
Putting together \eqref{eq:W_w_diff_goes_to_zero} and~\eqref{eq:W_EC_diff_goes_to_zero}, we obtain~\eqref{eq:OU_drift_comes_up}.
\end{proof}

\begin{proof}[Proof of~\eqref{eq:OU_volatility_comes_up}] We will use
\eqref{eq:second_moment_of_component_size} to estimate $\ev_{A,n,t}\mleft[ \mathscr{C}^2 \mright]$.
 We begin with
\begin{align}
\begin{split}\label{eq:W_w_second_moment_compare}
&\lim_{n \to \infty} \, \sup_{t_n^c \leq t \leq T} \, \sup_{ A: \, |z_n^A(t)| \leq \kappa_n } \left| \lambda_n^3 \frac{1}{(1-W^A_n(t))^3} - \frac{1 }{g^3_n(t)}\right|
\stackrel{\eqref{eq:def_g_n_t},\, \eqref{eq:z_n_t_A_def},\, \eqref{eq:W_t_A_n_def} }{=}\\
&\lim_{n \to \infty} \, \sup_{t_n^c \leq t \leq T} \, \sup_{ A: \, |z_n^A(t)| \leq \kappa_n } \left| \frac{1 }{g^3_n(t)} \left[ \left( 1+\frac{z_n^A(t)/\sqrt{n \lambda_n}}{1-W^A_n(t)} \right)^3-1 \right]\right|\stackrel{(*)}{=}0,
\end{split}
\end{align}
where $(*)$ follows from $\lim_{n \to \infty} \, \sup_{t_n^c \leq t \leq T} \, \sup_{ A: \, |z_n^A(t)| \leq \kappa_n }\left| \frac{z_n^A(t)/\sqrt{n \lambda_n}}{1-W^A_n(t)}\right|=0 $ (which can be proved
similarly to~\eqref{eq:W_w_diff_goes_to_zero}) and~\eqref{eq:gnt_bounds}. We can also bound
\begin{align}
\begin{split}\label{eq:W_C_second_moment_close}
&\lim_{n \to \infty} \, \sup_{t_n^c \leq t \leq T} \, \sup_{ A: \, |z_n^A(t)| \leq \kappa_n } \left| \lambda_n^3 \ev_{A,n,t}\mleft[ \mathscr{C}^2 \mright]- \lambda_n^3 \frac{1}{(1-W^A_n(t))^3}\right| 
\stackrel{\eqref{eq:second_moment_of_component_size} }{\leq } \\
&\lim_{n \to \infty} \, \sup_{t_n^c \leq t \leq T} \, \sup_{ A: \, |z_n^A(t)| \leq \kappa_n } \lambda_n^3 \frac{\widetilde{C}}{(1-W_n^A(t))^6 A}
\stackrel{\eqref{eq:w_geq_1},\, \eqref{eq:tA_n_asymp}\mathrm{(c)}}{\leq } \lim_{n \to \infty} \frac{C'}{n \lambda_n^3 }
\stackrel{\eqref{eq:subcrit_lambda_assumption} }{=}0.
\end{split}
\end{align}
Putting together \eqref{eq:W_w_second_moment_compare} and~\eqref{eq:W_C_second_moment_close}, we obtain~\eqref{eq:OU_volatility_comes_up}.
\end{proof}

\begin{proof}[Proof of~\eqref{eq:control_of_f_expect_taylor_remainder}]
For any $\beta,\, z,\, x \in \mathbb{R}$ and any $f \in \mathcal{D}(z, \beta)$, we have
\begin{equation*}
|R_2(f,z,x)| \stackrel{(*)}{\leq} \frac{|\beta|^3}{6} |x|^3 \mathrm{e}^{|\beta x|} \leq
\frac{|\beta|^3}{6} |x|^3 \mathrm{e} + \mathrm{e}^{2 |\beta x|} \indic{ |\beta x| \geq 1 },
\end{equation*}
where $(*)$ follows from Taylor's theorem together with~\eqref{eq:D_z_def} and~\eqref{eq:error_term_taylor}.
Thus in order to prove~\eqref{eq:control_of_f_expect_taylor_remainder}, we just have to prove the following two statements:
\begin{align}
\label{eq:third_moment_goes_to_zero}
&\lim_{n \to \infty} \, \sup_{t_n^c \leq t \leq T} \, \sup_{ A: \, |z_n^A(t)| \leq \kappa_n }
n \lambda^2_n \ev_{A,n,t}\mleft[ \mleft( \sqrt{\frac{\lambda_n}{n}}\mathscr{C} \mright)^3 \, \mright] =0, \\
\label{eq:truncated_exponential_goes_to_zero}
&\lim_{n \to \infty} \, \sup_{t_n^c \leq t \leq T} \, \sup_{ A: \, |z_n^A(t)| \leq \kappa_n }
n \lambda^2_n \ev_{A,n,t}\mleft[ \exp \mleft(2 |\beta | t \sqrt{\frac{\lambda_n}{n}}\mathscr{C} \mright) \cdot \indic{ |\beta| t \sqrt{\frac{\lambda_n}{n}}\mathscr{C} \geq 1 } \mright] =0.
\end{align}

We first show \eqref{eq:third_moment_goes_to_zero}:
\begin{equation*}
\lim_{n \to \infty} \, \sup_{t_n^c \leq t \leq T} \, \sup_{ A: \, |z_n^A(t)| \leq \kappa_n } \sqrt{\frac{\lambda_n^7}{n}}
\ev_{A,n,t}\mleft[ \mathscr{C}^3 \mright]
\stackrel{\eqref{eq:general_moment_of_component_size_crude},\,\eqref{eq:w_geq_1} }{\leq} \widetilde{C}_3 \lim_{n \to \infty} \sqrt{\frac{\lambda_n^7}{n}}
\lambda_n^{-5}\stackrel{\eqref{eq:subcrit_lambda_assumption}}{=}0.
\end{equation*}
It remains to show \eqref{eq:truncated_exponential_goes_to_zero}. We use \eqref{eq:magic_branching_proc_estimate_2}. It follows from~\eqref{eq:subcrit_lambda_assumption} that for large enough $n$ and $t_n^c \le t \le T$, we have
\begin{equation}\label{eq:trunc_exp_bound_aux}
2|\beta|t \sqrt{\frac{\lambda_n}{n}} < \frac{1}{4} \lambda_n^2
\end{equation}
Therefore, for large enough $n$ and $t_n^c \le t \le T$, we have
\begin{align}
\begin{split}\label{eq:trunc_exp_bound}
\frac{\exp \mleft(2|\beta|t \sqrt{\frac{\lambda_n}{n}} \mathscr{C}\mright) \cdot \indic{ |\beta|t \sqrt{\frac{\lambda_n}{n}} \mathscr{C} \geq 1 }}{\exp \mleft( \frac{1}{4}\lambda_n^2 \mathscr{C} \mright)} =&
\left( \frac{\mathrm{e}^2 \cdot \indic{ |\beta|t \sqrt{\frac{\lambda_n}{n}} \mathscr{C} \geq 1 }}{\exp \mleft[ \frac{1}{4}\lambda_n^2 \left( |\beta|t \sqrt{\frac{\lambda_n}{n}} \right)^{-1}\mright]} \right)^{|\beta|t \sqrt{\frac{\lambda_n}{n}}\mathscr{C}} \stackrel{(*)}{\leq}\\
&\exp \mleft[2 - \frac{1}{4}\lambda_n^2 \left( |\beta|t \sqrt{\frac{\lambda_n}{n}} \right)^{-1} \mright]
\leq
\exp \mleft(- C_{T,\beta} \sqrt{ \lambda_n^3 n } \mright)
\end{split}
\end{align}
for some positive constant $C_{T,\beta}$ that only depends on $\beta$ and $T$, where at $(*)$ we used that by~\eqref{eq:trunc_exp_bound_aux} the fraction on the l.h.s.\ is smaller than $1$. Therefore, for large enough $n$, any $t_n^c \leq t \leq T$ and any $A$ such that $|z_n^A(t)| \leq \kappa_n$, we have $\exp \mleft( \frac{1}{4}\lambda_n^2 \mathscr{C} \mright) \leq \exp \mleft( \frac{1}{4} (1-W_n^A(t))^2 \mathscr{C}\mright) $ by~\eqref{eq:w_geq_1}, thus
\begin{equation*}
\sup_{t_n^c \leq t \leq T} \, \sup_{ A: \, |z_n^A(t)| \leq \kappa_n } \ev_{A,n,t}\mleft[ \exp \mleft(2 |\beta | t \sqrt{\frac{\lambda_n}{n}}\mathscr{C} \mright) \cdot \indic{ |\beta| t \sqrt{\frac{\lambda_n}{n}}\mathscr{C} \geq 1 } \mright]
 \stackrel{\eqref{eq:magic_branching_proc_estimate_2},\, \eqref{eq:trunc_exp_bound} }{\leq} 3 \exp \mleft( - C_{T,\beta} \sqrt{ \lambda_n^3 n } \mright).
\end{equation*}

It, together with $\ln n \ll \sqrt{ \lambda_n^3 n }$ (which follows from~\eqref{eq:subcrit_lambda_assumption}), gives~\eqref{eq:truncated_exponential_goes_to_zero}.
\end{proof}

The proof of Lemma~\ref{lem:control_of_f_expectations} is complete.

\begin{remark}[Stronger assumptions on~$\lambda_n$]
We note that most of our proofs works for $n^{-1/3} \ll \lambda_n \ll 1$. We use our stronger assumptions~\eqref{eq:subcrit_lambda_assumption} only to have a sequence $(\kappa_n)_{n=1}^{\infty}$ such that $\ln( {1}/{\lambda_n}) \ll \kappa_n$ (see~\eqref{eq:kappa_threshold_assumption}) and at the last step of the proof of the formula~\eqref{eq:control_of_f_expect_taylor_remainder}. In the proof of~\eqref{eq:control_of_f_expect_taylor_remainder}, we only need $(\ln n)^{2/3} n^{-1/3} \ll \lambda_n$. To the existence of $\kappa_n$ that satisfies~\eqref{eq:kappa_threshold_assumption}, it is enough to assume that $\ln ( 1/\lambda_n) \ll \left(n \lambda_n^3 \right)^{1/4}$. For that purpose $(\ln n)^{4/3} n^{-1/3} \ll \lambda_n$ is sufficient, while $(\ln n)^{2/3} n^{-1/3} \ll \lambda_n$ is not (see also Remark~\ref{rem:kappa_construction}).
\end{remark}

\section{Proof of the L\'evy process scaling limit result}\label{sec:levy_proof}

In this section our goal is to prove Theorem~\ref{thm:tempered_stable_scaling}.

Similarly to Theorem~\ref{thm:OU_scaling}, we prove Theorem~\ref{thm:tempered_stable_scaling} by \cite[Theorem 15.1]{B68}. But instead of $U_n^{t_n}$ and $U^t$, we apply this theorem to the following auxiliary processes.

\begin{definition}[Auxiliary increasing processes]
Let us define the following processes:
\begin{align}
Y_n^t(s) &\coloneqq -\left[ A_n \mleft(t+\frac{s}{n\lambda_n^2} \mright) - A_n(t) \right] \cdot \lambda_n^2,\label{eq:defnY_n(s)}\\
Y^t(s) &\coloneqq \frac{1}{t^2} \cdot \int \limits_0^s \int \limits_0^\infty y N(\mathrm{d} y,\mathrm{d} u),\label{eq:defnY(s)}
\end{align}
where $N(\cdot,\cdot)$ is a Poisson point process on $\mathbb{R}^2_+$ with intensity $\frac{1}{\sqrt{2\pi}}y^{-3/2}\mathrm{e}^{-y/2}\, \mathrm{d} y \, \mathrm{d} u$ (cf.~\eqref{eq:defnU(s)}).
\end{definition}

Observe that $Y^t(s)$ is an increasing L\'evy process, therefore, it is a subordinator process.

\begin{proposition}[Convergence to subordinator process]\label{prop:Yn_convergence}
Let $t$ and $t_n$ as in Theorem~\ref{thm:tempered_stable_scaling}, and consider the processes $Y_n^{t_n}$ and $Y^t$ defined in \eqref{eq:defnY_n(s)} and~\eqref{eq:defnY(s)}, respectively. Then $Y_n^{t_n}(\cdot) \Rightarrow Y^t(\cdot)$ in $D[0,S]$ for each~${S<\infty}$.
\end{proposition}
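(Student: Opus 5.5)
We prove Proposition~\ref{prop:Yn_convergence} with Theorem~\ref{thm:convergence_in_D}: convergence of finite-dimensional distributions plus tightness. Since $Y_n^{t_n}$ is nondecreasing and $Y^t$ is a subordinator, the natural tool is Laplace transforms of increments. We will show that for every $\theta\ge 0$, uniformly on the likely region $|Z_n|\le\kappa_n$, the generator of $Y_n^{t_n}$ acting on $e^{-\theta y}$ satisfies
\begin{equation*}
\frac{1}{n\lambda_n^2}\,\frac{\mathcal{L}\,\mathrm{e}^{-\theta Y}}{\mathrm{e}^{-\theta Y}} \;\longrightarrow\; -\Psi_t(\theta), \qquad \Psi_t(\theta):=\int_0^\infty \bigl(1-\mathrm{e}^{-\theta y/t^2}\bigr)\frac{y^{-3/2}\mathrm{e}^{-y/2}}{\sqrt{2\pi}}\,\mathrm{d}y .
\end{equation*}
Here $\Psi_t$ is the Laplace exponent of $Y^t$. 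Then $\exp(-\theta Y_n^{t_n}(s)+\int_0^s(\text{generator ratio})\,\mathrm{d}u)$ is a martingale. Stopping it at the exit time of $|Z_n|\le \kappa_n$ and using Lemma~\ref{lem:opt_stop} gives the conditional Laplace transforms of increments. Iterating via the Markov property (Claim~\ref{cl:time_dep_inf_gen}) gives $\ev[\exp(-\sum_j\theta_j(Y_n(s_j)-Y_n(s_{j-1})))]\to\prod_j \exp(-(s_j-s_{j-1})\Psi_t(\theta_j))$. This yields convergence of finite-dimensional distributions.

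\textbf{Generator computation.} By \eqref{eq:A_n_t_infgen}, on the time scale $s$ the rate of lightnings is $\lambda_n A_n(u)/(n\lambda_n^2)\approx 1/(t\lambda_n)$, using $A_n\approx n/t$ by \eqref{eq:tA_n_asymp}(b), since $u=t_n+s/(n\lambda_n^2)$ and $t_n\to t$. Each lightning multiplies $\mathrm{e}^{-\theta Y}$ by $\mathrm{e}^{-\theta\lambda_n^2\mathscr{C}}$. So the ratio equals
\begin{equation*}
-\frac{A}{n\lambda_n}\,\ev_{A,n,u}\bigl[1-\mathrm{e}^{-\theta\lambda_n^2\mathscr{C}}\bigr].
\end{equation*}
The key input is the law of $\lambda_n^2\mathscr{C}$ in $\mathcal{G}(A,1-\mathrm{e}^{-u/n})$ with $1-W\approx\lambda_n t$ (from \eqref{eq:tA_n_asymp_stronger}, which holds since $(t_n-t_n^c)/\lambda_n\to\infty$). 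This law should be close to the Borel distribution with parameter $W$, $\pr(\mathscr{C}=k)\approx \frac{(kW)^{k-1}}{k!}\mathrm{e}^{-kW}$. Stirling then gives $\pr(\mathscr{C}=k)\approx (2\pi)^{-1/2}k^{-3/2}\mathrm{e}^{-k(1-W)^2/2}$. Substituting $k=y/\lambda_n^2$ with $(1-W)^2\approx \lambda_n^2t^2$, we get
\begin{equation*}
\frac{1}{\lambda_n}\ev\bigl[1-\mathrm{e}^{-\theta\lambda_n^2\mathscr{C}}\bigr]\to\int_0^\infty(1-\mathrm{e}^{-\theta x})\frac{x^{-3/2}\mathrm{e}^{-xt^2/2}}{\sqrt{2\pi}}\,\mathrm{d}x.
\end{equation*}
Multiplying by $1/t$ and substituting $x=y/t^2$ gives exactly $\Psi_t(\theta)$, which checks the normalization. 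To make this rigorous without a local limit theorem, I would use the exact identity \eqref{eq:magic_A_n_t}. The product $\prod_{i<\mathscr{C}}\mathrm{e}^{-uj/n}(1+\frac{j}{A-i})$ is approximately $\exp(\mathscr{C}\cdot(\text{explicit in }j))$ for $\mathscr{C}\ll A$. Taking $j=-\gamma\lambda_n^2A$-type choices therefore gives a closed-form approximate Laplace transform of $\mathscr{C}$ at scale $\lambda_n^{-2}$. That transform can then be compared with the Borel/tempered-stable one. Large values $\mathscr{C}\gtrsim \sqrt{A}$ are killed by \eqref{eq:magic_branching_proc_estimate_2}.

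\textbf{Tightness.} Tightness follows from Aldous' criterion (Theorem~\ref{thm:aldous_crit}). Monotonicity makes it easy: $\ev[1-\mathrm{e}^{-(Y(\tau+\delta)-Y(\tau))}]\le \delta\sup(\text{generator ratio})+o(1)$, which tends to $0$. Tightness of the marginals follows from the same Laplace bound.

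\textbf{Main obstacle.} The main obstacle is the uniform convergence of the rescaled Laplace functional of $\lambda_n^2\mathscr{C}$. It must hold uniformly in $A$ with $|z_n^A|\le\kappa_n$ and $u$ in the tiny window, with error $o(\lambda_n)$, because we divide by $\lambda_n$. The bulk of the mass of $1-\mathrm{e}^{-\theta\lambda_n^2\mathscr{C}}$ comes from $\mathscr{C}\asymp\lambda_n^{-2}$. At this scale the finite-$A$ corrections are of order $\lambda_n^{-2}/\sqrt{A}\cdot$polylog, which is small exactly because $n\lambda_n^3\to\infty$. I would first try extracting everything from \eqref{eq:magic_A_n_t}, and fall back on the exact ER formula $\pr(\mathscr{C}=k)=\binom{A-1}{k-1}T_k p^{k-1}(1-p)^{k(A-k)+\binom k2-k+1}$ with Cayley-type tree count bounds if needed.
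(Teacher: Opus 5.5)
\textbf{Overall architecture: sound, and partly a different route from the paper.} Your plan is valid: Aldous tightness via a generator bound on increments, which is essentially the paper's Lemma~\ref{lem:bnd_decrement}, plus finite-dimensional convergence via Laplace transforms of increments on $\{\tau_Z\ge T\}$. The finite-dimensional step differs from the paper. The paper does not run an exponential martingale along the path. Instead it sandwiches $Y_n^{t_n}$ between two conditionally compound Poisson processes, with parameters frozen at $(\Alower,t_n)$ and $(\Aupper,t_n+S/(n\lambda_n^2))$, and reads off their Laplace transforms explicitly. Your route avoids that coupling and also works once the uniform Laplace limit is available. Mind the sign: if $R_n\to-\Psi_t(\theta)$ is the generator ratio, the martingale is $\mathrm{e}^{-\theta Y_n(s)}\exp(-\int_0^s R_n)$. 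The real gap is that you never establish that uniform limit, and the concrete recipe you give for it fails.

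\textbf{First problem: the scale of $j$ in \eqref{eq:magic_A_n_t} is off by a factor $\lambda_n$.}
\begin{itemize}
\item We have $-uj/n+j/A=(j/A)(1-W_n^A(u))$, and $1-W_n^A(u)=\lambda_n t+o(\lambda_n)$ by \eqref{eq:tA_n_asymp_stronger}.
\item So your choice $j\approx-\gamma\lambda_n^2 A$ turns the product into roughly $\exp(-\gamma t\lambda_n^3\mathscr{C})$.
\item The identity then only recovers $\ev\mathscr{C}\approx 1/(t\lambda_n)$, not the law at scale $\lambda_n^{-2}$.
\end{itemize}
The missing idea is to take $j_n=\lceil n\lambda_n\alpha\rceil$, so that $j/A\approx\alpha t\lambda_n$. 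Then the linear term $(j/A)(1-W)$ and the quadratic term $-\frac12(j/A)^2$ of $\ln(1+j/A)$ are both of order $\lambda_n^2$. The product is approximately $\exp(t^2(\alpha-\alpha^2/2)\lambda_n^2\mathscr{C})$, and \eqref{eq:magic_A_n_t} gives
$\frac{1}{\lambda_n}\ev[\exp(t^2(\alpha-\alpha^2/2)\lambda_n^2\mathscr{C})-1]\to\alpha t$.
Solving $t^2(\alpha-\alpha^2/2)=-\gamma$, i.e.\ $\alpha=1-\sqrt{1+2\gamma/t^2}$, yields the limit $t-\sqrt{t^2+2\gamma}$ directly. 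This quadratic inversion is where the square-root (tempered $1/2$-stable) exponent comes from, and no Borel/Stirling comparison is needed.

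\textbf{Second problem: your error control fails in part of the regime.}
\begin{itemize}
\item If $\lambda_n\ll n^{-1/4}$, which \eqref{eq:subcrit_lambda_assumption} allows, then $\lambda_n^2\sqrt n\to 0$. So \eqref{eq:magic_branching_proc_estimate_2} says nothing at the level $\mathscr{C}\asymp\sqrt{A}$.
\item In that regime the components carrying the main contribution have size $\lambda_n^{-2}\gg\sqrt n$, so discarding $\mathscr{C}\gtrsim\sqrt A$ would throw away the answer.
\item Your claimed correction size $\lambda_n^{-2}/\sqrt A$ also does not vanish under $n\lambda_n^3\to\infty$ alone.
\end{itemize}
The correct bookkeeping has three pieces.
\begin{itemize}
\item All factors $\mathrm{e}^{-uj/n}(1+\frac{j}{A-i})$ lie in $[0,1]$, since $j\le0$ and $1/A-u/n\ge 0$. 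So for $k\le A/2$, replacing $\prod_{i<k}(1+\frac{j}{A-i})$ by $(1+\frac jA)^k$ costs $O(|j|k^2/A^2)=O(\lambda_n k^2/n)$. After dividing by $\lambda_n$ and averaging, this is $O(\ev\mathscr{C}^2/n)=O(1/(n\lambda_n^3))$ by \eqref{eq:general_moment_of_component_size_crude}.
\item The event $\{\mathscr{C}>A/2\}$ costs $O(1/(n\lambda_n^2))$ by Markov's inequality.
\item Replacing $[\mathrm{e}^{-uj/n}(1+j/A)]^k$ by $\exp(t^2(\alpha-\alpha^2/2)\lambda_n^2k)$ costs $k\cdot o(\lambda_n^2)$. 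This uses the Taylor expansion of $\ln(1+j/A)$, together with $1-W_n^A(u)=\lambda_n t+o(\lambda_n)$ and $n/A\to t$. It is $o(1)$ after dividing by $\lambda_n$, since $\ev\mathscr{C}=O(1/\lambda_n)$.
\end{itemize}
This is exactly the content of Lemmas~\ref{lem:levy_laplace_infgen_alt}--\ref{lem:technical_fs}. Your fallback via the exact Cayley-type formula for $\pr(\mathscr{C}=k)$ might replace it, but as written it is not carried out.
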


We prove Proposition~\ref{prop:Yn_convergence} in Section~\ref{subsec:subordinator}. Before that we show that it indeed implies Theorem~\ref{thm:tempered_stable_scaling}.

\begin{proof}[Proof of Theorem~\ref{thm:tempered_stable_scaling}]

Observe that
\begin{equation}\label{eq:U_Y_connection}
 U_n^{t_n}(s) \stackrel{\eqref{eq:defnU_n(s)},\, \eqref{eq:Wn_def},\, \eqref{eq:defnY_n(s)}}{=} -t_n \cdot Y_n^{t_n}(s) + \frac{A_n\mleft(t_n+\frac{s}{n \lambda_n^2} \mright)}{n}\cdot s, \qquad U^t(s) \stackrel{\eqref{eq:defnU(s)},\, \eqref{eq:defnY(s)}}{=} \frac st - t \cdot Y^t(s).
\end{equation}

Note also that $t_n \to t$ and $\frac 1n \cdot A_n\mleft(t_n+\frac{s}{n \lambda_n^2}\mright) \to \frac 1t$ uniformly for $s \in [0,S]$ (see Theorem~\ref{thm:window_critical}) as $n \to \infty$. Therefore, Proposition~\ref{prop:Yn_convergence} and~\eqref{eq:U_Y_connection} imply Theorem~\ref{thm:tempered_stable_scaling}.
\end{proof}

\subsection{Convergence to the subordinator L\'evy process}\label{subsec:subordinator}

In this section we prove Proposition~\ref{prop:Yn_convergence}.

To prove Proposition~\ref{prop:Yn_convergence}, we use Theorem~\ref{thm:convergence_in_D}, i.e., we show that $Y_n^{t_n}$ is tight on $D[0,S]$ and that the finite dimensional distributions of $Y_n^{t_n}$ converge weakly to the finite dimensional distributions of $Y^t$.

First, we prove the tightness of $Y_n^{t_n}$. For that purpose, we use Aldous' criterion, i.e., we show that the sequence $Y_n^{t_n}$ satisfies the conditions of Theorem~\ref{thm:aldous_crit}. Before that we prove an upper bound on the expected size of the decrement of alive vertices in a given interval. Recall the filtration $(\mathcal{F}_n(\cdot))$ from Definition~\ref{def:alive_vert}, the notion of the stopping time $ \tau_Z = \min \mleft\{\, t \ge t_n^c \; : \; |Z_n(t)| \geq \kappa_n \, \mright\} $ from~\eqref{eq:tauZ_def} and the notation~$\widetilde{C}_1$ from~\eqref{eq:general_moment_of_component_size_crude}.

\begin{lemma}[Bound on the decrement of alive vertices]\label{lem:bnd_decrement} There exists $C \in \mathbb{R}_+$ such that for any $n \in \mathbb{N}$ and for any stopping time $\tau_n$ w.r.t.\ $(\mathcal{F}_n(\cdot))$ that takes its values in $[t_n^c,T]$, for any $D \in \mathbb{R_+}$ such that $\tau_n+D \le T$ almost surely, and for any $K > 0$, we have
\begin{equation}\label{eq:alive_difference_bound}
\pr \big(\, |A_n(\tau_n+D ) - A_n(\tau_n)| \geq K\, \big) \le \frac{D \cdot n \cdot C}{K} + \pr(\tau_Z < T).
\end{equation}
\end{lemma}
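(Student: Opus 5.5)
The plan is to combine a Markov inequality with a localization at $\tau_Z$. Since $A_n(\cdot)$ is nonincreasing, $|A_n(\tau_n+D)-A_n(\tau_n)| = A_n(\tau_n)-A_n(\tau_n+D)$. I would split according to whether $\tau_Z \ge T$:
\begin{equation*}
\pr\big(A_n(\tau_n)-A_n(\tau_n+D)\ge K\big) \le \pr\big(A_n(\tau_n\wedge\tau_Z)-A_n((\tau_n+D)\wedge\tau_Z)\ge K\big) + \pr(\tau_Z<T).
\end{equation*}
This works because on $\{\tau_Z\ge T\}$ both times $\tau_n$ and $\tau_n+D$ lie in $[t_n^c,T]$. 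Hence they coincide with their truncations at $\tau_Z$, up to the boundary case $\tau_n+D=T=\tau_Z$. That case is harmless, because one can truncate at $\tau_Z^-$, or simply note that $|Z_n|<\kappa_n$ on $[t_n^c,T)$ suffices for the drift bound below.

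For the first term I would apply Markov's inequality and then control the expectation through the drift. Since $A_n(t)+\int_0^t \lambda_n A_n(u)\ev_{A_n(u),n,u}[\mathscr{C}]\,\mathrm{d}u$ is a martingale (this is \eqref{eq:neighbor_drift} multiplied by $n$), optional stopping between the bounded stopping times $\tau_n\wedge\tau_Z$ and $(\tau_n+D)\wedge\tau_Z$ gives
\begin{equation*}
\ev\big[A_n(\tau_n\wedge\tau_Z)-A_n((\tau_n+D)\wedge\tau_Z)\big] = \ev\int_{\tau_n\wedge\tau_Z}^{(\tau_n+D)\wedge\tau_Z} \lambda_n A_n(u)\,\ev_{A_n(u),n,u}[\mathscr{C}]\,\mathrm{d}u .
\end{equation*}

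It remains to bound the integrand uniformly by $C n$ for $u\in[t_n^c,\tau_Z)$. On this range $|Z_n(u)|<\kappa_n$, so \eqref{eq:w_geq_1} gives $1-W_n(u)\ge \lambda_n/2$ for $n$ large. Corollary~\ref{cor:ER_conditions} also gives $W_n(u)\in(1/3,1)$. The crude bound \eqref{eq:general_moment_of_component_size_crude} with $k=1$ then yields $\ev_{A_n(u),n,u}[\mathscr{C}]\le \widetilde{C}_1/(1-W_n(u))\le 2\widetilde{C}_1/\lambda_n$. Together with $A_n(u)\le n$, the integrand is at most $2\widetilde{C}_1 n$. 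The expectation is therefore at most $2\widetilde{C}_1 nD$, and Markov's inequality gives the bound $DnC/K$ with $C=2\widetilde{C}_1$.

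The only delicate point is uniformity in all $n$, not just large $n$. For the finitely many small $n$ I would enlarge $C$, using the trivial bound $A_n(u)\ev[\mathscr{C}]\le n\cdot n$ and $\lambda_n\le$ a constant. Otherwise the argument is routine.
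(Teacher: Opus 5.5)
Your proposal is correct and follows the paper's proof essentially verbatim: stop at $\tau_Z$, use the compensator of $A_n$ given by \eqref{eq:neighbor_drift}, bound the integrand by $2\widetilde{C}_1 n$ via \eqref{eq:general_moment_of_component_size_crude} and \eqref{eq:w_geq_1}, and conclude with Markov's inequality plus $\pr(\tau_Z<T)$. Your explicit split on $\{\tau_Z\ge T\}$ and your treatment of the finitely many small $n$ fill in details the paper leaves implicit, since \eqref{eq:w_geq_1} only holds for large $n$; the boundary case $\tau_n+D=T=\tau_Z$ needs no special treatment, because the truncation at $\tau_Z$ is exact there anyway.
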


\begin{proof} Recall the notion of 
 $\ev_{A,n,t}$ and $\mathscr{C}$ from Definition~\ref{not:ER_notation_A_n_t} and that the process $A_n(t) - \int_0^t \mathcal{L}A_n(s) \, \mathrm{d}s$ is a martingale (cf.~\eqref{eq:alive_corrector_martingale}). We have
\begin{multline*}
-\ev \mleft[ A_n((\tau_n+D) \wedge \tau_Z) - A_n( \tau_n \wedge \tau_Z ) \mright] =
- \ev \mleft[ \, \int \limits_{\tau_n \wedge \tau_Z }^{(\tau_n+D) \wedge \tau_Z} \mathcal{L}A_n(s) \, \mathrm{d}s \mright] \stackrel{\eqref{eq:neighbor_drift}}{=} \\
\ev \mleft[ \, \int \limits_{\tau_n \wedge \tau_Z}^{(\tau_n+D) \wedge \tau_Z } \lambda_n A_n(s) \cdot \ev_{A_n(s),n,s}[\mathscr{C}] \, \mathrm{d}s \mright] \stackrel{\eqref{eq:general_moment_of_component_size_crude}, \, \eqref{eq:w_geq_1}}{\le}
D \cdot \lambda_n \cdot n \cdot \frac{2\widetilde{C}_1}{\lambda_n} = 2 D \cdot n \cdot \widetilde{C}_1.
\end{multline*}
An application of Markov's inequality concludes the proof of~\eqref{eq:alive_difference_bound}.
\end{proof}

\begin{lemma}[Tightness of $Y_n^{t_n}$]\label{lem:tightness_Yn}
The sequence $Y_n^{t_n}$ is tight on $D[0,S]$.
\end{lemma}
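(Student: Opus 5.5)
We plan to verify the two hypotheses of Aldous' criterion (Theorem~\ref{thm:aldous_crit}) for $X_n = Y_n^{t_n}$ on $[0,S]$. First, $Y_n^{t_n}(\cdot)$ lives in the time window $[t_n, t_n + S/(n\lambda_n^2)]$. Since $(t_n - t_n^c)/\lambda_n \to \infty$ and $1/(n\lambda_n^2) \ll \lambda_n$, this window is contained in $[t_n^c, T]$ for $n$ large (recall $T>1$ is fixed and $t_n\to t$; if $t_n$ is close to $T$ we simply enlarge $T$).

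For one-dimensional tightness at a fixed $s$, we apply Lemma~\ref{lem:bnd_decrement} with the deterministic stopping time $\tau_n \equiv t_n$, $D = s/(n\lambda_n^2)$ and $K = M/\lambda_n^2$. This gives
\begin{equation*}
\pr\big(Y_n^{t_n}(s) \ge M\big) \le \frac{s}{n\lambda_n^2}\cdot n \cdot C \cdot \frac{\lambda_n^2}{M} + \pr(\tau_Z<T) = \frac{Cs}{M} + \pr(\tau_Z<T).
\end{equation*}
By Lemma~\ref{lem:opt_stop}, $\pr(\tau_Z<T)\to 0$. Since $Y_n^{t_n}(s)\ge 0$ (because $A_n$ is decreasing), this gives tightness of $\{Y_n^{t_n}(s)\}$ on the line, uniformly in $n$ after letting $M\to\infty$.

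For the increment condition~\eqref{eq:aldous_crit}, let $\tau_n$ be a stopping time of $Y_n^{t_n}$ with finitely many values in $[0,S]$, and let $\delta_n\to 0$. The time $\sigma_n := t_n + \tau_n/(n\lambda_n^2)$ is a stopping time with respect to $(\mathcal{F}_n(\cdot))$, because the natural filtration of $Y_n^{t_n}$ is generated by $A_n$, which is contained in $\mathcal{F}_n$. Moreover, $\sigma_n$ takes values in $[t_n^c,T]$. Since $Y_n^{t_n}$ is nondecreasing, we have
\begin{equation*}
0\le Y_n^{t_n}((\tau_n+\delta_n)\wedge S)-Y_n^{t_n}(\tau_n)\le \lambda_n^2\,\big(A_n(\sigma_n)-A_n(\sigma_n+D_n)\big), \qquad D_n := \frac{\delta_n}{n\lambda_n^2}.
\end{equation*}
Here $\sigma_n + D_n \le T$ for large $n$. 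Lemma~\ref{lem:bnd_decrement} with $K=\varepsilon/\lambda_n^2$ then bounds the probability that this increment is at least $\varepsilon$ by
\begin{equation*}
\frac{C\delta_n}{\varepsilon}+\pr(\tau_Z<T)\to 0.
\end{equation*}
This verifies~\eqref{eq:aldous_crit}, and Theorem~\ref{thm:aldous_crit} yields tightness.

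The only delicate point is measurability: we need the stopping time of the rescaled process to be an $\mathcal{F}_n$-stopping time, and the endpoints to stay inside $[t_n^c,T]$ so that Lemma~\ref{lem:bnd_decrement} (with its use of $\tau_Z$ and~\eqref{eq:w_geq_1}) applies. Both follow directly from the definitions. The quantitative content is entirely in Lemma~\ref{lem:bnd_decrement}, which we take as given.
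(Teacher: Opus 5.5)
Your proposal is correct and follows the paper's proof. Both verify the two hypotheses of Aldous' criterion by applying the decrement bound of Lemma~\ref{lem:bnd_decrement} (with $K=\varepsilon/\lambda_n^2$ for the increment condition and $K=M/\lambda_n^2$ for one-dimensional tightness) and then using $\pr(\tau_Z<T)\to 0$ from Lemma~\ref{lem:opt_stop}. You also make explicit two points the paper leaves implicit: that the rescaled stopping time is an $(\mathcal{F}_n(\cdot))$-stopping time, and that the relevant time window stays inside $[t_n^c,T]$.
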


\begin{proof} 
We show that~\eqref{eq:alive_difference_bound} implies that the conditions of Theorem~\ref{thm:aldous_crit} hold for the sequence $Y_n^{t_n}$. Let~$\tau_n$ and~$\delta_n$ as in Theorem~\ref{thm:aldous_crit}. Then for any $\varepsilon>0$ we obtain~\eqref{eq:aldous_crit}:
\begin{align*}
\begin{split}
\pr \mleft( \,|Y_n^{t_n}(\tau_n+\delta_n) - Y_n^{t_n}(\tau_n)| \geq \varepsilon \,\mright) \stackrel{\eqref{eq:defnY_n(s)}}{=}&
\pr \mleft( \, \left| A_n \mleft(t_n + \frac{\tau_n+\delta_n}{n \lambda_n^2} \mright) -A_n \mleft(t_n + \frac{\tau_n}{n \lambda_n^2} \mright) \right| \geq \frac{\varepsilon}{\lambda^2_n} \,\mright) \stackrel{\eqref{eq:alive_difference_bound}}{\le}\\
&\frac{\lambda_n^2}{\varepsilon} \cdot \frac{\delta_n}{n \lambda_n^2} \cdot n \cdot C + \pr(\tau_Z <T) \stackrel{n \to \infty}{\longrightarrow} 0,
\end{split}
\end{align*}
where we used $\delta_n \to 0$ and Lemma~\ref{lem:opt_stop} (i.e., that we have $\tau_Z \geq T$ w.h.p.) to conclude.

Similarly, for any $K > 0$, we have
\begin{equation*}
\pr \mleft( |Y_n^{t_n}(s)| \geq K \mright) \stackrel{\eqref{eq:defnY_n(s)}}{=} \mathbb{P} \mleft( \,\left| A_n \mleft(t_n + \frac{s}{n \lambda_n^2} \mright) - A_n(t_n) \right| \geq \frac{K}{\lambda_n^2} \,\mright) \stackrel{\eqref{eq:alive_difference_bound}}{\le} \frac{\lambda_n^2}{K} \cdot \frac{s}{n \lambda_n^2} \cdot n \cdot C + \mathbb{P}(\tau_Z <T).
\end{equation*}
Therefore, for any $\varepsilon>0$, we can choose $K$ such that ${\pr \mleft( |Y_n^{t_n}(s)| \geq K \mright) \le \varepsilon}$ for all $n$, i.e.,~$Y_n^{t_n}(s)$ is tight for any $s \in [0,S]$. By Theorem~\ref{thm:aldous_crit}, the tightness of the sequence $(Y_n^{t_n}(\cdot))$ on~$D[0,S]$ follows.
\end{proof}

Our next goal is to prove the convergence of finite dimensional distributions of $Y_n^{t_n}$. Recall Notation~\ref{not:ER_graph}.

\begin{definition}[Auxiliary L\'evy processes]\label{def:aux_levy_proc}
Let $\Aupper\coloneqq A_n(t_n)$, $\Alower \coloneqq A_n(t_n)-\lceil \lambda_n^{-2}\cdot \ln (1/\lambda_n)\rceil$, and let $I_n^S$ denote the interval $\left[t_n, \, t_n + \frac{S}{n \lambda_n^2}\right]$. We will later show that w.h.p.\ $A_n(t)$ stays in $[\Alower, \Aupper]$ as long as $t \in I_n^S$.

We define two integer-valued decreasing L\'evy processes $\overline{A}_n^{t_n}(\cdot)$ and $\underline{A}_n^{t_n} (\cdot)$ on the interval $I_n^S$ (we will later show that w.h.p.\ the inequalities $\overline{A}_n^{t_n}(t) \leq A_n(t) \leq \underline{A}_n^{t_n} (t)$ hold as long as $t \in I_n^S$, c.f.\ \eqref{eq:bounds_on_Y_t_n}).
\begin{itemize}
\item $\overline{A}_n^{t_n}(t_n)= A_n(t_n)$ and $ \underline{A}_n^{t_n}(t_n) = A_n(t_n)$,
\item $\underline{A}_n^{t_n} (\cdot)$ decreases with constant rate $\lambda_n\Alower$, $\overline{A}_n^{t_n}(\cdot)$ decreases with constant rate $\lambda_n \Aupper$,
\item the sizes of the decrements of $\underline{A}_n^{t_n}(\cdot)$ are i.i.d., the distribution of a decrement is the same as the distribution of the size of the connected component of a fixed vertex in $\mathcal{G}(\Alower,\, 1-\exp(-{t_n}/{n}))$,
\item the sizes of the decrements of $\overline{A}_n^{t_n}(\cdot)$ are i.i.d., the distribution of a decrement is the same as the distribution of the size of the connected component of a fixed vertex in $\mathcal{G}\mleft(\Aupper,\, 1-\exp \mleft(-\frac{t_n+\frac{S}{n\lambda_n^2}}{n} \mright) \mright)$.
\end{itemize}
Note that $\overline{A}_n^{t_n}(\cdot)$ and $\underline{A}_n^{t_n} (\cdot)$ are actually only L\'evy processes once we condition on the value of $A_n(t_n)$.

We also define two increasing (conditionally) L\'evy processes for $s \in [0,S]$
\begin{align}
\underline{Y}_n^{t_n}(s) &\coloneqq -\left[ \underline{A}_n^{t_n} \mleft(t_n+\frac{s}{n\lambda_n^2} \mright) - \underline{A}_n^{t_n}(t_n) \right] \cdot \lambda_n^2, \label{eq:Y_underline_def}\\
\overline{Y}_n^{t_n}(s) &\coloneqq -\left[ \overline{A}_n^{t_n} \mleft(t_n+\frac{s}{n\lambda_n^2} \mright) - \overline{A}_n^{t_n}(t_n) \right] \cdot \lambda_n^2.\label{eq:Y_overline_def}
\end{align}
\end{definition}

\needspace{3\baselineskip}
\begin{lemma}[Properties of the auxiliary L\'evy processes]\label{lem:Y_under_over_prop}
\mbox{}
\begin{enumerate}
\item With high probability, the processes $\underline{Y}_n^{t_n}$ and $\overline{Y}_n^{t_n}$ (defined in \eqref{eq:Y_underline_def} and~\eqref{eq:Y_overline_def}) squeeze the process $Y_n^{t_n}$ (defined in~\eqref{eq:defnY_n(s)}), i.e., for any $s \in [0,S]$, we have
\begin{equation}\label{eq:bounds_on_Y_t_n}
\lim_{n \to \infty} \pr\mleft( \, \underline{Y}_n^{t_n}(s) \le Y_n^{t_n}(s) \le \overline{Y}_n^{t_n}(s) \, \mright)=1. 
\end{equation}

\item As $n \to \infty$, the finite dimensional distributions of both $\underline{Y}_n^{t_n}$ and $\overline{Y}_n^{t_n}$ converge weakly to the finite dimensional distributions of $Y^t$ (defined in \eqref{eq:defnY(s)}). 
\end{enumerate}
\end{lemma}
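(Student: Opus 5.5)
The plan has three stages. First build a coupling that gives part 1. Then compute the Laplace exponents of the two bounding compound Poisson processes for part 2. Finally, check that the coupling survives the stopping needed to keep $A_n$ in $[\Alower,\Aupper]$.

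\textbf{Coupling for part 1.} I will construct $A_n$, $\underline{A}_n^{t_n}$ and $\overline{A}_n^{t_n}$ on one probability space on the interval $I_n^S$, conditionally on $\mathcal{F}_n(t_n)$.

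Use the PPP $\Lambda$ of lightnings. Thin it so that hits on alive vertices occur at rate $\lambda_n A_n(t)$. Since $\Alower\le A_n(t)\le\Aupper$ as long as $A_n$ stays in the window, I can add and remove points by independent thinning. This gives three nested strike sets: the $\underline{A}$-strikes are contained in the true strikes, which are contained in the $\overline{A}$-strikes.

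At each common strike time, the killed component $\mathscr{C}$ has, by Lemma~\ref{lem:ER}, the law of the component of a fixed vertex in $\mathcal{G}(A_n(t-),1-\mathrm{e}^{-t/n})$. This law is stochastically monotone in both the vertex count and the edge probability: more vertices or a larger $p$ give a larger component, via the standard edge/vertex inclusion coupling. So I can couple the decrement of $\underline A$ (vertex count $\Alower$, time $t_n$) below the true decrement, and the true decrement below that of $\overline A$ (vertex count $\Aupper$, time $t_n+S/(n\lambda_n^2)$). Strikes present only in the larger set contribute extra nonnegative jumps on the correct side.

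This yields $\underline{Y}_n^{t_n}\le Y_n^{t_n}\le\overline{Y}_n^{t_n}$ on $[0,S]$, on the event $E_n=\{A_n(t)\ge\Alower \ \forall t\in I_n^S\}$.

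It remains to show $\pr(E_n)\to1$. The total decrease on $I_n^S$ is at most $\lambda_n^{-2}\overline{Y}_n^{t_n}(S)$, and I need this to be below $\lceil\lambda_n^{-2}\ln(1/\lambda_n)\rceil$. Equivalently, I need $\overline{Y}_n^{t_n}(S)\le\ln(1/\lambda_n)$ w.h.p. That follows from tightness of $\overline{Y}_n^{t_n}(S)$, which part 2 gives, together with $\ln(1/\lambda_n)\to\infty$. To avoid circularity, I stop the coupling at the exit time from the window; the inequalities then hold up to that time, and the exit itself is controlled by $\overline{Y}$.

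\textbf{Part 2: Laplace exponents.} Conditionally on $A_n(t_n)$, $\overline{Y}$ is compound Poisson, with jump rate $\lambda_n\Aupper/(n\lambda_n^2)=\Aupper/(n\lambda_n)$ in the $s$-variable and jump law $\lambda_n^2\mathscr{C}$. Hence, for $\theta\ge0$,
\[
-\ln\ev\mleft[\mathrm{e}^{-\theta\overline{Y}(s)}\mright]=s\frac{\Aupper}{n\lambda_n}\ev\mleft[1-\mathrm{e}^{-\theta\lambda_n^2\mathscr{C}}\mright].
\]
Independent increments reduce convergence of finite-dimensional distributions to convergence of this one-dimensional exponent. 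The same computation applies to $\underline Y$.

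By Lemma~\ref{lem:Z_bound} and \eqref{eq:tA_n_asymp_stronger}, w.h.p.\ $1-W^{\Aupper}_n(t_n)=\lambda_n t_n(1+o(1))$, with the same asymptotics for $\Alower$ and for the slightly shifted time, since $\lambda_n^{-2}\ln(1/\lambda_n)/n\ll\lambda_n$ and $S/(n\lambda_n^2)\ll\lambda_n$. Also $\Aupper/n\to1/t$.

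\textbf{Main obstacle: the scaling limit of $\mathscr{C}$.} The key input is that for $\mathcal{G}(A,1-\mathrm{e}^{-W/A})$ with $1-W=\epsilon\to0$ and $A\epsilon^3\to\infty$,
\[
\epsilon^{-1}\ev\mleft[1-\mathrm{e}^{-\theta\epsilon^2\mathscr{C}}\mright]\to\int_0^\infty(1-\mathrm{e}^{-\theta y})\frac{1}{\sqrt{2\pi}}y^{-3/2}\mathrm{e}^{-y/2}\,\mathrm{d}y ,
\]
uniformly on compact sets of $\theta$.

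I would prove this by comparing $\mathscr{C}$ with the total progeny of a Poisson$(W)$ Galton--Watson tree, i.e.\ a Borel law, for which $\pr(\mathscr{C}=k)\approx\frac{k^{k-1}}{k!}W^{k-1}\mathrm{e}^{-kW}\approx(2\pi)^{-1/2}k^{-3/2}\mathrm{e}^{-k\epsilon^2/2}$. The relevant sizes are $k\asymp\epsilon^{-2}\ll A^{2/3}$, and in that range the finite-$A$ correction is negligible.

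Alternatively, I would use the exact identity \eqref{eq:magic_A_n_t} with $j$ negative of order $A\epsilon^2$. Dominated convergence is handled by the exponential tail \eqref{eq:magic_branching_proc_estimate_2} and the moment bound \eqref{eq:general_moment_of_component_size_crude}.

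\textbf{Identifying the limit.} With $\epsilon=\lambda_n t$, the rescaled jump is $\lambda_n^2\mathscr{C}=\epsilon^2\mathscr{C}/t^2$, and the rate is $\Aupper/(n\lambda_n)\approx1/(t\lambda_n)=1/\epsilon$. The exponent therefore converges to
\[
s\int(1-\mathrm{e}^{-\theta y/t^2})\,\nu(\mathrm{d}y),\qquad \nu(\mathrm{d}y)=\frac{1}{\sqrt{2\pi}}y^{-3/2}\mathrm{e}^{-y/2}\,\mathrm{d}y ,
\]
which is exactly the Laplace exponent of $Y^t$ in \eqref{eq:defnY(s)}. Since the convergence holds w.h.p.\ in $A_n(t_n)$, bounded convergence gives the unconditional statement.
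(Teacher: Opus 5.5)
Your proposal is correct. The part~1 coupling (nested thinning of the strike rates $\lambda_n\Alower\le\lambda_n A_n(t)\le\lambda_n\Aupper$, plus monotonicity of the component law in the vertex count and in the edge probability) is the paper's. You depart from the paper in two places.

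First, to keep $A_n$ above $\Alower$ on $I_n^S$, the paper uses Lemma~\ref{lem:bnd_decrement}. That is a Markov bound on the expected decrement stopped at $\tau_Z$, and it rests on the Lyapunov exit estimate of Lemma~\ref{lem:opt_stop}. You instead note that the upper half of the coupling needs no stopping, since $A_n\le\Aupper$ always holds. So the whole decrement is at most $\lambda_n^{-2}\overline{Y}_n^{t_n}(S)$, whose conditional mean is $O(1)$ once Lemma~\ref{lem:Z_bound} gives $1-W_n^{\Aupper}(t_n)\asymp\lambda_n$. This is slightly more economical.

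Second, and more substantially, the analytic input of part~2 differs. The paper (Lemma~\ref{lem:levy_laplace_infgen_alt}) never looks at point probabilities of $\mathscr{C}$. It plugs $j_n=\lceil n\lambda_n\alpha_n\rceil$, with $t_n^2(\alpha_n-\alpha_n^2/2)=-\gamma$, into the exact identity \eqref{eq:magic_A_n_t}. The two error terms (replacing $A_n-i$ by $A_n$, and expanding the logarithm) then need only $\ev\mathscr{C}\lesssim\lambda_n^{-1}$, $\ev\mathscr{C}^2\lesssim\lambda_n^{-3}$ and $n\lambda_n^3\to\infty$. Your Borel route shows the L\'evy measure more transparently. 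Its price is a uniform local estimate $\pr(\mathscr{C}=k)=\mathrm{Borel}_W(k)(1+o(1))$ for $k\le M\epsilon^{-2}$, including components containing cycles (relative weight $O(k^{3/2}/A)$). You only assert this estimate.

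Three details need fixing.
\begin{itemize}
\item On $\{\mathscr{C}\le\delta\epsilon^{-2}\}$, the moment bounds of Claim~\ref{cl:component_size} only give $O(\theta)$. You need $\ev[\mathscr{C}\wedge K]\lesssim\sqrt{K}$, which comes from your local bound or from domination of $\mathscr{C}$ by the total progeny of a Poisson Galton--Watson tree with mean $W$.
\item On $\{\mathscr{C}>M\epsilon^{-2}\}$, the exponential bound \eqref{eq:magic_branching_proc_estimate_2} does not suffice for fixed $M$ once it is multiplied by $\epsilon^{-1}$. Markov's inequality with $\ev\mathscr{C}\lesssim\epsilon^{-1}$ is what gives $O(1/M)$.
\item In your alternative via \eqref{eq:magic_A_n_t}, the scale of $j$ is wrong. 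The logarithm of the per-vertex factor is $j\epsilon/A-j^2/(2A^2)+\dots$, and this must be of order $\epsilon^2$. So $j$ must be of order $A\epsilon\asymp n\lambda_n$, as in the paper. With $j\asymp A\epsilon^2$ the identity only returns $\ev\mathscr{C}\approx\epsilon^{-1}$.
\end{itemize}
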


We prove Lemma~\ref{lem:Y_under_over_prop} in Section~\ref{subsec:aux_Levy_processes}. Before that, we deduce Proposition~\ref{prop:Yn_convergence} from it.

\begin{proof}[Proof of Proposition~\ref{prop:Yn_convergence}]
Lemma~\ref{lem:Y_under_over_prop} implies that the finite dimensional distributions of $Y_n^{t_n}$ converge weakly to the finite dimensional distributions of $Y^t$. Therefore, by Theorem~\ref{thm:convergence_in_D} and Lemma~\ref{lem:tightness_Yn}, we obtain that~${Y_n^{t_n}(\cdot) \Rightarrow Y^t(\cdot)}$.
\end{proof}

\subsection{Properties of the auxiliary L\'evy processes}\label{subsec:aux_Levy_processes}

In this section our goal is to prove Lemma~\ref{lem:Y_under_over_prop}.

Recall that we defined $z_n^A(t)$ in \eqref{eq:z_n_t_A_def} for given $n \in \mathbb{N}$, $t \in \mathbb{R}_+$ and $A \in \left\{ 1, \ldots, n \right\}$. Recall also that we have fixed $T \in (1, +\infty)$ and the notation $\ev_{A,n,t}$ from Definition~\ref{not:ER_notation_A_n_t}.

\begin{lemma}[Limit of a Laplace transform]\label{lem:levy_laplace_infgen_ingredient}
For any $\gamma \geq 0$, we have
\begin{equation}\label{eq:levy_laplace_infgen_ingredient}
\lim \limits_{s \to \infty} \, \lim_{n \to \infty} \, \sup_{t_n^c + s \cdot \lambda_n \leq t \leq T} \, \sup_{A \,: \, |z_n^A(t)| \leq \kappa_n }
\left| \frac{ \ev_{A,n,t}\mleft( \exp(-\gamma \lambda_n^2 \mathscr{C})-1 \mright)}{\lambda_n} -\left( t-\sqrt{t^2+2\gamma} \right) \right|=0.
\end{equation}
\end{lemma}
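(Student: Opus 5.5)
The plan is to compare $\mathscr{C}$ with the total progeny of a slightly subcritical Galton--Watson tree. For such a tree the Laplace transform solves an explicit fixed-point equation, and I will expand that equation at scale $\lambda_n$. I write $\lambda=\lambda_n$, $W=W_n^A(t)=tA/n$, $p=1-\mathrm{e}^{-t/n}$, and $g=(1-W)/\lambda$.

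\textbf{Step 1: reduction of the parameters.} Under the suprema in \eqref{eq:levy_laplace_infgen_ingredient}, the relations \eqref{eq:tA_n_asymp}(a), \eqref{eq:w_geq_1} and \eqref{eq:gnt_asymp_t} give $g=g_n(t)+o(1)$ uniformly, and $\lim_{s\to\infty}\limsup_n\sup|g_n(t)-t|=0$. Since $x\mapsto x-\sqrt{x^2+2\gamma}$ is uniformly continuous on $[1/2,T+1]$, it suffices to show the following uniform statement: $\lambda^{-1}\ev_{A,n,t}(\mathrm{e}^{-\gamma\lambda^2\mathscr{C}}-1)=g-\sqrt{g^2+2\gamma}+o(1)$, whenever $g$ stays in a compact subset of $(0,\infty)$ (by \eqref{eq:gnt_bounds} and \eqref{eq:w_geq_1} it lies in $[1-o(1),\,4\vee T+o(1)]$).

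\textbf{Step 2: truncation and coupling.} Put $m=\lceil \lambda^{-2}(\ln(1/\lambda))^2\rceil$. By \eqref{eq:magic_branching_proc_estimate_2} and Markov's inequality, $\pr_{A,n,t}(\mathscr{C}\ge m)\le 3\exp(-\tfrac14 g^2(\ln(1/\lambda))^2)=o(\lambda)$. Because $|\mathrm{e}^{-\gamma\lambda^2 x}-1|\le 1$, the event $\{\mathscr{C}\ge m\}$ changes the quantity in Step 1 by only $o(1)$ after dividing by $\lambda$. Next I explore the component of the fixed vertex in breadth-first order. While fewer than $m$ vertices have been explored, each newly explored vertex has $\mathrm{Bin}(A-k,p)$ new children with $k\le m$. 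Hence, on $\{\mathscr{C}<m\}$, the variable $\mathscr{C}$ can be sandwiched between the total progenies $\mathscr{T}_-\le\mathscr{C}\le\mathscr{T}_+$ of Galton--Watson trees with offspring laws $\mathrm{Bin}(A-m,p)$ and $\mathrm{Bin}(A,p)$. The two offspring means are $Ap$ and $(A-m)p$. The first equals $W+O(1/n)$, and the second differs from it by $mp\asymp\lambda^{-2}(\ln(1/\lambda))^2/n$, which is $o(\lambda)$ because $n\lambda^3\gg(\ln n)^{4}$ by \eqref{eq:subcrit_lambda_assumption}. So both means are $1-\lambda g_\pm$ with $g_\pm=g+o(1)$, and both variances are $1+o(1)$. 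The same tail bound applies to $\mathscr{T}_\pm$; this follows from the elementary subcritical exponential-moment estimate, proved just as \eqref{eq:magic_branching_proc_estimate_2}. By monotonicity of $x\mapsto \mathrm{e}^{-\gamma\lambda^2x}$, it therefore suffices to prove the limit for $\ev[\mathrm{e}^{-\gamma\lambda^2\mathscr{T}_\pm}]$.

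\textbf{Step 3: the fixed-point equation.} Let $F$ be the offspring generating function, $\theta=\gamma\lambda^2$, and $\psi=\ev[\mathrm{e}^{-\theta\mathscr{T}}]$. Then $\psi$ is the unique solution in $[0,1]$ of $\psi=\mathrm{e}^{-\theta}F(\psi)$. Write $\psi=1+\lambda h$ with $h\le0$. A Taylor expansion gives $F(1+\lambda h)=1+(1-\lambda g_\pm)\lambda h+\tfrac12(1+o(1))\lambda^2h^2+O(\lambda^3|h|^3)$, where the third derivative of a binomial generating function is bounded on $[0,1]$. Substituting and cancelling $\lambda h$, then dividing by $\lambda^2$, yields
\begin{equation*}
\tfrac12 h^2-g_\pm h-\gamma=o(1)\cdot(1+h^2)+O(\lambda|h|^3).
\end{equation*}
A priori $|h|$ is bounded, since $|h|\le\gamma\lambda\,\ev\mathscr{T}\le\gamma\lambda/(\lambda g_\pm)$. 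So $h$ is an approximate root of $h^2-2g_\pm h-2\gamma$. The discriminant is bounded away from $0$ and $h\le 0$, so $h$ must lie near the negative root: $h=g_\pm-\sqrt{g_\pm^2+2\gamma}+o(1)$, uniformly in the parameters.

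\textbf{Main obstacle.} I expect Step 3 to be the hardest part: the error terms must be uniform over all $(A,t)$ in the suprema, and the root selection must be made rigorous. I would handle it with the a priori bound on $|h|$ together with the observation that the quadratic has two roots separated by $2\sqrt{g^2+2\gamma}\ge 2$. The coupling in Step 2 is standard but also needs care: the lower tree uses offspring $\mathrm{Bin}(A-m,p)$ only up to exploration time $m$, and the domination must respect the truncation.
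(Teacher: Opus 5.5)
Your proof is correct, but it takes a different route from the paper, which never compares $\mathscr{C}$ with a Galton--Watson tree.

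\textbf{The paper's route.} It uses the exact identity \eqref{eq:magic_A_n_t} with $j_n=\lceil n\lambda_n\alpha_n\rceil$, where $\alpha_n=1-\sqrt{1+2\gamma/t_n^2}\le 0$ is chosen so that $t_n^2(\alpha_n-\alpha_n^2/2)=-\gamma$. It then shows two things. First, $\prod_{i<\mathscr{C}}\mathrm{e}^{-t_nj_n/n}(1+j_n/(A_n-i))$ differs from $\mathrm{e}^{-\gamma\lambda_n^2\mathscr{C}}$ by $o(\lambda_n)$ in expectation; the error terms $\Phi_n,\Psi_n$ are controlled by $\ev\mathscr{C}$, $\ev\mathscr{C}^2$ and a Markov bound on $\{\mathscr{C}>A_n/2\}$. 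Second, the right-hand side $j_n/(\lambda_nA_n)$ equals $\alpha_nt_n+o(1)=t_n-\sqrt{t_n^2+2\gamma}+o(1)$.

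So your Step 3 quadratic also appears in the paper, but it is solved in advance through the choice of $j_n$. The constraint $j\le 0$ in the identity plays the role of your root selection via $h\le 0$.

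\textbf{What each route buys.} The paper's argument needs no exploration coupling, no truncation and no exponential moment bound. The price is that it relies on an algebraic identity special to Erd\H{o}s--R\'enyi components. Your argument exhibits the limit as the fixed point of the total-progeny Laplace transform of a near-critical branching process. It therefore works for any model admitting such a sandwich. In exchange it needs \eqref{eq:magic_branching_proc_estimate_2} and $(\ln(1/\lambda_n))^2\ll n\lambda_n^3$ for the truncation, both of which are available here.

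Your parametrization by $g=(1-W_n^A(t))/\lambda_n$ also handles the iterated limit $\lim_{s\to\infty}\lim_{n\to\infty}$ cleanly. The paper instead applies Lemma~\ref{lem:levy_laplace_infgen_alt}, which assumes $(t_n-t_n^c)/\lambda_n\to\infty$, to maximizers of a supremum over $t\ge t_n^c+s\lambda_n$ with $s$ fixed. That step tacitly needs a subsequence argument.

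\textbf{Two minor points.} In the coupling, the binomial parameter is governed by the number of \emph{discovered} vertices, not explored ones. On $\{\mathscr{C}<m\}$ both counts stay below $m$, so your sandwich still holds. Also, you do not need the tail bound for $\mathscr{T}_\pm$. The facts that $\mathscr{C}\le\mathscr{T}_+$ always and $\mathscr{T}_-\le\mathscr{C}$ on $\{\mathscr{C}<m\}$ already give
\[
\ev[\mathrm{e}^{-\theta\mathscr{T}_+}]\le\ev[\mathrm{e}^{-\theta\mathscr{C}}]\le\ev[\mathrm{e}^{-\theta\mathscr{T}_-}]+\pr(\mathscr{C}\ge m).
\]
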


Before we prove Lemma~\ref{lem:levy_laplace_infgen_ingredient}, let us use it to prove Lemma~\ref{lem:Y_under_over_prop}.

\begin{proof}[Proof of Lemma~\ref{lem:Y_under_over_prop}]

Recall that $I_n^S$ denotes the interval $\left[t_n,\, t_n + \frac{S}{n\lambda_n^2}\right]$.
Observe that $A_n (\cdot) \le \Aupper$ on $I_n^S$. Moreover, by the definition of $\Alower$ (see Definition~\ref{def:aux_levy_proc}), Lemma~\ref{lem:bnd_decrement}, also using that $\lambda_n \to 0$, we obtain that with high probability $\Alower \le A_n (\cdot)$ holds on $I_n^S$.

By Claim~\ref{cl:time_dep_inf_gen} and Definition~\ref{def:aux_levy_proc} it follows that $\underline{A}_n^{t_n}(\cdot)$ decreases with smaller rate than $A_n(\cdot)$, and $A_n(\cdot)$ decreases with smaller rate than $\overline{A}_n^{t_n}(\cdot)$ on the interval $I_n^S$. Moreover, the distribution of the decrements of~$\underline{A}_n^{t_n}(\cdot)$ is stochastically dominated by the distribution of the decrements of $A_n(\cdot)$ at any point of the interval~$I_n^S$, which, in turn, is stochastically dominated by the distribution of the decrements of $\overline{A}_n^{t_n}(\cdot)$. Therefore, we can couple~$\underline{A}_n^{t_n}(\cdot)$,~$A_n(\cdot)$ and~$\overline{A}_n^{t_n}(\cdot)$ on the interval $I_n^S$ in a way that for any $s \in [0,S]$, we have
\begin{equation}\label{eq:alive_vert_coupled}
\lim_{n \to \infty}\pr\mleft( \underline{A}_n^{t_n} \mleft(t_n+\frac{s}{n\lambda_n^2} \mright) \ge A_n \mleft(t_n+\frac{s}{n\lambda_n^2} \mright) \ge
\overline{A}_n^{t_n} \mleft(t_n+\frac{s}{n\lambda_n^2} \mright) \mright)=1.
\end{equation}
By~\eqref{eq:Y_underline_def} and~\eqref{eq:Y_overline_def}, \eqref{eq:alive_vert_coupled} implies the inequalities~\eqref{eq:bounds_on_Y_t_n}. 

Now we prove the convergence of finite dimensional distributions. Since $\underline{Y}_n^{t_n}$ and $\overline{Y}_n^{t_n}$ have independent increments, it is enough to prove that for any fixed $s \in [0,S]$, we have~${\underline{Y}_n^{t_n}(s) \Rightarrow Y^t(s)}$ and $\overline{Y}_n^{t_n}(s) \Rightarrow Y^t(s)$. Since $\underline{Y}_n^{t_n}$, $\overline{Y}_n^{t_n}$ and $Y^t$ are nonnegative random variables, it is enough to prove the convergence of the Laplace transforms, i.e., we only need to show that for any $\gamma > 0$ we have
\begin{equation}\label{eq:Laplace_transforms_converge}
\ev \mleft( \mathrm{e}^{-\gamma \underline{Y}_n^{t_n}(s)} \mright) \to
\ev \mleft( \mathrm{e}^{-\gamma Y^t(s)} \mright)
\quand
\ev \mleft( \mathrm{e}^{-\gamma \overline{Y}_n^{t_n}(s)} \mright) \to
\ev \mleft( \mathrm{e}^{-\gamma Y^t(s)} \mright)
\quad \text{as} \quad
n \to \infty.
\end{equation}
We will use Lemma~\ref{lem:levy_laplace_infgen_ingredient} to prove~\eqref{eq:Laplace_transforms_converge}. In order to do so, we note that it follows from Lemma~\ref{lem:Z_bound} and the definitions of~$\kappa_n$ (cf.~\eqref{eq:kappa_threshold_assumption}), $z_n^A(t)$ (cf.~\eqref{eq:z_n_t_A_def}), the formula~\eqref{eq:w_ODE} for $\dot{w}_n(t)$ together with $g_n(t) \approx t$ (cf.~\eqref{eq:gnt_asymp_t}) and the definitions of $I_n^S$, and $\Alower$, $\Aupper$ that w.h.p.\ $|z_n^A(t)| \leq \kappa_n$ holds for all $t \in I_n^S $ and all $A \in [\Alower, \Aupper]$.

By Definition~\ref{def:aux_levy_proc}, $\underline{Y}_n^{t_n}(s)$ is the sum of $\text{POI}\mleft( \frac{s\Alower}{n\lambda_n} \mright)$ independent terms that have the same distribution as~${\lambda_n^2 \cdot \mathscr{C}}$, where $\mathscr{C}$ denotes the size of the connected component of a fixed vertex in $\mathcal{G}(\Alower,\, 1-\exp(-{t_n}/{n}))$. Therefore,
\begin{equation}\label{eq:Y_underline_Laplace}
\ev \mleft( \mathrm{e}^{-\gamma \underline{Y}_n^{t_n}(s)} \, \big| \, A_n(t_n) \mright) = \exp \mleft[ \frac{s\Alower}{n\lambda_n} \cdot \left( \ev_{\Alower,n,t_n} \mleft( \mathrm{e}^{-\gamma \lambda_n^2 \cdot \mathscr{C}} \mright) - 1 \right) \mright].
\end{equation}

Similarly, we have
\begin{equation}\label{eq:Y_overline_Laplace}
\ev \mleft( \mathrm{e}^{-\gamma \overline{Y}_n^{t_n}(s)}\, \big| \, A_n(t_n) \mright) = \exp \mleft[ \frac{s \Aupper}{n\lambda_n} \cdot \left( \ev_{\Aupper,n,t_n+\frac{S}{n\lambda_n^2}} \left( \mathrm{e}^{-\gamma \lambda_n^2 \cdot \mathscr{C}} \right) - 1 \right) \mright].
\end{equation}

Using Campbell's formula for Poisson point processes (see e.g.\ \cite[Section~3.2]{K92}) in the equation marked by $(*)$ below, we obtain
\begin{equation}\label{eq:Y_Laplace}
\ev \mleft( \mathrm{e}^{-\gamma Y^t(s)} \mright)
\stackrel{(*)}{=}
\exp \mleft[\, \int \limits_0^s \int \limits_0^\infty \left( \mathrm{e}^{-\gamma \cdot \frac{y}{t^2}}-1\right) \cdot \frac{1}{\sqrt{2\pi}}\cdot y^{-3/2} \cdot \mathrm{e}^{-y/2} \, \mathrm{d}y\, \mathrm{d}u \mright]
= \exp \mleft[s \cdot \left(1- \sqrt{1+\frac{2\gamma}{t^2}} \right) \mright].
\end{equation}

Observing that $t_n \to t$, $\frac{S}{n\lambda_n^2} \to 0$ and ${\Alower}/{n} \to 1/t$ and ${\Aupper}/{n} \to 1/t$ in probability as $n \to \infty$ (see Theorem~\ref{thm:window_critical} and~\eqref{eq:subcrit_lambda_assumption}), we obtain that \eqref{eq:Laplace_transforms_converge} follows by~\eqref{eq:levy_laplace_infgen_ingredient}, \eqref{eq:Y_underline_Laplace}, \eqref{eq:Y_overline_Laplace} and~\eqref{eq:Y_Laplace}.
\end{proof}

\subsection{Limit of the Laplace transform}\label{subsec:lemma_levy_laplace}

In this section we prove Lemma~\ref{lem:levy_laplace_infgen_ingredient}. We will deduce it from the next lemma.

\begin{lemma}[Alternative form of Laplace transform limit]\label{lem:levy_laplace_infgen_alt}
For any $\beta \geq 0$ and for any choice of the sequences $(t_n)_{n=1}^{\infty}$, $(\alpha_n)_{n=1}^{\infty}$ and $(A_n)_{n=1}^{\infty}$ satisfying
\begin{equation}\label{eq:t_n_alpha_n_A_n_conditions}
\frac{t_n - t_n^c}{\lambda_n} \to \infty \text{ as } n \to \infty, \qquad \forall \, n \in \mathbb{N} \; : \; t_n \leq T, \quad -\beta \leq \alpha_n \leq 0, \quad \left|z_n^{A_n}(t_n)\right| \leq \kappa_n,
\end{equation}
we have
\begin{equation}\label{eq:levy_laplace_infgen_ingredient2}
\lim_{n \to \infty}
\left( \frac{1}{\lambda_n} \ev_{A_n,n,t_n}\mleft[ \exp \mleft( t_n^2 \left(\alpha_n -\frac{\alpha_n^2}{2} \right) \lambda_n^2 \mathscr{C} \mright) -1 \mright] - \alpha_n t_n \right) =0.
\end{equation}
\end{lemma}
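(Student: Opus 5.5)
The natural tool is the exact identity \eqref{eq:magic_A_n_t} of Claim~\ref{cl:component_size}, which holds for integer $j\in\{-A,\dots,0\}$. I would apply it with $A=A_n$, $t=t_n$ and $j=j_n\coloneqq\lfloor \alpha_n t_n A_n\lambda_n/n\cdot n/ (t_n)\rfloor$; more precisely, I would choose the integer $j_n$ with $j_n/A_n\approx \alpha_n\lambda_n$, so that $j_n\asymp -\lambda_n n$ and $|j_n|\le A_n$ for large $n$ (using $A_n\asymp n$, \eqref{eq:tA_n_asymp}(c)). The identity reads
\begin{equation*}
\ev_{A_n,n,t_n}\Big[\prod_{i=0}^{\mathscr{C}-1} \mathrm{e}^{-t_nj_n/n}\Big(1+\frac{j_n}{A_n-i}\Big)\Big]=1+\frac{j_n}{A_n}.
\end{equation*}
The plan is to show that the random product equals $\exp\big(t_n^2(\alpha_n-\alpha_n^2/2)\lambda_n^2\mathscr{C}\big)$ up to an error whose contribution is $o(\lambda_n)$ in expectation. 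Then \eqref{eq:levy_laplace_infgen_ingredient2} follows, because the right-hand side is $1+j_n/A_n=1+\alpha_n\lambda_n+o(\lambda_n)$, and we need $\alpha_n\lambda_n t_n$ there. So the correct normalisation is $j_n/A_n=\alpha_n t_n\lambda_n+o(\lambda_n)$, and I will fix $j_n$ this way.

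To expand the logarithm of the product, write $\epsilon_n\coloneqq j_n/A_n\approx\alpha_nt_n\lambda_n$. Each factor has logarithm
\begin{equation*}
-t_nj_n/n+\ln(1+\epsilon_n)+\mathcal{O}(\lambda_n i/n).
\end{equation*}
Here $-t_nj_n/n=-\epsilon_n\,t_nA_n/n=-\epsilon_n(1-\lambda_nt_n+o(\lambda_n))$ by \eqref{eq:tA_n_asymp_stronger}, and $\ln(1+\epsilon_n)=\epsilon_n-\epsilon_n^2/2+\mathcal{O}(\lambda_n^3)$. Summing gives a per-vertex exponent
\begin{equation*}
\epsilon_n\lambda_nt_n-\epsilon_n^2/2+o(\lambda_n^2)=t_n^2\lambda_n^2(\alpha_n-\alpha_n^2/2)+o(\lambda_n^2),
\end{equation*}
uniformly in $\alpha_n\in[-\beta,0]$. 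The $i$-dependence contributes $\mathcal{O}(\lambda_n\mathscr{C}^2/n)$ to the total exponent.

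Hence the product is $\exp\big((c_n+r_n)\lambda_n^2\mathscr{C}+\mathcal{O}(\lambda_n\mathscr{C}^2/n)\big)$ with $c_n=t_n^2(\alpha_n-\alpha_n^2/2)\le 0$ and $r_n\to0$ deterministic. Using $|e^a-e^b|\le|a-b|e^{\max(a,b)}$, the difference from $\exp(c_n\lambda_n^2\mathscr{C})$ is bounded by
\begin{equation*}
\big(|r_n|\lambda_n^2\mathscr{C}+C\lambda_n\mathscr{C}^2/n\big)\exp\big(|r_n|\lambda_n^2\mathscr{C}+C\lambda_n\mathscr{C}^2/n\big).
\end{equation*}
I will control its expectation with \eqref{eq:magic_branching_proc_estimate_2}, noting that $1-W_n^{A_n}(t_n)\ge\lambda_n$ by \eqref{eq:w_geq_1}, together with \eqref{eq:general_moment_of_component_size_crude}. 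The first moment gives $\ev[\lambda_n^2\mathscr{C}]\le C\lambda_n$, so that term is $o(\lambda_n)$. For the quadratic term, $\lambda_n\ev[\mathscr{C}^2]/n\le C\lambda_n^{-2}/n=o(\lambda_n)$ since $n\lambda_n^3\to\infty$.

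\textbf{Main obstacle.} The exponential factor is the delicate step, because $\mathscr{C}^2/n$ is not controlled by $\lambda_n^2\mathscr{C}$ when $\mathscr{C}\gg n\lambda_n$. I would truncate at $\mathscr{C}\le n\lambda_n/ (8C)$, where the quadratic term is at most $\lambda_n^2\mathscr{C}/8$ and the exponential is dominated by $e^{\lambda_n^2\mathscr{C}/4}$, integrable by \eqref{eq:magic_branching_proc_estimate_2} after Cauchy--Schwarz with a slightly smaller constant. On the complement, both products are bounded (each factor is at most $\mathrm{e}^{-t_nj_n/n}(1+\epsilon_n)\le e^{C\lambda_n^2}$) by $e^{C\lambda_n^2\mathscr{C}}$. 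This also needs care, since $C$ might exceed $1/4$; I would handle it by Hölder with the bound $\pr(\mathscr{C}\ge n\lambda_n/8C)\le 3e^{-cn\lambda_n^3}$, which is $o(\lambda_n)$ by \eqref{eq:subcrit_lambda_assumption}. If the constant issue persists, the fallback is to bound the factors more carefully: for $j_n<0$ each factor is at most $e^{\epsilon_n\lambda_nt_n+o}$, which is of order $\lambda_n^2$ times a small constant $\beta T^2$. Scaling $\beta$ requires the sharper exponential moment $\ev[e^{\theta(1-W)^2\mathscr{C}}]$ for $\theta<1/2$, which I would derive from the Borel-type tail exactly as in \cite[(3.6)]{R18}.
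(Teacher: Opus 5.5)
There is a genuine gap at the step you flag as the main obstacle. Up to that point your plan follows the paper's, and it is correct:
\begin{itemize}
\item you apply \eqref{eq:magic_A_n_t} with $j_n/A_n=\alpha_nt_n\lambda_n+o(\lambda_n)$ (the paper takes $j_n=\lceil n\lambda_n\alpha_n\rceil$);
\item you expand each factor to second order using \eqref{eq:tA_n_asymp_stronger};
\item you pay $\mathcal{O}(\lambda_n\mathscr{C}^2/n)$ for the $i$-dependence on $\{\mathscr{C}\le A_n/2\}$.
\end{itemize}
Your exponent computation is right.

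The problem is how you control the weight $\mathrm{e}^{\max(a,b)}$: your method does not give an error $o(\lambda_n)$. Cauchy--Schwarz against \eqref{eq:magic_branching_proc_estimate_2}, with a weight $\mathrm{e}^{c\lambda_n^2\mathscr{C}}$ for a small constant $c>0$, only yields
\[
\ev\big[\,|r_n|\lambda_n^2\mathscr{C}\,\mathrm{e}^{c\lambda_n^2\mathscr{C}}\big]\le |r_n|\lambda_n^2\sqrt{\ev[\mathscr{C}^2]}\cdot\mathcal{O}(1)=\mathcal{O}\big(|r_n|\lambda_n^{1/2}\big).
\]
This is not $o(\lambda_n)$, because $r_n\to0$ at an uncontrolled rate: it inherits the rate in \eqref{eq:gnt_asymp_t}, which is arbitrarily slow when $(t_n-t_n^c)/\lambda_n\to\infty$ slowly. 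Likewise the quadratic term only gives $\lambda_n\sqrt{\ev[\mathscr{C}^4]}/n=\mathcal{O}(\lambda_n^{-5/2}/n)$, which is not $o(\lambda_n)$ for, e.g., $\lambda_n=n^{-0.32}$. Your treatment of $\{\mathscr{C}>n\lambda_n/(8C)\}$ breaks down if $C>1/4$, as you note yourself. The Borel-tail fallback is only gestured at.

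The missing observation is that no exponential weight arises at all. Since $\alpha_n\le0$ you have $j_n\le0$. Since $t_nA_n/n\le1-\lambda_nt_n+o(\lambda_n)<1$, you have $1/A_n\ge t_n/n$. Hence for $i\le A_n/2$ and $n$ large,
\[
0\le \mathrm{e}^{-t_nj_n/n}\Big(1+\frac{j_n}{A_n-i}\Big)\le \mathrm{e}^{-t_nj_n/n}\Big(1+\frac{j_n}{A_n}\Big)\le \mathrm{e}^{j_n(1/A_n-t_n/n)}\le 1,
\]
which is \eqref{eq:bounded_by_one}. The target also satisfies $\exp(t_n^2(\alpha_n-\alpha_n^2/2)\lambda_n^2\mathscr{C})\le1$. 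For larger $i$, the product over $i<\mathscr{C}$ still lies in $[0,1]$: once it contains a negative factor it also contains the zero factor $i=A_n-|j_n|$.

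So your $a$ and $b$ are both nonpositive, and $|\mathrm{e}^a-\mathrm{e}^b|\le|a-b|$. Only \eqref{eq:general_moment_of_component_size_crude} with $k=1,2$ is then needed:
\begin{itemize}
\item on $\{\mathscr{C}\le A_n/2\}$ the error is $\mathcal{O}(|r_n|\lambda_n^2\ev[\mathscr{C}]+\lambda_n\ev[\mathscr{C}^2]/n)=\mathcal{O}(|r_n|\lambda_n+\lambda_n^{-2}/n)=o(\lambda_n)$;
\item on $\{\mathscr{C}>A_n/2\}$ both quantities lie in $[0,1]$, and Markov gives $\pr(\mathscr{C}>A_n/2)\le2\ev[\mathscr{C}]/A_n=\mathcal{O}(1/(n\lambda_n))=o(\lambda_n)$.
\end{itemize}
This is exactly the paper's proof. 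The paper organises it as $\Phi_n$, which removes the $i$-dependence via $|\prod a_i-\prod b_i|\le\sum|a_i-b_i|$ for $a_i,b_i\in[0,1]$, and $\Psi_n$, which uses $|\mathrm{e}^x-\mathrm{e}^y|\le|x-y|$ for $x,y\le0$. The exponential moment bound \eqref{eq:magic_branching_proc_estimate_2} is never used in this lemma.
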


Before we prove Lemma~\ref{lem:levy_laplace_infgen_alt}, let us deduce Lemma~\ref{lem:levy_laplace_infgen_ingredient} from it.

\begin{proof}[Proof of Lemma~\ref{lem:levy_laplace_infgen_ingredient}]
Let us fix $\gamma \geq 0$. For each $n \in \mathbb{N}$, choose $t_n$ and $A_n$ in a way that the supremum that appears on the left-hand-side of~\eqref{eq:levy_laplace_infgen_ingredient} is achieved. Now in order to apply Lemma~\ref{lem:levy_laplace_infgen_alt}, we choose $\beta=\sqrt{1+2 \gamma }$ and $\alpha_n=1-\sqrt{1+2 \gamma /t_n^2}$ so that $t_n^2(\alpha_n -\frac{\alpha_n^2}{2} )=-\gamma$. With this choice we have $\alpha_n t_n=t_n-\sqrt{t_n^2+2\gamma}$, therefore, Lemma~\ref{lem:levy_laplace_infgen_alt} indeed implies Lemma~\ref{lem:levy_laplace_infgen_ingredient}.
\end{proof}

The rest of Section~\ref{subsec:lemma_levy_laplace} is devoted to the proof of Lemma~\ref{lem:levy_laplace_infgen_alt}. Let us fix $\beta \geq 0$ and sequences $(t_n)$, $(\alpha_n)$ and $(A_n)$ satisfying \eqref{eq:t_n_alpha_n_A_n_conditions} for the rest of Section~\ref{subsec:lemma_levy_laplace}. Recall the asymptotic relations from~\eqref{eq:tA_n_asymp} and~\eqref{eq:tA_n_asymp_stronger}. The key idea of the proof of Lemma~\ref{lem:levy_laplace_infgen_alt} is to use the moment-generating formula~\eqref{eq:magic_A_n_t}. In order to shorten our formulas, we define
\begin{equation}\label{eq:j_n_def}
j_n \coloneqq \lceil n \lambda_n \alpha_n \rceil.
\end{equation}

Note that $j_n \leq 0$ by~\eqref{eq:t_n_alpha_n_A_n_conditions}. Next we define the error terms $\Phi_n(\cdot)$ and $\Psi_n(\cdot)$.
\begin{notation}
We also define $\Phi_n(k)$ and $\Psi_n(k)$ for any $k \in\{ 1, \ldots, A_n\}$ by
\begin{align}
\Phi_n(k)\coloneqq & \prod_{i=0}^{k-1} \mathrm{e}^{-t_n j_n/n}\left( 1+ \frac{j_n}{A_n-i} \right) -
\left[ \mathrm{e}^{-t_n j_n/n}\left( 1+ \frac{j_n}{A_n} \right) \right]^{k}, \label{eq:Phi_n_k_def}\\
\Psi_n(k)\coloneqq & \left[ \mathrm{e}^{-t_n j_n/n}\left( 1+ \frac{j_n}{A_n} \right) \right]^{k} - \exp \mleft[ t_n^2 \left(\alpha_n -\frac{\alpha_n^2}{2} \right) \lambda_n^2 k \mright]. \label{eq:Psi_n_k_def}
\end{align}
\end{notation}

\begin{lemma}[$\Phi_n$ and $\Psi_n$ vanish]\label{lem:Phi_Psi_goes_to_zero} We have
\begin{align}
\lim_{n \to \infty} \frac{1}{\lambda_n} \ev_{A_n,n,t_n}\mleft[ \Phi_n( \mathscr{C} ) \mright] = & 0, \label{eq:Phi_n_goes_to_zero}\\
\lim_{n \to \infty} \frac{1}{\lambda_n} \ev_{A_n,n,t_n}\mleft[ \Psi_n( \mathscr{C} ) \mright] = & 0. \label{eq:Psi_n_goes_to_zero}
\end{align}
\end{lemma}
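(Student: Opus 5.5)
The plan is to prove both limits by direct pointwise estimates of $\Phi_n(k)$ and $\Psi_n(k)$, followed by integration against the law of $\mathscr{C}$. Throughout I use \eqref{eq:tA_n_asymp}(c), \eqref{eq:tA_n_asymp_stronger}, \eqref{eq:w_geq_1}, the moment bounds \eqref{eq:general_moment_of_component_size_crude}, and the exponential moment \eqref{eq:magic_branching_proc_estimate_2}; the last two apply by Corollary~\ref{cor:ER_conditions}. Write $x_n\coloneqq \mathrm{e}^{-t_nj_n/n}(1+j_n/A_n)$ and $y_n\coloneqq \exp[t_n^2(\alpha_n-\alpha_n^2/2)\lambda_n^2]$. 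Note that $|j_n|\le \beta n\lambda_n+1$, so $|j_n|/A_n=\mathcal{O}(\lambda_n)$.

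\emph{Step 1: expansion of $\ln x_n$.} Expanding $\ln(1+j_n/A_n)$ to second order and using $t_nA_n/n=1-\lambda_nt_n+o(\lambda_n)$ from \eqref{eq:tA_n_asymp_stronger}, I get $j_n/A_n=(t_nj_n/n)(1+\lambda_nt_n+o(\lambda_n))$. Together with $j_n=n\lambda_n\alpha_n+\mathcal{O}(1)$ this gives
\begin{equation*}
\ln x_n-\ln y_n=\eta_n\lambda_n^2,\qquad \eta_n\to 0.
\end{equation*}
The contributions to $\eta_n$ are: the $o(\lambda_n)\cdot |j_n|/n$ term, the cubic Taylor remainder $\mathcal{O}(\lambda_n^3)$, and the rounding error $\mathcal{O}(1/n)=o(\lambda_n^2)$. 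In particular, $\max(x_n,y_n)^k\le \mathrm{e}^{\eta_n^+\lambda_n^2k}\le \mathrm{e}^{\lambda_n^2k/8}$ for $n$ large, since $\alpha_n-\alpha_n^2/2\le 0$.

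\emph{Step 2: the bound on $\Phi_n$.} I factor $\Phi_n(k)=x_n^k\bigl(\prod_{i<k}r_i-1\bigr)$ with
\begin{equation*}
r_i=\frac{1+j_n/(A_n-i)}{1+j_n/A_n}=1-\frac{|j_n|\,i}{(A_n-i)(A_n+j_n)}.
\end{equation*}
Because $j_n\le 0$ is an integer, for $k\le A_n+j_n$ every $r_i\in[0,1]$. For $k>A_n+j_n$ the original product contains the factor with $i=A_n+j_n$, which vanishes. In both cases $|\Phi_n(k)|\le x_n^k\min\bigl(1,\sum_{i<k}(1-r_i)\bigr)$. For $k\le A_n/2$ the sum is at most $C\lambda_nk^2/n$.

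Hence
\begin{equation*}
\frac1{\lambda_n}\ev[|\Phi_n(\mathscr{C})|]\le \frac Cn\ev[\mathscr{C}^2\mathrm{e}^{\lambda_n^2\mathscr{C}/8}]+\frac1{\lambda_n}\ev[\mathrm{e}^{\lambda_n^2\mathscr{C}/8};\mathscr{C}\ge A_n/2].
\end{equation*}
For the first term, Cauchy--Schwarz with \eqref{eq:general_moment_of_component_size_crude} and \eqref{eq:magic_branching_proc_estimate_2} gives a bound of order $\lambda_n^{-7/2}/n\to 0$. For the second term I use $\mathrm{e}^{\lambda_n^2\mathscr{C}/8}\le \mathrm{e}^{\lambda_n^2\mathscr{C}/4}\mathrm{e}^{-\lambda_n^2A_n/16}$ on $\{\mathscr{C}\ge A_n/2\}$, together with \eqref{eq:magic_branching_proc_estimate_2}. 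This gives at most $3\lambda_n^{-1}\mathrm{e}^{-c\lambda_n^2 n}\to0$, since $n\lambda_n^3\gg1$. This proves \eqref{eq:Phi_n_goes_to_zero}.

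\emph{Step 3: the bound on $\Psi_n$.} Here $|\Psi_n(k)|=|x_n^k-y_n^k|\le |\eta_n|\lambda_n^2\,k\,\mathrm{e}^{|\eta_n|\lambda_n^2k}$. So it suffices to show
\begin{equation*}
\ev[\mathscr{C}\mathrm{e}^{|\eta_n|\lambda_n^2\mathscr{C}}]=\mathcal{O}(\lambda_n^{-1}),
\end{equation*}
because then $\lambda_n^{-1}\ev|\Psi_n(\mathscr{C})|=\mathcal{O}(|\eta_n|)\to0$.

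This estimate is the main obstacle. The crude route, expanding $\mathrm{e}^u\le 1+u\mathrm{e}^u$ and using Cauchy--Schwarz, only gives $\mathcal{O}(\lambda_n^{-1})+\mathcal{O}(\eta_n\lambda_n^{-3/2})$. That suffices only if $\eta_n\ll\lambda_n^{1/2}$, which Step 1 does not provide. I would first try to iterate the expansion to higher order and use the exact scaling $\ev[\mathscr{C}^k]\le \widetilde{C}_k\lambda_n^{-(2k-1)}$. Failing that, I would use a tail bound of the form $\pr(\mathscr{C}\ge k)\le Ck^{-1/2}\mathrm{e}^{-c(1-W_n^{A}(t))^2k}$. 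Such a bound follows from the same branching-process comparison as \cite[Inequality~(3.6)]{R18}. Summing it against $k\,\mathrm{e}^{|\eta_n|\lambda_n^2k}$ with $|\eta_n|<c/2$ directly yields $\mathcal{O}(\lambda_n^{-1})$. Alternatively, one can split at $\mathscr{C}\le M\lambda_n^{-2}$, where the tilt is $1+o(1)$, and handle the tail using this same tail bound. This would complete \eqref{eq:Psi_n_goes_to_zero}.
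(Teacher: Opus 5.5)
\paragraph{Verdict.} Your Step~1 (the paper's Lemma~\ref{lem:technical_fs} plus a Taylor expansion) and your factorization $\Phi_n(k)=x_n^k\bigl(\prod_{i<k}r_i-1\bigr)$ are correct. The argument does not close, however, and both failures have the same cause.

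\paragraph{Step~2 fails in part of the regime.} You keep the weight $\mathrm{e}^{\lambda_n^2\mathscr{C}/8}$ and apply Cauchy--Schwarz, which gives
$\frac1n\ev[\mathscr{C}^2\mathrm{e}^{\lambda_n^2\mathscr{C}/8}]\lesssim n^{-1}\sqrt{\ev[\mathscr{C}^4]}\lesssim n^{-1}\lambda_n^{-7/2}$.
This bound does not tend to $0$ under \eqref{eq:subcrit_lambda_assumption}, which only guarantees $n\lambda_n^3\to\infty$. For example, $\lambda_n=n^{-1/3}(\ln n)^2$ satisfies the assumption but gives $n\lambda_n^{7/2}=n^{-1/6}(\ln n)^7\to0$. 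Cauchy--Schwarz costs a factor $\lambda_n^{-1/2}$ relative to the order $\lambda_n^{-3}$ you need, so your Step~2 only covers $\lambda_n\gg n^{-2/7}$.

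\paragraph{Step~3 is left open.} You correctly isolate the estimate $\ev[\mathscr{C}\mathrm{e}^{|\eta_n|\lambda_n^2\mathscr{C}}]=\mathcal{O}(\lambda_n^{-1})$, but you do not prove it. Since $\eta_n\to0$ at no controlled rate, Claim~\ref{cl:component_size} alone does not give it. You would need either the growth of $\widetilde{C}_k$ in $k$, or a tail bound $\pr(\mathscr{C}\ge k)\le Ck^{-1/2}\mathrm{e}^{-c(1-W)^2k}$. Neither is established in the paper.

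\paragraph{The missing observation: $x_n\le 1$.} You only derived $x_n\le \mathrm{e}^{\eta_n^+\lambda_n^2}$, but in fact $x_n\le1$. Since $1+u\le\mathrm{e}^u$ and $j_n\le0$, we get $x_n\le\exp\bigl(j_n(\frac{1}{A_n}-\frac{t_n}{n})\bigr)\le1$ as soon as $t_nA_n\le n$. This holds for large $n$ by \eqref{eq:tA_n_asymp_stronger}; it is the paper's \eqref{eq:bounded_by_one}. Also $y_n\le1$ is automatic.

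\paragraph{How the proof then goes through.}
\begin{itemize}
\item In Step~2, your own factorization yields $|\Phi_n(k)|\le\min\bigl(1,C\lambda_nk^2/n\bigr)$ with no exponential weight. Hence, by \eqref{eq:general_moment_of_component_size_crude}, \eqref{eq:w_geq_1} and Markov's inequality,
$\frac1{\lambda_n}\ev|\Phi_n(\mathscr{C})|\le\frac Cn\ev[\mathscr{C}^2]+\frac{2\ev[\mathscr{C}]}{\lambda_nA_n}=\mathcal{O}\bigl((n\lambda_n^3)^{-1}\bigr)+\mathcal{O}\bigl((n\lambda_n^2)^{-1}\bigr)$.
\item In Step~3, the inequality $|\mathrm{e}^a-\mathrm{e}^b|\le|a-b|$ for $a,b\le0$ gives $|x_n^k-y_n^k|\le k|\ln x_n-\ln y_n|=k|\eta_n|\lambda_n^2$. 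Hence $\frac1{\lambda_n}\ev|\Psi_n(\mathscr{C})|\le|\eta_n|\lambda_n\ev[\mathscr{C}]=\mathcal{O}(|\eta_n|)\to0$.
\end{itemize}
This is the paper's proof, and it needs neither exponential moments nor tail bounds.
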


Before we prove Lemma~\ref{lem:Phi_Psi_goes_to_zero}, let us deduce Lemma~\ref{lem:levy_laplace_infgen_alt} from it.
\begin{proof}[Proof of Lemma~\ref{lem:levy_laplace_infgen_alt}]
Recall from~\eqref{eq:magic_A_n_t} that for each $n \in \mathbb{N}$, we have
\begin{equation}\label{eq:magic_n}
\frac{1}{\lambda_n} \ev_{A_n,n,t_n}\mleft[ \prod_{i=0}^{\mathscr{C}-1} \mathrm{e}^{-t_n j_n/n}\left( 1+ \frac{j_n}{A_n-i} \right)-1 \mright]= \frac{j_n}{\lambda_n A_n}.
\end{equation}
The left-hand side of~\eqref{eq:magic_n} can be asymptotically replaced with the similar term of~\eqref{eq:levy_laplace_infgen_ingredient2} by Lemma~\ref{lem:Phi_Psi_goes_to_zero}, while the right-hand side of~\eqref{eq:magic_n} can be asymptotically replaced by $\alpha_n t_n$, because
\begin{equation*}
\left| \frac{j_n}{ \lambda_n A_n} -\alpha_n t_n \right| \stackrel{\eqref{eq:tA_n_asymp}\mathrm{(c)},\, \eqref{eq:j_n_def} }{=} 
\left| \alpha_n \cdot \left( \frac{n }{ A_n} - t_n \right) +\mathcal{O}\mleft( \frac{1}{ n \lambda_n } \mright) \right| \stackrel{\eqref{eq:subcrit_lambda_assumption}, \, \eqref{eq:t_n_alpha_n_A_n_conditions},\, \eqref{eq:tA_n_asymp}\mathrm{(b)}}{=} o(1).
\end{equation*} 
These observations together imply~\eqref{eq:levy_laplace_infgen_ingredient2}.
\end{proof}

It remains to prove Lemma~\ref{lem:Phi_Psi_goes_to_zero}. The following claim and lemma will be useful in the proof.

\begin{claim}[Bounded terms]
We have
\begin{equation}\label{eq:bounded_by_one}
\mathrm{e}^{-t_n j_n/n}\left( 1+ \frac{j_n}{A_n} \right) \leq 1.
\end{equation}
\end{claim}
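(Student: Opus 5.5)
We need to show $\mathrm{e}^{-t_n j_n/n}(1+j_n/A_n)\le 1$, where $j_n\le 0$ by \eqref{eq:j_n_def} and \eqref{eq:t_n_alpha_n_A_n_conditions}. Write $x\coloneqq -j_n\ge 0$; then $|j_n|\le n\lambda_n\beta+1\ll A_n$ by \eqref{eq:tA_n_asymp}(c), so $1+j_n/A_n\in(0,1]$ for large $n$. Taking logarithms, the claim becomes
\begin{equation*}
\frac{t_n x}{n} + \ln\mleft(1-\frac{x}{A_n}\mright) \le 0 .
\end{equation*}

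The plan is to use the elementary inequality $\ln(1-y)\le -y$ for $y\in[0,1)$, which bounds the left-hand side by
\begin{equation*}
x\left(\frac{t_n}{n}-\frac{1}{A_n}\right)=\frac{x}{A_n}\left(W_n^{A_n}(t_n)-1\right).
\end{equation*}
This is at most $0$ as soon as $W_n^{A_n}(t_n)=t_nA_n/n\le 1$. By Corollary~\ref{cor:ER_conditions}, for $n$ large enough we have $W_n^{A_n}(t_n)\le 1-A_n^{-1/3}<1$. This is the whole argument: the prefactor $\mathrm{e}^{-t_nj_n/n}=\mathrm{e}^{t_nx/n}\ge1$ is more than compensated by $1-x/A_n$ precisely because the graph is subcritical.

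The only potential obstacle is the edge case $j_n=0$, where equality holds trivially, and the requirement $x<A_n$ needed for the logarithm. The latter follows from $x\le n\lambda_n\beta+1$ and $A_n\asymp n$ with $\lambda_n\to0$. If the claim is intended for all $n$ rather than only large $n$, one can also note that $1+j_n/A_n\ge 0$ would be needed. Since it is only used asymptotically in Lemma~\ref{lem:Phi_Psi_goes_to_zero}, it suffices to state it for $n\ge n_0$.
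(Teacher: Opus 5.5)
Your argument is correct and is essentially the paper's: the paper writes $1+j_n/A_n\le \mathrm{e}^{j_n/A_n}$ directly (so it needs no positivity check for a logarithm) and obtains $1/A_n-t_n/n\ge 0$ from \eqref{eq:tA_n_asymp_stronger} rather than from Corollary~\ref{cor:ER_conditions}. Both versions reduce to $j_n\le 0$ together with $t_nA_n/n\le 1$. Your use of Corollary~\ref{cor:ER_conditions} is legitimate, since $t_n\ge t_n^c$ for large $n$ because $(t_n-t_n^c)/\lambda_n\to\infty$.
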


\begin{proof}
Note that we have $1/A_n-t_n/n \ge 0$ for large enough $n$ by~\eqref{eq:tA_n_asymp_stronger}. It implies \eqref{eq:bounded_by_one}:
\begin{equation*}
\mathrm{e}^{-t_n j_n/n}\left( 1+ \frac{j_n}{A_n} \right) \leq \mathrm{e}^{-t_n j_n/n} \mathrm{e}^{ j_n /A_n }= \mathrm{e}^{j_n( 1/A_n -t_n/n )} \leq 1.\qedhere
\end{equation*}
\end{proof}

\begin{lemma}[Upper bound on $\Phi_n$]\label{lem:removing_i}
For large enough $n$ and for any $0 \leq k \leq A_n/2$, we have
\begin{equation}\label{eq:removing_i}
\left| \Phi_n(k) \right| \leq 4 \frac{\lambda_n}{n} \beta T^2 k^2.
\end{equation}
\end{lemma}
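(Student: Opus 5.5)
We write $x_i := \mathrm{e}^{-t_n j_n/n}\left(1+\frac{j_n}{A_n-i}\right)$ and $x := x_0$, so that $\Phi_n(k) = \prod_{i=0}^{k-1} x_i - x^k$. The plan is the standard telescoping estimate for a difference of two products whose factors all lie in $[0,1]$. For such factors,
\[
\Bigl|\prod_{i=0}^{k-1} x_i - x^k\Bigr| \le \sum_{i=0}^{k-1} |x_i - x|.
\]
This follows by changing one factor at a time, since every other factor has modulus at most one.

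Next we check that all factors lie in $[0,1]$. By \eqref{eq:bounded_by_one} we have $0\le x\le 1$, provided $1+j_n/A_n\ge 0$. This holds because $|j_n|\le n\lambda_n\beta+1 \ll A_n$, using $A_n\asymp n$ from \eqref{eq:tA_n_asymp}(c). Since $j_n\le 0$, the map $i\mapsto 1+\frac{j_n}{A_n-i}$ is decreasing in $i$. Hence $0\le x_i\le x\le 1$ for $0\le i\le k-1\le A_n/2$; nonnegativity uses $A_n-i\ge A_n/2 \gg |j_n|$.

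It remains to bound each difference:
\[
|x_i-x| = \mathrm{e}^{-t_nj_n/n}\,|j_n|\Bigl(\frac{1}{A_n-i}-\frac{1}{A_n}\Bigr) = \mathrm{e}^{-t_nj_n/n}\,\frac{|j_n|\, i}{A_n(A_n-i)} \le \mathrm{e}^{-t_nj_n/n}\,\frac{2|j_n|\, i}{A_n^2},
\]
using $A_n-i\ge A_n/2$. The prefactor satisfies $\mathrm{e}^{-t_nj_n/n}\le \mathrm{e}^{T(\lambda_n\beta+1/n)}\to 1$. For the main factor, $|j_n|\le n\lambda_n\beta+1$. By \eqref{eq:tA_n_asymp}(b), $n/A_n\to t$ with $t_n\to t\le T$; for large $n$ this gives $n/A_n\le T(1+o(1))$, so $1/A_n^2\le T^2(1+o(1))/n^2$.

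Summing over $i<k$ gives $\sum_{i<k} i\le k^2/2$, so
\[
|\Phi_n(k)|\le (1+o(1))\,\frac{(n\lambda_n\beta+1)\,T^2 k^2}{n^2} \le 4\frac{\lambda_n}{n}\beta T^2k^2 .
\]
The only delicate point is absorbing the additive $+1$ in $|j_n|\le n\lambda_n\beta+1$ when $\beta$ is small. If $\beta>0$, then $1\ll n\lambda_n\beta$, so for large $n$ the whole factor is at most $2n\lambda_n\beta$ and the constant $4$ leaves room. If $\beta=0$, then $\alpha_n=0$ and $j_n=0$, so $\Phi_n\equiv 0$ and the bound is trivial.

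Strictly, one must also use $n/A_n\le T(1+o(1))$ rather than $n/A_n\le T$. This factor is absorbed in the slack between the constant obtained and the stated constant $4$. The rigorous version is just the careful bookkeeping of the bound $|j_n|\le 2n\lambda_n\beta$, with margin to spare.
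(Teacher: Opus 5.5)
Your proof is correct and follows the paper's argument: the telescoping bound $\bigl|\prod_i a_i-\prod_i b_i\bigr|\le\sum_i|a_i-b_i|$ for factors in $[0,1]$, then $|x_i-x|\le 2\mathrm{e}^{-t_nj_n/n}|j_n|\,i/A_n^2$ and $\sum_{i<k}i\le k^2/2$. You are in fact more careful than the paper in checking $0\le x_i\le 1$ for every $i$.

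Two small simplifications are available. First, since $j_n=\lceil n\lambda_n\alpha_n\rceil\ge n\lambda_n\alpha_n\ge -n\lambda_n\beta$, you have $|j_n|\le n\lambda_n\beta$ exactly, so the $+1$ and the case split on $\beta$ are unnecessary. Second, $n/A_n\le T(1+o(1))$ follows directly from \eqref{eq:tA_n_asymp}(b) together with $t_n\le T$; you do not need $t_n\to t$, which is not among the hypotheses here.
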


\begin{proof}
We will use the following inequality
\begin{equation}\label{eq:prod_sum_ineq}
\text{if } \quad 0\leq a_i, b_i \leq 1, \quad \text{ then } \quad \left| \prod_{i=0}^{k-1} a_i - \prod_{i=0}^{k-1} b_i \right| \leq \sum_{i=0}^{k-1} |a_i-b_i|.
\end{equation}

By~\eqref{eq:Phi_n_k_def}, \eqref{eq:bounded_by_one} and~\eqref{eq:prod_sum_ineq} the l.h.s.\ of~\eqref{eq:removing_i} can be upper bounded by
\begin{equation*}
\sum_{i=0}^{k-1} \left| \mathrm{e}^{-t_n j_n/n}\left( \frac{j_n}{A_n} - \frac{j_n}{A_n-i} \right) \right| \stackrel{(*)}{\leq} 2|j_n| \sum_{i=0}^{k-1} \frac{ i}{|A_n-i|A_n }
\stackrel{(**)}{\leq} 4 \frac{\lambda_n}{n} \beta T^2 k^2,
\end{equation*}
where $(*)$ holds for large enough $n$ by~\eqref{eq:subcrit_lambda_assumption} and~\eqref{eq:j_n_def}, and $(**)$ holds for large enough $n$ by our assumption
$0 \leq k \leq A_n/2 $ as well as \eqref{eq:t_n_alpha_n_A_n_conditions}, \eqref{eq:tA_n_asymp}(b) and~\eqref{eq:j_n_def}. This proves Lemma~\ref{lem:removing_i}.
\end{proof}

Now we prove the two limits of Lemma~\ref{lem:Phi_Psi_goes_to_zero}.

\begin{proof}[Proof of~\eqref{eq:Phi_n_goes_to_zero}]
First, note that
\begin{equation}\label{eq:truncated_Phi_n_expect_goes_to_zero}
\frac{\ev_{A_n,n,t_n}\mleft[ |\Phi_n(\mathscr{C})| \, \Big| \, \mathscr{C} \leq A_n/2 \mright]}{\lambda_n}
\stackrel{\eqref{eq:removing_i} }{\leq} \frac{4\beta T^2}{n} \ev_{A_n,n,t_n}\mleft[ \mathscr{C}^2 \mright]
\stackrel{\eqref{eq:general_moment_of_component_size_crude},\, \eqref{eq:w_geq_1},\, \eqref{eq:t_n_alpha_n_A_n_conditions} }{\leq} \frac{4\beta T^2 \widetilde{C}_2}{n \lambda_n^3} \stackrel{\eqref{eq:subcrit_lambda_assumption} }{\ll} 1.
\end{equation}
Next, note that $|\Phi_n(k)|\leq 1$ follows from~\eqref{eq:Phi_n_k_def} and~\eqref{eq:bounded_by_one}, thus
\begin{align}
\begin{split}\label{eq:leftover_Y_n_goes_to_zero}
\frac{\ev_{A_n,n,t_n}\mleft[ |\Phi_n(\mathscr{C})| \, \Big| \, \mathscr{C} > A_n/2 \mright]}{\lambda_n} \leq&
\frac{\pr_{A_n,n,t_n}[\mathscr{C} > A_n/2]}{\lambda_n} \stackrel{(*)}{\leq} \frac{\ev_{A_n,n,t_n}[\mathscr{C}] }{ (A_n/2) \lambda_n} \stackrel{\eqref{eq:general_moment_of_component_size_crude},\, \eqref{eq:w_geq_1},\, \eqref{eq:t_n_alpha_n_A_n_conditions} }{\leq} \\
& \frac{ \widetilde{C}_1 \lambda_n^{-1} }{ (A_n/2) \lambda_n }
 \stackrel{\eqref{eq:tA_n_asymp}\mathrm{(c)}}{\asymp} \frac{ 1 }{ n \lambda_n^2 }
 \stackrel{\eqref{eq:subcrit_lambda_assumption} }{\ll} 1,
\end{split}
\end{align}
where at $(*)$ we used Markov's inequality. Putting together \eqref{eq:truncated_Phi_n_expect_goes_to_zero} and~\eqref{eq:leftover_Y_n_goes_to_zero}, we obtain~\eqref{eq:Phi_n_goes_to_zero}.
\end{proof}

Before we prove~\eqref{eq:Psi_n_goes_to_zero}, let us state a technical lemma.
\begin{lemma}[Technical lemma]\label{lem:technical_fs}
We have
\begin{align}
\label{eq:first_order_cancellation_la_sq} & \lim_{n \to \infty } \frac{1}{\lambda^2_n} \left| -\frac{t_n j_n }{n} +\frac{j_n}{A_n} -t_n^2 \alpha_n \lambda_n^2 \right| =0, \\
\label{eq:second_order_cancellation_la_sq} & \lim_{n \to \infty } \frac{1}{\lambda^2_n} \left| \frac{j_n^2}{A_n^2} -t_n^2 \alpha_n^2 \lambda_n^2 \right| =0.
\end{align}
\end{lemma}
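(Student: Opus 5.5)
The plan is a direct computation with the two ingredients $j_n=\lceil n\lambda_n\alpha_n\rceil$ and the asymptotics \eqref{eq:tA_n_asymp_stronger}. First write $j_n = n\lambda_n\alpha_n + \theta_n$ with $\theta_n\in[0,1)$. Since $-\beta\le\alpha_n\le 0$ and $t_n\le T$, we have $|j_n|\le \beta n\lambda_n+1 = \mathcal{O}(n\lambda_n)$. The recurring quantity is
\begin{equation*}
\frac{n}{A_n}-t_n=\frac{n}{A_n}\Bigl(1-\frac{t_nA_n}{n}\Bigr).
\end{equation*}
By \eqref{eq:tA_n_asymp_stronger}, $1-\frac{t_nA_n}{n}=\lambda_n t_n+o(\lambda_n)$. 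By \eqref{eq:tA_n_asymp}(b), $\frac{n}{A_n}=t_n+o(1)$. Hence $\frac{n}{A_n}-t_n=\lambda_n t_n^2+o(\lambda_n)$, using $t_n\le T$ and $\lambda_n\to 0$.

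For \eqref{eq:first_order_cancellation_la_sq}, rewrite
\begin{equation*}
-\frac{t_nj_n}{n}+\frac{j_n}{A_n}=\frac{j_n}{n}\Bigl(\frac{n}{A_n}-t_n\Bigr).
\end{equation*}
Insert the expansion of $j_n$. The main part is $\lambda_n\alpha_n(\lambda_n t_n^2+o(\lambda_n))=t_n^2\alpha_n\lambda_n^2+o(\lambda_n^2)$, where the bound uses $|\alpha_n|\le\beta$ uniformly. The remainder is $\frac{\theta_n}{n}\cdot\mathcal{O}(\lambda_n)=\mathcal{O}(\lambda_n/n)$. Dividing by $\lambda_n^2$ leaves $o(1)+\mathcal{O}(1/(n\lambda_n))$, and this tends to $0$ by \eqref{eq:subcrit_lambda_assumption}.

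For \eqref{eq:second_order_cancellation_la_sq}, write $\frac{j_n}{A_n}=\frac{j_n}{n}\cdot\frac{n}{A_n}=(\lambda_n\alpha_n+\mathcal{O}(1/n))(t_n+o(1))=\lambda_n\alpha_nt_n+o(\lambda_n)$. Again this uses $|\alpha_n|\le\beta$ and $1/n\ll\lambda_n$. Squaring gives $\frac{j_n^2}{A_n^2}=t_n^2\alpha_n^2\lambda_n^2+o(\lambda_n^2)$, because the cross term is $\mathcal{O}(\lambda_n)\cdot o(\lambda_n)$. Dividing by $\lambda_n^2$ yields the claim.

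The only delicate point is the first statement. There the leading terms $-t_nj_n/n$ and $j_n/A_n$ are each of order $\lambda_n$ and cancel to order $\lambda_n$, so an $o(\lambda_n)$ error in $\frac{n}{A_n}-t_n$ would be too crude. What is needed is precisely the sharper relation \eqref{eq:tA_n_asymp_stronger}, rather than \eqref{eq:tA_n_asymp}(a) with $g_n(t_n)$. The hypothesis $(t_n-t_n^c)/\lambda_n\to\infty$ in \eqref{eq:t_n_alpha_n_A_n_conditions} makes this available via \eqref{eq:gnt_asymp_t}. All $o(\cdot)$ terms are uniform because they depend on $\alpha_n$ only through $|\alpha_n|\le\beta$.
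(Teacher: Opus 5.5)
Your proof is correct and is essentially the paper's computation. Both arguments rest on $j_n=n\lambda_n\alpha_n+\mathcal{O}(1)$, the sharpened relation \eqref{eq:tA_n_asymp_stronger}, and $n/A_n-t_n=o(1)$ from \eqref{eq:tA_n_asymp}(b); your factorization $\frac{j_n}{n}\bigl(\frac{n}{A_n}-t_n\bigr)$ is only a regrouping of the paper's split $\frac{j_n}{A_n}\bigl((1-\lambda_nt_n)-\frac{t_nA_n}{n}\bigr)+\lambda_nt_n\bigl(\frac{j_n}{A_n}-t_n\alpha_n\lambda_n\bigr)$.

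There is one wording slip in your closing remark. What would be too crude is an $\mathcal{O}(\lambda_n)$ error in $\frac{n}{A_n}-t_n$, such as the error $\lambda_n t_n(g_n(t_n)-t_n)$ you would incur from \eqref{eq:tA_n_asymp}(a) alone. An $o(\lambda_n)$ error is not too crude: it is exactly what you obtain, and it suffices.
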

Before we prove Lemma~\ref{lem:technical_fs}, let us deduce \eqref{eq:Psi_n_goes_to_zero} from it. 

\begin{proof}[Proof of~\eqref{eq:Psi_n_goes_to_zero}]
Recalling the definition of $\Psi_n(k)$ from~\eqref{eq:Psi_n_k_def}, we first note that
\begin{align}
\begin{split}\label{eq:Psi_n_k_upper_bound}
|\Psi_n(k)| \stackrel{(*)}{\leq}
& k \cdot \left| - \frac{t_n j_n}{n} +
\ln \mleft( 1+\frac{j_n}{A_n} \mright) -
t_n^2 \cdot \left(\alpha_n -\frac{\alpha_n^2}{2} \right) \cdot \lambda_n^2 \right|
\stackrel{ (**) }{=} \\
&k \cdot \left| - \frac{t_n j_n}{n} + \frac{j_n}{A_n} -\frac{1}{2} \frac{j_n^2}{A_n^2} +
o\mleft( \frac{j_n^2}{A_n^2} \mright) -
t_n^2 \cdot \left(\alpha_n -\frac{\alpha_n^2}{2} \right) \cdot \lambda_n^2 \right| \stackrel{(***)}{=} k \cdot o(\lambda_n^2),
\end{split}
\end{align}
where in $(*)$ we used that $|\mathrm{e}^{x}-\mathrm{e}^{y}| \leq |x-y|$ if $x,y \leq 0$ together with~\eqref{eq:bounded_by_one}, in $(**)$ we used that for any~${|x|\leq \frac12}$, we have $\ln(1+x)=x-\frac{1}{2}x^2 +o(x^2)$, and in $(***)$ we used both statements of Lemma~\ref{lem:technical_fs} and~${j_n^2 / A_n^2=\mathcal{O}(\lambda_n^2)}$ (which follows from~\eqref{eq:t_n_alpha_n_A_n_conditions}, \eqref{eq:tA_n_asymp}(c) and~\eqref{eq:j_n_def}).
Using this, we obtain~\eqref{eq:Psi_n_goes_to_zero}:
\begin{equation*}
\frac{1}{\lambda_n} \ev_{A_n,n,t_n}\mleft[\, |\Psi_n( \mathscr{C} ) | \,\mright] \stackrel{\eqref{eq:Psi_n_k_upper_bound} }{\leq} \ev_{A_n,n,t_n}[\mathscr{C}] o(\lambda_n)
\stackrel{\eqref{eq:general_moment_of_component_size_crude},\, \eqref{eq:w_geq_1},\, \eqref{eq:t_n_alpha_n_A_n_conditions} }{=} o(1).\qedhere
\end{equation*}
\end{proof}

The proof of Lemma~\ref{lem:Phi_Psi_goes_to_zero} is complete (given the statement of Lemma~\ref{lem:technical_fs}). Therefore, it remains to show Lemma~\ref{lem:technical_fs}.

\begin{proof}[Proof of~\eqref{eq:first_order_cancellation_la_sq}]
We will use that $A_n \asymp n$ (cf.~\eqref{eq:tA_n_asymp}(c)) throughout.
\begin{align}
\begin{split}\label{eq:first_order_canc_proof}
\left| -\frac{t_n j_n }{n} +\frac{j_n}{A_n} -t_n^2 \alpha_n \lambda_n^2 \right| =
& \left|
\frac{j_n}{A_n} \left( (1-\lambda_n t_n) - \frac{t_n A_n }{n} \right) + \lambda_n t_n \cdot \left( \frac{j_n}{A_n} -t_n \alpha_n \lambda_n \right) \right| \stackrel{\eqref{eq:tA_n_asymp_stronger}, \, \eqref{eq:j_n_def}}{=} \\
&\frac{|j_n|}{A_n} o(\lambda_n) + \lambda_n t_n \left| \frac{n \lambda_n \alpha_n}{A_n} +\mathcal{O}\mleft( \frac{1}{n} \mright) -t_n \alpha_n \lambda_n \right| \stackrel{\eqref{eq:t_n_alpha_n_A_n_conditions},\, \eqref{eq:j_n_def} }{=} \\
&o(\lambda_n^2) + \lambda^2_n t_n \left| \alpha_n \cdot \left( \frac{n }{A_n}- t_n \right) +\mathcal{O}\mleft( \frac{1}{n \lambda_n } \mright) \right| \stackrel{(*)}{=} o(\lambda_n^2),
\end{split}
\end{align}
where in $(*)$ we used that $|n/A_n-t_n|=o(1)$ (cf.~\eqref{eq:tA_n_asymp}(b)) and that $n \lambda_n \gg 1$ (cf.~\eqref{eq:subcrit_lambda_assumption}) together with the assumptions \eqref{eq:t_n_alpha_n_A_n_conditions} on $t_n$ and~$\alpha_n$.
\end{proof}

\begin{proof}[Proof of~\eqref{eq:second_order_cancellation_la_sq}]
Using the definition \eqref{eq:j_n_def} of $j_n$, we have
\begin{equation*}
\left| \frac{j_n^2}{A_n^2} -t_n^2 \alpha_n^2 \lambda_n^2 \right| \stackrel{\eqref{eq:tA_n_asymp}\mathrm{(c)}}{=} 
\left| \frac{ (n \lambda_n \alpha_n)^2 }{A_n^2} + \mathcal{O}\mleft( \frac{n \lambda_n}{n^2} \mright) -t_n^2 \alpha_n^2 \lambda_n^2 \right| =
\lambda_n^2 \left| \alpha_n^2 \cdot \left( \frac{n^2}{A_n^2} -t_n^2 \right) + \mathcal{O}\mleft( \frac{ 1}{n \lambda_n } \mright) \right| \stackrel{ (**) }{=} o(\lambda_n^2),
\end{equation*}
where in $(**)$ we used similar considerations as in equation $(*)$ of~\eqref{eq:first_order_canc_proof}.
\end{proof}
\noindent The proof of Lemma~\ref{lem:technical_fs} and, therefore, the proof of Lemma~\ref{lem:levy_laplace_infgen_alt} is complete.

\appendix

\section{Detailed study of the window curve}\label{ap:window_curve_proofs}

In the Appendix we prove Claim~\ref{cl:tc_asymp} and the statements of Section~\ref{sec:ODE}, in particular Claim~\ref{cl:tc_less_than_t*} and Lemmas~\ref{lem:w_bounds},~\ref{lem:gn_properties}.

Recall the functions $w_n(t)$ defined in \eqref{eq:wn_def}, which solves the ODE \eqref{eq:w_ODE}, $v_n(t)$ defined as the solution of the ODE \eqref{eq:v_ODE}, its inverse $\mathrm{t}_n(v)$ (defined in \eqref{eq:t_def_as_v_inverse}) and $g_n(t)$ from~\eqref{eq:def_g_n_t}. In this section we omit this dependence on $n$ from our notation, hopefully without causing any confusion and use $w(t)=w_n(t)$, $v(t) = v_n(t)$, $\mathrm{t}(v) = \mathrm{t}_n(v)$ and $g(t) = g_n(t)$. We also use the notation $\lambda=\lambda_n$, $t^c = t_n^c$ (recall \eqref{eq:tnc_def}) and $t^* =t_n^*$ (recall \eqref{eq:t_star_def}).

First, we prove Lemma~\ref{lem:w_bounds}.

\begin{proof}[Proof of Lemma~\ref{lem:w_bounds}]
The function $w(t)$ is nonnegative by that fact that if $w(t) = 0$ for some $t>0$, then $\dot{w}(t) = 0$ (see~\eqref{eq:w_ODE}).

Observe that it follows from~\eqref{eq:v_ODE} that $\dot{v}(t) \leq 1$, therefore, $v(t) \leq t$. Now to show $w(t) \leq v(t)$, it is enough to show that if $v(t)=w(t)$ for some $t$, then $\dot{w}(t) \leq \dot{v}(t)$. This indeed follows by comparing \eqref{eq:w_ODE} and~\eqref{eq:v_ODE} and observing that $\frac{v(t)}{t} \leq 1$. This proves $w(t) \leq v(t)$ for all $t \geq 0$.
Finally, $v(t) \le \frac{1}{1+\lambda}$ follows from Claim~\ref{cl:vn_properties}.
\end{proof}

We will use the next lemma to prove Lemma~\ref{lem:gn_properties}.

\begin{proposition}[Self-organized $\lambda_n$-critical state of the window curve]\label{prop:soc_fixed_point}
Let $T > 1$ and $ \lambda \leq \frac{1}{4T^2} \wedge \frac{1}{100}$. Then for any $s \ge 6$, we have
\begin{equation}\label{eq:soc_fixed_point}
1- \lambda t - 4 \lambda \cdot \exp \mleft( -\frac{s-6}{20} \mright) \leq w(t) \leq 1- \lambda t + 2 T^2 \lambda^2 \quad
\text{for any} \quad t^c + s \cdot \lambda\leq t \leq T. 
\end{equation}
\end{proposition}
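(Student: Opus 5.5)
The natural variable is $g(t)=(1-w(t))/\lambda$, the scaled distance of the window curve from criticality. From \eqref{eq:w_ODE} we get the ODE
\begin{equation*}
\dot g(t) = -\frac{\dot w(t)}{\lambda} = \frac{w(t)}{g(t)\lambda} - \frac{w(t)}{\lambda t} = \frac{w(t)}{\lambda}\cdot\frac{t-g(t)}{t\, g(t)} .
\end{equation*}
Since $w\approx 1$ and $g,t\asymp 1$ on the relevant range, this is a strongly attracting ODE on the fast time scale $\lambda$: $g$ relaxes towards the slowly moving target $t$ at rate of order $1/(\lambda t g)$. The two bounds in \eqref{eq:soc_fixed_point} read $g(t)\ge t-2T^2\lambda$ and $g(t)\le t+4e^{-(s-6)/20}$. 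I prove them by comparison (barrier) arguments for this ODE, starting from the initial condition $g(t^c)=4$ given by \eqref{eq:tnc_def}.

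\textbf{Upper bound on $w$ (lower bound on $g$).} Set $h(t)=g(t)-t$, so that
\begin{equation*}
\dot h = -\frac{w}{\lambda t g}\,h - 1 .
\end{equation*}
The barrier is $h=-2T^2\lambda$. Suppose $h(t_0)=-2T^2\lambda$ at some time $t_0\ge t^c$. There $g\le t\le T$ and $w\ge 1/2$, because $1-w=\lambda g\le \lambda T$ is small under $\lambda\le \frac{1}{4T^2}\wedge\frac1{100}$. Hence
\begin{equation*}
\dot h(t_0)\ge \frac{1}{2\lambda T^2}\,2T^2\lambda - 1 = 0,
\end{equation*}
so $h$ cannot cross the barrier downwards. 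A little slack is needed for strictness; I would use $w\ge 1-\lambda T>1/2$. It remains to check $h(t^c)\ge -2T^2\lambda$, which holds since $g(t^c)=4\ge T$ if $T\le 4$. If $T>4$, note that while $g<t$ the function $g$ increases, and $g(t)>t$ whenever $t<4$. The crossing time of $g$ with the diagonal comes after $t^c$, and the barrier argument applies from then on. So this bound actually holds for all $t\ge t^c$, which is stronger than needed.

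\textbf{Lower bound on $w$ (upper bound on $g$).} On the region $\{h>0\}$ we have $\dot h\le -\frac{w}{\lambda t g}h$. Here $g\le T+4$ (using $g(t^c)=4$ and the fact that $g$ decreases while above the diagonal), $t\le T$ and $w\ge 1/2$, so
\begin{equation*}
\dot h \le -\frac{c}{\lambda}\,h .
\end{equation*}
This gives exponential decay of $h^+$ in the scaled time $(t-t^c)/\lambda$, starting from $h(t^c)=4-t^c\le 4$. The claimed bound $h\le 4e^{-(s-6)/20}$ needs a rate $1/20$ that is uniform in $T$, and this is where care is required.

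\textbf{Main obstacle.} A naive constant $c\asymp 1/(T(T+4))$ depends on $T$, whereas the statement's rate is $T$-free. To get around this, use that $h>0$ forces $g\le t+h$. Then
\begin{equation*}
\frac{w}{tg}\ \ge\ \frac{1}{2t(t+h)} .
\end{equation*}
With $t\le t^c+s\lambda\approx 1+s\lambda$, this suggests that the statement implicitly relies on $t$ close to $1$ during the first $O(\lambda)$ units of time, when $h$ is still large. Later, for larger $t$, the decay is already essentially done. I would also exploit $\frac{d}{dt}(g^2)$-type identities: $\frac{d}{dt}\tfrac{g^2}{2}\approx \frac{1}{\lambda}(1-g/t)$, which handles the large-$h$ phase linearly. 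The threshold $s\ge6$ and the factor $1/20$ should come from that initial linear phase (from $g=4$ down to $g\approx t+1$), followed by exponential decay. This bookkeeping, with the sharp explicit constants, is the step I expect to be the most delicate.
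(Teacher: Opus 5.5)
You have a genuine gap: the upper bound on $w$ is proved, but the lower bound on $w$ is not.

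\textbf{Upper bound on $w$.} This part is correct and is the paper's argument in disguise. With $x=t-g=-h$, your barrier at $h=-2T^2\lambda$ is exactly the paper's barrier at $x=2\lambda T^2$. Running it from $t^c$ instead of from $\theta$ is harmless. Your justification of the starting condition is garbled, though. The claim that $g(t)>t$ whenever $t<4$ is false, since $g$ crosses the diagonal at $\theta<2$. No case split on $T$ is needed: $h(t^c)=4-t^c>0$ simply because $t^c<t^*<2$.

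\textbf{Lower bound on $w$.} Here you only obtain a $T$-dependent decay rate. You correctly note that the statement needs the $T$-free rate $1/20$, but then leave this open with suggestions that are never carried out ($g^2$-identities, an initial \emph{linear phase}). The remark that $t\le t^c+s\lambda$ also misreads the range, which is $t\ge t^c+s\lambda$.

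The missing idea is that the phase where $h>0$ is short and lies entirely in $\{t<2\}$, and afterwards the bound is trivial.
\begin{enumerate}
\item At $h=0$ your equation gives $\dot h=-1$. So once $h$ reaches $0$, at the time $\theta$ of \eqref{eq:theta_def}, it stays $\le 0$. Hence $w(t)\ge 1-\lambda t$ on $[\theta,T]$, and $\{t\in[t^c,T]: h(t)>0\}=[t^c,\theta)$.
\item On $[t^c,\theta)$, $h$ is decreasing, and $h(t^c)=4-t^c\le 3+4\lambda$ because $t^c\ge w(t^c)=1-4\lambda$. So as long as also $t\le 2$, you have $g=t+h\le 5.04$ and $w=1-\lambda g\ge 0.9$. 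Then $w/(tg)>1/20$, and therefore $\dot h\le -\frac{h}{20\lambda}-1$.
\item Integrating, $h$ must vanish within time $20\lambda\ln(1+h(t^c)/(20\lambda))$ after $t^c$. Combined with $t^c\le t^*\le 1-\lambda\ln\lambda+\lambda$, this lands before time $2$ for $\lambda\le 1/100$. A continuity argument then gives $\theta<2$, so the differential inequality holds on all of $[t^c,\theta]$.
\item For $t\in[t^c+s\lambda,\theta]$ this yields $h(t)\le h(t^c)\,\mathrm{e}^{-(t-t^c)/(20\lambda)}\le 3.04\,\mathrm{e}^{-s/20}\le 4\,\mathrm{e}^{-(s-6)/20}$. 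Together with the first step, this is the claimed lower bound.
\end{enumerate}

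The paper runs the same scheme in the variable $x$. It starts from $t^*$, where $x(t^*)\ge -3$, and uses $\dot x\ge \frac14-\frac{x}{20\lambda}$ on $[t^*,\theta\wedge 2]$ together with monotonicity of $x$ before $\theta$. It then converts to $t^c$ via $t^c\ge t^*-6\lambda$, which is where the shift $s-6$ comes from. Without the absorption at $h=0$ and the localization $\theta<2$, your sketch does not produce the stated $T$-independent estimate.
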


We prove Proposition~\ref{prop:soc_fixed_point} in Appendix~\ref{subap:soc_fixed_point}.

To prove Lemma~\ref{lem:gn_properties}, we also study the monotonicity of $g$. For that purpose, we introduce a notation.

\begin{definition}[Reaching $1-\lambda t$]
Let us define
\begin{equation}\label{eq:theta_def}
\theta \coloneqq \min \{\, t \; : \; w(t)=1-\lambda t \, \}.
\end{equation}
\end{definition}

\begin{lemma}[Monotonicity of $g$]\label{lem:g_monoton}
For any $\lambda < 1/T$, the function $g$ is monotone decreasing on the interval~${[0,\theta]}$ and monotone increasing on~${[\theta, T]}$.
\end{lemma}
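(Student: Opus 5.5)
We study the sign of $\dot g$ directly from the ODE~\eqref{eq:w_ODE}. Since $g=(1-w)/\lambda$, we have $\dot g=-\dot w/\lambda$, and by~\eqref{eq:w_ODE}
\begin{equation*}
\lambda \dot g(t) = \lambda\frac{w(t)}{1-w(t)} - \frac{w(t)}{t} = w(t)\cdot\frac{\lambda t-(1-w(t))}{t(1-w(t))}.
\end{equation*}
By Lemma~\ref{lem:w_bounds} we have $0\le w(t)\le v(t)<1$, so $w(t)>0$ and $1-w(t)>0$ for $t>0$ (positivity of $w$ for $t>0$ follows from $\dot w(0)=1$ and the argument in the proof of Lemma~\ref{lem:w_bounds}). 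Hence the sign of $\dot g(t)$ equals the sign of
\begin{equation*}
h(t)\coloneqq w(t)-(1-\lambda t).
\end{equation*}
So it suffices to show that $h<0$ on $(0,\theta)$ and $h\ge 0$ on $[\theta,T]$.

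\textbf{Step 1: $h<0$ before $\theta$.} We have $h(0)=-1<0$, and $\theta$ is by~\eqref{eq:theta_def} the first zero of $h$, so by continuity $h<0$ on $[0,\theta)$. Thus $g$ is decreasing on $[0,\theta]$. If $\theta\ge T$ or $\theta$ does not exist, only this part is needed.

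\textbf{Step 2: $h$ cannot return below $0$ after $\theta$.} This is the main step. I would show that at any time $t\le T$ with $h(t)=0$ we have $\dot h(t)>0$. Then $h$ crosses zero only upward, so $h>0$ on $(\theta,T]$. Indeed, if $h$ vanished again at some later time, take the first such time after $\theta$; there $\dot h\le 0$, which contradicts $\dot h>0$.

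At a point with $w=1-\lambda t$, i.e.\ $1-w=\lambda t$, the ODE~\eqref{eq:w_ODE} gives
\begin{equation*}
\dot w = -\lambda\frac{w}{\lambda t}+\frac{w}{t}=0,
\end{equation*}
so $\dot h(t)=\dot w(t)+\lambda=\lambda>0$. This works for every such $t$; the hypothesis $\lambda<1/T$ is what guarantees $1-\lambda t>0$ on $[0,T]$, so that such touching points are genuine points with $w\in(0,1)$ where the computation above applies.

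Combining the two steps, $\dot g\le 0$ on $[0,\theta]$ and $\dot g\ge 0$ on $[\theta,T]$, which gives the claimed monotonicity. The only delicate point is the strict transversality in Step 2, which reduces to the one-line computation $\dot h=\lambda$ at zeros of $h$; I do not expect further obstacles.
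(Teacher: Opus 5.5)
Your proof is correct and follows the paper's argument. The paper also reduces the sign of $\dot g$ to the sign of $w(t)-(1-\lambda t)$, and it rules out a return below zero after $\theta$ via $x(t)=(w(t)-(1-\lambda t))/\lambda$, whose ODE has right-hand side $1$ at $x=0$; this is your $\dot h=\lambda$, and your first-zero-after-$\theta$ argument just makes the paper's ``positive drift'' remark explicit. (Minor point: $\lambda<1/T$ is not what makes the touching-point computation valid, since that computation only uses $t>0$.)
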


We prove Lemma~\ref{lem:g_monoton} in Appendix~\ref{subap:soc_fixed_point}.

Now we can prove Lemma~\ref{lem:gn_properties}. Since in the statements of Lemma~\ref{lem:gn_properties} we take $n \to \infty$, here we indicate explicitly the dependence on $n$ in our notation.

\begin{proof}[Proof of Lemma~\ref{lem:gn_properties}]
Equation \eqref{eq:gnt_asymp_t} follows from~\eqref{eq:def_g_n_t} and~\eqref{eq:soc_fixed_point}.

Now we prove~\eqref{eq:gnt_bounds}. By the definition of $g_n(t)$, it follows from $w_n(t) \le \frac{1}{1+\lambda_n}$ (see~\eqref{eq:wnvnbound}) that $g_n(t) \ge \frac{1}{1+\lambda_n}$. It implies the lower bound in \eqref{eq:gnt_bounds}. On the other hand, by~\eqref{eq:tnc_def}, we have $g_n(t_n^c)=4$ and $\lim \limits g_n(T) = T$ as~${n \to \infty}$ by~\eqref{eq:gnt_asymp_t}. Therefore, Lemma~\ref{lem:g_monoton} implies the upper bound in~\eqref{eq:gnt_bounds}.
\end{proof}

\subsection{Self-organized \texorpdfstring{$\lambda_n$}{lambda-n}-critical state of the window curve}\label{subap:soc_fixed_point}

In this section our goal is to prove Proposition~\ref{prop:soc_fixed_point} and as a side product, we also obtain Claim~\ref{cl:tc_asymp}, Claim~\ref{cl:tc_less_than_t*} and Lemma~\ref{lem:g_monoton}. Before proving Proposition~\ref{prop:soc_fixed_point}, we define an auxiliary function $z(\cdot)$ (which will serve as a more delicate approximation of $w(\cdot)$ than $v(\cdot)$).

\begin{definition}[Solution of a linear ODE]
Given $v(\cdot)$, we define the function $z(t)$ that arises as the solution of the linear ODE
\begin{equation}\label{eq:z_ODE}
\dot{z}(t) = -\lambda \cdot \frac{z(t)}{1-v(t)} +\frac{z(t)}{t} , \quad z(0)=0, \quad \dot{z}(0)=1.
\end{equation}
\end{definition}

In the next lemma, we show that $z(\cdot)$ is a lower bound on $w(\cdot)$ and (given the function $v(\cdot)$) the ODE of~$z(\cdot)$ can be solved explicitly.

\begin{lemma}[Properties of $z$]
For any $t \ge 0$, we have
\begin{equation}\label{eq:z_less_than_w}
z(t) \leq w(t).
\end{equation}

On the other hand, we have the explicit formula
\begin{equation}\label{eq:z_solution_v}
z(t)= t \cdot \left[ 1 - (1+\lambda)v(t) \right]^{\frac{\lambda}{1+\lambda}}.
\end{equation}
\end{lemma}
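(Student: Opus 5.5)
The plan is to prove the explicit formula \eqref{eq:z_solution_v} first, by direct integration, and then obtain \eqref{eq:z_less_than_w} by an ODE comparison argument based on Lemma~\ref{lem:w_bounds}.

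\textbf{Explicit formula.} Since \eqref{eq:z_ODE} is linear in $z$, on $t>0$ we may write $z(t)=t\cdot y(t)$. Substituting gives $\dot y(t)=-\lambda\, y(t)/(1-v(t))$, hence
\begin{equation*}
y(t)=\exp\mleft(-\lambda\int_0^t \frac{\mathrm{d}u}{1-v(u)}\mright),
\end{equation*}
where $y(0)=1$ is forced by $\dot z(0)=1$. To evaluate the integral, substitute $v=v(u)$. By \eqref{eq:v_ODE} we have
\begin{equation*}
\dot v=\frac{1-(1+\lambda)v}{1-v},
\end{equation*}
so $\mathrm{d}u/(1-v(u))=\mathrm{d}v/(1-(1+\lambda)v)$. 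Integrating from $0$ to $v(t)$ gives $-\frac{1}{1+\lambda}\ln\bigl(1-(1+\lambda)v(t)\bigr)$, and therefore $y(t)=[1-(1+\lambda)v(t)]^{\lambda/(1+\lambda)}$. This is \eqref{eq:z_solution_v}. The substitution is legitimate because $v$ is increasing and $v<1/(1+\lambda)$ (Claim~\ref{cl:vn_properties}).

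\textbf{Comparison.} Write $w(t)=t\,q(t)$ and $z(t)=t\,y(t)$. By \eqref{eq:alive_ODE},
\begin{equation*}
\dot q=-\lambda\, q/(1-w) \quand \dot y=-\lambda\, y/(1-v),
\end{equation*}
with $q(0)=y(0)=1$. Lemma~\ref{lem:w_bounds} gives $w\le v<1$, hence $\frac{1}{1-w}\le\frac{1}{1-v}$. This yields
\begin{equation*}
\ln q(t)=-\lambda\int_0^t\frac{\mathrm{d}u}{1-w(u)}\ \ge\ -\lambda\int_0^t\frac{\mathrm{d}u}{1-v(u)}=\ln y(t).
\end{equation*}
So $q\ge y$, and multiplying by $t$ gives $w\ge z$, which is \eqref{eq:z_less_than_w}. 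Both $q$ and $y$ are positive solutions of linear ODEs, so the logarithms are well defined.

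\textbf{Main obstacle.} The only delicate point is the singular initial condition at $t=0$ in the $z$- and $w$-ODEs, where the term $z/t$ appears. Working with the regular quantities $q$ and $y$ avoids this issue entirely, since they satisfy nonsingular ODEs with value $1$ at $0$.
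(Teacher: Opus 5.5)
Your proof is correct and follows essentially the paper's route: the comparison $z\le w$ rests on $w\le v$ from Lemma~\ref{lem:w_bounds}, which gives $\frac{1}{1-w}\le\frac{1}{1-v}$ in otherwise identical linear ODEs, and the formula comes from solving the linear ODE \eqref{eq:z_ODE}. The differences are minor improvements in rigour: you derive \eqref{eq:z_solution_v} via the substitution $v=v(u)$ rather than just checking that it solves \eqref{eq:z_ODE}, which avoids an implicit uniqueness appeal for that singular ODE; and you compare $q=w/t$ and $y=z/t$ through integrating factors, which handles $t=0$ cleanly where the paper's differential-inequality comparison of $w$ and $z$ is left informal.
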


\begin{proof}
From \eqref{eq:w_ODE} and~\eqref{eq:wnvnbound}, it follows that
\begin{equation*}
\dot{w}(t) \geq w(t) \cdot \left( \frac{-\lambda}{1-v(t)} +\frac{1}{t} \right).
\end{equation*}
Comparing this to $z(\cdot)$, which satisfies
\begin{equation*}
\dot{z}(t) \stackrel{\eqref{eq:z_ODE} }{=} z(t) \cdot \left( \frac{-\lambda}{1-v(t)} +\frac{1}{t} \right),
\end{equation*}
we see that we have
$z(t) \leq w(t)$ for any $t \geq 0$. This completes the proof of~\eqref{eq:z_less_than_w}.

On the other hand, it is easy to check that function $z(\cdot)$, defined by formula~\eqref{eq:z_solution_v}, solves~\eqref{eq:z_ODE}.
\end{proof}

\begin{lemma}[Difference of $v$ and $w$]
For any $0 \leq t \leq t^*$, we have 
\begin{equation}\label{eq:v_w_diff_bound}
0 \stackrel{\eqref{eq:wnvnbound}}{\leq} v(t)-w(t) \leq 2 \lambda.
\end{equation}
\end{lemma}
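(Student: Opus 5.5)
The lower bound $0\le v(t)-w(t)$ is already in \eqref{eq:wnvnbound}, so only the upper bound $v(t)-w(t)\le 2\lambda$ on $[0,t^*]$ needs proof. The plan is to use the explicit lower bound \eqref{eq:z_less_than_w}, $w(t)\ge z(t)$, together with the formula \eqref{eq:z_solution_v}, and reduce everything to an estimate on the single function $v$.

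\textbf{Step 1 (reduction).} From $w\ge z$ we get
\begin{equation*}
v(t)-w(t)\le v(t)-t\bigl[1-(1+\lambda)v(t)\bigr]^{\frac{\lambda}{1+\lambda}} = \bigl(v(t)-t\bigr)+t\Bigl(1-\bigl[1-(1+\lambda)v(t)\bigr]^{\frac{\lambda}{1+\lambda}}\Bigr).
\end{equation*}
Since $\dot v\le 1$ we have $v(t)\le t$, so the first bracket is $\le 0$. This alone is too crude, because the second term could be of order $\lambda\ln(1/\lambda)$. So instead of discarding $v(t)-t$ we will keep both terms and show that they balance.

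\textbf{Step 2 (express $t-v(t)$).} Integrating \eqref{eq:v_ODE} gives
\begin{equation*}
t-v(t)=\lambda\int_0^t \frac{v(u)}{1-v(u)}\,\mathrm{d}u .
\end{equation*}
Substitute $v=v(u)$ with $\mathrm{d}u=\mathrm{d}v/\dot v$, where $\dot v=1-\lambda v/(1-v)=\bigl(1-(1+\lambda)v\bigr)/(1-v)$. This yields
\begin{equation*}
t-v(t)=\lambda\int_0^{v(t)}\frac{v}{1-(1+\lambda)v}\,\mathrm{d}v = \frac{\lambda}{1+\lambda}\Bigl[-\frac{v(t)}{1+\lambda}-\frac{\ln\bigl(1-(1+\lambda)v(t)\bigr)}{1+\lambda}\Bigr].
\end{equation*}
Write $L:=-\ln\bigl(1-(1+\lambda)v(t)\bigr)\ge 0$. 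For $t\le t^*$ we have $v(t)\le 1/(1+2\lambda)$, so $1-(1+\lambda)v\ge \lambda/(1+2\lambda)$ and hence $L\le \ln(1/\lambda)+\mathcal{O}(1)$. In particular $\frac{\lambda}{1+\lambda}L\ll 1$.

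\textbf{Step 3 (second term and cancellation).} Use $1-\mathrm{e}^{-x}\le x$ with $x=\frac{\lambda}{1+\lambda}L$. Then the second term is at most
\begin{equation*}
t\,\frac{\lambda}{1+\lambda}L = \bigl(v(t)+(t-v(t))\bigr)\frac{\lambda}{1+\lambda}L .
\end{equation*}
The first part of Step 2 contributes $-(t-v(t))=\frac{\lambda v}{(1+\lambda)^2}-\frac{\lambda L}{(1+\lambda)^2}$. Combining the two,
\begin{equation*}
v-w\le \frac{\lambda L}{1+\lambda}\Bigl(v-\frac{1}{1+\lambda}\Bigr)+\frac{\lambda v}{(1+\lambda)^2}+(t-v)\frac{\lambda L}{1+\lambda}.
\end{equation*}
The first term is $\le 0$ because $v\le 1/(1+\lambda)$. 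The second is $\le\lambda$. The third is at most $\bigl(\lambda L\bigr)^2\lesssim\lambda^2\ln^2(1/\lambda)$, which is $\le\lambda$ for small $\lambda$. The total is therefore $\le 2\lambda$, provided $\lambda$ is small enough, which is how the paper's appendix lemmas are phrased.

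\textbf{Main obstacle.} The crux is the cancellation in Step 3: the $\lambda\ln(1/\lambda)$ parts coming from $t-v$ and from the exponent of $z$ have to cancel. This works exactly because the prefactor of the log is $v-1/(1+\lambda)\le 0$. The remaining work is to track constants carefully enough to reach the explicit bound $2\lambda$, possibly assuming something like $\lambda\le 1/100$ as in Proposition~\ref{prop:soc_fixed_point}.
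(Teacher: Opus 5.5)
Your argument is correct, but it takes a different route from the paper.

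\textbf{The paper's route.} The paper never compares $v$ and $z$ at a fixed time; it works with derivatives. It shows $\dot v-\dot w\le 1-\frac{w(t)}{t}\le 1-\frac{z(t)}{t}=1-\bigl[1-(1+\lambda)v(t)\bigr]^{\frac{\lambda}{1+\lambda}}$, where the $\lambda$-term is dropped using $w\le v$. It then integrates over $[0,t^*]$, changes variables to $v$ using $\dot v\ge\frac12$, and enlarges the range to $\bigl[0,\frac{1}{1+\lambda}\bigr]$. On that range the integral is exactly $\frac{1}{1+\lambda}-\frac{1}{1+2\lambda}\le\lambda$. No logarithm appears, no cancellation has to be found, and nothing is assumed about the size of $\lambda$.

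\textbf{Your route.} You bound $v-w\le v-z$ pointwise and use the closed form of $t-v(t)$ (equivalently \eqref{eq:explicit_t_v}). This forces you to exhibit the cancellation between the two $\lambda\ln(1/\lambda)$ contributions through the sign of $v-\frac{1}{1+\lambda}$, which you do correctly. In exchange you get the slightly sharper bound $\lambda(1+o(1))$ instead of $2\lambda$; the paper's crude bound $\dot v\ge\frac12$ costs a factor $2$.

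\textbf{Two minor points.}

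First, in Step 2 the bracket should read $\bigl[-v(t)+\frac{L}{1+\lambda}\bigr]$. So the second term in Step 3 is $\frac{\lambda v}{1+\lambda}$ rather than $\frac{\lambda v}{(1+\lambda)^2}$. This is still at most $\lambda$, so nothing breaks.

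Second, your smallness assumption on $\lambda$ can be dropped. The lemma is stated without one, although the paper only applies it for $\lambda\le\frac{1}{10}$. If you keep the factors of $1+\lambda$, the third term is at most $\frac{\lambda^2L^2}{(1+\lambda)^3}$. Since $0\le L\le\ln y$ with $y=\frac{1+2\lambda}{\lambda}$, the inequality $\ln y\le\sqrt{y}$ gives $\lambda L^2\le 1+2\lambda\le(1+\lambda)^3$. Hence this term is at most $\lambda$ for every $\lambda>0$.
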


\begin{proof}
We have
\begin{align}
\begin{split}\label{eq:D_bound_by_z_and_v}
\dot{v}(t)-\dot{w}(t) \stackrel{\eqref{eq:w_ODE},\, \eqref{eq:v_ODE}}{=}& -\lambda \cdot \left( \frac{v(t)}{1-v(t)} - \frac{w(t)}{1-w(t)} \right)+
\left( 1- \frac{w(t)}{t} \right) \stackrel{\eqref{eq:wnvnbound}}{\leq} \\
 &1- \frac{w(t)}{t} \stackrel{\eqref{eq:z_less_than_w} }{\leq} 1- \frac{z(t)}{t}
\stackrel{\eqref{eq:z_solution_v} }{=} 1- \left[ 1 - (1+\lambda)v(t) \right]^{\frac{\lambda}{1+\lambda}}.
\end{split}
\end{align}

Then we have for any $t \leq t^*$
\begin{align*}
v(t) -w(t) \stackrel{\eqref{eq:D_bound_by_z_and_v} }{\leq} &
\int \limits_0^{t^*} 1- \left[ 1 - (1+\lambda)v(t) \right]^{\frac{\lambda}{1+\lambda}} \, \mathrm{d} t
\stackrel{\eqref{eq:v_dot_lower_bound} }{\leq} 2 \int \limits_0^{(1+2\lambda)^{-1}} 1- \left[ 1 - (1+\lambda)v \right]^{\frac{\lambda}{1+\lambda}} \, \mathrm{d} v \leq \\
&2 \int \limits_0^{(1+\lambda)^{-1}} 1- \left[ 1 - (1+\lambda)v \right]^{\frac{\lambda}{1+\lambda}} \, \mathrm{d}v=
2 \left( \frac{1}{1+\lambda}- \frac{1}{1+2\lambda} \right) \leq 2 \lambda.
\end{align*}
This proves~\eqref{eq:v_w_diff_bound}.
\end{proof}

\begin{claim}[Explicit formula for $\mathrm{t}(v)$]
Let $\mathrm{t}(\cdot)$ be defined by~\eqref{eq:t_def_as_v_inverse}. We have the explicit formula
\begin{equation} \label{eq:explicit_t_v}
\mathrm{t}(v)= \frac{v}{1+\lambda} - \frac{\lambda}{(1+\lambda)^2} \ln \mleft( 1- (1+\lambda)v \mright).
\end{equation}
\end{claim}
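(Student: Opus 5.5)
Since $v(\cdot)$ is strictly increasing with $v(0)=0$ (Claim~\ref{cl:vn_properties}), the inverse function satisfies $\mathrm{t}(0)=0$ and
\begin{equation*}
\mathrm{t}'(v)=\frac{1}{\dot v(\mathrm{t}(v))}, \qquad 0\le v<\frac{1}{1+\lambda}.
\end{equation*}
The plan is to compute this derivative explicitly from~\eqref{eq:v_ODE}, then integrate from $0$ to $v$.

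By~\eqref{eq:v_ODE}, evaluated at the time $\mathrm{t}(v)$ where $v(\mathrm{t}(v))=v$,
\begin{equation*}
\dot v=1-\frac{\lambda v}{1-v}=\frac{1-(1+\lambda)v}{1-v}.
\end{equation*}
This quantity is positive on $[0,\frac{1}{1+\lambda})$. Hence
\begin{equation*}
\mathrm{t}'(v)=\frac{1-v}{1-(1+\lambda)v}.
\end{equation*}

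Next I would perform a partial fraction decomposition. Write $1-v=\frac{1}{1+\lambda}\bigl(1-(1+\lambda)v\bigr)+\frac{\lambda}{1+\lambda}$. This gives
\begin{equation*}
\mathrm{t}'(v)=\frac{1}{1+\lambda}+\frac{\lambda}{1+\lambda}\cdot\frac{1}{1-(1+\lambda)v}.
\end{equation*}
Integrating from $0$ to $v$ and using $\mathrm{t}(0)=0$ yields
\begin{equation*}
\mathrm{t}(v)=\frac{v}{1+\lambda}-\frac{\lambda}{(1+\lambda)^2}\ln\bigl(1-(1+\lambda)v\bigr),
\end{equation*}
which is~\eqref{eq:explicit_t_v}.

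There is no real obstacle here. The only points needing care are the positivity of $\dot v$ on the whole range, so that the inverse function theorem applies, and the fact that the logarithm is finite for $v<\frac{1}{1+\lambda}$. As a final check, differentiating the formula recovers $\mathrm{t}'(v)$ above.
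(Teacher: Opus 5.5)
Your proof is correct and is essentially the paper's argument: the paper just says the formula follows from \eqref{eq:v_ODE} by separation of variables, which is exactly your computation of $\mathrm{t}'(v)=1/\dot v(\mathrm{t}(v))=\frac{1-v}{1-(1+\lambda)v}$, followed by partial fractions and integration from $\mathrm{t}(0)=0$. Your remark that $\dot v>0$ on $[0,\frac{1}{1+\lambda})$, which justifies the inverse-function step, is a detail the paper leaves implicit.
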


\begin{proof}
The formula~\eqref{eq:explicit_t_v} arises as the solution of~\eqref{eq:v_ODE} using separation of variables.
\end{proof}

\begin{claim}[Bounds on $t^*$]\label{cl:crude_arrival}
If $\lambda \leq \frac{1}{10}$, then we have
\begin{align}
&1-\lambda \ln(\lambda) - 4\lambda \leq t^* \leq 1-\lambda \ln(\lambda) + \lambda, \label{eq:bounds_on_arrival_time} \\
&w \mleft( t^* \mright) > 1-4 \lambda,\label{eq:crude_arrival_of_w}\\
&t^*-6\lambda \le t^c \le t^*.\label{eq:tc_t*_close}
\end{align}
\end{claim}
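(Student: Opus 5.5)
The plan is to work directly with the explicit formula \eqref{eq:explicit_t_v} for $\mathrm{t}(v)$ and the sandwich $v-2\lambda\le w\le v$ from \eqref{eq:v_w_diff_bound}. Note that $\ln(\lambda)<0$, so the statement's $-\lambda\ln(\lambda)$ is $\lambda\ln(1/\lambda)$.

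For \eqref{eq:bounds_on_arrival_time}, substitute $v=\frac{1}{1+2\lambda}$ into \eqref{eq:explicit_t_v}. Then
\begin{equation*}
1-(1+\lambda)v=\frac{\lambda}{1+2\lambda},
\end{equation*}
so
\begin{equation*}
t^*=\frac{1}{(1+\lambda)(1+2\lambda)}+\frac{\lambda}{(1+\lambda)^2}\Big(\ln\tfrac1\lambda+\ln(1+2\lambda)\Big).
\end{equation*}
Both bounds then come from elementary inequalities valid for $\lambda\le 1/10$. The first term lies between $1-3\lambda$ and $1$. The factor $\frac{1}{(1+\lambda)^2}$ lies between $1-2\lambda$ and $1$, which perturbs $\lambda\ln(1/\lambda)$ by at most $2\lambda^2\ln(1/\lambda)\le\lambda$. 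Finally, $0\le\lambda\ln(1+2\lambda)\le2\lambda^2$. Collecting these, the upper bound is $1+\lambda\ln(1/\lambda)+2\lambda^2\le 1+\lambda\ln(1/\lambda)+\lambda$, and the lower bound is $1-3\lambda+\lambda\ln(1/\lambda)-\lambda\ge 1+\lambda\ln(1/\lambda)-4\lambda$. The only care needed is checking $\lambda\ln(1/\lambda)\cdot 2\lambda\le\lambda$, which holds since $2\lambda\ln(1/\lambda)<1$ for $\lambda\le1/10$.

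For \eqref{eq:crude_arrival_of_w}, use \eqref{eq:v_w_diff_bound} at $t=t^*$:
\begin{equation*}
w(t^*)\ge v(t^*)-2\lambda=\frac{1}{1+2\lambda}-2\lambda\ge 1-4\lambda .
\end{equation*}
This inequality is not strict, since $\frac{1}{1+2\lambda}\ge 1-2\lambda$ is only an inequality. The strict version needs a little more slack, and I would get it from \eqref{eq:D_bound_by_z_and_v}. That integral bound actually gives
\begin{equation*}
2\Big(\tfrac{1}{1+\lambda}-\tfrac{1}{1+2\lambda}\Big)=\frac{2\lambda}{(1+\lambda)(1+2\lambda)}<2\lambda,
\end{equation*}
so $w(t^*)>\frac{1}{1+2\lambda}-2\lambda\ge1-4\lambda$. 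Since $w(0)=0<1-4\lambda$, continuity then gives a crossing time, so $t^c$ exists and $t^c<t^*$. This also yields Claim~\ref{cl:tc_less_than_t*}.

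For the lower bound $t^*-6\lambda\le t^c$ in \eqref{eq:tc_t*_close}, I use $w\le v$. At $t^c$ we have $1-4\lambda=w(t^c)\le v(t^c)$, so $t^c\ge\mathrm{t}(1-4\lambda)$ because $\mathrm{t}$ is increasing. It then suffices to show $t^*-\mathrm{t}(1-4\lambda)\le 6\lambda$. There are two ways to bound this difference:
\begin{itemize}
\item By \eqref{eq:v_dot_lower_bound}, $\dot v\ge\frac12$ on $[0,t^*]$, so $t^*-\mathrm{t}(1-4\lambda)\le 2\big(\frac{1}{1+2\lambda}-(1-4\lambda)\big)\le 2(2\lambda+4\lambda^2)$. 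This is slightly above $4\lambda$ and certainly at most $6\lambda$, but it relies on $\dot v\ge\frac12$.
\item Alternatively, compute directly from \eqref{eq:explicit_t_v}. The logarithmic terms differ by $\frac{\lambda}{(1+\lambda)^2}\ln\frac{1-(1+\lambda)(1-4\lambda)}{\lambda/(1+2\lambda)}$, which is about $\lambda\ln 3$, and the linear terms differ by at most $4\lambda$. Together this is at most $6\lambda$.
\end{itemize}

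The main obstacle is the constant bookkeeping in the first part, namely keeping the $\lambda^2\ln(1/\lambda)$ error within the allotted $\lambda$, together with getting strictness in \eqref{eq:crude_arrival_of_w}. Everything else is a direct substitution.

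Claim~\ref{cl:tc_asymp} then follows by combining \eqref{eq:bounds_on_arrival_time} and \eqref{eq:tc_t*_close}.
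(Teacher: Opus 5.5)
Your proposal is correct and follows essentially the paper's proof: you substitute $v=\frac{1}{1+2\lambda}$ into \eqref{eq:explicit_t_v}, use $w(t^*)\ge v(t^*)-2\lambda$, and bound $t^c$ from below via $w\le v$ and $\dot v\ge\frac12$. The only difference is that you phrase the last step through $\mathrm{t}(1-4\lambda)$, whereas the paper evaluates $w$ at $t^*-6\lambda$. One small correction: your detour to obtain strictness in \eqref{eq:crude_arrival_of_w} is unnecessary, because $\frac{1}{1+2\lambda}>1-2\lambda$ already holds strictly for $\lambda>0$, since $(1-2\lambda)(1+2\lambda)=1-4\lambda^2<1$.
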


\begin{proof}
We have
\begin{equation*}
t^* \stackrel{\eqref{eq:t_star_def},\, \eqref{eq:explicit_t_v} }{=} \frac{1}{1+2\lambda} \cdot \frac{1}{1+\lambda} -\frac{\lambda}{(1+\lambda)^2}\ln(\lambda) + \frac{\lambda}{(1+\lambda)^2} \ln(1+2\lambda), 
\end{equation*}
from which, using $\lambda \leq {1}/{10}$, one can deduce that~\eqref{eq:bounds_on_arrival_time} holds.

Inequality~\eqref{eq:crude_arrival_of_w} follows by~\eqref{eq:v_w_diff_bound}:
\begin{equation*}
w \mleft( t^* \mright) \stackrel{\eqref{eq:v_w_diff_bound}}{\ge} v(t^*) - 2\lambda \stackrel{\eqref{eq:t_def_as_v_inverse},\, \eqref{eq:t_star_def} }{=} \frac{1}{1+2\lambda} -2\lambda > 1-4 \lambda.
\end{equation*}

Finally, observe that
\begin{equation}\label{eq:tc_lower_bound}
 w(t^*-6\lambda) \stackrel{\eqref{eq:wnvnbound}}{\le} v(t^*-6\lambda) \stackrel{\eqref{eq:v_dot_lower_bound}}{\le} v(t^*) - 3\lambda \stackrel{\eqref{eq:t_star_def}}{\le} 1-4\lambda.
\end{equation}
Therefore, by the definition of $t^c$ (see~\eqref{eq:tnc_def}), inequalities \eqref{eq:crude_arrival_of_w} and~\eqref{eq:tc_lower_bound} prove~\eqref{eq:tc_t*_close}.
\end{proof}

Note that Claim~\ref{cl:crude_arrival} immediately implies Claim~\ref{cl:tc_asymp} and Claim~\ref{cl:tc_less_than_t*}.

\begin{notation}
Let us define
\begin{equation}\label{eq:def_x}
x(t) \coloneqq \frac{w(t)-(1-\lambda t)}{\lambda}. 
\end{equation}
\end{notation}

Note that the function $x(t)$ depends on $n$ as well, but similarly to other notation in this section, we omit it
from our notation.

Recall the definition of~$\theta$ from~\eqref{eq:theta_def}. Observe that~$\theta$ is the first time when~$x(\cdot)$ becomes nonnegative and also note that $x(t) = t-g(t)$ (cf.~\eqref{eq:def_g_n_t}). Hence~\eqref{eq:gnt_asymp_t} is equivalent to $\lim \limits_{s \to \infty} \limsup \limits_{n \to \infty} \sup \limits_{t_n^c + s \cdot \lambda_n \leq t \leq T} |x(t)|=0$. 

\begin{claim}[Properties of $x$]
The function $x(t)$ defined in \eqref{eq:def_x} solves the ODE
\begin{equation}\label{eq:x_ode}
\dot{x}(t) = \frac{1}{1- \frac{x(t)}{t}} + \left( \frac{1}{\lambda} +x(t) \right) \cdot \frac{-x(t)}{t \cdot (t -x(t))}.
\end{equation}

If $\lambda \le {1}/{100}$, then
\begin{equation}\label{eq:x_t_star}
x(t^*) \geq -3,
\end{equation}
and
\begin{equation}\label{eq:x_nonneg_after_tau}
x(t) \geq 0 \quad \text{for any} \quad t \geq \theta. 
\end{equation}
\end{claim}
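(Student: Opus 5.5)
The plan is to verify the three assertions separately: a direct substitution for \eqref{eq:x_ode}, the bounds of Claim~\ref{cl:crude_arrival} for \eqref{eq:x_t_star}, and a barrier argument at the zeros of $x$ for \eqref{eq:x_nonneg_after_tau}.

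\textbf{The ODE \eqref{eq:x_ode}.} By \eqref{eq:def_x} we have $w(t)=1-\lambda t+\lambda x(t)$, so $\dot w=-\lambda+\lambda\dot x$ and $1-w(t)=\lambda\,(t-x(t))$. Here $t-x(t)>0$, because $w<1$ by \eqref{eq:wnvnbound}. Substituting into \eqref{eq:w_ODE} gives $-\lambda\frac{w}{1-w}=-\frac{w}{t-x}$, and therefore
\[
\dot w = w\Bigl(\frac1t-\frac1{t-x}\Bigr)=w\cdot\frac{-x}{t(t-x)} .
\]
Dividing by $\lambda$ and using $w/\lambda=\frac1\lambda+x-t$ yields
\[
\dot x = 1+\Bigl(\frac1\lambda+x\Bigr)\frac{-x}{t(t-x)}+\frac{x}{t-x}.
\]
Since $1+\frac{x}{t-x}=\frac{t}{t-x}=\frac{1}{1-x/t}$, this is exactly \eqref{eq:x_ode}. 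This step is pure algebra.

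\textbf{The bound \eqref{eq:x_t_star}.} Start from \eqref{eq:v_w_diff_bound} and \eqref{eq:t_star_def}: $w(t^*)\ge v(t^*)-2\lambda=\frac1{1+2\lambda}-2\lambda\ge 1-4\lambda$. Hence
\[
x(t^*)\ge \frac{-4\lambda+\lambda t^*}{\lambda}=t^*-4 .
\]
By the lower bound in \eqref{eq:bounds_on_arrival_time}, $t^*-4\ge -3+\lambda\ln(1/\lambda)-4\lambda$. This is at least $-3$ as soon as $\ln(1/\lambda)\ge 4$. That condition holds for $\lambda\le 1/100$, since $\ln 100>4$; note that $\lambda\le 1/100$ also permits the use of Claim~\ref{cl:crude_arrival}.

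\textbf{The bound \eqref{eq:x_nonneg_after_tau}.} This is a one-sided barrier argument at the zeros of $x$. The ODE \eqref{eq:x_ode} has a continuous right-hand side wherever $t>0$ and $t>x$, which always holds as shown above, so $x$ is $C^1$ there. At any time $r>0$ with $x(r)=0$, \eqref{eq:x_ode} gives $\dot x(r)=1>0$.

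Now argue by contradiction. By definition $x(\theta)=0$; suppose $x(s)<0$ for some $s>\theta$. Let $r\coloneqq\sup\{u\in[\theta,s]: x(u)\ge 0\}$. By continuity $x(r)=0$ and $r<s$, and $x<0$ on $(r,s]$. This forces $\dot x(r)\le 0$, contradicting $\dot x(r)=1$.

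The only step needing some care is this barrier argument, specifically checking that $t-x(t)>0$ so that \eqref{eq:x_ode} is nonsingular along the trajectory. That positivity follows from $w<1$.
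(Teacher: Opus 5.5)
Your proof is correct and follows essentially the same route as the paper. You obtain \eqref{eq:x_ode} by substituting $w=1-\lambda t+\lambda x$ into \eqref{eq:w_ODE}, and \eqref{eq:x_t_star} from \eqref{eq:bounds_on_arrival_time} together with \eqref{eq:crude_arrival_of_w} (which you rederive from \eqref{eq:v_w_diff_bound}); for \eqref{eq:x_nonneg_after_tau} you use that the right-hand side of \eqref{eq:x_ode} equals $1$ at zeros of $x$, and your supremum-based barrier argument is just a rigorous version of the paper's one-line ``positive drift'' remark.
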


\begin{proof}
The fact that $x(t)$ solves~\eqref{eq:x_ode} follows from~\eqref{eq:w_ODE} and~\eqref{eq:def_x}.
The inequality~\eqref{eq:x_t_star} follows from~\eqref{eq:bounds_on_arrival_time}, \eqref{eq:crude_arrival_of_w} and~\eqref{eq:def_x}.
Finally, \eqref{eq:x_nonneg_after_tau} holds because the right-hand side of~\eqref{eq:x_ode} is equal to $1$ if $x(t)=0$, so this positive drift does not let $x(t)$ become negative for any $t \geq \theta$.
\end{proof}

Lemma~\ref{lem:g_monoton} also follows from observation~\eqref{eq:x_nonneg_after_tau}:

\begin{proof}[Proof of Lemma~\ref{lem:g_monoton}]
By~\eqref{eq:def_g_n_t} and~\eqref{eq:w_ODE}, we have $\dot{g}(t) = w(t) \cdot \left( \frac{1}{1-w(t)} - \frac{1}{\lambda t} \right)$. Therefore, for any $\lambda < \frac 1T$ and $t \in [0,T]$, $\dot{g}(t) > 0$ if and only if $w(t) > 1-\lambda t$. By~\eqref{eq:theta_def}, $w(t) < 1-\lambda t$ for any $t < \theta$ and by~\eqref{eq:x_nonneg_after_tau}, $w(t) \ge 1-\lambda t$ for any $t \in [\theta, T]$. It implies the statement.
\end{proof}

\begin{lemma}[Lower bound on $x$]
Recall the definition of $t^*$ from~\eqref{eq:t_star_def} and $\theta$ from~\eqref{eq:theta_def}. If $\lambda \leq {1}/{100}$, then for any $s \in \mathbb{R}_+$ and $t \in [t^*+s \cdot \lambda,\, \theta]$, we have 
\begin{equation}\label{eq:x_lower_bound}
 x(t) \ge -4\exp \mleft( -\frac{ s }{20} \mright).
\end{equation}
\end{lemma}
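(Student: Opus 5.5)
We will prove \eqref{eq:x_lower_bound} by a comparison argument for the ODE \eqref{eq:x_ode}, restricted to the interval $[t^*,\theta]$, where $x(t)<0$ by the definition \eqref{eq:theta_def} of $\theta$. The starting point is \eqref{eq:x_t_star}, i.e.\ $x(t^*)\ge -3$. The plan is to show that on $[t^*,\theta]$ the ODE forces $x$ to relax exponentially fast towards $0$ on the time scale $\lambda$. This means $x(t)\ge -3\exp(-(t-t^*)/(C\lambda))$ with some explicit $C\le 20$, and this gives the claim with constant $4$ after taking $t-t^*\ge s\lambda$.

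First we need a priori control of $x$ on $[t^*,\theta]$. We will show that $x(t)\ge -3$ persists. At a time where $x=-3$, we have $t\ge t^*\ge 1-\lambda\ln\lambda-4\lambda\ge 1/2$ by \eqref{eq:bounds_on_arrival_time}. The dominant term in \eqref{eq:x_ode} is then $\frac{1}{\lambda}\cdot\frac{-x}{t(t-x)}\ge \frac{3}{\lambda\, t(t+3)}$, which is large and positive (note $t\le T$ and $\lambda\le 1/100$ is small compared with $1$; if needed we use $\lambda\le 1/(4T^2)$ as in Proposition~\ref{prop:soc_fixed_point}). The remaining terms are $\frac{1}{1-x/t}\in(0,1]$ and $x\cdot\frac{-x}{t(t-x)}=-\frac{x^2}{t(t-x)}$, which is bounded by an absolute constant when $|x|\le 3$ and $t\ge 1/2$. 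So $\dot x>0$ there, and hence $-3\le x(t)<0$ on $[t^*,\theta]$.

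Next, with $-3\le x<0$ and $t\in[1/2,T]$, we write \eqref{eq:x_ode} as
\[
\dot x(t)\ \ge\ -\frac{x(t)}{\lambda}\cdot \frac{1}{t(t-x(t))}\ -\ \frac{x(t)^2}{t(t-x(t))}\ \ge\ \frac{|x(t)|}{\lambda}\left(\frac{1}{t(t+3)}-\lambda\cdot\frac{3}{t^2}\right).
\]
Here we drop the positive term $\frac{1}{1-x/t}$. The bracket is bounded below by a positive constant $c$. To keep $c\ge 1/20$ uniformly we need $t$ of order $1$. This is the delicate point, and it is where the hypothesis on $\lambda$ relative to $T$ and the range of $t$ enter. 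If the bound $1/(t(t+3))$ is too weak for large $t\le T$, we will first try to refine the a priori bound: after a short time $x$ is already in $[-1,0]$, and then $\frac{1}{t(t+1)}$ is the relevant quantity. Alternatively, we keep the positive term $\frac{1}{1-x/t}$, which alone pushes $x$ up at unit speed in the original time scale.

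Then $\frac{d}{dt}|x|\le -\frac{c}{\lambda}|x|$, and Grönwall (Claim~\ref{cl:gronwall}) gives $|x(t)|\le 3\mathrm{e}^{-c(t-t^*)/\lambda}\le 4\mathrm{e}^{-s/20}$ for $t\ge t^*+s\lambda$. We expect the main obstacle to be securing the uniform constant $1/20$ over $t\in[t^*,T]$. We will handle it by the case split above, combined with the observation that $\theta$ is reached within time $O(\lambda\ln)$ and stays below a modest bound.
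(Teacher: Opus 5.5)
There is a genuine gap, and it sits exactly at the point you call delicate. Dropping the term $\frac{1}{1-x/t}$ leaves the decay rate $\frac{1}{\lambda}\bigl(\frac{1}{t(t+3)}-\frac{3\lambda}{t^2}\bigr)$. This is at least $\frac{1}{20\lambda}$ only for $t$ up to about $3$, and beyond that it degenerates like $\frac{1}{\lambda t^2}$. Your first fix (first pushing $x$ into $[-1,0]$) does not help: $\frac{1}{t(t-x)}\le\frac{1}{t^2}<\frac{1}{20}$ for $t>\sqrt{20}$, however small $|x|$ is. So your Gr\"onwall step proves \eqref{eq:x_lower_bound} only for $t\le\theta\wedge 2$, say, and you need $\theta\le 2$. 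You assert this (``$\theta$ is reached within time $O(\lambda\ln)$'') but never prove it. It also cannot come out of the linear inequality $\frac{\mathrm{d}}{\mathrm{d}t}|x|\le-\frac{c}{\lambda}|x|$, whose comparison solution $-3\mathrm{e}^{-c(t-t^*)/\lambda}$ never reaches $0$. Note too that the lemma involves no $T$, so neither $t\le T$ nor $\lambda\le 1/(4T^2)$ is available to you.

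The missing piece is your second option, actually carried out; this is what the paper does. By \eqref{eq:bounds_on_arrival_time}, $t\ge t^*\ge 1$, since $\lambda\ln(1/\lambda)\ge 4\lambda$ for $\lambda\le 1/100$. Together with $-3\le x<0$ this gives $\frac{1}{1-x/t}\ge\frac14$ and $\frac1\lambda+x\ge\frac{1}{2\lambda}$, and for $t\le 2$ also $t(t-x)\le 10$. Hence $\dot x\ge\frac14-\frac{x}{20\lambda}$ on $[t^*,\theta\wedge 2]$, and comparison gives
\[
x(t)\ \ge\ 5\lambda-(3+5\lambda)\,\mathrm{e}^{-(t-t^*)/(20\lambda)}\ \ge\ -4\,\mathrm{e}^{-(t-t^*)/(20\lambda)} .
\]
This affine inequality is exactly the one the paper uses, and it does double duty. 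Its middle expression is positive for $t>t^*+20\lambda\ln\frac{3+5\lambda}{5\lambda}$. That threshold is below $2$, because $t^*\le 1+\lambda\ln(1/\lambda)+\lambda$ and $\lambda\le 1/100$. Hence $\theta<2$ (otherwise the bound would give $x(2)>0$), so $\theta\wedge 2=\theta$ and the estimate covers all of $[t^*+s\lambda,\theta]$.

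Your a priori step is fine but can be simplified. For $-1/\lambda<x<0$ both terms of \eqref{eq:x_ode} are positive, so $\dot x>0$ directly. There is no need to compare $\frac{3}{\lambda t(t+3)}$ with an ``absolute constant'', a comparison that is imprecise for large $t$ since both quantities are of order $t^{-2}$.
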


\begin{proof}
Note that by~\eqref{eq:bounds_on_arrival_time}, we have $1 \leq t^*$. For any $t^* \leq t \leq \theta$, we have $x(t)<0$. Recall from~\eqref{eq:x_t_star} that we also have $x(t^*) \geq -3$. Therefore, by~\eqref{eq:x_ode}, for any $t^* \leq t \leq \theta$, we have $\dot{x}(t) > 0$ and thus $ -3 \leq x(t)$. Therefore, we have
\begin{equation}\label{eq:ineq_1}
\frac{1}{1- \frac{x(t)}{t}} \geq \frac{1}{1- x(t)} \geq \frac{1}{4}, \qquad
\frac{1}{\lambda} +x(t) \geq \frac{1}{2 \lambda}.
\end{equation}

Moreover, for any $t^* \leq t \leq \theta \wedge 2 $, we have
\begin{equation}\label{eq:ineq_2}
\frac{-x(t)}{t \cdot (t -x(t))} \geq \frac{-x(t)}{2 \cdot (2+3)}.
\end{equation}
Thus, for any $t^* \leq t \leq \theta \wedge 2 $, we obtain
\begin{equation}\label{eq:x_lower_ode}
\dot{x}(t) \stackrel{\eqref{eq:x_ode},\, \eqref{eq:ineq_1},\, \eqref{eq:ineq_2} }{\geq} \frac{1}{4}- \frac{x(t)}{20 \lambda}, \qquad x(t^*) \geq -3.
\end{equation}
Therefore, for any $t^* \leq t \leq \theta \wedge 2$
\begin{equation*}
x(t) \stackrel{\eqref{eq:x_lower_ode}}{\geq} 5 \lambda +(-3 -5 \lambda)e^{-\frac{t-t^*}{20 \lambda}} \geq - 4 e^{-\frac{t-t^*}{20 \lambda}}.
\end{equation*}
Therefore, if $s \in \mathbb{R}_+$ and $t^*+s \cdot \lambda \leq \theta \wedge 2$, then
\begin{equation}\label{eq:x_lower_bound_after_t*}
 x(t^*+s \lambda) \ge -4 \exp \mleft(-\frac{s}{20} \mright).
\end{equation}
By the fact that $\dot{x}(t)>0$ for any $t^* \le t \le \theta$, inequality~\eqref{eq:x_lower_bound_after_t*} implies~\eqref{eq:x_lower_bound}.
\end{proof}

\begin{lemma}[$x$ is small after $\theta$]
Recall the definition of $\theta$ from~\eqref{eq:theta_def}. Let $T>1$. If $\lambda \leq {1}/{(4T^2)}$, then for any $\theta \leq t \leq T$, we have
\begin{equation}\label{eq:x_small}
x(t) \leq 2 \lambda T^2.
\end{equation}
\end{lemma}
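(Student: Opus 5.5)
We need to show $x(t)\le 2\lambda T^2$ for $\theta\le t\le T$, using the ODE \eqref{eq:x_ode}. The approach is a barrier (comparison) argument: at a time where $x$ would cross the level $c\coloneqq 2\lambda T^2$ from below, the right-hand side of \eqref{eq:x_ode} is negative, so $x$ cannot exceed $c$.

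First, note that $x(\theta)=0<c$ by the definition \eqref{eq:theta_def} and continuity. Next, evaluate the right-hand side of \eqref{eq:x_ode} at a point with $x(t)=c$ and $1\le t\le T$. Here we use $\theta\ge 1$: indeed $w(t)\le v(t)\le t$ and $w(t)\le 1/(1+\lambda)$, and before $t=1$ we have $w<1-\lambda t$ since $w\le v$, which is far below $1-\lambda t$ for $t<1$. This step needs a short check, but it is certainly true that $\theta\ge t^*\ge 1$, because $x<0$ on $[t^*,\theta]$ was used in the previous lemma.

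At such a point the right-hand side equals
\begin{equation*}
\frac{t}{t-c}-\frac{(1/\lambda+c)\,c}{t(t-c)}=\frac{t^2-c/\lambda-c^2}{t(t-c)}.
\end{equation*}
Since $c/\lambda=2T^2>t^2$ and $t-c>0$ (because $c=2\lambda T^2\le 1/2<1\le t$ by the hypothesis $\lambda\le 1/(4T^2)$), this expression is strictly negative.

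Hence if $x$ ever exceeded $c$ on $[\theta,T]$, let $t_0\coloneqq\inf\{t\ge\theta: x(t)>c\}$. Then $x(t_0)=c$, but $\dot x(t_0)<0$, so $x<c$ just after $t_0$, which contradicts the definition of $t_0$. Note that $x$ is $C^1$ on this range, since $t-x(t)>0$ keeps the ODE nonsingular.

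The only mildly delicate point is justifying $t\ge 1$ on $[\theta,T]$, which keeps $t-c>0$ and $t^2<2T^2$. Alternatively, even without $t\ge1$, we only need $t>c$, and $t^2\le T^2<2T^2$ holds anyway. So it suffices that $\theta>c$, which follows from $\theta\ge t^*\ge 1$ by \eqref{eq:bounds_on_arrival_time}. The justification of $\theta\ge t^*$ is that $w\le v<1-\lambda t$ before $t^*$ (for $t<t^*$, $v(t)\le 1/(1+2\lambda)$ and $\lambda t<2\lambda$), so $x<0$ there.
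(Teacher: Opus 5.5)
Your barrier argument is correct and is essentially the paper's proof. The paper uses $x\ge 0$ on $[\theta,T]$ (from \eqref{eq:x_nonneg_after_tau}) to bound $\dot x<\frac{1}{1-x/t}-\frac{x}{\lambda T^2}$ and then checks the sign at $x=2\lambda T^2$, which implicitly needs $t\ge 4\lambda T^2$ (essentially $\theta\ge 1$); you instead evaluate the right-hand side of \eqref{eq:x_ode} exactly at the barrier, so you do not need $x\ge 0$ and only require $t>2\lambda T^2$.

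One small slip in your lower bound on $\theta$: the facts $v(t)\le 1/(1+2\lambda)$ and $\lambda t<2\lambda$ do not by themselves give $v(t)<1-\lambda t$, because $1/(1+2\lambda)>1-2\lambda$; you would also need $t^*\le 2/(1+2\lambda)$. A simpler route suffices: $w(t)\le v(t)\le t<1-\lambda t$ for $t<1/(1+\lambda)$, so $\theta\ge 1/(1+\lambda)>1/2\ge 2\lambda T^2$, which is all your argument uses.
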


\begin{proof}
We know that $x(\theta)=0$ and we also know from~\eqref{eq:x_nonneg_after_tau} that $x(t) \geq 0$ for any $t \geq \theta$. Thus for any $\theta \leq t < T$, we have
\begin{equation*}
\dot{x}(t) \stackrel{\eqref{eq:x_ode} }{ < } \frac{1}{1- \frac{x(t)}{t}} - \frac{1}{\lambda} \cdot \frac{x(t)}{T^2}
\end{equation*}
Now if $\lambda \leq {1}/(4T^2)$ and if we assume $x(t) = 2 \lambda T^2$, then $\dot{x}(t)<0$, so this negative drift does not let $x(t)$ become bigger than $2 \lambda T^2$ for $\theta \leq t < T$, i.e., \eqref{eq:x_small} holds.
\end{proof}

\begin{proof}[Proof of Proposition~\ref{prop:soc_fixed_point}]
First, note that $t^c \stackrel{\eqref{eq:tc_t*_close}}{\ge} t^*-6\lambda$. Therefore, for any $s \ge 6$, if $t \in [t^c + s \cdot \lambda, \, \theta]$, we have
\begin{equation}\label{eq:x_before_theta}
 -4 \exp \mleft(- \frac{s-6}{20} \mright) 
 \stackrel{\eqref{eq:x_lower_bound}}{\le} x(t) \stackrel{\eqref{eq:theta_def}}{\le} 0
\end{equation}

On the other hand, for any $t \in [\theta, \, T]$, we have
\begin{equation}\label{eq:x_after_theta}
0 \stackrel{\eqref{eq:theta_def}}{\le} x(t) \stackrel{\eqref{eq:x_small}}{\le} 2\lambda T^2.
\end{equation}
Now Proposition~\ref{prop:soc_fixed_point} follows from the definition~\eqref{eq:def_x} of $x(t)$, inequalities~\eqref{eq:x_before_theta} and~\eqref{eq:x_after_theta}.
\end{proof}

\bigskip
\noindent
\textbf{\large Acknowledgements.}
Bal\'azs R\'ath and M\'arton Sz\H{o}ke were partially supported by the grant NKFI-FK-142124 of NKFI (National Research, Development and Innovation Office), and by the ERC Synergy Grant No.\ 810115 - DYNASNET. Dominic Yeo was partially supported by EPSRC doctoral training grant EP/K503113 and ISF grant 1325/14.

\end{document}